\documentclass[a4paper, oneside, 11pt]{amsart}
\usepackage{import}

\title[Twisted representations of conformal nets]{Twisted representations of conformal nets and crossed balanced tensor categories}
\author{Adrià Marín-Salvador}
\date{}

\newcommand{\Z}{\mathbb{Z}}
\newcommand{\R}{\mathbb{R}}
\newcommand{\C}{\mathbb{C}}

\newcommand{\Diff}{\text{Diff}}

\newcommand{\Mob}{\mathbf{M\ddot{o}b}}
\newcommand{\A}{\mathcal{A}}
\newcommand{\Rot}{\text{Rot}}

\newcommand{\id}{\text{id}}

\newcommand{\Hom}{\text{Hom}}
\newcommand{\Aut}{\text{Aut}}
\newcommand{\Rep}{\text{Rep}}
\newcommand{\colim}{\text{colim}}

\newcommand{\Dcal}{\mathcal{D}}

\newcommand{\Ccal}{\mathcal{C}}
\newcommand{\End}{\text{End}}

\newcommand{\Jcal}{\mathcal{J}}
\newcommand{\ConfR}{\widetilde{\text{Conf}}_{2}(S^1)}

\newcommand{\B}{\mathbb{B}}
\newcommand{\Vect}{\text{Vect}}
\newcommand{\Loc}{\text{Loc}}

\newcommand{\1}{\mathbf{1}}

\usepackage{amssymb}
\usepackage{amsfonts}
\usepackage{amsmath}
\usepackage{quiver}
\usepackage{amsthm}
\usepackage{comment}
\usepackage[english]{babel}
\usepackage{quiver}
\usepackage{nicefrac}
\usepackage[a4paper]{geometry}
\usepackage{amssymb}
\usepackage{mathtools}
\usepackage{stackrel}
\usepackage[hidelinks]{hyperref}
\usepackage{mathrsfs} 
\usetikzlibrary{decorations.pathmorphing}
\usetikzlibrary{calc}
\usetikzlibrary{decorations.markings}
\usetikzlibrary{fadings,decorations.pathreplacing}
\usetikzlibrary{matrix,arrows}
\usetikzlibrary{arrows,calc,decorations.pathreplacing,decorations.markings,shapes.geometric,shadows}

\usepackage{xcolor}

\tikzset{-dot-/.style={decoration={
  markings,
  mark=at position 0.5 with {\fill circle (2pt);}},postaction={decorate}}}

\tikzset{
	Fdot/.style={circle, draw, fill, inner sep=0pt}, 
	Odot/.style={circle, draw, inner sep=0.1pt, minimum size=0.1cm}
	}

  \vfuzz \hfuzz
\usepackage{xcolor}
\usetikzlibrary{arrows}

\theoremstyle{definition}
\newtheorem{definition}{Definition}[section]
\newtheorem{proposition}[definition]{Proposition}
\newtheorem{lemma}[definition]{Lemma}
\newtheorem{theorem}[definition]{Theorem}
\newtheorem{corollary}[definition]{Corollary}

\newtheorem{remark}[definition]{Remark}

\newtheorem*{theorem*}{Theorem}
\newtheorem*{definition*}{Definition}
\newtheorem*{example*}{Example}
\newtheorem*{corollary*}{Corollary}
\newtheorem*{conjecture*}{Conjecture}

\definecolor{myviolet}{HTML}{7D3C98}
\definecolor{myorange}{HTML}{F39C12}
\definecolor{myblue}{HTML}{2E86C1}
\definecolor{mygreen}{HTML}{1E8449}
\definecolor{myred}{HTML}{C0392B}

\tikzstyle{ipe stylesheet} = [
  ipe import,
  even odd rule,
  line join=round,
  line cap=butt,
  ipe pen normal/.style={line width=0.4},
  ipe pen heavier/.style={line width=0.8},
  ipe pen fat/.style={line width=1.2},
  ipe pen ultrafat/.style={line width=2},
  ipe pen normal,
  ipe mark normal/.style={ipe mark scale=3},
  ipe mark large/.style={ipe mark scale=5},
  ipe mark small/.style={ipe mark scale=2},
  ipe mark tiny/.style={ipe mark scale=1.1},
  ipe mark normal,
  /pgf/arrow keys/.cd,
  ipe arrow normal/.style={scale=7},
  ipe arrow large/.style={scale=10},
  ipe arrow small/.style={scale=5},
  ipe arrow tiny/.style={scale=3},
  ipe arrow normal,
  /tikz/.cd,
  ipe arrows, 
  <->/.tip = ipe normal,
  ipe dash normal/.style={dash pattern=},
  ipe dash dotted/.style={dash pattern=on 1bp off 3bp},
  ipe dash dashed/.style={dash pattern=on 4bp off 4bp},
  ipe dash dash dotted/.style={dash pattern=on 4bp off 2bp on 1bp off 2bp},
  ipe dash dash dot dotted/.style={dash pattern=on 4bp off 2bp on 1bp off 2bp on 1bp off 2bp},
  ipe dash normal,
  ipe node/.append style={font=\normalsize},
  ipe stretch normal/.style={ipe node stretch=1},
  ipe stretch normal,
  ipe opacity 10/.style={opacity=0.1},
  ipe opacity 30/.style={opacity=0.3},
  ipe opacity 50/.style={opacity=0.5},
  ipe opacity 75/.style={opacity=0.75},
  ipe opacity opaque/.style={opacity=1},
  ipe opacity opaque,
]
\definecolor{red}{rgb}{1,0,0}
\definecolor{blue}{rgb}{0,0,1}
\definecolor{green}{rgb}{0,1,0}
\definecolor{yellow}{rgb}{1,1,0}
\definecolor{orange}{rgb}{1,0.647,0}
\definecolor{gold}{rgb}{1,0.843,0}
\definecolor{purple}{rgb}{0.627,0.125,0.941}
\definecolor{gray}{rgb}{0.745,0.745,0.745}
\definecolor{brown}{rgb}{0.647,0.165,0.165}
\definecolor{navy}{rgb}{0,0,0.502}
\definecolor{pink}{rgb}{1,0.753,0.796}
\definecolor{seagreen}{rgb}{0.18,0.545,0.341}
\definecolor{turquoise}{rgb}{0.251,0.878,0.816}
\definecolor{violet}{rgb}{0.933,0.51,0.933}
\definecolor{darkblue}{rgb}{0,0,0.545}
\definecolor{darkcyan}{rgb}{0,0.545,0.545}
\definecolor{darkgray}{rgb}{0.663,0.663,0.663}
\definecolor{darkgreen}{rgb}{0,0.392,0}
\definecolor{darkmagenta}{rgb}{0.545,0,0.545}
\definecolor{darkorange}{rgb}{1,0.549,0}
\definecolor{darkred}{rgb}{0.545,0,0}
\definecolor{lightblue}{rgb}{0.678,0.847,0.902}
\definecolor{lightcyan}{rgb}{0.878,1,1}
\definecolor{lightgray}{rgb}{0.827,0.827,0.827}
\definecolor{lightgreen}{rgb}{0.565,0.933,0.565}
\definecolor{lightyellow}{rgb}{1,1,0.878}
\definecolor{black}{rgb}{0,0,0}
\definecolor{white}{rgb}{1,1,1}

\usetikzlibrary{arrows.meta,patterns}
\usetikzlibrary{ipe}

\begin{document}

\begin{abstract}
    Let $\A$ be a (not necessarily rational) conformal net with an action of a discrete group $G$. We show that the category $\Rep^G(\A)$ of $G$-twisted representations of $\A$ is canonically a $G$-crossed balanced $\mathrm{W}^*$-tensor category. This extends the results of \cite{muger05}, in the language of localized endomorphisms, that $\Rep^G(\A)$ is a $G$-crossed braided tensor category.
\end{abstract}

\maketitle

\tableofcontents

\section{Introduction}
\addtocontents{toc}{\protect\setcounter{tocdepth}{-1}}

Conformal nets provide a mathematical formalization of unitary 2-dimensional chiral conformal field theory. Taking spacetime to be the 2-dimensional Minkowski space, or a partial compactification into a cylinder, one is required to provide, in the spirit of algebraic quantum field theory, a net of algebras on the cylinder which is compatible with the group of conformal diffeomorphisms of the cylinder. Whenever the net is independent of the positive light-ray coordinate, one talks about chiral nets. In this setting, it is natural to consider nets of von Neumann algebras of observables on $S^1$, seen as a Cauchy slice on the cylinder, in a way that is covariant with the group of diffeomorphisms of the circle. These are conformal nets \cite{bmt88, bsm90, bgl93, gf93, w95, kl04, bdh15}. More explicitly, a conformal net consists of a \textit{vacuum} Hilbert space $H_0$ equipped with a projective representation of the group $\Diff^+(S^1)$ of orientation-preserving diffeomorphisms of the circle together with, for every interval $I\subset S^1$, a von Neumann algebra $\A(I)\subset B(H_0)$ in such a way that the assignment $I\mapsto \A(I)$ is covariant with respect to the inclusion of intervals and compatible with the action of $\Diff^+(S^1).$ Given a conformal net $\A$, a representation of $\A$ consists of a Hilbert space $H$ with compatible $*$-actions of the von Neumann algebras $\A(I)$ for all intervals $I\subset S^1$. The category of representations of a conformal net $\A$, denoted $\Rep(\A)$, is a braided tensor category, see \cite[Sec. 2]{frs}, \cite[Sec. 7]{lon89} and \cite[Sec. IV.4]{gf93} for the definition of the Connes fusion of representations and the braiding, and \cite{bdh15, bicommutantfromnets, Gui21} for a more modern treatment. Actually, given $H\in \Rep(\A)$, the endomorphisms $\End_{\Rep(\A)}(H)$ of $H$ form a von Neumann algebra, making $\Rep(\A)$ into a braided $\mathrm{W}^*$-tensor category (see \cite{henriques2024completewcategories} for an introduction to $\mathrm{W}^*$-categories).

A lot of the literature on conformal nets focuses on the particular class of \textit{rational} conformal nets \cite{klm01}. The most relevant consequence of rationality for our discussion is that, when $\mathcal{A}$ is rational, the $\mathrm{W}^*$-category $\Rep(\A)$ has finitely many simple objects. When one drops rationality, $\Rep(\A)$ may have infinitely (or continuously) many simple objects, and a general object may be a direct integral over the space of simple objects, as opposed to a direct sum. In this paper, we do not assume rationality at any point.

\subsection*{Twisted representations of conformal nets} Given a conformal net $\A$, an automorphism of $\A$ consists of a collection of compatible automorphisms of the von Neumann algebras $\A(I)$ for all $I\subset S^1$, which are implemented unitarily in the vacuum representation $H_0$, see Section \ref{Sec: ConfNets}. We write $\Aut(\A)$ for the group of automorphisms of $\A$. Given an automorphism $\varphi\in \Aut(\A)$, one can consider the notion of a $\varphi$-twisted representation of $\A$, which consists of a Hilbert space $H$ with actions of the von Neumann algebras $\A(I)$, compatible with the inclusions $\A(J)\subset \A(I)$ up to the action of $\varphi$, see Section \ref{Sec: ConfNets} and \cite{muger05}. We write $\Rep^\varphi(\A)$ for the $\mathrm{W}^*$-category of $\varphi$-twisted representations of $\A$. The category $\Rep^\varphi(\A)$ is not in general a tensor category, but rather there exist tensor products
\[
\Rep^{\varphi}(\A)\times\Rep^{\sigma}(\A)\to \Rep^{\varphi\circ\sigma}(\A)
\]
for $\varphi, \sigma\in \Aut(\A)$ generalizing the Connes fusion of $\A$-representations. Given a discrete group $G$ acting faithfully on $\A$ by an injective group homomorphism $\Phi: G\to \Aut(\A)$, we can therefore consider the tensor category
\[
\Rep^G(\A):=\bigoplus_{g\in G} \Rep^{\Phi(g)}(\A).
\]
Twisted representations of conformal nets were introduced in \cite{muger05} in the language of localized endomorphisms. Müger defines the $\mathrm{W}^*$-tensor category $G-\Loc_{I_0}(\A)$ of $G$-localized endomorphisms of $\A$ in some interval $I_0\subset S^1$, generalizing DHR endomorphisms, and shows that it admits a canonical $G$-crossed braiding, which is a modification of the usual structure of a braiding in the presence of a $G$-grading and a compatible $G$-action. The $\mathrm{W}^*$-tensor categories $\Rep^G(\A)$ and $G-\Loc_{I_0}(\A)$ are equivalent, and hence $\Rep^G(\A)$ is also canonically a $G$-crossed braided $\mathrm{W}^*$-tensor category (an element $g\in G$ acts on $\Rep^G(\A)$ by precomposing the action of $\A$ on a twisted representation by the automorphism $\Phi(g)^{-1}\in \Aut(\A)$). In this paper, we give an explicit description of the $G$-crossed braided structure on $\Rep^G(\A)$. This allows us to define a \emph{$G$-crossed balance} (or $G$-crossed categorical twist) on $\Rep^G(\A)$. Given a $G$-crossed braided category $\Ccal = \bigoplus\limits_{g\in G}\Ccal_g$, a $G$-crossed balance is a family of isomorphisms
\[
\theta_X: X\xrightarrow{\cong }g X
\]
for all $g\in G$ and $X\in \mathcal{C}_g$, satisfying a set of compatibilities with the $G$-crossed braiding (see Definition \ref{def: CrossedCategoricalTwist}). Here, $gX$ is the object obtained by acting with $g\in G$ on $X\in\Ccal$. We show that any twisted representation of $\A$ is conformal covariant, and hence admits an action of the universal cover of the Möbius group. This allows us to define the $G$-crossed balance on $\Rep^G(\A)$ as the action $e^{-2\pi i L_0}$, where $L_0$ denotes the generator of rotations in $\Diff^+(S^1)$. A $G$-crossed braided $\mathrm{W}^*$-tensor category with a compatible $G$-crossed balance is called a $G$-crossed balanced $\mathrm{W}^*$-tensor category. The main results of this paper are Theorems \ref{Thm: RepAutAIsCrossedBalanced} and~\ref{thm: BigCorollary}.

\begin{theorem*}
    Let $\A$ be a (not necessarily rational) conformal net with a faithful action of a discrete group $G$. The $G$-crossed braided $\mathrm{W}^*$-tensor category $\Rep^{G}(\A)$ of $G$-twisted representations of $\A$ admits a canonical structure of a $G$-crossed balanced $\mathrm{W}^*$-tensor category.
\end{theorem*}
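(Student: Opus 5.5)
We construct the balance from conformal covariance of twisted representations and check its axioms by geometric arguments on the circle; the structure has four parts.

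\emph{Covariance.} For $g\in G$ and a $\Phi(g)$-twisted representation $H$, we first show that $H$ carries a unique strongly continuous unitary representation $U_H$ of the universal cover $\widetilde{\Mob}$ of the Möbius group. This representation should implement the Möbius covariance of the local actions, with the twist $\Phi(g)$ appearing when a path crosses the cut point where the twisting is localized. Our route is to restrict $H$ to the complement of a point. Identify $S^1\setminus\{p\}\cong\R$; there $H$ becomes an honest (untwisted) locally normal representation of the net restricted to $\R$. Since $\Phi(g)$ commutes with the vacuum covariance (automorphisms are implemented in $H_0$ and preserve the vacuum, hence commute with the Möbius representation by uniqueness), we can invoke the standard argument via Bisognano--Wichmann and modular theory (D'Antoni--Longo--Radulescu / Guido--Longo style). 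It gives positive-energy covariance of $H$ under the universal cover, at least for representations generated by a separating vector over a half-line algebra. General $H$ are direct integrals or direct sums of such representations. Uniqueness, which holds because the Möbius group is perfect, lets these covariance representations glue and makes $U_H$ natural with respect to morphisms.

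\emph{The balance.} Set $\theta_H:=U_H(\widetilde{R}_{2\pi})$, the lift of the full rotation, written $e^{-2\pi i L_0}$ as in the statement above. We check that $\theta_H$ is a unitary $H\to gH$. The rotation by $2\pi$ transports an interval $I$ around the circle once, crossing the twist cut once. Hence $\theta_H\,x\,\theta_H^{*}=\Phi(g)^{\pm1}(x)$ for $x\in\A(I)$ acting on $H$, which says exactly that $\theta_H$ intertwines $H$ with the $g$-action of $H$ (precomposition by $\Phi(g)^{-1}$). The sign of the exponent fixes the convention. Naturality in $H$ follows from uniqueness of $U_H$, and $\theta_{H_0}=\id$ because $L_0$ has integer spectrum on $H_0$.

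\emph{Compatibility axioms.} Two axioms are checked in the same way. First, $\theta$ should commute with the $G$-action, $\theta_{kH}=k(\theta_H)$; this holds by uniqueness, since $U_{kH}=U_H$ as operators. Second, we need the twist formula $\theta_{H\boxtimes K}=c_{gK,gH}\circ c_{gH,K}\circ(\theta_H\boxtimes\theta_K)$, with the $G$-crossed modifications of Definition \ref{def: CrossedCategoricalTwist}. To prove it, we realize Connes fusion of $H$ and $K$ geometrically: $H$ is localized (cut and charge) in an upper interval and $K$ in a lower one. The braiding is then given by the unitary obtained by transporting $K$ around $H$ along a half-rotation, using the explicit description of the $G$-crossed braiding developed earlier. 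We also need the covariance of the fused representation, namely that $U_{H\boxtimes K}$ is determined by the requirement that it implements covariance on the fusion. A full rotation on $H\boxtimes K$ then decomposes as an independent full rotation of each factor, composed with the double exchange of the two localization regions. This is the usual spin--statistics-style argument (Guido--Longo, Fröhlich--Gabbiani), adapted so that each crossing of a cut picks up the appropriate $g$-action.

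\emph{Main obstacle.} We expect the hardest step to be this last identification in the non-rational, twisted setting. It requires making the covariance of $H\boxtimes K$ compatible with Connes fusion, where the latter is defined via bimodules over half-circle algebras. It also requires tracking precisely where the $\Phi(g)$ and $\Phi(k)$ twists enter, so that the two braidings carry the correct $G$-action labels ($c_{gK,gH}$ rather than $c_{K,H}$). We would first prove the formula for representations localized in disjoint intervals, using a path in the configuration space of two intervals and homotopy invariance of the transporters. We would then extend to general objects by naturality and direct integrals. Normality of all the structure maps, which gives the $\mathrm{W}^*$ part, is automatic because everything is implemented by unitaries.
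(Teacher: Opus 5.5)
Your architecture matches the paper's. The balance $\theta_H=e^{-2\pi iL_0}$ comes from conformal covariance of twisted representations, compatibility with the action comes from $U^{\nu\ast H}=U^H$, and the twist axiom comes from comparing two full turns. On one side, the rotation on $H\boxtimes K$ is the rotations of the two factors followed by the path continuation along a full turn. On the other side, $\B_{T_\varphi(K),H}\circ\B_{H,K}$ is that same path continuation after the $\Gamma$-identifications. The genuine gap is in the covariance step, on which everything rests.

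The Bisognano--Wichmann/modular mechanism of Guido--Longo requires finite statistics (finite index), which is exactly what is unavailable for non-rational $\A$. Cyclic separating vectors for half-line algebras exist automatically for type $\mathrm{III}$ factors on separable spaces, so that is not the obstruction, and your reduction to such pieces changes nothing. Restricting to $S^1\setminus\{\mathrm{p}\}$ also only sees transformations fixing $\mathrm{p}$, not the rotations that carry intervals across the cut.

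What works is the diffeomorphism argument. Set $U^H:=\pi^H_I\circ U$ on $\Diff^+_\A(I)$ and glue these via Henriques' theorem $\colim_I\Diff^+_\A(I)\cong\Diff^+_\A(S^1)$. For twisted representations this needs a fact you do not supply: every $\varphi\in\Aut(\A)$ fixes the Virasoro subnet. Otherwise, for a diffeomorphism supported near $\mathrm{p}$, the candidates $\pi^H_{\tilde I}(U(\cdot))$ and $\pi^H_{\tilde I+1}(U(\cdot))=\pi^H_{\tilde I}(\varphi\, U(\cdot))$ disagree, and there is no consistent local implementation. Your observation that $V_\varphi$ commutes with the Möbius group has to be upgraded. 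The paper (Remark \ref{rk: AutomorphismsPreserveVirasoro}) uses Weiner's uniqueness theorem to get that $V_\varphi$ commutes projectively with $U(\Diff^+(S^1))$. The resulting character factors through $\Diff^+(S^1)$, which is simple, so it is trivial. The same fact is what gives $U^{\nu\ast H}=U^H$.

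Your uniqueness claim is also false as stated, and both the naturality of $\theta$ and your identification of $U_{H\boxtimes K}$ depend on it. Perfectness of $\widetilde{\Mob}$ forces $U'U^*=1$ only when $U'U^*$ is central. Take $H=X\otimes\ell^2$ with $\pi^H=\pi^X\otimes 1$: if $U_X$ implements covariance, so does $U_X\otimes W$ for any positive-energy representation $W$ of $\widetilde{\Mob}$ on $\ell^2$. Choosing $W$ irreducible of lowest weight $h\notin\Z$ multiplies $\theta_H$ by $e^{-2\pi ih}$. So covariance alone determines neither $U_H$ nor $U_{H\boxtimes K}$, and the latter is typically reducible.

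The paper pins $U^H$ down by its local values $U^H=\pi^H_I\circ U$ on $\Diff^+_\A(I)$. This makes it natural and places it in $\bigvee_I\pi^H_I(\A(I))$ (Corollary \ref{cor: RepOfDiffA}), where perfectness does give uniqueness. The twist axiom then rests on Theorem \ref{Thm: ConformalStructureOfFusion}. There one shows that $(\lambda_z^\bullet)^{-1}\big(U^H\xi U^{-1}\otimes U^K\eta\big)$ defines a representation of $\Diff^+_\A(S^1)$ on the fusion. One then checks that it agrees with $\pi^{H\boxtimes K}_{\tilde L}(U(\cdot))$ for diffeomorphisms localized in $\tilde L\subset\tilde I^{c+}$, where it reduces to $\xi\otimes U^K\eta$. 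This works directly on Connes fusions of arbitrary twisted representations via path continuations, so your detour through localized objects and direct integrals is unnecessary.
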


Note that, restricting to the case where $G$ is the trivial group, we obtain a balanced structure on the braided $\mathrm{W}^*$-tensor category $\Rep(\A)$. This result has appeared independently in the note \cite{marinsalvador2026balancedstructurecategoryrepresentations}, which contains a much more accessible proof for the case where no group is~present.

\begin{corollary*}
    Let $\A$ be a (not necessarily rational) conformal net. The braided $\mathrm{W}^*$-tensor category $\Rep(\A)$ of representations of $\A$ admits a canonical structure of a balanced $\mathrm{W}^*$-tensor category.
\end{corollary*}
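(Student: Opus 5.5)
The corollary follows by specializing the main Theorem to the trivial group $G=\{e\}$. For $G$ trivial, $\Rep^G(\A)=\Rep^{\id}(\A)$. An $\id$-twisted representation is a Hilbert space with actions of the $\A(I)$ that are compatible with inclusions on the nose, so this category is exactly $\Rep(\A)$. I will check this from the definitions in Section \ref{Sec: ConfNets}, including that morphisms agree, so that it is an equality of $\mathrm{W}^*$-categories. I also need to confirm that the twisted fusion $\Rep^{\id}(\A)\times\Rep^{\id}(\A)\to\Rep^{\id}(\A)$ reduces to the usual Connes fusion together with its associator and unitors. The faithfulness hypothesis is trivially met, since the trivial group embeds in $\Aut(\A)$.

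Next I unpack Definition \ref{def: CrossedCategoricalTwist} and the $G$-crossed braiding when $G$ is trivial. The grading has a single component, and the action is the identity functor with identity coherence data. A $G$-crossed braiding $c_{X,Y}:X\otimes Y\to gY\otimes X$ then becomes an ordinary braiding $X\otimes Y\to Y\otimes X$. Each axiom of the crossed balance, $\theta_X:X\to gX$ together with its compatibility with tensor products via the crossed braiding and with the action, reduces to the ordinary balancing axiom $\theta_{X\otimes Y}=c_{Y,X}c_{X,Y}(\theta_X\otimes\theta_Y)$ plus naturality. The equivariance axioms become vacuous.

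The one step needing care is showing that the crossed braiding built explicitly in the paper agrees, for $G$ trivial, with the standard braiding on $\Rep(\A)$ from \cite{frs,gf93,bdh15}. I would argue this by comparing constructions: both are defined by transporting localized charges past one another through the same geometric configuration, and with no twist there is no monodromy correction. If the paper's convention differs by a choice of orientation, it yields the reverse braiding, and the balance then becomes $\theta^{-1}$. Either way $\Rep(\A)$ receives a balanced structure.

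The resulting balance is $\theta_H=e^{-2\pi i L_0}$, using the action of the universal cover of the Möbius group on every representation. It is canonical because it comes from the canonical structure in the Theorem, and it is unitary, hence an isomorphism in the $\mathrm{W}^*$-category.
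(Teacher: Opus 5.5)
Your proposal is correct and is the paper's own argument: the corollary follows from Theorem \ref{thm: BigCorollary} with $G=\{e\}$, since $\Rep^{\id}(\A)=\Rep(\A)$ and the crossed-balance axioms reduce to the ordinary balance axiom. Your comparison with the braiding from the literature is not needed, because the paper simply uses the restricted braiding and records in a remark that it is opposite to Gui's; your observation that reversing the braiding replaces $\theta$ by $\theta^{-1}$ correctly covers either convention.
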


Whenever $\A$ is rational, $\Rep(\A)$ is a unitary fusion category and hence it admits a canonical pivotal structure. Such a pivotal structure induces, via the usual drawing of a kink and the evaluation and coevaluation maps, also a canonical balance on $\Rep(\A)$. In that situation, the conformal Spin and Statistics Theorem of \cite{MR1410566} says exactly that this balance and the one we construct agree.

We want to emphasize that, while the $G$-crossed braided structure on $\Rep^G(\A)$ can be constructed by transporting the $G$-crossed braided structure on $G-\Loc_{I_0}(\A)$ under the equivalence of $\mathrm{W}^*$-categories $\Rep^G(\A)\cong G-\Loc_{I_0}(\A)$, this is not true for the crossed balance. The crossed balance does not seem to have a natural definition in $G-\Loc_{I_0}(\A)$, which requires us to construct the $G$-crossed braided structure on $\Rep^G(\A)$ explicitly in a way that makes evident the existence of the $G$-crossed balance.

\subsection*{Relation to the category of representations of the fixed-points conformal net} In the companion paper \cite{FixedPoints}, we use the results in this article to characterize categories of representations of fixed-points conformal nets. We include the main result of that reference~here.

Fix $G$ to be a finite group acting faithfully on $\A$. The action of $G$ on $\A$ allows us to construct the fixed-points conformal net $\A^G$ by setting, for an interval~$I\subset S^1$,
\[
\A^G(I):=\A(I)^{G},
\]
the von Neumann subalgebra of $\A(I)$ given by the fixed points of the restriction of the action of $G$ to $\A(I)$. These algebras act on the fixed-points Hilbert space $H_0^G$ of $H_0$ under the action of $G$ that implements the actions on the algebras $\A(I)$. In parallel, the $G$-crossed braided category $\Rep^G(\A)$ can be equivariantized with respect to its $G$-action in order to obtain a braided $\mathrm{W}^*$-tensor category $\big(\Rep^G(\A)\big)^G$ \cite{braidedFC}. 

It follows from \cite[Thm. 3.12]{muger05} and the equivalence $\Rep^G(\A)\cong G-\Loc_{I_0}(\A)$ that, if $\A$ is a \emph{rational} conformal net, there is an equivalence of braided tensor categories
    \begin{equation}\label{eq: intro}
        \Rep(\A^G)\cong \big(\Rep^G(\A)\big)^G=\Big(\bigoplus_{g\in G} \Rep^{\Phi(g)}(\A)\Big)^G.
    \end{equation}
In \cite{FixedPoints}, we extend this result and prove the equivalence \eqref{eq: intro} when $\A$ is not necessarily rational, and hence $\Rep^G(\A)$ is not necessarily a fusion category (while keeping $G$ finite). The underlying functor of this equivalence is the functor $\mathfrak{R}: (\Rep^G(\A))^G\to \Rep(\A^G)$ that restricts a $G$-equivariant twisted representation of $\A$ on a Hilbert space $H$ to an $\A^G$-representation on the subspace of $G$-invariant vectors of $H$. Note that the equivariantization of the $G$-crossed balance on $\Rep^G(\A)$ produces an honest balance on the braided $\mathrm{W}^*$-tensor category $(\Rep^G(\A))^G$. As we have discussed above, our construction also produces a balance on the category $\Rep(\A^G)$. We show that the equivalence of braided $\mathrm{W}^*$-tensor categories $(\Rep^G(\A))^G\cong \Rep(\A^G)$ is actually an equivalence of balanced $\mathrm{W}^*$-tensor categories.

\begin{theorem*}(\cite{FixedPoints})
    Let $\A$ be a (not necessarily rational) conformal net with a faithful action of a finite group $G$. The equivalence of braided $\mathrm{W}^*$-tensor categories
    \[
    \mathfrak{R}:(\Rep^G(\A))^G\xrightarrow{\cong} \Rep(\A^G)
    \]
   provides an equivalence of balanced $\mathrm{W}^*$-tensor categories.
\end{theorem*}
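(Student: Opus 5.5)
Taking as given that $\mathfrak{R}$ is an equivalence of braided $\mathrm{W}^*$-tensor categories, the only thing left to prove is that $\mathfrak{R}$ intertwines the balances. The balance on $\Rep(\A^G)$ is $e^{-2\pi i L_0}$ for the conformal covariance of $\A^G$-representations. The balance on $(\Rep^G(\A))^G$ is the equivariantization of the $G$-crossed balance. For an equivariant object $(X,\{u_h\})$ with $X=\bigoplus_g X_g$, $X_g\in\Rep^{\Phi(g)}(\A)$, this equivariantized balance is the composite
\[
X_g\xrightarrow{\theta_{X_g}=e^{-2\pi i L_0}} gX_g \xrightarrow{u_g^{-1}} X_g .
\]
The plan is to show that, on the subspace of $G$-invariant vectors, this composite coincides with $e^{-2\pi i L_0^{\A^G}}$. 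Here $L_0^{\A^G}$ is the rotation generator of the $\A^G$-representation $\mathfrak{R}(X)=H^G$.

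\textbf{Step 1: uniqueness of covariance.} First I would show that the $\A$-covariance of a twisted representation $H$, restricted to $H^G$, agrees with the $\A^G$-covariance of $\mathfrak{R}(H)$. Both actions of $\widetilde{\Mob}$ (or $\widetilde{\Diff^+}(S^1)$) on $H^G$ implement the geometric action on $\A^G$; this uses that the $G$-action commutes with the Möbius covariance of $\A$, so the $\A$-covariance preserves $H^G$. I would then invoke the uniqueness of covariance for representations of conformal nets, proved earlier in the paper when twisted representations were shown to be covariant: implementations of $\widetilde{\Mob}$ that are compatible with the net are unique, by the usual argument through Bisognano--Wichmann, i.e. modular groups of $\A(I)$ or $\A^G(I)$ with respect to the vacuum.

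\textbf{Step 2: rotation by $2\pi$ and the twist.} On a $\Phi(g)$-twisted representation, rotation by $2\pi$ in $\widetilde{\Mob}$ does not act as a scalar on the net. Instead it intertwines the $\A$-action with its $\Phi(g)$-twisted version, and this is exactly why $\theta$ lands in $gX$. The factor $u_g^{-1}$ converts it back into an honest endomorphism of $X_g$. Restricted to $G$-invariant vectors (after the necessary identification for the equivariant object), $u_g$ acts trivially. So on $H^G$ the equivariantized balance is literally $e^{-2\pi i L_0}$, which by Step 1 equals $e^{-2\pi i L_0^{\A^G}}$.

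\textbf{Step 3: naturality and bookkeeping.} Since $\mathfrak{R}$ restricts to invariant vectors and both balances are natural, it suffices to check the identity objectwise, which Steps 1–2 do. The main obstacle is Step 2. One has to match precisely the convention identifying $gX$ as a Hilbert space with $X$, and then verify that the equivariant structure $u_g$ cancels against the monodromy of the $2\pi$-rotation on invariant vectors without leaving a phase. My first approach would be to compute both on the generating description $H=\bigoplus_g H_g$ coming from the induction $\Rep(\A^G)\to\Rep^G(\A)$. There the $G$-action and the rotation are explicit, and I expect a careful comparison of the two implementations of $\Phi(g)$ to show the phases agree.
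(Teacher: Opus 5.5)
Your overall plan is the right one. This paper only states the result and defers the proof to \cite{FixedPoints}, but with the tools developed here the argument is short and follows your outline. However, Step~1 rests on a uniqueness principle that is false as you formulate it.

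\textbf{Step 1.} ``Both actions implement the geometric action on $\A^G$'' does not determine a $\widetilde{\Mob}$- or $\widetilde{\Diff^+}(S^1)$-action on a reducible representation, and $\mathfrak{R}(X)=X^G$ is typically reducible. For example, take $H\otimes\ell^2$ with the net acting by $\pi_I\otimes 1$, and let $W$ be any positive-energy representation of $\widetilde{\Mob}$ on $\ell^2$. Then $U^H\otimes W$ still implements the geometric action. If $W$ has non-integer lowest weight $h$, it changes $e^{-2\pi i L_0}$ by the phase $e^{-2\pi i h}$. So no Bisognano--Wichmann argument can derive uniqueness from that criterion.

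The uniqueness the paper actually proves, in Theorem~\ref{Thm: TwistedRepIsConformal}, is different. It characterizes the action of $\Diff^+_\A(S^1)$ by the local condition $U^H(\tilde f,f,V)=\pi^H_I(U(\tilde f,f,V))$ for $(\tilde f,f,V)\in\Diff^+_\A(I)$, and uniqueness holds because these local subgroups generate. That local condition is what you must check, and it passes to $X^G$ at once:
\begin{itemize}
\item By Remark~\ref{rk: AutomorphismsPreserveVirasoro}, every such $U(\tilde f,f,V)$ lies in $\A(I)^G=\A^G(I)$, and every $V$ commutes with the implementers $V_g$. So restriction to $H_0^G$ identifies $\Diff^+_\A(S^1)$ with $\Diff^+_{\A^G}(S^1)$.
\item Hence $\bigoplus_g U^{X_g}$, restricted to $X^G$, satisfies the defining local property of the canonical action on the $\A^G$-representation $X^G$.
\item This restriction makes sense because $\bigoplus_g U^{X_g}$ preserves $X^G$. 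Each $u_h$ is a morphism of twisted representations between $X$ and $T_h(X)$, so it intertwines the canonical actions, and $U^{T_h(Y)}=U^{Y}$ as operators by Proposition~\ref{prop: CompatibilityActionConformalStruct}.
\item Uniqueness of the lift to $\widetilde{\Mob}$ then gives $e^{-2\pi i L_0}|_{X^G}=\theta_{\mathfrak{R}(X)}$.
\end{itemize}

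\textbf{Step 2.} This step, which you flag as the main obstacle, is in fact immediate, and the detour through the induction functor is unnecessary. In this paper the action $T$ is strict, and $T_g$ is the identity on underlying Hilbert spaces and on morphisms. Moreover $\theta_{T_h(Y)}=\theta_Y$ as operators. So there is no identification or phase to keep track of.
\begin{itemize}
\item Naturality of $\theta$ applied to $u_h$ shows that $u_h$ commutes with $\theta:=\bigoplus_g\theta_{X_g}$ on $X$.
\item Since $g$ commutes with itself, $u_g$ maps the summand $X_g$ to itself. So for $v=\sum_g v_g\in X^G$, comparing $g$-components of $u_gv=v$ gives $u_gv_g=v_g$.
\item Therefore $u_g^{-1}\theta_{X_g}v_g=\theta_{X_g}u_g^{-1}v_g=\theta_{X_g}v_g$.
\end{itemize}
Hence $\mathfrak{R}$ applied to the equivariantized balance is $\theta|_{X^G}$. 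By the corrected Step~1 this equals $\theta_{\mathfrak{R}(X)}$, which is all that is needed.
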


   These results are used in \cite{RepHeis} to produce the first explicit computation of a category of representations of a conformal net with irreducible representations that tensor to a direct integral of irreducible representations, as opposed to a direct sum.

\section*{Acknowledgements}
I am grateful to André Henriques for constant conversations during this project, and to Bin Gui for his help setting up the basics of the Connes fusion of twisted representations in Section \ref{Sec: CatOfTwistedReps} and his valuable comments on earlier versions of this article. I want to thank Josep Fontana McNally for proofreading this paper. This work has been funded by the EPSRC grant EP/W524311/1.

For the purpose of Open Access, the author has applied a CC BY public copyright license to any Author Accepted Manuscript version arising from this submission.

\addtocontents{toc}{\protect\setcounter{tocdepth}{5}}

\section{Preliminaries}

\subsection{Conformal nets}
\label{Sec: ConfNets}

An interval of the standard circle $S^1:=\{z\in \C\ |\ |z| = 1\}$ is an open, connected, non-empty, non-dense subset of $S^1$, and we denote by $\Jcal$ the collection of intervals of $S^1$. We denote by $\Mob$ the set of Möbius transformations of $S^1$, that is, transformations of the form
\[
z\mapsto \frac{az+b}{\bar{b}z+\bar{a}}
\]
with $a,b\in \C$, $|a|^2-|b|^2 = 1$. The group $\Mob$ is a Lie group isomorphic to $PSL(2, \R)$ and contains the group $\Rot$ of rotations of the circle. We denote an anticlockwise rotation of angle $t$ by $R_t$. We assume all Hilbert spaces to be separable, and all von Neumann algebras to have separable predual.

\begin{definition}
    A \textit{Möbius covariant net on $S^1$} is a tuple $(H_0, \A, U, \Omega)$ where $H_0$ is a Hilbert space equipped with a nonzero vector $\Omega\in H_0$, $U$ is a strongly continuous unitary representation of $\Mob$ on $H_0$ and $\A$ is an assignment of a von Neumann algebra $\A(I)$ acting on $H_0$ for every interval $I\in\Jcal$. This data is required to satisfy, for every $I,J\in\Jcal$ and $\varphi\in\Mob$,
    \begin{enumerate}
        \item isotony: if $J\subset I$, then $\A(J)\subset \A(I)$;
        \item locality: if $I\cap J = \emptyset$, then $\A(J)$ and $\A(I)$ commute in $B(H_0)$;
        \item Möbius covariance: $U(\varphi)\A(I)U(\varphi)^* = \A(\varphi I)$;
        \item positivity of the energy: the representation $U$ is of positive energy, meaning that the conformal Hamiltonian $L_0$, defined by $U(R_t) = e^{itL_0}$ is positive;
        \item uniqueness of the vacuum: the vector $\Omega\in H_0$ is the unique vector, up to a constant, which is invariant under $U$;
        \item cyclicity of the vacuum: $\Omega$ is cyclic for the von Neumann  algebra $\A(S^1):=\bigvee\limits_{I\in \Jcal}\A(I)\subset B(H_0)$.
    \end{enumerate}
\end{definition}

Let $\Diff^+(S^1)$ denote the group of orientation-preserving diffeomorphisms of $S^1$. A strongly continuous projective unitary representation $V$ of $\Diff^+(S^1)$ on a Hilbert space $H$ is a strongly continuous homomorphism $V: \Diff^+(S^1)\to \mathcal{P}U(H)$. Note that $\Mob\subset\Diff^+(S^1)$. We say that $V$ is an extension of a unitary representation $U$ of $\Mob$ on $H$ if, for every $\varphi\in \Mob$, it holds that $V(\varphi) = [U(\varphi)]$, where $[-]: U(H)\to \mathcal{P}U(H)$ denotes the projection map.

\begin{definition}
    A Möbius covariant net $(H_0, \A ,U, \Omega)$ is said to be a \emph{conformal net} if there is an extension of $U$ to a unitary projective representation of $\Diff^+(S^1)$, which we also denote by $U$, such that for all $I\in\Jcal$ and $\varphi\in \Diff^+(S^1)$, it holds that
    \begin{enumerate}
        \item $U(\varphi)\A(I) U(\varphi)^* = \A(\varphi I)$;
        \item if $\varphi|_{I} = \id_I$, then $\text{Ad}(U(\varphi))|_{\A(I)} = \id_{\A(I)}.$
    \end{enumerate}
\end{definition}

The following are well-known properties of any conformal net $(H_0, \A, U, \Omega)$. 
\begin{enumerate}
    \item Haag duality: the commutant of $\A(I)$ in $B(H_0)$ is $\A(I^c)$, where $I^c$ denotes the interior of $S^1\setminus I$ \cite[Thm. 2.19]{gf93};
    \item The Reeh-Schlieder Theorem: the vacuum vector $\Omega$ is cyclic and separating for every von Neumann algebra $\A(I)$ \cite[Cor. 2.8]{gf93};
    \item Each von Neumann algebra $\A(I)$ is a type $\mathrm{III}_1$ factor \cite[Prop. 1.2]{MR1410566};
    \item Additivity: for any interval $I\in \Jcal$ and any collection of intervals $\{I_\alpha\}_{\alpha \in A}$ with $I_\alpha\in \Jcal$ and $\bigcup\limits_{\alpha\in A}I_\alpha = I$, it holds that $\A(I) = \bigvee\limits_{\alpha\in A}\A(I_\alpha)$ \cite[p. 545]{MR1376431}.
\end{enumerate}

Fix a conformal net $(H_0,\A,U, \Omega)$, which we will simply denote by $\A$ from now on.

\begin{definition}\label{def: Rep}
    A \emph{representation} of a conformal net $\A$ consists of a Hilbert space $H$ and a collection of $*$-homomorphisms $\pi_I:\A(I)\to B(H)$ for every interval $I\in\Jcal$ such that
\[
\pi_I|_{\A(J)} = \pi_J
\]
whenever $J\in\Jcal$ is an interval such that $J\subset I$. We write $\Rep(\A)$ for the category whose objects are representations of $\A$ and whose morphisms are bounded linear maps intertwining the actions of all the algebras $\A(I)$.
\end{definition}

The conformal net $\A$ has a canonical representation on its vacuum Hilbert space $H_0$ by definition, called the \emph{vacuum representation of} $\A$. Given $I\in\Jcal$ and $x\in \A(I)$, we write $\pi_{0,I}(x)$ or $\pi_0(x)$ for the action of $x$ on the vacuum representation $H_0$.

The category $\Rep(\A)$ is a braided tensor category, the structure of which can be produced in different equivalent ways \cite{gf93, was98, bdh15,Gui21}.

We are interested in more general representations of $\A$, those which are twisted by automorphisms of the conformal net. An \emph{automorphism} $\varphi$ of $\A$ consists of a von Neumann algebra automorphism $\varphi_I: \A(I)\to\A(I)$ for every $I\in\Jcal$ such that for every inclusion $J\subset  I$ of intervals, the following diagram commutes
\[\begin{tikzcd}
	{\A(I)} & {\A(I)} \\
	{\A(J)} & {\A(J)}.
	\arrow["{\varphi_I}", from=1-1, to=1-2]
	\arrow[hook, from=2-1, to=1-1]
	\arrow["{\varphi_J}"', from=2-1, to=2-2]
	\arrow[hook, from=2-2, to=1-2]
\end{tikzcd}\]
Automorphisms are furthermore required to be unitarily implemented in the vacuum representation in the sense that they come equipped with a unitary $V_\varphi: H_0\to H_0$ such that, for every $I\in \Jcal$, it holds that
\[
\pi_{0,I}\circ\varphi_I(-) = V_\varphi\circ \pi_{0,I}(-)\circ V_\varphi^{-1},
\]
and $V_\varphi(\Omega) = \Omega$. We denote by $\Aut(\A)$ the group of automorphisms of $\A$. Given $\varphi\in \Aut(\A)$, an interval $I\in \Jcal$ and $x\in\A(I)$, we will write $\varphi x:=\varphi_I x$.

Automorphisms of $\A$ allow us to define twisted representations of the conformal net, as follows. Let us pick $\mathrm{p}:=1\in S^1$, and fix it for the rest of the paper.

\begin{definition}\label{def: TwistedRep}
Let $\varphi\in\Aut(\A)$ be an automorphism of $\A$ and recall that we have fixed $\mathrm{p} = 1\in S^1$. A $\varphi$\emph{-twisted representation} of $\A$ consists of a Hilbert space $H$ equipped with a collection of $*$-actions $\pi^H_I$ of $\A(I)$ on $H$, such that for every pair of intervals $I,J\in\Jcal$ with $J\subset I$, the diagram
\[\begin{tikzcd}
	{\A(I)} & {\A(I)} & {B(H)} \\
	& {\A(J)}
	\arrow["{\varphi_I^{-1}}", from=1-1, to=1-2]
	\arrow["{\pi^H_I}", from=1-2, to=1-3]
	\arrow[hook, from=2-2, to=1-1]
	\arrow["{\pi^H_J}"', from=2-2, to=1-3]
\end{tikzcd}\]
commutes if $\mathrm{p}\in \mathrm{cl}({I})$, $\mathrm{p}\notin{\mathrm{cl}({J})}$, and $J$ is counter-clockwise to $\mathrm{p}$ ($\mathrm{cl}(L)$ denotes the closure of an interval $L$); and 
\[\begin{tikzcd}
	{\A(I)} & {B(H)} \\
	{\A(J)}
	\arrow["{\pi^H_I}", from=1-1, to=1-2]
	\arrow[hook, from=2-1, to=1-1]
	\arrow["{\pi_J^H}"', from=2-1, to=1-2]
\end{tikzcd}\]
commutes otherwise. We write $\Rep^\varphi(\A)$ for the category of $\varphi$-twisted representations of $\A$, where morphisms are bounded linear operators commuting with the actions of all the von Neumann algebras $\A(I)$. We denote by $\Rep^{\Aut(\A)}(\A) :=\bigoplus\limits_{\varphi\in \Aut(\A)}\Rep^\varphi(\A)$ the category whose objects are countable direct sums of twisted $\A$-representations.
\end{definition}
\begin{remark}
    Note that, although $\Aut(\A)$ may be naturally a topological group, we are treating it as a discrete group, and an object of $\Rep^{\Aut(\A)}(\A)$ decomposes as a direct sum of objects living in countably many of the subcategories $\Rep^\varphi(\A)$. If $\Aut(\A)$ is a finite group, or we are only interested in a finite subgroup of automorphisms, then $\Rep^{\Aut(\A)}(\A)$ provides a complete picture of the problem at hand.
\end{remark}

\begin{remark}
    Given distinct automorphisms $\varphi,\mu\in \Aut(\A)$, and twisted representations $H^\varphi\in\Rep^\varphi(\A)$ and $K^\mu\in\Rep^\mu(\A)$, the only bounded linear map $H^\varphi\to K^\mu$ commuting with the actions of all the algebras $\A(I)$ is the zero map.
\end{remark}

The condition in Definition \ref{def: TwistedRep} can also be formulated as follows. Let $q: \R\to S^1$ be the map $t\mapsto e^{2\pi i t}$. Given an interval $I\in \Jcal$, we shall write $\hat{I}\subset \R_{>0}$ for the lift of $I$ to $\R_{> 0}$ as close to zero as possible so that $\mathrm{cl}(\hat{I})\subset\R_{>0}$. We call an inclusion $J\subset I$ in $\Jcal$ \emph{standard} if $\hat{J}\subset \hat{I}$, and \emph{special} if it is not standard. For an inclusion $J\subset I$, we shall write $\delta^{\varphi^{-1}}_{J\subset I}: \A(J)\to \A(I)$ for the inclusion $\A(J)\hookrightarrow \A(I)$ if $J\subset I$ is standard, and for $\A(J)\hookrightarrow\A(I)\xrightarrow{\varphi^{-1}_I}\A(I)$ if $J\subset I$ is special. Then, the condition in Definition \ref{def: TwistedRep} is equivalent to the diagram
\[\begin{tikzcd}
	{\A(I)} & {B(H)} \\
	{\A(J)}
	\arrow["{\pi^H_I}", from=1-1, to=1-2]
	\arrow["{\delta_{J\subset I}^{\varphi^{-1}}}", hook, from=2-1, to=1-1]
	\arrow["{\pi^H_J}"', from=2-1, to=1-2]
\end{tikzcd}\]
commuting for all inclusions $J\subset I$ of intervals in $\Jcal$. 

In what follows, it will be useful to have the following characterization of special inclusions. Let $\Jcal_\R := \{\tilde{I}\subset \R\ |\ \tilde{I} \text{ is an open interval such that $q\tilde{I} \in \Jcal$}\}$. For an interval $\tilde{I}\in \Jcal_\R$, we denote $I : =q\tilde{I}\in\Jcal$. Also, let 
\[
\epsilon(\tilde{I}):=\begin{cases}
    |(0, \partial_-\tilde{I})\cap \Z|,
    & \text{if $\partial_{-} \tilde{I}$} > 0\\
    -|[\partial_-\tilde{I}, 0]\cap \Z|, & \text{if $\partial_-\tilde{I}\leq0$}.
\end{cases}
\] 
We have the following lemma.

\begin{lemma}\label{lemm: epsilon}
    Let $\tilde{J}\subset \tilde{I}$ be an inclusion of intervals in $\Jcal_\R$. The inclusion $J\subset I$ is standard if and only if $\epsilon(\tilde{J}) = \epsilon(\tilde{I})$. If the inclusion $J\subset  I $ is special, then $\epsilon(\tilde{I}) = \epsilon(\tilde{J}) - 1$.
\end{lemma}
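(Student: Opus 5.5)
The plan is to reduce everything to an explicit formula for $\epsilon$ and then compare lifts. Write $a:=\partial_-\tilde{I}$ for the left endpoint. I would first check, by running through the two cases in the definition, that $\epsilon(\tilde{I})=\lceil a\rceil -1$. Equivalently, $\epsilon(\tilde{I})$ is the unique integer $n$ with $n<a\le n+1$.

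\begin{itemize}
\item If $a>0$, the integers in $(0,a)$ are $1,\dots,\lceil a\rceil-1$.
\item If $a\le 0$, the integers in $[a,0]$ number $\lfloor -a\rfloor+1=-\lceil a\rceil+1$.
\end{itemize}

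From this I would identify the canonical lift. Any interval of $\Jcal$ is non-dense, so every $\tilde{I}\in\Jcal_\R$ has length strictly less than $1$. The lifts of $I$ are the translates $\tilde{I}+k$ with $k\in\Z$. Among these, the one closest to zero whose closure lies in $\R_{>0}$ is the one with left endpoint in $(0,1]$. A left endpoint $\le 0$ puts a non-positive point in the closure, and a left endpoint $>1$ is not closest. Hence
\[
\hat{I}=\tilde{I}-\epsilon(\tilde{I})-1, \qquad \hat{J}=\tilde{J}-\epsilon(\tilde{J})-1 .
\]
This step is where I expect the only real care to be needed: the boundary cases where an endpoint is an integer, i.e. where $\mathrm{p}$ is an endpoint of $I$ or $J$.

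Now take $\tilde{J}\subset\tilde{I}$, so $\partial_-\tilde{I}\le\partial_-\tilde{J}<\partial_+\tilde{I}\le \partial_-\tilde{I}+1$. Since $\lceil\cdot\rceil$ is monotone, this gives
\[
\epsilon(\tilde{I})\le\epsilon(\tilde{J})\le \lceil \partial_-\tilde{I}+1\rceil-1=\epsilon(\tilde{I})+1 .
\]
So either $\epsilon(\tilde{J})=\epsilon(\tilde{I})$ or $\epsilon(\tilde{J})=\epsilon(\tilde{I})+1$.

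\begin{itemize}
\item In the first case, $\hat{J}$ and $\hat{I}$ are obtained from $\tilde{J}\subset\tilde{I}$ by the same translation, so $\hat{J}\subset\hat{I}$ and the inclusion is standard.
\item In the second case, $\hat{J}=\tilde{J}-\epsilon(\tilde{I})-2\subset \hat{I}-1$. Since $\hat{I}$ is a nonempty open interval of length $<1$, the translate $\hat{I}-1$ is disjoint from $\hat{I}$. Thus $\hat{J}\not\subset\hat{I}$, and the inclusion is special.
\end{itemize}

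These two cases are exhaustive and mutually exclusive. This gives both the equivalence "standard $\iff$ $\epsilon(\tilde{J})=\epsilon(\tilde{I})$" and, in the special case, the relation $\epsilon(\tilde{I})=\epsilon(\tilde{J})-1$.
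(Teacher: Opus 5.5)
Your proof is correct, and it takes a different route from the paper's.

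The paper argues by cases on the signs of $\partial_-\tilde{I}$ and $\partial_-\tilde{J}$:
\begin{itemize}
\item When $0<\partial_-\tilde{I}$, it notes that $\epsilon(\tilde{J})-\epsilon(\tilde{I})=|[\partial_-\tilde{I},\partial_-\tilde{J})\cap\Z|$. It rewrites the vanishing of this as the geometric condition $\mathrm{p}\notin[\partial_-I,\partial_-J)$ on the circle, and identifies that condition with standardness.
\item The case $\partial_-\tilde{I}\le 0<\partial_-\tilde{J}$ is handled separately, the remaining case is done ``similarly'', and the second assertion is declared clear.
\end{itemize}

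You instead use the closed formula $\epsilon(\tilde{I})=\lceil\partial_-\tilde{I}\rceil-1$ to write $\hat{I}$ as an explicit integer translate of $\tilde{I}$. This reduces the lemma to comparing two integer translations. As a result:
\begin{itemize}
\item there are no sign cases;
\item integer endpoints are absorbed by the half-open window $(0,1]$;
\item the second assertion falls out of the bound $\epsilon(\tilde{I})\le\epsilon(\tilde{J})\le\epsilon(\tilde{I})+1$ rather than being left to the reader.
\end{itemize}
What the paper's version buys in exchange is a direct link, through the position of $\mathrm{p}$, to the original description of special inclusions in Definition~\ref{def: TwistedRep}.

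One slip to fix. Since $\epsilon(\tilde{I})<\partial_-\tilde{I}\le\epsilon(\tilde{I})+1$, the lift with left endpoint in $(0,1]$ is $\hat{I}=\tilde{I}-\epsilon(\tilde{I})$, not $\tilde{I}-\epsilon(\tilde{I})-1$. Your version has left endpoint in $(-1,0]$. Likewise, $\hat{J}=\tilde{J}-\epsilon(\tilde{J})$.

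This is harmless, because your argument only uses that $\hat{I}$ and $\hat{J}$ come from $\tilde{I}$ and $\tilde{J}$ by translating by $-\epsilon$ plus a common constant. With the corrected formula, the special case reads $\hat{J}=\tilde{J}-\epsilon(\tilde{I})-1\subset\hat{I}-1$. Since $\hat{I}$ has length less than $1$, this is disjoint from $\hat{I}$, exactly as you say.
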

\begin{proof}
   Assume that $0<\partial_-\tilde{I}$. Then, we have that $\epsilon(\tilde{J}) = \epsilon(\tilde{I})\iff [\partial_-\tilde{I} ,\partial_-\tilde{J})\cap \Z = \emptyset$. This is equivalent to $\mathrm{p}\notin [\partial_-I, \partial_-J)$, where $\partial_-$ denotes the left boundary of an interval in $S^1$. This is in turn equivalent to $J\subset I$ being standard. If $\partial_-\tilde{I}\leq 0<\partial_-\tilde{J}$, then it holds that $\epsilon(\tilde{I})<0\leq \epsilon(\tilde{J})$ and also $J\subset I$ is special. The case where $\partial_-\tilde{J}\leq 0$ can be treated similarly to the case $0<\partial_-\tilde{I}$ above. The second part of the statement is clear.
\end{proof}

\subsection{$G$-crossed balanced categories}\label{Sec: G-X-balanced}

We recall the notion of a crossed balanced tensor category. Let $G$ be a discrete group and denote by $\underline{G}$ the tensor category with objects $\text{Ob(\underline{G})} = G$ and morphisms $\Hom_{\underline{G}}(g,g) =\{\id_g\}$ and $\Hom_{\underline{G}}(g,h) = \emptyset$ if $g\neq h$, for $g,h\in G$. The tensor product is given by $g\otimes h = gh$. Let $\mathcal{C}$ be a $\mathbb{C}$-linear category. We assume all categories to be direct sum and idempotent complete. Recall that $\mathcal{C}$ is a $*$-category if it is equipped with a dagger structure $*:\text{Hom}(X,Y)\to \Hom(Y,X)$ which is antilinear and satisfies $f^{**} = f$ and $(f\circ g)^* = g^*\circ f^*$. We say that $\mathcal{C}$ is a $\mathrm{W}^*$-category if it is a $*$-category and $\End(X)$ is a von Neumann algebra for all $X\in \mathcal{C}$. A functor between $\mathrm{W}^*$-categories is a $\mathbb{C}$-linear functor $F$ respecting the $*$-structure and such that the map $\End(X)\to \End(FX)$ is normal for all $X$ in the domain category. $\mathrm{W}^*$-tensor categories (and braided $\mathrm{W}^*$-tensor categories) are defined in the obvious way, and requiring all structure morphisms to be unitary \cite{henriques2024completewcategories}.

Given a $\mathrm{W}^*$-category $\mathcal{C}$, we denote by $\underline{\text{Aut}}_\otimes(\mathcal{C})$ the tensor category of $\mathrm{W}^*$-tensor automorphisms of $\mathcal{C}$ and unitary $\mathrm{W}^*$-tensor natural transformations. 

\begin{definition}
An \textit{action of $G$ on $\mathcal{C}$} is a tensor functor
\[
T:\underline{G}\to \underline{\text{Aut}}_\otimes(\mathcal{C}).
\]
\end{definition}
Hence, the data of an action of $G$ on $\mathcal{C}$ provides
\begin{enumerate}
    \item $\mathrm{W}^*$-automorphisms $T_g:\mathcal{C}\to \mathcal{C}$ for every $g\in G$;
    \item a unitary natural equivalence $\text{Id}_{\Ccal}\cong T_e$, where $e\in G$ is the unit;
    \item a unitary isomorphism $i_g: \1\to T_g(\1)$ for every $g\in G$;
    \item a unitary $\mathrm{W}^*$-tensor natural isomorphism $s_g:T_g(-)\otimes T_g(-)\to  T_g(-\otimes -)$ for every $g\in G$;
    \item a unitary $\mathrm{W}^*$-tensor natural isomorphism $\eta_{g,h}:T_g\circ T_h \to  T_{gh}$ for every $g,h\in G$
\end{enumerate}
satisfying certain coherences. We say the action is strict if $s_g$ and $\eta_{g,h}$ are identities for all $g,h\in G$. Let us recall the concept of a $G$-crossed braided tensor category \cite{braidedFC}.

\begin{definition}\label{def: G-X-braidedWTensorCat}
    A \textit{$G$-crossed braided $\mathrm{W}^*$-tensor category} consists of
    \begin{enumerate}
        \item a $\mathrm{W}^*$-tensor category $\mathcal{C}$;
        \item an action $T: \underline{G}\to \underline{\text{Aut}}_\otimes(\mathcal{C})$ of $G$ on $\mathcal{C}$;
        \item a grading $\mathcal{C} = \bigoplus\limits_{g\in G}\mathcal{C}_g$ compatible with the tensor product in the sense that $-\otimes-$ provides functors 
        \[
        \Ccal_g\times\Ccal_h\to \Ccal_{gh}
        \]
        for all $g,h\in G$;
        \item for every $g\in G$, $X\in \mathcal{C}_g$ and $Y\in \mathcal{C}$, a unitary isomorphism
        \[
        \beta_{X,Y} : X\otimes Y\to T_g(Y)\otimes X
        \]
        natural in $X$ and $Y$.
    \end{enumerate}
    This data is required to satisfy
    \begin{enumerate}
        \item $T_g(\mathcal{C}_h) \subset \Ccal_{ghg^{-1}}$ for all $g,h\in G$;
        \item the following diagram commutes 
\[\begin{tikzcd}
	{T_g(X)\otimes T_g(Y)} && {T_{ghg^{-1}}T_g(Y)\otimes T_g(X)} \\
	{T_g(X\otimes Y)} && {T_g(T_h(Y)\otimes X)}
	\arrow["{\beta_{T_g(X),T_g(Y)}}", from=1-1, to=1-3]
	\arrow["\cong", from=1-1, to=2-1]
	\arrow["\cong"', from=1-3, to=2-3]
	\arrow["{T_g(\beta_{X,Y})}"', from=2-1, to=2-3]
\end{tikzcd}\]
for all $g,h\in G$, $X\in\mathcal{C}_h$ and $Y\in\mathcal{C}$;
\item the following diagram commutes
\begin{equation}\label{eq: Xbraiding1}\begin{tikzcd}
	{X\otimes Y\otimes Z} && {T_g(Y\otimes Z)\otimes X} \\
	{T_g(Y)\otimes X\otimes Z} && {T_g(Y)\otimes T_g(Z)\otimes X}
	\arrow["{\beta_{X, Y\otimes Z}}", from=1-1, to=1-3]
	\arrow["{\beta_{X,Y}\otimes\id_Z}"', from=1-1, to=2-1]
	\arrow["{\id_{T_g(Y)}\otimes \beta_{X,Z}}"', from=2-1, to=2-3]
	\arrow["\cong"', from=2-3, to=1-3]
\end{tikzcd}\end{equation}
for all $g\in G$, $X\in \mathcal{C}_g$ and $Y,Z\in \mathcal{C}$;
\item the following diagram commutes
\begin{equation}\label{eq: Xbraiding2}
\begin{tikzcd}
	{X\otimes Y\otimes Z} && {T_{gh}(Z)\otimes X\otimes Y} \\
	{X\otimes T_h(Z)\otimes Y} && {T_gT_h(Z)\otimes X\otimes Y}
	\arrow["{\beta_{X\otimes Y,Z}}", from=1-1, to=1-3]
	\arrow["{\id_X\otimes \beta_{Y,Z}}"', from=1-1, to=2-1]
	\arrow["{\beta_{X, T_h(Z)}\otimes \id_Y}"', from=2-1, to=2-3]
	\arrow["\cong"', from=2-3, to=1-3]
\end{tikzcd}\end{equation}
for all $g,h\in G$, $X\in \mathcal{C}_g$, $Y\in\mathcal{C}_h$ and $Z\in\mathcal{C}$.
    \end{enumerate}
\end{definition}
The unlabelled vertical isomorphisms in the diagrams above are constructed using the natural transformations $s$ and $\eta$, and the associators are omitted for readability. We call a $\mathrm{G}$-crossed braided tensor category \emph{strict} if the $G$-action is strict. Given $(\mathcal{C}, \otimes^\Ccal, T^\Ccal, \beta^\Ccal)$ and $(\mathcal{D}, \otimes^\Dcal, T^\Dcal, \beta^\Dcal )$ two $G$-crossed braided $\mathrm{W}^*$-tensor categories, a functor between them consists of a $\mathrm{W}^*$-tensor functor $(F,\Psi): (\Ccal,\otimes^\Ccal)\to (\Dcal, \otimes^\Dcal)$ preserving the $G$-grading, together with unitary natural isomorphisms 
\[
\Phi_g(X): T_g^\Dcal(F(X)) \xrightarrow{\cong }   F(T^\Ccal_g(X))
\]
for all $g\in G$ and $X\in \Ccal$. These are required to make the following diagrams commute
\[\begin{tikzcd}[column sep=large]
	{T_g^\Dcal(T_h^\Dcal(F(X)))} && {T_{gh}^{\Dcal}(F(X))} \\
	{T^\Dcal_g(F(T_h^\Ccal(X)))} && {F(T_{gh}^\Ccal(X))} \\
	& {F(T_g^\Ccal(T_h^\Ccal(X)))}
	\arrow["\cong", from=1-1, to=1-3]
	\arrow["{T_g^\Dcal(\Phi_h(X))}"', from=1-1, to=2-1]
	\arrow["{\Phi_{gh}(X)}", from=1-3, to=2-3]
	\arrow["{\Phi_{g}(T_h^\Ccal(X))}"', from=2-1, to=3-2]
	\arrow["\cong"', from=3-2, to=2-3]
\end{tikzcd}\]
\[\begin{tikzcd}[column sep=large]
	{T_g^\Dcal(F(X))\otimes^\Dcal T_g^\Dcal(F(Y))} && {F(T_g^\Ccal(X))\otimes^\Dcal F(T_g^\Ccal(Y))} \\
	{T_g^\Dcal(F(X)\otimes^\Dcal F(Y))} && {F(T_g^\Ccal(X)\otimes^\Ccal T_g^\Ccal(Y))} \\
	{T_g^\Dcal(F(X\otimes^\Ccal Y))} && {F(T_g^\Ccal(X\otimes^\Ccal Y))}
	\arrow["{\Phi_g(X)\otimes^\Dcal \Phi_g(Y)}", from=1-1, to=1-3]
	\arrow["\cong"', from=1-1, to=2-1]
	\arrow["{\Psi_{T_g^\Ccal(X),T_g^\Ccal(Y)}}"', from=1-3, to=2-3]
	\arrow["{T_g^\Dcal(\Psi_{X,Y})}", from=2-1, to=3-1]
	\arrow["\cong", from=2-3, to=3-3]
	\arrow["{\Phi_g(X\otimes^\Ccal Y)}"', from=3-1, to=3-3]
\end{tikzcd}\]
for all $g,h\in G$ and $X,Y\in\Ccal,$ and where the unlabelled arrows correspond to the structure morphisms of the actions of $G$ on $\Ccal$ and $\Dcal$. Furthermore, the triple $(F,\Psi,\Phi)$ is required to satisfy the following compatibility with the $G$-crossed braiding,
\[\begin{tikzcd}
	{F(X)\otimes^\Dcal F(Y)} && {F(X\otimes^\Ccal Y)} \\
	{T_g^\Dcal(F(Y))\otimes^\Dcal F(X)} && {F(T_g^\Ccal(Y)\otimes^\Ccal X)} \\
	& {F(T_g^\Ccal(Y))\otimes^\Dcal F(X)}
	\arrow["{\Psi_{X,Y}}", from=1-1, to=1-3]
	\arrow["{\beta_{F(X),F(Y)}^\Dcal}"', from=1-1, to=2-1]
	\arrow["{F(\beta^\Ccal_{X,Y})}", from=1-3, to=2-3]
	\arrow["{\Phi_{g}(Y)\otimes\id_{F(X)}}"', from=2-1, to=3-2]
	\arrow["{\Psi_{T_g^\Ccal(Y), X}}"', from=3-2, to=2-3]
\end{tikzcd}\]
for all $g\in G$ and $X\in \mathcal{C}_g$ and $Y\in\Ccal$. We shall call such a triple $(F,\Psi,\Phi)$ a $G$\emph{-crossed braided} $\mathrm{W}^*$\emph{-tensor functor}. A $G$-crossed braided $\mathrm{W}^*$-tensor functor is an equivalence of $G$-crossed braided $\mathrm{W}^*$-tensor categories if its underlying $\mathrm{W}^*$-functor is an equivalence of $\mathrm{W}^*$-categories.

Let $\mathcal{C}$ be a $G$-crossed braided $\mathrm{W}^*$-tensor category.

\begin{definition}\label{def: CrossedCategoricalTwist}
    A $G$\emph{-crossed balance} on $\mathcal{C}$ is a family of natural unitary isomorphisms $\theta_X: X\to T_g(X)$ for every $g\in G$ and $X\in \mathcal{C}_g$ such that 
    \begin{enumerate}
                \item the following diagram commutes for all $g,h\in G$ and $X\in \mathcal{C}_g$
\[\begin{tikzcd}
	{T_h(X)} && {T_{hgh^{-1}}T_h(X)} \\
	{T_hT_g(X)} && {T_{gh}(X)};
	\arrow["{\theta_{T_h(X)}}", from=1-1, to=1-3]
	\arrow["{T_h(\theta_X)}"', from=1-1, to=2-1]
	\arrow["\cong", from=1-3, to=2-3]
	\arrow["\cong"', from=2-1, to=2-3]
\end{tikzcd}\]
        \item the following diagram commutes
\begin{equation}\label{eq: Xtwist}
\begin{tikzcd}
	{X\otimes Y} && {T_{gh}(X\otimes Y)} \\
	&& {T_{gh}(X)\otimes T_{gh}(Y)} \\
	&& {T_{ghgh^{-1}g^{-1}}T_{ghg^{-1}}(X)\otimes T_{ghg^{-1}}T_g(Y)} \\
	{T_g(Y)\otimes X} && {T_{ghg^{-1}}(X)\otimes T_g(Y)}
	\arrow["{\theta_{X\otimes Y}}", from=1-1, to=1-3]
	\arrow["{\beta_{X,Y}}"', from=1-1, to=4-1]
	\arrow["\cong"', from=2-3, to=1-3]
	\arrow["\cong"', from=3-3, to=2-3]
	\arrow["{\beta_{T_g(Y), X}}"', from=4-1, to=4-3]
	\arrow["{\theta_{T_{ghg^{-1}}(X)}\otimes\theta_{T_g(Y)}}"', from=4-3, to=3-3]
\end{tikzcd}\end{equation}
for all $g,h\in G$, $X\in \mathcal{C}_g$, $Y\in\mathcal{C}_h$.
    \end{enumerate}
    A $G$\emph{-crossed balanced $\mathrm{W}^*$-tensor category} is a $G$-crossed braided $\mathrm{W}^*$-tensor category with a $G$-crossed balance.
\end{definition}

Given $\Ccal$ and $\Dcal$ two $G$-crossed braided $\mathrm{W}^*$-tensor categories with $G$-crossed balances $\theta^\Ccal$ and $\theta^\Dcal$ respectively, a $G$\emph{-crossed balanced $\mathrm{W}^*$-tensor functor} $(F,\Psi,\Phi):\Ccal\to \Dcal$ is a functor of $G$-crossed braided $\mathrm{W}^*$-tensor categories such that
\[\begin{tikzcd}[column sep=large]
	{F(X)} && {T_g^\Dcal(F(X))} \\
	&& {F(T^\Ccal_g(X))}
	\arrow["{\theta^\Dcal_{F(X)}}", from=1-1, to=1-3]
	\arrow["{F(\theta^\Ccal_X)}"', from=1-1, to=2-3]
	\arrow["{\Phi_g(X)}", from=1-3, to=2-3]
\end{tikzcd}\]
commutes for all $g\in G$ and $X\in\Ccal_g$.

\section{The $G$-crossed balanced category of twisted representations of a conformal net}
\label{Sec: CatOfTwistedReps}

Let $G$ be a discrete group acting on a conformal net $\A$ by a group homomorphism $G\to \Aut(\A)$. All actions of groups on conformal nets are assumed to be faithful. Throughout, we identify an element $g\in G$ with the automorphism of $\A$ it induces. In this section, we show that the $\mathrm{W}^*$-category
\[
\Rep^G(\A):=\bigoplus\limits_{g\in G}\Rep^g(\A)
\]
admits a structure of a $G$-crossed balanced $\mathrm{W}^*$-tensor category. The fact that $\Rep^G(\A)$ admits the structure of a $G$-crossed braided category was shown, in the language of localized endomorphisms of $\A_\infty:=\bigcup\limits_{I\in \Jcal_\mathrm{p}}\A(I) \subset B(H_0)$, in \cite{muger05}, where $\Jcal_\mathrm{p}$ denotes the set of intervals in $\Jcal$ not containing the fixed point $\mathrm{p} = 1\in S^1$ in their closure. In \cite{Gui21} the author gives a prescription to define a braided tensor structure on the category $\Rep(\A)$ directly, without making use of localized endomorphisms. We extend his arguments to construct a $G$-crossed braided structure on $\Rep^G(\A)$, which makes apparent the existence of the $G$-crossed balance. We do not know how to obtain the $G$-crossed balance in the language of localized endomorphisms of \cite{muger05}, and hence we need to construct the crossed braided structure on $\Rep^G(\A)$ directly, without making reference to localized endomorphisms.

\subsection{Connes fusion of twisted representations}\label{Sec: ConnesFusionOfTwisted}

Recall that we fix the point $\mathrm{p} = 1\in S^1$ and we write $q: \R\to S^1$ for the exponential map $q(t) = e^{2\pi i t}$. We also denote by $\Jcal_\R$ the set of intervals $\tilde{I}$ on $\R$ such that $I:=q(\tilde{I})$ is an interval in $\Jcal$. For an interval $I\in \Jcal$, we write $\hat{I}$ for the lift of $I$ to $\R$ as close to 0 as possible so that $\mathrm{cl}(\hat{I})\subset\R_{>0}$. Recall that we have defined, for every $\tilde{I}\in \Jcal_\R$, 
\[
\epsilon(\tilde{I}):=\begin{cases}
    |(0, \partial_-\tilde{I})\cap \Z|,
    & \text{if $\partial_{-} \tilde{I}$} > 0\\
    -|[\partial_-\tilde{I}, 0]\cap \Z|, & \text{if $\partial_-\tilde{I}\leq0$}.
\end{cases}
\] 

Given a conformal net $\A$, we can produce a partially defined net $\A_\R$ on the real line by assigning to every interval $\tilde{I}\in \Jcal_\R$ the von Neumann algebra $\A_\R(\tilde{I}) := \A(I)$. Note that the net $\A_\R$ satisfies the weaker version of locality that $\A_\R(\tilde{I})$ and $\A_\R(\tilde{J})$ commute if $I\cap J = \emptyset$. Given a $\varphi$-twisted representation $(H^\varphi, \pi^H)$ of $\A$, we obtain an action $\pi^H_{\tilde{I}}$ of every von Neumann algebra $\A_\R(\tilde{I})$ on $H^\varphi$ by setting
\[
    \pi^H_{\tilde{I}}(x) := \pi^H_I(\varphi^{\epsilon(\tilde{I})}x)
\]
for every $\tilde{I}\in \Jcal_\R$ and $x\in A_\R(\tilde{I}) = \A(I)$. Note that it holds that, for every inclusion $\tilde{J}\subset \tilde{I}$ in $\Jcal_\R$ and $x\in \A_\R(\tilde{J}) = \A(J)$,
\[
\pi^H_{\tilde{J}}(x) = \pi^H_J(\varphi^{\epsilon(\tilde{J})}x) = \pi^H_I(\delta^{\varphi^{-1}}_{J\subset I}\varphi^{\epsilon(\tilde{J})}x) = \pi^H_I(\varphi^{\epsilon(\tilde{I})}x) = \pi_{\tilde{I}}^H(x),
\]
where we have used Lemma \ref{lemm: epsilon} in the third equality. In addition, for all $x\in \A_\R(\tilde{I}) = \A(I) = \A_\R(\tilde{I} + 1)$, we have
\begin{equation}\label{eq: TwistedRepOnR}
\pi_{\tilde{I}}^H(\varphi x ) = \pi_{\tilde{I} + 1}^H( x)
\end{equation}
coming from the fact that $\epsilon(\tilde{I} + 1) = \epsilon(\tilde{I}) + 1$. Actually, given a Hilbert space $K$ with an action $\pi_{\tilde{I}}$ of the von Neumann algebra $\A_\R(\tilde{I})$ for every $\tilde{I}\in\Jcal_\R$, compatible with the inclusion of intervals in $\Jcal_\R$ and satisfying \eqref{eq: TwistedRepOnR}, we obtain a $\varphi$-twisted representation $\pi$ of $\A$ on $K$ by setting
\[
\pi_I(x) := \pi_{\hat{I}}(x)
\]
for every $I\in\Jcal$ and $x\in \A(I)$. We will freely move between the notion of a $\varphi$-twisted representation of $\A$ on a Hilbert space $H$ in the sense of Definition \ref{def: TwistedRep} and in the sense of a compatible family of $*$-actions of the von Neumann algebras $\A_\R(\tilde{I})$ on $H$ satisfying \eqref{eq: TwistedRepOnR}.

In this section, we generalize \cite[Sec. 2]{Gui21} to define the tensor product of twisted representations. Given $\tilde{I}\in \Jcal_\R$, we write $\tilde{I}^{c+} := (\partial_+\tilde{I}, \partial_-\tilde{I} + 1)$ and $\tilde{I}^{c-}:= (\partial_+\tilde{I} - 1, \partial_-\tilde{I})$ for the \emph{positive} and the \emph{negative complements} of $\tilde{I}$ respectively, which are intervals in $\Jcal_\R$. Note that $q(\tilde{I}^{c\pm}) = I^{c}$. Let $\varphi\in \Aut(\A)$ and $H^\varphi = (H^\varphi, \pi^H)\in\Rep^\varphi(\A)$. We denote by $\Hom_{\A_\R(\tilde{I})}(H_0, H^\varphi)$ the space of bounded linear operators $T:H_0\to H^\varphi$ such that, for all $x\in \A(I) = \A_\R(\tilde{I})$, it holds that
\[
    T\circ \pi_{0, I}(x) = \pi^H_{\tilde{I}}(x)\circ T.
\]
Since $\A_\R(\tilde{I})$ is a type $\mathrm{III}$ factor and $H_0$ and $H^\varphi$ are separable Hilbert spaces, the $*$-actions of $\A_\R(\tilde{I})$ on them are automatically normal \cite[Thm. V.5.1]{MR1873025}. Furthermore, since $H_0$ and $H^\varphi$ are non-trivial $\A_\R(\tilde{I})$-modules, there is a unitary equivalence $H_0\cong H^\varphi$ of $\A_\R(\tilde{I})$-modules. Hence, there are unitary operators in $\Hom_{\A_\R(\tilde{I})}(H_0, H^\varphi)$.

\begin{definition}
    Let $\tilde{I}\in \Jcal_\R$. A vector $\xi\in H^\varphi$ is said to be $(\tilde{I}, +)$\emph{-bounded} if there exists $T\in \Hom_{\A_\R(\tilde{I}^{c+})}(H_0, H^\varphi)$ such that $\xi = T\Omega$. We write $H_+^\varphi(\tilde{I})$ for the subspace of $H^\varphi$ of $(\tilde{I},+)$-bounded vectors. We define similarly the space of $(\tilde{I},-)$-bounded vectors $H^\varphi_-(\tilde{I})$, using the negative complement $\tilde{I}^{c-}$ of $\tilde{I}$. 
\end{definition}

Given $\xi\in H_+^\varphi(\tilde{I})$, there exists a unique operator $T\in \Hom_{\A_\R(\tilde{I}^{c+})}(H_0, H^\varphi)$ such that $\xi = T\Omega$, as $\A(I^{c})\Omega$ is dense in $H_0$ by the Reeh-Schlieder Theorem. We denote such operator by $T = Z^+(\xi, \tilde{I})$. Similarly, if $\xi\in H^\varphi_-(\tilde{I})$, we denote by $T = Z^-(\xi, \tilde{I})$ the unique operator $T\in \Hom_{\A_\R(\tilde{I}^{c-})}(H_0, H^\varphi)$ such that $\xi = T\Omega$. We have that $H^\varphi_{\pm}(\tilde{I}) = \Hom_{\A(\tilde{I}^{c\pm})}(H_0, H^\varphi)\Omega$. For an untwisted representation $K\in \Rep(\A)$, there is no distinction between $(\tilde{I},+)$- and $(\tilde{I}, -)$-bounded vectors of $K$, and these furthermore only depend on $I$. Also, given $\xi\in K_{\pm}(\tilde{I})$, we have that $Z^+(\xi, \tilde{I}) = Z^-(\xi,\tilde{I})$, and these also only depend on $I$. We may therefore write $K_{\pm}(\tilde{I})$ as $K(I)$. In particular, for the vacuum representation we have that $H_0(I) = \A(I)\Omega$, by Haag duality. This implies that $H_0(I)\subset H_0$ is dense for all $I\in\Jcal$. Since there exist unitary operators in $\Hom_{\A_\R(\tilde{I}^{c\pm})}(H_0, H^\varphi)$, we obtain that $H_\pm^\varphi(\tilde{I})\subset H^\varphi$ are dense inclusions for all $\tilde{I}\in\Jcal_\R$. In addition, if $\tilde{I_1}\subset \tilde{I}$ is an inclusion of intervals in $\Jcal_\R$, we have inclusions $H_\pm^\varphi(\tilde{I_1})\subset H_\pm^\varphi(\tilde{I})$, which are also dense. 

Recall that the automorphism $\varphi\in \Aut(\A)$ comes equipped by definition with a unitary $V_\varphi\in U(H_0)$ implementing the action of $\varphi$ on the algebras $\A(I)$. We claim that there is an equality $H_{\pm}^\varphi(\tilde{I}) = H_\pm^\varphi(\tilde{I} + 1)$ for all $\tilde{I}$ in $\Jcal_\R$. Indeed, let $\xi = Z^\pm(\xi, \tilde{I})\Omega\in H_\pm^\varphi(\tilde{I})$. Then, the bounded linear operator
\(
Z^\pm(\xi, \tilde{I})\circ V_{\varphi}
\)
satisfies that, for all $x\in \A(I^c) = \A_{\R}(\tilde{I}^{c\pm})$
\begin{align*}
Z^\pm(\xi, \tilde{I})\circ V_{\varphi}\circ \pi_{0, I^c}(x) &= Z^\pm(\xi, \tilde{I})\circ \pi_{0, I^c}(\varphi x)\circ V_{\varphi} \\ &= \pi_{\tilde{I}^{c\pm}}^H(\varphi x)\circ Z^\pm(\xi, \tilde{I})\circ V_{\varphi} \\ &=\pi^{H}_{(\tilde{I} +1)^{c\pm}}(x)\circ Z^\pm(\xi, \tilde{I})\circ V_{\varphi}
\end{align*}
and $Z^\pm(\xi, \tilde{I})\circ V_{\varphi}(\Omega) = Z^\pm(\xi, \tilde{I})(\Omega) = \xi.$ Therefore $\xi\in H_\pm^\varphi(\tilde{I} + 1)$ and
\[
Z^\pm(\xi, \tilde{I} +1) = Z^\pm(\xi, \tilde{I})\circ V_{\varphi}.
\]
 Analogously, $H_\pm^\varphi(\tilde{I} + 1)\subseteq H_\pm^\varphi(\tilde{I})$, and we obtain the equality $H_\pm^\varphi(\tilde{I} + 1) =  H_\pm^\varphi(\tilde{I})$. Similarly, there is an equality $H^\varphi_-(\tilde{I}) = H^\varphi_+(\tilde{I})$, with 
 \begin{equation}\label{eq: RelationZ+Z-}
 Z^-(\xi, \tilde{I}) = Z^+(\xi, \tilde{I})\circ V_{\varphi^{-1}}.
 \end{equation}

Let $\mu\in \Aut(\A)$ and $K^\mu = (K^\mu, \pi^K)\in \Rep^\mu(\A)$. Let $\tilde{I}, \tilde{J}\in\Jcal_\R $ be intervals such that $\tilde{J}\subset\tilde{I}^{c+}$. We define a positive sesquilinear form $\langle - |-\rangle$ on $H_+^\varphi(\tilde{I})\otimes K_-^\mu(\tilde{J})$ by setting, for $\xi, \xi'\in H_+^\varphi(\tilde{I})$ and $\eta, \eta'\in K_-^\mu(\tilde{J})$, 
\[
\langle \xi\otimes\eta|\xi'\otimes\eta'\rangle:=\langle Z^-(\eta', \tilde{J})^*Z^-(\eta, \tilde{J})Z^+(\xi', \tilde{I})^*Z^+(\xi, \tilde{I})\Omega|\Omega\rangle.
\]
Note that $Z^+(\xi', \tilde{I})^*Z^+(\xi, \tilde{I}): H_0\to H_0$ satisfies that, for all $x\in \A(I^c) = \A_\R(\tilde{I}^{c+})$,
\begin{align*}
Z^+(\xi', \tilde{I})^*Z^+(\xi, \tilde{I})\pi_{0, I^c}(x) &= Z^+(\xi', \tilde{I})^*\pi^H_{\tilde{I}^{c+}}(x)Z^+(\xi, \tilde{I})\\
&=\pi_{0, I^c}(x) Z^+(\xi', \tilde{I})^*Z^+(\xi, \tilde{I}),
\end{align*}
and hence $Z^+(\xi', \tilde{I})^*Z^+(\xi, \tilde{I})\in \Hom_{\A(I^c)}(H_0, H_0) \cong \A(I) = \A_\R(\tilde{I})$. Similarly, we have that $Z^-(\eta', \tilde{J})^*Z^-(\eta, \tilde{J})\in \Hom_{\A(J^c)}(H_0, H_0) \cong \A(J) = \A_\R(\tilde{J})$. Hence, the maps $Z^+(\xi', \tilde{I})^*Z^+(\xi, \tilde{I})$ and $Z^-(\eta', \tilde{J})^*Z^-(\eta, \tilde{J})$ commute by locality, and the inner product above also equals
\[
\langle \xi\otimes\eta|\xi'\otimes\eta'\rangle=\langle Z^+(\xi', \tilde{I})^*Z^+(\xi, \tilde{I})Z^-(\eta', \tilde{J})^*Z^-(\eta, \tilde{J})\Omega|\Omega\rangle.
\]
The positivity of $\langle-|-\rangle$ follows from the same arguments as in the untwisted setting, see \cite[Prop. IX.3.15]{MR1943006}.

\begin{definition}\label{def: ConnesFusion}
    The Hilbert space $H_+^\varphi(\tilde{I})\boxtimes K_-^\mu(\tilde{J})$ is the completion of $H^\varphi_+(\tilde{I})\otimes K^\mu_-(\tilde{J})$~with respect to the inner product $\langle -|-\rangle$. We call it the \emph{Connes fusion of $H^\varphi, K^\mu$ over the intervals $\tilde{I}$ and $\tilde{J}.$} 
\end{definition}

We continue denoting by $\xi\otimes \eta$ the image of a vector $\xi\otimes \eta\in H_+^\varphi(\tilde{I})\otimes K_-^\mu(\tilde{J})$ in $H_+^\varphi(\tilde{I})\boxtimes K_-^\mu(\tilde{J})$. Recall that we can regard $Z^+(\xi', \tilde{I})^*Z^+(\xi, \tilde{I})$ as an element of $\A(I)$, which we denote by the same symbol, and it satisfies $Z^+(\xi', \tilde{I})^*Z^+(\xi, \tilde{I}) = \pi_{0,I}(Z^+(\xi', \tilde{I})^*Z^+(\xi, \tilde{I}))$. We have
\begin{align}\label{eq: OneSidedInnerProd}
    \langle \xi\otimes\eta|\xi'\otimes \eta'\rangle &=\langle Z^-(\eta', \tilde{J})^*Z^-(\eta, \tilde{J})Z^+(\xi', \tilde{I})^*Z^+(\xi, \tilde{I})\Omega|\Omega\rangle\\ \nonumber
    &=\langle Z^-(\eta, \tilde{J})\circ \pi_{0, I}(Z^+(\xi', \tilde{I})^*Z^+(\xi, \tilde{I}))\Omega|\eta'\rangle \\ \nonumber
    & = \langle \pi^K_{\tilde{J}^{c-}}(Z^+(\xi', \tilde{I})^*Z^+(\xi, \tilde{I}))\eta|\eta'\rangle 
    \\ & = \langle \pi^K_{\tilde{I}}(Z^+(\xi', \tilde{I})^*Z^+(\xi, \tilde{I}))\eta|\eta'\rangle  \nonumber
\end{align}
and
\[
\langle \xi\otimes\eta|\xi'\otimes \eta'\rangle  = \langle \pi^H_{\tilde{J}}(Z^-(\eta', \tilde{J})^*Z^-(\eta, \tilde{J}))\xi|\xi'\rangle.
\]
Given $\tilde{I_1},\tilde{J_1}\in\Jcal_\R$ such that $\tilde{J_1}\subset\tilde{I_1}^{c+}$ and $\tilde{I_1}\subset\tilde{I}$, $\tilde{J_1}\subset \tilde{J}$, there is a canonical equivalence
\begin{equation}\label{eq: CanonicalEquivalenceInclusion}
H^\varphi_+(\tilde{I_1})\boxtimes K_-^\mu(\tilde{J_1})\xrightarrow{\cong} H_+^\varphi(\tilde{I})\boxtimes K_-^\mu(\tilde{J})
\end{equation}
induced by the inclusion $H_+^\varphi(\tilde{I_1})\otimes K_-^\mu(\tilde{J_1})\hookrightarrow H_+^\varphi(\tilde{I})\otimes K_-^\mu(\tilde{J})$. 

\begin{definition}\label{def: FusionOfMorphisms}
    Fix automorphisms $\varphi,\mu\in \Aut(\A)$ and let $H^\varphi, \hat{H}^{\varphi}\in \Rep^\varphi(\A)$, $K^\mu, \hat{K}^{\mu}\in \Rep^\mu(\A)$ be representations of $\A$ twisted by $\varphi$ and $\mu$ respectively. Let $\tilde{I},\tilde{J}\in\Jcal_\R$ with $\tilde{J}\subset\tilde{I}^{c+}$. Let $F: H^\varphi\to \hat{H}^{\varphi}$ and $G: K^\mu\to \hat{K}^{\mu}$ be morphisms of twisted $\A$-representations. We denote by $F\boxtimes G : H^\varphi_+(\tilde{I})\boxtimes K^\mu_-(\tilde{J})\to \hat{H}^{\varphi}_+(\tilde{I})\boxtimes \hat{K}_-^{\mu}(\tilde{J})$ the bounded map induced by 
    \[
    (F\boxtimes G)(\xi\otimes\eta) = F(\xi)\otimes G(\eta)
    \]
    for all $\xi\in H^\varphi_+(\tilde{I})$ and $\eta\in K^\mu_-(\tilde{J})$.
\end{definition}

Now, given $z, \zeta\in \R$ such that $\zeta\in (z,z+1)$, we define
\[
H_+^\varphi(z)\boxtimes K_-^\mu(\zeta):=\varinjlim\limits_{(z,\zeta)\in\tilde{I}\times \tilde{J}}H_+^\varphi(\tilde{I})\boxtimes K_-^\mu(\tilde{J}) = \Bigg(\bigsqcup\limits_{(z,\zeta)\in\tilde{I}\times \tilde{J}}H_+^\varphi(\tilde{I})\boxtimes K_-^\mu(\tilde{J})\Bigg)\Big/\cong,
\]
where the limit runs over pairs $\tilde{I}, \tilde{J}\in \Jcal_\R$ satisfying $\tilde{J}\subset\tilde{I}^{c+}$ containing $z$ and $\zeta$ respectively, and $\cong$ denotes the equivalence obtained by identifying $H^\varphi_+(\tilde{I_1})\boxtimes K^\mu_-(\tilde{J_1})$ with $H^\varphi_+(\tilde{I})\boxtimes K^\mu_-(\tilde{J})$ by the unitary \eqref{eq: CanonicalEquivalenceInclusion} whenever $\tilde{I_1}\subset \tilde{I}$ and $\tilde{J_1}\subset \tilde{J}$. Given $\tilde{I}, \tilde{J}\in \Jcal_\R$ such that $\tilde{J}\subset\tilde{I}^{c+}$ containing $z$ and $\zeta$ respectively, the canonical map 
\[
H_+^\varphi(\tilde{I})\boxtimes K_-^\mu(\tilde{J})\to H_+^\varphi(z)\boxtimes K_-^\mu(\zeta)
\]
is a unitary equivalence. 

We will next relate the Connes fusions over different pairs of intervals. Given intervals $\tilde{I}_i, \tilde{J}_i\in \Jcal_\R$ with $\tilde{J_i}\subset\tilde{I}_i^{c+}$ for $i = 0,1$, and a suitable path $\gamma$ in $\R\times\R$ from a point in $\tilde{I_0}\times\tilde{J_0}$ to a point in $\tilde{I_1}\times \tilde{J_1}$, we wish to define a unitary $\gamma^\bullet: H^\varphi_+(\tilde{I_0})\boxtimes K^\mu_-(\tilde{J_0})\xrightarrow{\cong} H^\varphi_+(\tilde{I_1})\boxtimes K^\mu_-(\tilde{J_1})$. Consider the topological space $(\R\times\R)\setminus (\R\times_{S^1}\R) = \{(z,\zeta)\in \R\times\R\ |\  qz\neq q\zeta\}$, which is a disconnected space with contractible connected components. We write $\ConfR$ for the connected component of $(\R\times\R)\setminus (\R\times_{S^1}\R)$ given by the points $(z,\zeta)$ with $\zeta\in (z,z+1).$ Then, $\ConfR$ is the universal cover of $\text{Conf}_2(S^1)$, the space of configurations of two points in $S^1$. Let $\gamma = (\alpha, \beta):[0,1]\to \ConfR$ be a path in $\ConfR$ from $\gamma(0) =: (z_0, \zeta_0)$ to $\gamma(1) =: (z_1, \zeta_1)$. Assume that $\gamma$ is small enough so that $\gamma([0,1])\subset \tilde{I}\times \tilde{J}$, for some $\tilde{I}, \tilde{J}\in \Jcal_\R$ satisfying $\tilde{J}\subset \tilde{I}^{c+}$. We define the unitary equivalence $\gamma^\bullet: H_+^\varphi(z_0)\boxtimes K_-^\mu(\zeta_0)\xrightarrow{\cong}H_+^\varphi(z_1)\boxtimes K_-^\mu(\zeta_1)$ as the composition
\[
H_+^\varphi(z_0)\boxtimes K_-^\mu(\zeta_0)\xrightarrow{\cong}H_+^\varphi(\tilde{I})\boxtimes K_-^\mu(\tilde{J})\xrightarrow{\cong}H_+^\varphi(z_1)\boxtimes K_-^\mu(\zeta_1).
\]
For a generic path $\gamma$, we choose $0 = t_0<t_1<\ldots <t_k = 1$ such that each $\gamma_n:=\gamma|_{[t_n, t_{n+1}]}$ is small enough in the sense above, and define $\gamma^\bullet: H_+^\varphi(z_0)\boxtimes K_-^\mu(\zeta_0)\xrightarrow{\cong}H_+^\varphi(z_1)\boxtimes K_-^\mu(\zeta_1)$ as 
\[
\gamma^\bullet = \gamma_{k-1}^\bullet\ldots\gamma_1^\bullet\gamma_0^\bullet.
\]
Since finer partitions give the same result, the map $\gamma^\bullet$ is independent of the choice of partition. We call $\gamma^\bullet$ the \emph{path-continuation} induced by $\gamma$.

Let $\tilde{I_0}, \tilde{I_1}, \tilde{J_0}, \tilde{J_1}\in \Jcal_\R$ be intervals such that $\tilde{J_0}\subset\tilde{I_0}^{c+}$ and $\tilde{J_1}\subset\tilde{I_1}^{c+}$. Then, $\tilde{I_0}\times\tilde{J_0}$ and $\tilde{I_1}\times \tilde{J_1}$ lie in $\ConfR$. Let $\gamma$ be a path in $\ConfR$ from $\gamma(0) =: (z_0, \zeta_0)\in \tilde{I_0}\times\tilde{J_0}$ to $\gamma(1) =: (z_1, \zeta_1)\in \tilde{I_1}\times \tilde{J_1}$. We define the path continuation $\gamma^\bullet : H_+^\varphi(\tilde{I_0})\boxtimes K_-^\mu(\tilde{J_0})\xrightarrow{\cong} H_+^\varphi(\tilde{I_1})\boxtimes K_-^\mu(\tilde{J_1})$ as the composition
\[
H_+^\varphi(\tilde{I_0})\boxtimes K_-^\mu(\tilde{J_0})\xrightarrow{\cong}H_+^\varphi(z_0)\boxtimes K_-^\mu(\zeta_0)\xrightarrow{\gamma^\bullet} H_+^\varphi(z_1)\boxtimes K_-^\mu(\zeta_1)\xrightarrow{\cong}H_+^\varphi(\tilde{I_1})\boxtimes K_-^\mu(\tilde{J_1}).
\]
The next proposition follows from the same arguments as \cite[Prop. 2.11]{Gui21}.
\begin{proposition}
    Let $\gamma, \tilde{\gamma}$ be two paths in $\ConfR$ with $\gamma(0), \tilde{\gamma}(0)\in \tilde{I_0}\times\tilde{J_0}$ and $\gamma(1),\tilde{\gamma}(1)\in \tilde{I_1}\times\tilde{J_1}$. Suppose there exists a homotopy $\Gamma:[0,1]\times[0,1]\to \ConfR$ from $\gamma = \Gamma(-,0)$ to $\tilde{\gamma}  = \Gamma(-,1)$. Assume moreover that $\Gamma(0, [0,1])\subset \tilde{I_0}\times\tilde{J_0}$ and $\Gamma(1, [0,1])\subset \tilde{I_1}\times\tilde{J_1}$. Then, $\gamma^\bullet = \tilde{\gamma}^\bullet$.
\end{proposition}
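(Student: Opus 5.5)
The proof is the standard "homotopy lifting by subdivision" argument. The idea is to reduce the global statement to a local one: path-continuation along a small loop, or along two small paths inside a single box $\tilde{I}\times\tilde{J}$ with $\tilde{J}\subset\tilde{I}^{c+}$, is independent of the path.

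\emph{Local statement.} If $\gamma$ and $\tilde{\gamma}$ both lie in one such box $\tilde{I}\times\tilde{J}$ and share endpoints, then $\gamma^\bullet=\tilde{\gamma}^\bullet$. Both maps reduce to the composition
\[
H_+^\varphi(z_0)\boxtimes K_-^\mu(\zeta_0)\to H_+^\varphi(\tilde{I})\boxtimes K_-^\mu(\tilde{J})\to H_+^\varphi(z_1)\boxtimes K_-^\mu(\zeta_1).
\]
To see this, I will subdivide finely and use that all the canonical maps \eqref{eq: CanonicalEquivalenceInclusion} are compatible. They are induced by inclusions of algebraic tensor products, so composites of them agree whenever defined. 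Hence any chain of small boxes, all contained in one big box, yields the same map as the big box itself.

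\emph{Cover the homotopy.} First I use compactness of $[0,1]^2$ and the fact that products $\tilde{I}\times\tilde{J}$ with $\tilde{J}\subset\tilde{I}^{c+}$ form a basis of open sets of $\ConfR$ (it is open in $\R^2$). By a Lebesgue number argument, there is $N$ such that each small square $Q_{a,b}=[a/N,(a+1)/N]\times[b/N,(b+1)/N]$ is mapped by $\Gamma$ into a single admissible box $B_{a,b}$. Next I consider the intermediate paths $\gamma_b=\Gamma(-,b/N)$. For consecutive $b$, I compare $\gamma_b^\bullet$ and $\gamma_{b+1}^\bullet$ by moving across one square at a time. The two boundary paths of $Q_{a,b}$, namely bottom-then-right and left-then-top, lie in $B_{a,b}$, so by the local statement they induce the same map. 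Composing these square-by-square equalities expresses $\gamma_{b+1}^\bullet$ as $\gamma_b^\bullet$ pre- and post-composed with the continuations along the vertical edges $\Gamma(0,[b/N,(b+1)/N])$ and $\Gamma(1,[b/N,(b+1)/N])$.

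\emph{Endpoint contributions.} By hypothesis these vertical edge paths lie in $\tilde{I_0}\times\tilde{J_0}$ and $\tilde{I_1}\times\tilde{J_1}$ respectively. Since $\gamma^\bullet$ between $H_+^\varphi(\tilde{I_i})\boxtimes K_-^\mu(\tilde{J_i})$ is defined through the canonical identifications with the point fusions, the local statement applied in these boxes shows that the edge continuations are absorbed by those identifications. Concretely, continuing from $(z,\zeta)$ to $(z',\zeta')$ inside $\tilde{I_0}\times\tilde{J_0}$ commutes with the maps from $H_+^\varphi(\tilde{I_0})\boxtimes K_-^\mu(\tilde{J_0})$. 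Hence $\gamma_b^\bullet=\gamma_{b+1}^\bullet$ as maps between the interval fusions, and by induction $\gamma^\bullet=\tilde{\gamma}^\bullet$.

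I expect the main obstacle to be the bookkeeping that the local statement is genuinely path independent. One must check that the definition of $\gamma^\bullet$ is independent of partition (asserted earlier), and that the canonical maps for nested boxes form a directed system with no twisting by $V_\varphi$. The latter holds because we never change the lift $\tilde{I}$: the box lives in $\ConfR$ itself, so the identifications $H_+^\varphi(\tilde{I})=H_+^\varphi(\tilde{I}+1)$ involving $V_\varphi$ never enter. This is exactly why the argument of \cite[Prop. 2.11]{Gui21} transfers verbatim.
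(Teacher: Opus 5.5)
Your proposal is correct. It follows essentially the same route as the paper, which gives no separate proof but defers to the subdivision argument of \cite[Prop. 2.11]{Gui21}: subdivide the homotopy square so each small square is mapped into an admissible box, use path-independence of continuation inside a single box, and absorb the edge continuations along $\Gamma(0,-)$ and $\Gamma(1,-)$ into the canonical identifications with $H_+^\varphi(\tilde{I_i})\boxtimes K_-^\mu(\tilde{J_i})$. In your local step, state explicitly that a small path lying in two admissible boxes $\tilde{I_1}\times\tilde{J_1}$ and $\tilde{I_2}\times\tilde{J_2}$ induces the same map through both: their intersection is again admissible, since $\tilde{J_1}\cap\tilde{J_2}\subset(\tilde{I_1}\cap\tilde{I_2})^{c+}$, and the canonical unitaries compose.
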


In Definition \ref{def: ConnesFusion}, we have defined the Connes fusion of $H^\varphi$ and $K^\mu$ over a pair of intervals $\tilde{I},\tilde{J}\in \Jcal_\R$ with $\tilde{J}\subset \tilde{I}^{c+}$ as a certain completion of the vector space $H^\varphi_+(\tilde{I})\otimes K^\mu_-(\tilde{J})$. The same Hilbert space can be obtained as a completion of the vector spaces $H^\varphi_+(\tilde{I})\otimes K^\mu$ and $H^\varphi\otimes K^\mu_-(\tilde{J})$. We define $H_+^\varphi(\tilde{I})\boxtimes K^\mu$ to be the completion of $H_+^\varphi(\tilde{I})\otimes K^\mu$ under the inner product
\[
\langle \xi\otimes\eta\,|\,\xi'\otimes \eta'\rangle := \langle\pi^K_{\tilde{I}}\big(Z^+(\xi', \tilde{I})^*Z^+(\xi, \tilde{I})\big)(\eta)\,|\, \eta'\rangle
\]
for $\xi, \xi'\in H_+^\varphi(\tilde{I})$ and $\eta, \eta'\in K^\mu$, and call it the \emph{Connes fusion of $H^\varphi$ and $K^\mu$ over $\tilde{I}$ on the left}. Then, if $\tilde{J}\in \Jcal_\R$ is an interval such that $\tilde{J}\subset \tilde{I}^{c+}$, the inclusion $H_+^\varphi(\tilde{I})\otimes K_-^\mu(\tilde{J})\hookrightarrow H^\varphi_+(\tilde{I})\otimes K^\mu$ induces a canonical equivalence $H_+^\varphi(\tilde{I})\boxtimes K_-^\mu(\tilde{J})\xrightarrow{\cong} H_+^\varphi(\tilde{I})\boxtimes K^\mu$, by Equation \eqref{eq: OneSidedInnerProd}. For any $z\in \R$, we define $H^\varphi_+(z)\boxtimes K^\mu := \varinjlim\limits_{z\in\tilde{I}}\, H^\varphi_+(\tilde{I})\boxtimes K^\mu$. Given a path $\alpha$ in $\R$ from $z_0$ to $z_1$, we also obtain a path continuation $\alpha^\bullet: H^\varphi_+(z_0)\boxtimes K^\mu\xrightarrow{\cong} H^\varphi_+(z_1)\boxtimes K^\mu$ as follows. If $\alpha$ is small enough so that $\alpha([0,1])$ can be covered by an interval $\tilde{I}\in \Jcal_\R$, we define
\[
\alpha^\bullet: H^\varphi_+(z_0)\boxtimes K^\mu\xrightarrow{\cong} H^\varphi_+(\tilde{I})\boxtimes K^\mu\xrightarrow{\cong} H^\varphi_+(z_1)\boxtimes K^\mu.
\]
For a generic path $\alpha$, we pick a partition $0 = t_0<t_1<\ldots<t_k = 1$ of $[0,1]$ so that each $\alpha_n:=\alpha|_{[t_n, t_{n+1}]}$ is small enough in the sense above, and define $\alpha^\bullet = \alpha_{k-1}^\bullet\ldots\alpha_0^\bullet$. Given two intervals $\tilde{I_0},\tilde{I_1}\in \Jcal_\R$ and a path $\alpha$ in $\R$ from a point in $\tilde{I_0}$ to a point in $\tilde{I_1}$, we define
\[
\alpha^\bullet:H^\varphi_+(\tilde{I_0})\boxtimes K^\mu\xrightarrow{\cong } H_+^\varphi(\alpha(0))\boxtimes K^\mu\xrightarrow{\alpha^\bullet} H^\varphi_+(\alpha(1))\boxtimes K^\mu\xrightarrow{\cong} H^\varphi_+(\tilde{I_1})\boxtimes K^\mu.
\]
Homotopic paths induce the same unitary equivalence. The following result follows from the same arguments as \cite[Prop. 2.12]{Gui21}.

\begin{proposition}\label{prop: SinglePathContinuationWRTDoublePathContinuation}
    Let $\tilde{I_0},\tilde{I_1},\tilde{J_0},\tilde{J_1}\in \Jcal_\R$ be such that $\tilde{J_i}\subset \tilde{I_i}^{c+}$ for $i = 0,1$. Let $\gamma = (\alpha, \beta)$ be a path in $\ConfR$ from $\tilde{I_0}\times\tilde{J_0}$ to $\tilde{I_1}\times \tilde{J_1}$. Then, the path-continuation $\alpha^\bullet: H_+^\varphi(\tilde{I_0})\boxtimes K^\mu\xrightarrow{\cong} H_+^\varphi(\tilde{I_1})\boxtimes K^\mu$ is given by
    \[
    H_+^\varphi(\tilde{I_0})\boxtimes K^\mu\xrightarrow{\cong} H_+^\varphi(\tilde{I_0})\boxtimes K_-^\mu(\tilde{J_0})\xrightarrow{\gamma^\bullet} H_+^\varphi(\tilde{I_1})\boxtimes K_-^\mu(\tilde{J_1})\xrightarrow{\cong} H_+^\varphi(\tilde{I_1})\boxtimes K^\mu.
    \]
\end{proposition}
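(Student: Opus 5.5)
Both sides are unitaries built by composing elementary steps along a partition of the path, so I would first reduce to short paths and then check each short piece on generating vectors. Choose $0 = t_0<t_1<\ldots<t_k = 1$ so fine that each segment $\gamma_n = (\alpha_n,\beta_n) = \gamma|_{[t_n,t_{n+1}]}$ lies in some product $\tilde{I}^{(n)}\times\tilde{J}^{(n)}$ with $\tilde{J}^{(n)}\subset (\tilde{I}^{(n)})^{c+}$. This is possible by compactness of $\gamma([0,1])$ and because $\ConfR$ is open and such products form a basis of its topology. The first segment's box is taken inside $\tilde{I_0}\times\tilde{J_0}$ and the last inside $\tilde{I_1}\times\tilde{J_1}$, which we may arrange by refining. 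By construction both $\gamma^\bullet$ and $\alpha^\bullet$ factor as composites over this partition, with intermediate identifications through $H^\varphi_+(z)\boxtimes K^\mu_-(\zeta)$ and $H^\varphi_+(z)\boxtimes K^\mu$. Inserting the canonical isomorphisms $H_+^\varphi(\tilde{I})\boxtimes K_-^\mu(\tilde{J})\cong H_+^\varphi(\tilde{I})\boxtimes K^\mu$ at each intermediate point $\gamma(t_n)$ and cancelling them telescopically, it suffices to prove the claim for a single short segment.

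For a short segment the statement becomes a commutative square. On one side we have
\[
H^\varphi_+(\tilde{I}')\boxtimes K^\mu_-(\tilde{J}')\to H^\varphi_+(\tilde{I})\boxtimes K^\mu_-(\tilde{J}),
\]
induced by inclusions of subspaces for small boxes $\tilde{I}'\subset\tilde{I}$, $\tilde{J}'\subset\tilde{J}$. On the other we have $H^\varphi_+(\tilde{I}')\boxtimes K^\mu\to H^\varphi_+(\tilde{I})\boxtimes K^\mu$, induced by the inclusion $H^\varphi_+(\tilde{I}')\otimes K^\mu\subset H^\varphi_+(\tilde{I})\otimes K^\mu$. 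The vertical maps are the canonical equivalences, induced by $H^\varphi_+(\tilde{I})\otimes K^\mu_-(\tilde{J})\hookrightarrow H^\varphi_+(\tilde{I})\otimes K^\mu$. On elementary tensors $\xi\otimes\eta$ with $\xi\in H^\varphi_+(\tilde{I}')$ and $\eta\in K^\mu_-(\tilde{J}')$, every map in the square sends $\xi\otimes\eta$ to the class of $\xi\otimes\eta$. Such tensors span a dense subspace, and all maps are unitaries, so the square commutes.

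The main point needing care is that the inner products really agree under these identifications. Inclusion $H^\varphi_+(\tilde{I}')\subset H^\varphi_+(\tilde{I})$ must be isometric for the one-sided fusion. This requires $Z^+(\xi,\tilde{I}) = Z^+(\xi,\tilde{I}')$, which holds because $\tilde{I}^{c+}\subset\tilde{I}'^{c+}$ and uniqueness follows from Reeh--Schlieder. It also requires $\pi^K_{\tilde{I}'}(x) = \pi^K_{\tilde{I}}(x)$ for $x\in\A_\R(\tilde{I}')$, which is the compatibility of the $\A_\R$-actions established via Lemma \ref{lemm: epsilon}. The two-sided-to-one-sided compatibility is exactly Equation \eqref{eq: OneSidedInnerProd}. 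With these in hand the statement follows as in \cite[Prop. 2.12]{Gui21}.
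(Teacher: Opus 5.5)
Your proposal is correct and follows essentially the argument the paper defers to, namely \cite[Prop. 2.12]{Gui21}. You partition $\gamma$ into segments, each lying in a box $\tilde{I}\times\tilde{J}$ with $\tilde{J}\subset\tilde{I}^{c+}$; on each segment both path-continuations and the one-sided/two-sided identifications are induced by inclusions of elementary tensors, so the squares commute on a dense subspace and the pieces compose telescopically.
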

Similar properties hold for  $H^\varphi\boxtimes K^\mu_-(\tilde{J})$, the \emph{Connes fusion of $H^\varphi$ and $K^\mu$ over $\tilde{J}$ on the right}, defined as the completion of $H^\varphi\otimes K^\mu_-(\tilde{J})$ with respect~to
\[
\langle \xi\otimes\eta\,|\,\xi'\otimes\eta'\rangle:=\langle \pi^H_{\tilde{J}}\big(Z^-(\eta', \tilde{J})^*Z^-(\eta, \tilde{J})\big)(\xi)\,|\,\xi'\rangle
\]
for $\xi, \xi'\in H^\varphi$ and $\eta,\eta'\in K_-^\mu(\tilde{J})$.

\subsection{Endowing the Connes fusion with a twisted action}

From now on, we fix intervals $\tilde{I},\tilde{J}\in \Jcal_\R$ such that $\tilde{J}\subset \tilde{I}^{c+}$, as well as automorphisms $\varphi,\mu\in\Aut(\A)$ and twisted representations $H^\varphi = (H^\varphi, \pi^H)\in \Rep^\varphi(\A)$ and $K^\mu = (K^\mu, \pi^K)\in \Rep^\mu(\A)$. We shall endow the Hilbert space $H^\varphi_+(\tilde{I})\boxtimes K^\mu_-(\tilde{J})$ with a $(\varphi\circ\mu)$-twisted action of $\A$. We will do this by endowing it with compatible actions $\pi^{H\boxtimes K}_{\tilde{L}}$ of the algebras $\A_\R(\tilde{L})$ such that $\pi^{H\boxtimes K}_{\tilde{L}} \circ (\varphi\circ\mu) = \pi^{H\boxtimes K}_{\tilde{L}+1}$ for all $\tilde{L}\in \Jcal_\R$. Note that we have natural actions of $\A_\R(\tilde{I} )$ and $\A_\R(\tilde{J})$ on $H^\varphi(\tilde{I})\boxtimes K^\mu(\tilde{J})$. Indeed, given $x\in \A(I) = \A_\R(\tilde{I})$ and $y\in \A(J) = \A_\R(\tilde{J})$, we can define
\[
\begin{array}{lll}
(\pi^{H}_{\tilde{I}}(x)\boxtimes\id)(\xi\otimes \eta) &= \Big(\pi_{\tilde{I}}^H(x)(\xi)\Big)\otimes \eta\\
(\id\boxtimes \pi^{K}_{\tilde{J} }(y))(\xi\otimes\eta) &= \xi\otimes \Big(\pi_{\tilde{J}}^K(y)(\eta)\Big)
\end{array}
\]
for $\xi\otimes\eta \in H_+^\varphi(\tilde{I})\otimes K_-^\mu(\tilde{J})\subset H_+^\varphi(\tilde{I})\boxtimes K_-^\mu(\tilde{J}) $. This also provides canonical actions of all $\A_\R(\tilde{L})$ for $\tilde{L}\subset \tilde{I}$ or $\tilde{L}\subset\tilde{J}$.

\begin{proposition}\label{prop: SwapIntervals}
    Let $\gamma$ be a path in $\ConfR$ from $\tilde{I}\times \tilde{J}$ to $\tilde{J}\times (\tilde{I} +1)$, and let $\vartheta$ be a path in $\ConfR$ from $\tilde{I}\times \tilde{J}$ to $(\tilde{J}-1)\times \tilde{I}$. Then,
    \begin{enumerate}
        \item for all $x\in \A(I)$, the following diagrams commute
\[\begin{tikzcd}
	{H^\varphi_+(\tilde{I})\boxtimes K_-^\mu(\tilde{J})} & {H^\varphi_+(\tilde{J})\boxtimes K^\mu_-(\tilde{I}+1)} \\
	{H^\varphi_+(\tilde{I})\boxtimes K_-^\mu(\tilde{J})} & {H^\varphi_+(\tilde{J})\boxtimes K^\mu_-(\tilde{I}+1)} \\
	{H^\varphi_+(\tilde{I})\boxtimes K_-^\mu(\tilde{J})} & {H^\varphi_+(\tilde{J}- 1)\boxtimes K^\mu_-(\tilde{I})} \\
	{H^\varphi_+(\tilde{I})\boxtimes K_-^\mu(\tilde{J})} & {H^\varphi_+(\tilde{J}- 1)\boxtimes K^\mu_-(\tilde{I})};
	\arrow["{\gamma^\bullet}", from=1-1, to=1-2]
	\arrow["{\pi^H_{\tilde{I}}(\varphi\mu x)\boxtimes\id}"', from=1-1, to=2-1]
	\arrow["{\id\boxtimes\pi^K_{\tilde{I}+1}(x)}", from=1-2, to=2-2]
	\arrow["{\gamma^\bullet}"', from=2-1, to=2-2]
	\arrow["{\vartheta^\bullet}", from=3-1, to=3-2]
	\arrow["{\pi^H_{\tilde{I}}(x)\boxtimes\id}"', from=3-1, to=4-1]
	\arrow["{\id\boxtimes\pi^K_{\tilde{I}}(x)}", from=3-2, to=4-2]
	\arrow["{\vartheta^\bullet}"', from=4-1, to=4-2]
\end{tikzcd}\]
\item for all $y\in \A(J)$, the following diagrams commute
\[\begin{tikzcd}
	{H^\varphi_+(\tilde{I})\boxtimes K_-^\mu(\tilde{J})} & {H^\varphi_+(\tilde{J})\boxtimes K^\mu_-(\tilde{I}+1)} \\
	{H^\varphi_+(\tilde{I})\boxtimes K_-^\mu(\tilde{J})} & {H^\varphi_+(\tilde{J})\boxtimes K^\mu_-(\tilde{I}+1)}.
	\arrow["{\gamma^\bullet}", from=1-1, to=1-2]
	\arrow["{\id\boxtimes \pi^K_{\tilde{J}}(y)}"', from=1-1, to=2-1]
	\arrow["{\pi^H_{\tilde{J}}(y)\boxtimes\id}", from=1-2, to=2-2]
	\arrow["{\gamma^\bullet}"', from=2-1, to=2-2]
\end{tikzcd}\]
\[\begin{tikzcd}
	{H^\varphi_+(\tilde{I})\boxtimes K_-^\mu(\tilde{J})} & {H^\varphi_+(\tilde{J}- 1)\boxtimes K^\mu_-(\tilde{I})} \\
	{H^\varphi_+(\tilde{I})\boxtimes K_-^\mu(\tilde{J})} & {H^\varphi_+(\tilde{J}- 1)\boxtimes K^\mu_-(\tilde{I})}
	\arrow["{\vartheta^\bullet}", from=1-1, to=1-2]
	\arrow["{\id\boxtimes \pi^K_{\tilde{J}}(y)}"', from=1-1, to=2-1]
	\arrow["{\pi^H_{\tilde{J}-1}(\varphi\mu y)\boxtimes \id}", from=1-2, to=2-2]
	\arrow["{\vartheta^\bullet}"', from=2-1, to=2-2]
\end{tikzcd}\]
    \end{enumerate}
\end{proposition}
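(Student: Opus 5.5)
The plan is to reduce everything to explicit computations on the algebraic tensor products $H^\varphi_+(\tilde{I})\otimes K^\mu_-(\tilde{J})$, using the homotopy invariance of path continuation together with Proposition \ref{prop: SinglePathContinuationWRTDoublePathContinuation}. Since $\ConfR$ has contractible components, any two paths between the given regions induce the same unitary. We may therefore choose convenient paths and compute $\gamma^\bullet$ through one-sided fusions. For $\gamma$ we move in two stages. First we move the first coordinate from $\tilde{I}$ to $\tilde{J}$ with the second coordinate kept fixed in $\tilde{J}$; this is impossible directly, so we move it through an interval $\tilde{J}_0\subset\tilde{J}$, keeping the second coordinate in a slightly smaller subinterval to its right. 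Then we move the second coordinate from $\tilde{J}$ to $\tilde{I}+1$ while the first stays in $\tilde{J}$. By Proposition \ref{prop: SinglePathContinuationWRTDoublePathContinuation}, the first stage is the single path-continuation $\alpha^\bullet$ on $H^\varphi_+(-)\boxtimes K^\mu$, and the second stage is the analogous right-sided continuation $\beta^\bullet$ on $H^\varphi\boxtimes K^\mu_-(-)$. The path $\vartheta$ is handled the same way with the roles reversed.

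The key computation is an explicit formula for $\alpha^\bullet$ when $\alpha$ moves within one big interval. Take $\tilde{L}\in\Jcal_\R$ containing both $\tilde{I}$ and a piece of $\tilde{J}$, with $\tilde{L}^{c+}$ still containing a small $\tilde{J}_1\subset\tilde{J}$. For the first coordinate, $\alpha^\bullet$ is then just the identification of $H^\varphi_+(\tilde{I})\boxtimes K^\mu_-(\tilde{J}_1)$ and $H^\varphi_+(\tilde{J}_0)\boxtimes K^\mu_-(\tilde{J}_1)$ inside $H^\varphi_+(\tilde{L})\boxtimes K^\mu_-(\tilde{J}_1)$. On the right-sided model $H^\varphi\boxtimes K^\mu_-(\tilde{J}_1)$ this identification is literally the identity $\xi\otimes\eta\mapsto\xi\otimes\eta$.

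The action of $x\in\A(I)$ by $\pi^H_{\tilde I}(x)\boxtimes \id$ then acts on the $H$ factor. We follow it through the second stage, where the $K$-coordinate moves from $\tilde{J}_1$ rightwards, past the point $\partial_+\tilde J$, to $\tilde{I}+1$. In this stage we work on the left-sided model $H^\varphi_+(\tilde{J}_0)\boxtimes K^\mu$, where the continuation is again the identity. Here the claim is that $\pi^H_{\tilde{I}}(\varphi\mu x)\boxtimes\id$ corresponds to $\id\boxtimes\pi^K_{\tilde{I}+1}(x)$.

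To see this, note that on $H^\varphi_+(\tilde J_0)\otimes K^\mu$ the $H$-operator $\pi^H_{\tilde I}(\cdot)$ is not obviously defined, so we pass through the mixed model. We use the intertwining relations $Z^-(\eta,\tilde{J})\pi_0(a)=\pi^K_{\tilde{J}^{c-}}(a)Z^-(\eta,\tilde J)$. Note that $\tilde{J}^{c-}\supset\tilde{I}$ and $(\tilde J)^{c+}\supset \tilde I+1$. We also use \eqref{eq: RelationZ+Z-} in the form $Z^-=Z^+\circ V_{\mu^{-1}}$, and the analogous relation $Z^+=Z^-\circ V_{\mu}$ for $K$. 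The goal is to show that the two vectors $(\pi^H_{\tilde I}(\varphi\mu x)\xi)\otimes\eta$ and $\xi\otimes \pi^K_{\tilde I+1}(x)\eta$ have equal inner products against all $\xi'\otimes\eta'$, computed via \eqref{eq: OneSidedInnerProd}.

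The twist bookkeeping goes as follows. In the $H$ part, $\pi^H_{\tilde I+1}=\pi^H_{\tilde I}\circ\varphi$ by \eqref{eq: TwistedRepOnR}. In the $K$ part, passing from $Z^-$ to $Z^+$ for $K$ at $\tilde J$ introduces $V_\mu$, which converts $\pi_0(\mu x)$ into $\mu$-twisted form. Combining these accounts exactly for the factor $\varphi\mu$.

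The other three diagrams follow by the same argument. For $y\in\A(J)$ under $\gamma$, $y$ already acts on $K$ over $\tilde J$, and after continuation $\tilde J$ is the $H$-interval. Here no boundary point of the complementary interval is crossed in an untwisted way, so there is no twist. For $\vartheta$ the motion is leftwards, which interchanges which diagram carries the twist; in the $y$ case we get $\varphi\mu$ with $\tilde J-1$, using $\pi_{\tilde J-1}(\varphi\mu y)$ and $\epsilon$ shifted by $-1$.

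The main obstacle is this twist bookkeeping: getting exactly $\varphi\mu$, rather than $\mu\varphi$ or an inverse, out of the intertwiners $V_\varphi$, $V_\mu$ and the $\epsilon$ shifts. I would verify it first in the model case where $H^\varphi$ and $K^\mu$ are the vacuum with actions twisted through $V_\varphi$, $V_\mu$, and then transport to the general case via unitaries in $\Hom_{\A_\R(\tilde{I}^{c+})}(H_0,H^\varphi)$ and $\Hom_{\A_\R(\tilde{J}^{c-})}(H_0,K^\mu)$.
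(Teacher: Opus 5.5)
Your proposal is correct, but it is organized differently from the paper's proof.

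\textbf{The paper's route.} It reduces by additivity to $x\in\A(L)$ with $L\Subset I$. It then picks one intermediate pair $\tilde{I_1}\times\tilde{J_1}$ straddling the intervals: $\tilde{I_1}$ contains $\tilde L-1$ (with $\tilde L$ the lift of $L$ in $\tilde I+1$) and reaches into $\tilde J$, while $\tilde{J_1}$ reaches from $\tilde J$ into $\tilde I+1$. It writes $\gamma^\bullet=SR$ via Lemmas 2.10 and 2.14 of \cite{Gui21}, so that $R$ visibly carries the action. Finally it checks $S^*(\id\boxtimes\pi^K_{\tilde I+1}(x))S=\pi^H_{\tilde I}(\varphi\mu x)\boxtimes\id$ using only the left one-sided inner product, merging the two $K$-actions inside $\pi^K_{\tilde{J_1}\cup\tilde J\cup\tilde{I_1}}$.

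\textbf{Your route.} You factor $\gamma$ into two one-coordinate moves through $\tilde J_0\times\tilde J_1$, each the identity in a one-sided model. So $\pi^H_{\tilde I}(\varphi\mu x)\boxtimes\id$ (right model) and $\id\boxtimes\pi^K_{\tilde I+1}(x)$ (left model) travel unchanged and are compared only once, on $H^\varphi_+(\tilde J_0)\otimes K^\mu_-(\tilde J_1)$. That comparison closes as you anticipate. Put $A=Z^+(\xi',\tilde J_0)^*Z^+(\xi,\tilde J_0)\in\A(J_0)$ and $B=Z^-(\eta',\tilde J_1)^*Z^-(\eta,\tilde J_1)\in\A(J_1)$. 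Since $\tilde I\subset\tilde J_1^{c-}$, you get $\pi^K_{\tilde I+1}(x)\eta=\pi^K_{\tilde I}(\mu x)\eta=Z^-(\eta,\tilde J_1)\pi_0(\mu x)\Omega$. Since $\tilde I+1\subset\tilde J_0^{c+}$, you get $\pi^H_{\tilde I}(\varphi\mu x)\xi=\pi^H_{\tilde I+1}(\mu x)\xi=Z^+(\xi,\tilde J_0)\pi_0(\mu x)\Omega$. Both inner products then reduce to $\langle AB\,\pi_0(\mu x)\Omega|\Omega\rangle$, because $AB=BA$ by locality.

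\textbf{Comparison.} Your route avoids the additivity reduction and Gui's lemmas. It also makes the origin of $\varphi\mu$ transparent: $\mu$ comes from the twist of $K$, $\varphi$ from that of $H$. The price is twofold.
\begin{enumerate}
\item You must check that $a\boxtimes\id$ and $\id\boxtimes b$ are well defined on the one-sided fusions; this is routine locality.
\item For the remaining diagrams, the order of the two moves must match the one-sided model that carries the operator. For $y\in\A(J)$ under $\gamma$ you must move the second coordinate first, e.g.\ through a configuration with the first point in the right part of $\tilde I$ and the second in the left part of $\tilde I+1$. Your ``same argument'' glosses over this, as does the paper's ``analogous arguments''.
\end{enumerate}

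Your closing idea of checking a vacuum model case and transporting via local intertwiners in $\Hom_{\A_\R(\tilde I^{c+})}(H_0,H^\varphi)$ is unnecessary. The computation above works directly for arbitrary $H^\varphi,K^\mu$, and such local intertwiners would not transport the statement by themselves.
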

\begin{proof}
    This is essentially the generalization of \cite[Prop. 2.13]{Gui21}, but we include the proof for completeness. We prove the commutativity of the first diagram. The other three follow from analogous arguments.
    
    Firstly, since by additivity we have that $\A(I) = \bigvee\limits_{L\Subset I}\A(L)$ (here, $L \Subset I$ means that the closure of $L$ lies in $I$) it suffices to verify the statement for all $L\Subset I$ and $x\in \A(L)$. Pick one such $L$ and $x\in \A(L)$. We denote by $\tilde{L}$ the lift of $L$ to $\R$ lying in $\tilde{I}+1$
. 
Since $\ConfR$ is contractible, $\gamma$ is homotopic to a path $(\alpha, \beta)$ such that both $\alpha([0,1])$ and $\beta([0,1])$ can be covered by intervals in $\Jcal_\R$. Since the path continuation is invariant under homotopy, let us assume that $\gamma$ is of this form, and denote $\gamma= (\alpha, \beta)$. Choose a pair of intervals $\tilde{I_1}, \tilde{J_1}$ in $\Jcal_\R$ such that
\begin{enumerate}
    \item $\tilde{I}\cap \tilde{I_1},\tilde{I_1}\cap\tilde{J},\tilde{J}\cap\tilde{J_1},\tilde{J_1}\cap (\tilde{I}+1)\in\Jcal_\R$
    \item $\tilde{L}-1\subset \tilde{I_1}$
    \item $\alpha([0,1])$ is covered by $\tilde{I}\cup \tilde{I_1}\cup \tilde{J}$ and $\beta([0,1])$ is covered by $\tilde{J}\cup\tilde{J_1}\cup(\tilde{I}+1)$.
    \item $\tilde{J_1}\cup \tilde{J}\cup\tilde{I_1}\in \Jcal_\R$.
\end{enumerate}

\begin{figure}[htb]
    \centering
    \resizebox{.5\linewidth}{!}{
    \begin{tikzpicture}[line cap=round, line join=round]
  \def\R{6.0}    
  \def\ry{1.2}   
  \def\pitch{1.7}
  \def\turns{2}  
  \pgfmathsetmacro{\T}{360*\turns}

  \draw[line width=5pt]
    plot[samples=700, domain=100:\T, variable=\t]
      ({- \R*cos(\t)}, {\ry*sin(\t) - \pitch*(\t/360)});

    \def\tA{295}            
\def\tB{615}            
\def\Icol{pink!50!blue} 

\draw[line width=3pt, \Icol,  latex-, shorten >=3.5pt]
  plot[samples=200, domain=\tA:\tB, variable=\t]
    ({-\R*cos(\t)}, {\ry*sin(\t) - \pitch*(\t/360)-.3});

\pgfmathsetmacro{\tm}{310}
\node[\Icol!80!black]
  at ({-\R*cos(\tm)}, {\ry*sin(\tm) - \pitch*(\tm/360) - 1.05})
  {\huge $\alpha$};

\def\tA{230}            
\def\tB{270}            
\def\Icol{red}

\draw[line width=7pt, \Icol]
  plot[samples=200, domain=\tA:\tB, variable=\t]
    ({-\R*cos(\t)}, {\ry*sin(\t) - \pitch*(\t/360)});

\pgfmathsetmacro{\tm}{0.5*(.18*\tA+1.75*\tB)}
\node[\Icol!80!black]
  at ({-\R*cos(\tm)}, {\ry*sin(\tm) - \pitch*(\tm/360) + 0.60})
  {\huge $\tilde{I}+1$};

\def\tA{230+360}            
\def\tB{270+360}   

\draw[line width=7pt, \Icol]
  plot[samples=200, domain=\tA:\tB, variable=\t]
    ({-\R*cos(\t)}, {\ry*sin(\t) - \pitch*(\t/360)});

\pgfmathsetmacro{\tm}{0.5*(.8*\tA+1.2*\tB)}
\node[\Icol!80!black]
  at ({-\R*cos(\tm)}, {\ry*sin(\tm) - \pitch*(\tm/360) + 0.65})
  {\huge $\tilde{I}$};

\def\tA{280}            
\def\tB{310}            
\def\Icol{blue} 

\draw[line width=7pt, \Icol]
  plot[samples=200, domain=\tA:\tB, variable=\t]
    ({-\R*cos(\t)}, {\ry*sin(\t) - \pitch*(\t/360)});

\pgfmathsetmacro{\tm}{0.5*(\tA+\tB)}
\node[\Icol!80!black]
  at ({-\R*cos(\tm)}, {\ry*sin(\tm) - \pitch*(\tm/360) + 0.65})
  {\huge $\tilde{J}$};

\def\tA{250}            
\def\tB{300}            
\def\Icol{green!60!black} 

\draw[line width=7pt, \Icol]
  plot[samples=200, domain=\tA:\tB, variable=\t]
    ({-\R*cos(\t)}, {\ry*sin(\t) - \pitch*(\t/360)-.1});

\pgfmathsetmacro{\tm}{0.5*(\tA+\tB)}
\node[\Icol!80!black]
  at ({-\R*cos(\tm)}, {\ry*sin(\tm) - \pitch*(\tm/360) - 0.75})
  {\huge $\tilde{J_1}$};

\def\tA{240+360}            
\def\tB{305}            
\def\Icol{orange} 

\draw[line width=7pt, \Icol]
  plot[samples=200, domain=\tA:\tB, variable=\t]
    ({-\R*cos(\t)}, {\ry*sin(\t) - \pitch*(\t/360)+.1});

\pgfmathsetmacro{\tm}{0.5*(\tA+\tB)}
\node[\Icol!80!black]
  at ({-\R*cos(\tm)}, {\ry*sin(\tm) - \pitch*(\tm/360) + 0.75})
  {\huge $\tilde{I_1}$};

  \def\tA{235}            
\def\tB{240}            
\def\Icol{purple!60!black} 

\draw[line width=7pt, \Icol]
  plot[samples=200, domain=\tA:\tB, variable=\t]
    ({-\R*cos(\t)}, {\ry*sin(\t) - \pitch*(\t/360)-.1});

\pgfmathsetmacro{\tm}{0.5*(\tA+\tB)}
\node[\Icol!80!black]
  at ({-\R*cos(\tm)}, {\ry*sin(\tm) - \pitch*(\tm/360) - 0.75})
  {\huge $\tilde{L}$};

  \def\tA{210}            
\def\tB{225}            
\def\Icol{black} 

\draw[line width=5pt, \Icol]
  plot[samples=200, domain=\tA:\tB, variable=\t]
    ({-\R*cos(\t)}, {\ry*sin(\t) - \pitch*(\t/360)});

  \def\tA{269}            
\def\tB{283}            
\def\Icol{pink!50!blue} 

\draw[line width=3pt, \Icol,  latex-, shorten >=3.5pt]
  plot[samples=200, domain=\tA:\tB, variable=\t]
    ({-\R*cos(\t)}, {\ry*sin(\t) - \pitch*(\t/360)+.4});

\pgfmathsetmacro{\tm}{0.5*(\tA+\tB)}
\node[\Icol!80!black]
  at ({-\R*cos(\tm)}, {\ry*sin(\tm) - \pitch*(\tm/360) + 0.75})
  {\huge $\beta$};

\end{tikzpicture}}
\caption{}
    \label{fig:intervals}
\end{figure}

See Figure \ref{fig:intervals}. Consider the unitary maps 
\begin{align*}
    R: &\ H^\varphi_+(\tilde{I})\boxtimes K_-^\mu(\tilde{J})\xrightarrow{\cong} H^\varphi_+(\tilde{I}\cap \tilde{I_1})\boxtimes K^\mu_-(\tilde{J}\cap \tilde{J_1})\xrightarrow{\cong} H^\varphi_+(\tilde{I_1})\boxtimes K^\mu_-(\tilde{J_1}) \\
    S:&\ H^\varphi_+(\tilde{I_1})\boxtimes K^\mu_-(\tilde{J_1}) \xrightarrow{\cong } H^\varphi_+(\tilde{I_1}\cap \tilde{J})\boxtimes K^\mu_-(\tilde{J_1}\cap (\tilde{I}+1))\xrightarrow{\cong} H^\varphi_+(\tilde{J})\boxtimes K_-^\mu(\tilde{I}+1).
\end{align*}
By the straightforward generalization of \cite[Lem. 2.10]{Gui21} to our case, we have that $\gamma^\bullet = SR$, and by \cite[Lem. 2.14]{Gui21}, $R$ intertwines the actions of $x$ on the domain and the target. Hence, it is enough to show that $S^*(\id\boxtimes \pi^K_{\tilde{I}+1}(x))S = \pi^H_{\tilde{I}}(\varphi\mu x)\boxtimes\id$. Let $\xi,\xi'\in H^\varphi_+(\tilde{I_1}\cap\tilde{J})$ and $\eta,\eta'\in K^\mu_-(\tilde{J_1}\cap(\tilde{I}+1))$. Note that such $\xi\otimes\eta$ span a dense subspace of $H^\varphi_+(\tilde{I_1})\boxtimes K^\mu_-(\tilde{J_1})$, and that  $S(\xi\otimes \eta) = \xi\otimes \eta$, and $S(\xi'\otimes \eta') = \xi'\otimes \eta'$. Let us compute
\begin{align*}
    \langle S^*(\id\boxtimes \pi^K_{\tilde{I}+1}(x))S(\xi\otimes\eta)\,|\,\xi'\otimes\eta'\rangle & = \langle \xi\otimes \big(\pi_{\tilde{I}+1}^K(x)(\eta)\big)\,|\,\xi'\otimes\eta'\rangle \\ & = \langle \pi^K_{\tilde{J}}\big(Z^+(\xi', \tilde{J})^*Z^+(\xi, \tilde{J})\big)\circ \pi^K_{\tilde{L}}(x)(\eta)\,|\,\eta'\rangle\\ & = \langle \pi^K_{\tilde{J_1}\cup\tilde{J}\cup \tilde{I_1}} \big(Z^+(\xi', \tilde{J})^*Z^+(\xi, \tilde{J})\big )\circ \pi_{\tilde{L}-1}^K(\mu x)(\eta)\,|\,\eta'\rangle
    \\ & = \langle \pi^K_{\tilde{J_1}\cup\tilde{J}\cup \tilde{I_1}} \big(Z^+(\xi', \tilde{J})^*Z^+(\xi, \tilde{J})\,\mu x)(\eta)\,|\,\eta'\rangle.
\end{align*}
It follows from the properties of $Z^+(\xi, \tilde{J}) = Z^+(\xi,\tilde{I_1}\cap\tilde{J}) = Z^+(\xi, \tilde{I_1})$ and the fact that $\tilde{L}$ is disjoint from $\tilde{I_1}\cap\tilde{J}$ that $Z^+(\xi,\tilde{J})\,\mu x = Z^+(\pi^H_{\tilde{L}}(\mu x)(\xi), \tilde{I}) = Z^+(\pi^H_{\tilde{L}-1}(\varphi\mu x)(\xi), \tilde{I})$. Therefore, the inner product above equals
\begin{align*}
\langle \pi^K_{\tilde{I_1}} \big(Z^+(\xi', \tilde{I_1})^*Z^-(\pi^H_{\tilde{L}-1}(\varphi\mu x)(\xi), \tilde{I_1})\big)(\eta)\,|\,\eta'\rangle & = \langle \pi^H_{\tilde{L}-1}(\varphi\mu x)(\xi)\otimes \eta\,|\,\xi'\otimes\eta'\rangle,
\end{align*}
and the commutativity of the first diagram follows.
\end{proof}

We can now equip the Connes fusion $H^\varphi_+(\tilde{I})\boxtimes K^\mu_-(\tilde{J})$ with a $(\varphi\circ\mu)$-twisted action of $\A$.

\begin{theorem}\label{thm: RepresentationOnFusion}
    Let $\varphi,\mu\in \Aut(\A)$ be automorphisms and $H^\varphi$ and $K^\mu$ be $\varphi$- and $\mu$-twisted $\A$-representations respectively. Fix intervals $\tilde{I},\tilde{J}\in\Jcal_\R$ with $\tilde{J}\subset\tilde{I}^{c+}$. Then,
    \begin{enumerate}
        \item there is a unique compatible collection $\pi^l$ of actions of the $*$-algebras $\A_\R(\tilde{L})$ on $H^\varphi_+(\tilde{I})\boxtimes K^\mu_-(\tilde{J})$ such that the following condition holds: for every $\tilde{L},\tilde{M}\in \Jcal_\R$ with $\tilde{M}\subset \tilde{L}^{c+}$, any path $\gamma$ in $\ConfR$ from $\tilde{I}\times\tilde{J}$ to $\tilde{L}\times\tilde{M}$, it holds that
        \begin{equation}\label{eq: leftRep}
        \pi_{\tilde{L}}^l(x) = (\gamma^\bullet)^{-1} (\pi_{\tilde{L}}^{H}(x)\boxtimes \id)\gamma^\bullet
        \end{equation}
        for all $x\in \A_\R(\tilde{L})$;
        \item  there is a unique compatible collection $\pi^r$ of actions of the $*$-algebras $\A_\R(\tilde{L})$ on $H^\varphi_+(\tilde{I})\boxtimes K^\mu_-(\tilde{J})$ such that the following condition holds: for every $\tilde{L},\tilde{M}\in \Jcal_\R$ with $\tilde{L}\subset \tilde{M}^{c+}$, any path $\vartheta$ in $\ConfR$ from $\tilde{I}\times\tilde{J}$ to $\tilde{M}\times\tilde{L}$, it holds that
        \begin{equation}\label{eq: rightRep}
        \pi^r_{\tilde{L}}(x) = (\vartheta^\bullet)^{-1} (\id \boxtimes \pi_{\tilde{L}}^{K}(x))\vartheta^\bullet
        \end{equation}
        for all $x\in \A_\R(\tilde{L})$;
        \item the actions $\pi^l$ and $\pi^r$ agree, and hence there is a canonical natural compatible collection $\pi^{H\boxtimes K}$ of actions of the algebras $\A_\R(\tilde{L})$ on $H^\varphi_+(\tilde{I})\boxtimes K^\mu_-(\tilde{J})$;
    \item the unitary maps induced by inclusions of intervals, restrictions of intervals, and path continuations between Connes fusions intertwine the actions $\pi^{H\boxtimes K}$;
    \item the actions $\pi^{H\boxtimes K}$ satisfy that, for all $\tilde{L}\in \Jcal_\R$ and $x\in \A(L)$, 
    \[
    \pi^{H\boxtimes K}_{\tilde{L}}(\varphi\mu x) = \pi^{H\boxtimes K}_{\tilde{L}+1}(x),
    \]
    hence providing a $\varphi\mu$-twisted representation $\pi^{H\boxtimes K}$ of $\A$ on $H^\varphi_+(\tilde{I})\boxtimes K_-^\mu(\tilde{J})$.
    \end{enumerate}
\end{theorem}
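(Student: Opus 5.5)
The plan follows the untwisted argument of \cite[Thm. 2.15]{Gui21}, with Proposition \ref{prop: SwapIntervals} supplying the twisted input. I would prove the five items in order.

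\emph{Items (1) and (2).} Define $\pi^l_{\tilde{L}}(x)$ by the formula \eqref{eq: leftRep} for some chosen $\tilde{M}\subset\tilde{L}^{c+}$ and some path $\gamma$. The first task is independence of choices.
\begin{itemize}
\item Since $\ConfR$ is contractible, any two paths between $\tilde{I}\times\tilde{J}$ and $\tilde{L}\times\tilde{M}$ are homotopic. By the homotopy invariance proposition, $\gamma^\bullet$ depends only on the endpoint rectangles.
\item Changing $\tilde{M}$ to some $\tilde{M}'\subset\tilde{L}^{c+}$ is harmless. By Proposition \ref{prop: SinglePathContinuationWRTDoublePathContinuation}, each $\gamma^\bullet$ factors through $H^\varphi_+(\tilde{L})\boxtimes K^\mu$. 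The operator $\pi^H_{\tilde{L}}(x)\boxtimes\id$ is defined on that one-sided fusion and intertwined by the canonical identifications.
\end{itemize}
Next I check that $\pi^l$ is a compatible family. For $\tilde{L}_1\subset\tilde{L}$, pick $\tilde{M}\subset\tilde{L}^{c+}$. The canonical map $H_+(\tilde{L}_1)\boxtimes K_-(\tilde{M})\to H_+(\tilde{L})\boxtimes K_-(\tilde{M})$ is a path continuation along a constant path. It intertwines $\pi^H_{\tilde{L}_1}(x)\boxtimes\id$ and $\pi^H_{\tilde{L}}(x)\boxtimes\id$, because $\pi^H$ is compatible on $\Jcal_\R$. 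Uniqueness is immediate, since the formula determines the operators.

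Item (2) is the same argument with right fusions.

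\emph{Item (3).} This is the heart of the proof. Both $\pi^l$ and $\pi^r$ are compatible families, and by additivity every $\tilde{L}$ is covered by small subintervals. So it suffices to show $\pi^l_{\tilde{L}}=\pi^r_{\tilde{L}}$ for $\tilde{L}$ small. For such $\tilde{L}$ I pick a configuration $\tilde{M}\times\tilde{L}$ (for $\pi^r$) and move to $\tilde{L}\times(\tilde{M}+1)$ (for $\pi^l$) by a path $\gamma$.
\begin{itemize}
\item Proposition \ref{prop: SwapIntervals}(2), applied with the pair $(\tilde{M},\tilde{L})$ in place of $(\tilde{I},\tilde{J})$, gives
\[
(\gamma^\bullet)^{-1}(\pi^H_{\tilde{L}}(y)\boxtimes\id)\gamma^\bullet=\id\boxtimes\pi^K_{\tilde{L}}(y).
\]
\item Composing with the path from $\tilde{I}\times\tilde{J}$ to $\tilde{M}\times\tilde{L}$, and using path-independence, yields $\pi^l_{\tilde{L}}=\pi^r_{\tilde{L}}$.
\end{itemize}
The main obstacle is bookkeeping of lifts. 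One must make sure the rectangle used for $\pi^r$ really has $\tilde{L}$ as the second coordinate with $\tilde{L}\subset\tilde{M}^{c+}$. One must also check that the swap moves to exactly $\tilde{L}\times(\tilde{M}+1)$, and not to a shifted copy of it; a shift would produce a spurious factor of $\varphi\mu$.

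\emph{Item (4).} This follows directly from the definitions. Inclusions, restrictions and path continuations compose with the $\gamma^\bullet$'s used in \eqref{eq: leftRep}, and path continuation is functorial under concatenation.

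\emph{Item (5).} Use $\pi^l$ at $\tilde{L}$ and $\pi^r$ at $\tilde{L}+1$, which is allowed by item (3).
\begin{itemize}
\item Choose $\tilde{M}$ with $\tilde{M}\subset\tilde{L}^{c+}$. The first diagram of Proposition \ref{prop: SwapIntervals}(1), applied to the pair $(\tilde{L},\tilde{M})$, says the path $\tilde{L}\times\tilde{M}\to\tilde{M}\times(\tilde{L}+1)$ intertwines $\pi^H_{\tilde{L}}(\varphi\mu x)\boxtimes\id$ with $\id\boxtimes\pi^K_{\tilde{L}+1}(x)$.
\item By the definitions of $\pi^l_{\tilde{L}}$ and $\pi^r_{\tilde{L}+1}$, this reads $\pi^l_{\tilde{L}}(\varphi\mu x)=\pi^r_{\tilde{L}+1}(x)$.
\end{itemize}
Combining with item (3) gives $\pi^{H\boxtimes K}_{\tilde{L}}(\varphi\mu x)=\pi^{H\boxtimes K}_{\tilde{L}+1}(x)$. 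Together with compatibility under inclusions, the equivalence discussed after \eqref{eq: TwistedRepOnR} turns this family into a $\varphi\mu$-twisted representation.
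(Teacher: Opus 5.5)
Your proposal is correct and follows essentially the paper's route: everything reduces to Proposition \ref{prop: SwapIntervals} plus homotopy invariance and concatenation of path continuations, with $\pi^l=\pi^r$ obtained from a single swap between the configurations defining the left and right actions. The only cosmetic difference is in item (5): you use one swap $\tilde{L}\times\tilde{M}\to\tilde{M}\times(\tilde{L}+1)$ and then invoke item (3), whereas the paper composes two $\vartheta$-type swaps $\tilde{L}\times\tilde{M}\to(\tilde{M}-1)\times\tilde{L}\to(\tilde{L}-1)\times(\tilde{M}-1)$ and works with the left action throughout.
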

\begin{proof}
Choose $\tilde{L},\tilde{M}\in \Jcal_\R$ with $\tilde{M}\subset \tilde{L}^{c+}$. Then, $\tilde{L}\subset (\tilde{M} - 1)^{c+}$. Let $\gamma$ and $\vartheta$ be paths in $\ConfR$ from $\tilde{I}\times \tilde{J}$ to $\tilde{L}\times\tilde{M}$ and $(\tilde{M}-1)\times\tilde{L}$ respectively. Then, $\vartheta\circ\gamma^{-1}$ is a path in $\ConfR$ from $\tilde{L}\times\tilde{M}$ to $(\tilde{M}-1)\times\tilde{L}$, and we can apply Proposition \ref{prop: SwapIntervals} to obtain
\begin{equation}\label{eq: GammaVartheta}
(\gamma^\bullet)^{-1}\circ\big(\pi^H_{\tilde{L}}(x)\boxtimes \id\big)\circ \gamma^\bullet = (\vartheta^\bullet)^{-1}\circ \big(\id\boxtimes\pi^K_{\tilde{L}}(x)\big)\circ \vartheta^\bullet.
\end{equation}
We define the action $\pi^l$ using Equation \eqref{eq: leftRep}, which is independent of the chosen $\gamma$ by Equation \eqref{eq: GammaVartheta}. We argue similarly for $\pi^r$. Then, $\pi^l = \pi^r$ follows from \eqref{eq: GammaVartheta}. The fact that $\pi^{H\boxtimes K} := \pi^r = \pi^l$ commutes with the maps induced by inclusion and restriction of intervals and path continuations is obvious. Let us prove the last statement. Let $\gamma_1$ be a path from $\tilde{L}\times\tilde{M}$ to $(\tilde{M}-1)\times \tilde{L}$ and $\gamma_2$ a path from $(\tilde{M}-1)\times \tilde{L}$ to $(\tilde{L}-1)\times(\tilde{M}-1)$. We write $\gamma_2*\gamma_1$ for the concatenation of $\gamma_1$ and $\gamma_2$. Then, $(\gamma_2*\gamma_1)^\bullet = \gamma_2^\bullet\gamma_1^\bullet$. By Proposition \ref{prop: SwapIntervals}, the two squares in 
\[\begin{tikzcd}
	{H^\varphi_+(\tilde{L})\boxtimes K^\mu_-(\tilde{M})} & {H^\varphi_+(\tilde{M}-1)\boxtimes K^\mu_-(\tilde{L})} & {H^\varphi_+(\tilde{L}-1)\boxtimes K^\mu_-(\tilde{M}-1)} \\
	{H^\varphi_+(\tilde{L})\boxtimes K^\mu_-(\tilde{M})} & {H^\varphi_+(\tilde{M}-1)\boxtimes K^\mu_-(\tilde{L})} & {H^\varphi_+(\tilde{L}-1)\boxtimes K^\mu_-(\tilde{M}-1)}
	\arrow["{\gamma_1^\bullet}", from=1-1, to=1-2]
	\arrow["{\tilde{\gamma}^\bullet}"', curve={height=-24pt}, from=1-1, to=1-3]
	\arrow["{\pi^H_{\tilde{L}}(x)\boxtimes\id}"', from=1-1, to=2-1]
	\arrow["{\gamma_2^\bullet}", from=1-2, to=1-3]
	\arrow["{\id\boxtimes\pi^K_{\tilde{L}}(x)}", from=1-2, to=2-2]
	\arrow["{\pi_{\tilde{L}-1}^H(\varphi\mu x)\boxtimes\id}", from=1-3, to=2-3]
	\arrow["{\gamma_1^\bullet}"', from=2-1, to=2-2]
	\arrow["{\tilde{\gamma}^\bullet}"', curve={height=24pt}, from=2-1, to=2-3]
	\arrow["{\gamma_2^\bullet}"', from=2-2, to=2-3]
\end{tikzcd}\]
commute for all $x\in \A(L)$, making the whole diagram commute. Therefore
\[\hspace{-.4cm}
\pi^{H\boxtimes K}_{\tilde{L}-1}(\varphi\mu x) = ((\gamma_2*\gamma_1*\gamma)^\bullet)^{-1}\circ(\pi_{\tilde{L}-1}^H(\varphi\mu x)\boxtimes \id)\circ(\gamma_2*\gamma_1*\gamma)^\bullet = (\gamma^\bullet)^{-1}\circ(\pi_{\tilde{L}}^H(x)\boxtimes\id)\circ\gamma^\bullet  = \pi^{H\boxtimes K}_{\tilde{L}}(x),
\]
where we use that the concatenation $\gamma_2*\gamma_1*\gamma$ is a path in $\ConfR$ from $\tilde{I}\times\tilde{J}$ to $(\tilde{L} - 1)\times(\tilde{M}-1)$.
\end{proof}

Let $\hat{H}^{\varphi}\in \Rep^\varphi(\A)$ and $ \hat{K}^{\mu}\in \Rep^\mu(\A)$ be representations of $\A$ twisted by $\varphi$ and $\mu$ respectively. Recall that in Definition \ref{def: FusionOfMorphisms}, given morphisms of twisted $\A$-representations $F:H^\varphi\to \hat{H}^\varphi$ and $G:K^\mu\to \hat{K}^\mu$ we have defined a bounded map
\[
F\boxtimes G: H^\varphi_+(\tilde{I})\boxtimes K^\mu_-(\tilde{J})\to \hat{H}^\varphi_+(\tilde{I})\boxtimes \hat{K}^\mu_-(\tilde{J}).
\]
By the same arguments as \cite[Prop. 2.17]{Gui21}, $F\boxtimes G$ commutes with canonical equivalences given by inclusion and restriction of intervals, and path continuations, and is a morphism of twisted $\A$-representations.

We can define similarly twisted actions of $\A$ on fusions over single intervals. Let $\tilde{I}\in \Jcal_\R$. If $\tilde{L}\in \Jcal_\R$ is a subinterval $\tilde{L}\subset \tilde{I}$, we define the action of $x\in\A_\R(\tilde{L})$ on $H_+^\varphi(\tilde{I})\boxtimes K^\mu$ by $\pi_{\tilde{L}}^{H\boxtimes K}(\xi\otimes \eta) = \pi_{\tilde{L}}^H(x)(\xi)\otimes \eta$ for all $\xi\in H_+^\varphi(\tilde{I})$ and $\eta\in K^\mu$. For a generic interval $\tilde{L}\in\Jcal_\R$, we choose a path $\alpha$ in $\R$ from $\tilde{I}$ to $\tilde{L}$, and define $\pi_{\tilde{L}}^{ H\boxtimes K}(x) = (\alpha^\bullet)^{-1}\circ \big(\pi_{\tilde{L}}^H(x)\boxtimes\id\big)\circ\alpha^\bullet$. This is independent of the path chosen. Alternatively, one can define the action of $\A_\R$ on $H^\varphi_+(\tilde{I})\boxtimes K^\mu$ via the canonical equivalence $H^\varphi_+(\tilde{I})\boxtimes K^\mu_-(\tilde{J})\cong H^\varphi_+(\tilde{I})\boxtimes K^\mu$ for some $\tilde{J}\in \Jcal_\R$ with $\tilde{J}\subset\tilde{I}^{c+}$ and the action of $\A_\R$ on $H^\varphi_+(\tilde{I})\boxtimes K^\mu_-(\tilde{J})$ from Theorem \ref{thm: RepresentationOnFusion}. These two actions agree.

We finally consider the tensor product with the vacuum representation. We define the linear map
\[
\begin{array}{cccc}
    \natural_H: & H_+^\varphi(\tilde{I})\otimes H_0 & \to& H^\varphi \\
     & \xi\otimes \pi_0(x)(\Omega) &\mapsto & \pi_{\tilde{I}^{c+}}^H(x)(\xi)
\end{array}
\]
for all $\xi\in H^\varphi_+(\tilde{I})$ and $x\in \A(I^c)$. Note that we drop $\tilde{I}$ from the notation of $\natural_H$. Then, $\natural_H$ extends to a unitary map $\natural_H: H_+^\varphi(\tilde{I})\boxtimes H_0 \xrightarrow{\cong} H^\varphi$, which commutes with inclusions and restrictions of intervals, path continuations, and the actions of $\A$. Hence, we obtain the following result, analogously to \cite[Thm. 2.18]{Gui21}.
\begin{theorem}\label{thm: PreUnitors}
There exists a unique unitary isomorphism of twisted $\A$-representations $\natural_H: H_+^\varphi(\tilde{I})\boxtimes H_0 \xrightarrow{\cong} H^\varphi$ such that
\[
\natural_H(\xi\otimes \pi_0(x)(\Omega)) = \pi_{\tilde{I}^{c+}}^H(x)(\xi)
\]
for all $\xi\in H^\varphi_+(\tilde{I})$ and $x\in \A(I^c)$. Moreover, $\natural_H$ commutes with path-continuations.
\end{theorem}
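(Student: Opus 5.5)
First I will show that the formula defines an isometry on the algebraic tensor product; then I will extend it by continuity and check surjectivity, the intertwining property and compatibility with path continuations. Let $\xi,\xi'\in H^\varphi_+(\tilde{I})$ and $x,x'\in\A(I^c)$. The inner product on $H^\varphi_+(\tilde{I})\boxtimes H_0$ is the one-sided one defined above, with $K=H_0$. Since $H_0$ is untwisted, $\pi^{H_0}_{\tilde{I}}=\pi_{0,I}$, so
\[
\langle \xi\otimes \pi_0(x)\Omega\,|\,\xi'\otimes\pi_0(x')\Omega\rangle=\langle \pi_0(x'^*)\,Z^+(\xi',\tilde{I})^*Z^+(\xi,\tilde{I})\,\pi_0(x)\Omega\,|\,\Omega\rangle .
\]
Here I have used that $Z^+(\xi',\tilde{I})^*Z^+(\xi,\tilde{I})\in\A(I)$ commutes with $\pi_0(x)$ and $\pi_0(x'^*)$ by locality. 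Because $Z^+(\xi,\tilde{I})$ intertwines $\pi_{0,I^c}$ with $\pi^H_{\tilde{I}^{c+}}$, we get $Z^+(\xi,\tilde{I})\pi_0(x)\Omega=\pi^H_{\tilde{I}^{c+}}(x)\xi$, and similarly for $\xi',x'$. So the expression equals $\langle \pi^H_{\tilde{I}^{c+}}(x)\xi\,|\,\pi^H_{\tilde{I}^{c+}}(x')\xi'\rangle$. This shows at once that the map is well defined (vectors of norm zero go to zero) and that it is isometric.

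Next I extend by continuity. By Reeh--Schlieder, $\A(I^c)\Omega$ is dense in $H_0$, so vectors $\xi\otimes\pi_0(x)\Omega$ span a dense subspace of $H^\varphi_+(\tilde{I})\boxtimes H_0$, and the isometry extends uniquely to the completion. This extension argument also gives uniqueness. For surjectivity, take $x=1$: the image contains $H^\varphi_+(\tilde{I})$, which is dense in $H^\varphi$. An isometry with dense range is unitary.

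For the intertwining property, I would first check subintervals $\tilde{L}\subset\tilde{I}$. There the action on the fusion is $\pi^H_{\tilde{L}}(y)\boxtimes\id$. Moreover $\pi^H_{\tilde{L}}(y)$ commutes with $\pi^H_{\tilde{I}^{c+}}(x)$, because $\tilde{L}$ and $\tilde{I}^{c+}$ both lie in an interval of $\Jcal_\R$ on which the actions come from $\A$ and are local. So the two sides agree on generators. For $\tilde{L}\subset\tilde{I}^{c+}$ I would instead use the right description of the action: by Theorem~\ref{thm: RepresentationOnFusion}, with a trivial path, it acts as $\id\boxtimes\pi_0(y)$. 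Then $\natural_H(\xi\otimes\pi_0(yx)\Omega)=\pi^H_{\tilde{I}^{c+}}(y)\pi^H_{\tilde{I}^{c+}}(x)\xi$, as required. For a general $\tilde{L}$, additivity lets me cover $L$ by small subintervals. Each of these lifts either into a translate of $\tilde{I}$ or of $\tilde{I}^{c+}$, or is handled using a neighbouring pair of intervals. The twisted periodicity relation (Theorem~\ref{thm: RepresentationOnFusion}(5), compared with \eqref{eq: TwistedRepOnR}) then carries the result to all integer translates.

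Finally I would check compatibility with inclusions and path continuations. For $\tilde{I}_1\subset\tilde{I}$ we have $\tilde{I}^{c+}\subset\tilde{I}_1^{c+}$, and the formulas for $\natural_H$ agree on $\xi\in H^\varphi_+(\tilde{I}_1)$ and $x\in\A(I^c)$. This gives commutation with the canonical maps, hence with small path continuations, hence with all of them. I expect the main obstacle to be the general $\tilde{L}$ in the intertwining step, where the lifts and the twisting exponents $\epsilon$ must be tracked carefully. A cleaner route there is to avoid covering arguments altogether: conjugate by the path-continuation compatibility just established, which reduces any $\tilde{L}$ to the case $\tilde{L}\subset\tilde{I}$.
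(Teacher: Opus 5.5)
Your proposal is correct and follows the same route as the paper, which only sketches the argument by reference to Gui's Thm.~2.18: an isometry on the algebraic tensor product via the one-sided inner product, dense range from $x=1$, compatibility with inclusions of intervals and hence with path continuations, and then the intertwining property. For general $\tilde{L}$, use your second option, reducing via path-continuation compatibility to $\tilde{L}$ inside the fusion interval. Note also that commuting $\pi^H_{\tilde{L}}(y)$ with $\pi^H_{\tilde{I}^{c+}}(x)$ through a common interval of $\Jcal_\R$ only works for $\tilde{L}\Subset\tilde{I}$; the remaining cases (e.g.\ $\tilde{L}=\tilde{I}$) follow by additivity and normality.
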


\subsection{Associativity}\label{Sec: Associativity}

In this section, we consider the Connes fusion of more than two twisted representations of $\A$. We restrict to the case of 3 representations. Fix $\varphi, \mu, \nu\in\Aut(\A)$ and let $H^\varphi, K^\mu, R^\nu$ be representations of $\A$ twisted by $\varphi, \mu$ and $\nu$ respectively. Let $\tilde{I},\tilde{J}\in \Jcal_\R$ be such that $\tilde{J}\subset \tilde{I}^{c+}$. Define on $H^\varphi_+(\tilde{I})\otimes K^\mu\otimes R^\nu_-(\tilde{J})$ the positive sesquilinear form
\[
\langle \xi\otimes\eta\otimes \chi\,|\,\xi'\otimes\eta'\otimes\chi'\rangle := \langle \pi_{\tilde{I}}^K\big(Z^+(\xi', \tilde{I})^*Z^+(\xi, \tilde{I})\big)\pi_{\tilde{J}}^K\big(Z^-(\chi', \tilde{J})^*Z^-(\chi, \tilde{J})\big)(\eta)\,|\,\eta'\rangle.
\]
We let $H_+^\varphi(\tilde{I})\boxtimes K^\mu\boxtimes R_-^\nu(\tilde{J})$ denote the Hilbert space completion of $H_+^\varphi(\tilde{I})\otimes K^\mu\otimes R_-^\nu(\tilde{J})$ with respect to the positive form above. The Connes fusion $H_+^\varphi(\tilde{I})\boxtimes K^\mu\boxtimes R_-^\nu(\tilde{J})$ admits a $\varphi\mu\nu$-twisted action $\pi^{H\boxtimes K\boxtimes R}$ of $\A$ defined as follows. Let $\tilde{L}\in \Jcal_\R$ and $x\in \A_\R(\tilde{L}).$ Pick $\tilde{M}\in \Jcal_\R$ with $\tilde{M}\subset\tilde{L}^{c+}$ and $\gamma: [0,1]\to \ConfR$ a path from $\tilde{I}\times\tilde{J}$ to $\tilde{L}\times\tilde{M}$. Analogously to the definition of path-continuations in the previous section, $\gamma$ provides a unitary equivalence
\[
\gamma^\bullet: H_+^\varphi(\tilde{I})\boxtimes K^\mu\boxtimes R_-^\nu(\tilde{J})\xrightarrow{\cong} H_+^\varphi(\tilde{L})\boxtimes K^\mu\boxtimes R_-^\nu(\tilde{M})
\]
also called a path-continuation. The action $\pi_{\tilde{L}}^{H\boxtimes K\boxtimes R}(x)$ is defined as
\[
\pi_{\tilde{L}}^{H\boxtimes K\boxtimes R}(x):= (\gamma^{\bullet})^{-1}\circ (\pi^H_{\tilde{L}}(x)\boxtimes \id\boxtimes \id)\circ \gamma^\bullet,
\]
where $\pi^H_{\tilde{L}}(x)\boxtimes \id\boxtimes \id: H_+^\varphi(\tilde{L})\boxtimes K^\mu\boxtimes R_-^\nu(\tilde{M})\to H_+^\varphi(\tilde{L})\boxtimes K^\mu\boxtimes R_-^\nu(\tilde{M})$ is the unitary induced by the map $H_+^\varphi(\tilde{L})\otimes K^\mu\otimes R_-^\nu(\tilde{M})\to H_+^\varphi(\tilde{L})\otimes K^\mu\otimes R_-^\nu(\tilde{M})$ given by $\xi\otimes\eta\otimes\chi\mapsto \pi_{\tilde{L}}^H(x)(\xi)\otimes\eta\otimes\chi$. One can prove this is a well-defined action using similar arguments to the proof of Theorem \ref{thm: RepresentationOnFusion}. Analogous arguments to those in the theorem also show that the action can be equivalently defined by taking $\tilde{M}\subset\tilde{L}^{c-}$, a path $\vartheta$ from $\tilde{I}\times \tilde{J}$ to $\tilde{M}\times \tilde{L}$, and letting
\[
\pi_{\tilde{L}}^{H\boxtimes K\boxtimes R}(x):= (\vartheta^{\bullet})^{-1}\circ ( \id\boxtimes \id\boxtimes \pi^R_{\tilde{L}}(x))\circ \vartheta^\bullet.
\]
Here, the map $\id\boxtimes \id\boxtimes \pi^R_{\tilde{L}}(x)$ is defined analogously to the map $\pi^H_{\tilde{L}}(x)\boxtimes \id\boxtimes \id$ above.

Consider the iterated Connes fusion $\big(H^\varphi_+(\tilde{I})\boxtimes K^\mu\big)\boxtimes R^\nu_-(\tilde{J})$ of $H_1:= H^\varphi_+(\tilde{I})\boxtimes K^\mu$ with $R^\nu$ over the interval $\tilde{J}$ on the right. We write $\pi_1$ for the action of $\A$ on~$H_1$.

\begin{proposition}\label{prop: MapAssociativity}
    The map $\big(H^\varphi_+(\tilde{I})\otimes K^\mu\big)\otimes R^\nu_-(\tilde{J})\to H^\varphi_+(\tilde{I})\otimes K^\mu \otimes R^\nu_-(\tilde{J})$ given by $(\xi\otimes\eta)\otimes \chi\mapsto \xi\otimes\eta\otimes\chi$ extends to a unitary map
    \[
    \big(H^\varphi_+(\tilde{I})\boxtimes K^\mu\big)\boxtimes R^\nu_-(\tilde{J}) \xrightarrow{\cong } H^\varphi_+(\tilde{I})\boxtimes K^\mu\boxtimes R^\nu_-(\tilde{J}).
    \]
\end{proposition}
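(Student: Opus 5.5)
The map is to be checked to be isometric on the algebraic tensor product and to have dense image; it then extends to a unitary.

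\textbf{Isometry.} Write $H_1 = H^\varphi_+(\tilde{I})\boxtimes K^\mu$ with its $\varphi\mu$-twisted action $\pi_1$. By definition of the fusion on the right, for $\xi,\xi'\in H^\varphi_+(\tilde{I})$, $\eta,\eta'\in K^\mu$ and $\chi,\chi'\in R^\nu_-(\tilde{J})$,
\[
\langle (\xi\otimes\eta)\otimes\chi\,|\,(\xi'\otimes\eta')\otimes\chi'\rangle = \langle \pi_{1,\tilde{J}}\big(Z^-(\chi',\tilde{J})^*Z^-(\chi,\tilde{J})\big)(\xi\otimes\eta)\,|\,\xi'\otimes\eta'\rangle .
\]
Set $y:=Z^-(\chi',\tilde{J})^*Z^-(\chi,\tilde{J})\in\A(J)=\A_\R(\tilde{J})$. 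The key step is to show that on $H_1$,
\[
\pi_{1,\tilde{J}}(y)(\xi\otimes\eta) = \xi\otimes \pi^K_{\tilde{J}}(y)\eta .
\]
To prove this, choose $\tilde{J}_0\subset\tilde{I}^{c+}$ containing $\tilde{J}$ and use the canonical equivalence $H^\varphi_+(\tilde{I})\boxtimes K^\mu_-(\tilde{J}_0)\cong H^\varphi_+(\tilde{I})\boxtimes K^\mu$. Through this equivalence the action on $H_1$ is the one from Theorem~\ref{thm: RepresentationOnFusion}. By part (2) of that theorem, with the constant path, the action of $\A_\R(\tilde{J})$ is $\id\boxtimes\pi^K_{\tilde{J}}(y)$. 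On vectors $\xi\otimes\eta$ with $\eta\in K^\mu_-(\tilde{J}_0)$ this gives the displayed formula. It extends to all $\eta\in K^\mu$ by density and continuity, because $\eta\mapsto\xi\otimes\eta$ is bounded: its norm is controlled by $\|Z^+(\xi,\tilde{I})\|$.

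Substituting the formula, the inner product becomes
\[
\langle \xi\otimes\pi^K_{\tilde{J}}(y)\eta\,|\,\xi'\otimes\eta'\rangle = \langle \pi^K_{\tilde{I}}\big(Z^+(\xi',\tilde{I})^*Z^+(\xi,\tilde{I})\big)\pi^K_{\tilde{J}}(y)\eta\,|\,\eta'\rangle ,
\]
which is exactly the triple form defining $H^\varphi_+(\tilde{I})\boxtimes K^\mu\boxtimes R^\nu_-(\tilde{J})$. So the map preserves inner products on simple tensors, hence on the algebraic tensor product, and descends to an isometry of the completions.

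\textbf{Density.} The algebraic tensor products $H^\varphi_+(\tilde{I})\otimes K^\mu\otimes R^\nu_-(\tilde{J})$ are dense in the triple fusion by construction. Their images lie in the image of the map, so the image is dense. Being isometric with dense image, the map extends to a unitary.

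The main obstacle is the identification of $\pi_{1,\tilde{J}}(y)$ with $\id\boxtimes\pi^K_{\tilde{J}}(y)$ on all of $H^\varphi_+(\tilde{I})\otimes K^\mu$, not only on the subspace $H^\varphi_+(\tilde{I})\otimes K^\mu_-(\tilde{J}_0)$. Two points need care. First, the path continuation should be trivial here, so no twist by $\varphi$ or $\mu$ enters; this holds because $\tilde{J}\subset\tilde{I}^{c+}$ sits in the same sheet as $\tilde{I}$. Second, $\xi\otimes-$ must be continuous, which follows from the defining form of the left one-sided fusion.
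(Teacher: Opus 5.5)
Your proof is correct. Its skeleton matches the paper's: compute the inner product on the iterated fusion via the right one-sided form, identify it with the triple form, and conclude that you have an isometry with dense image. The difference lies in how you evaluate $\pi_{1,\tilde{J}}(y)$ on $\xi\otimes\eta$.

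The paper moves the action onto the left factor. It picks a path $\alpha$ from $\tilde{I}$ to $\tilde{J}$ inside some $\tilde{L}\in\Jcal_\R$ and writes $\pi_{1,\tilde{J}}(y)=(\alpha^\bullet)^{-1}(\pi^H_{\tilde{J}}(y)\boxtimes\id)\alpha^\bullet$. It then uses $\pi^H_{\tilde{J}}(y)Z^+(\xi,\tilde{I})=Z^+(\xi,\tilde{I})\,y$, valid because $\tilde{J}\subset\tilde{I}^{c+}$, to arrive at $\pi^K\big(Z^+(\xi',\tilde{I})^*Z^+(\xi,\tilde{I})\,y\big)$.

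You instead use $\pi^l=\pi^r$ and the canonical identification with $H^\varphi_+(\tilde{I})\boxtimes K^\mu_-(\tilde{J}_0)$, where $\A_\R(\tilde{J})$ acts as $\id\boxtimes\pi^K_{\tilde{J}}(y)$. You then extend from $\eta\in K^\mu_-(\tilde{J}_0)$ to all $\eta\in K^\mu$ using the bound $\|\xi\otimes\eta\|\le\|Z^+(\xi,\tilde{I})\|\,\|\eta\|$ and the density of $K^\mu_-(\tilde{J}_0)$.

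What each route buys:
\begin{itemize}
\item Your route lands directly on the triple form, in the correct order, without locality. It also avoids handling $\pi^H_{\tilde{J}}(y)\xi$ as a bounded vector. That vector lies in $H^\varphi_+(\tilde{L})$, not $H^\varphi_+(\tilde{I})$, which the paper's notation $Z^+(\cdot,\tilde{I})$ glosses over.
\item The cost is the extra density/continuity step. You also rely on the stated agreement between the action on $H^\varphi_+(\tilde{I})\boxtimes K^\mu$ and the one transported from the two-interval fusion.
\item The paper's route stays on $H^\varphi_+(\tilde{I})\otimes K^\mu$ throughout and needs only a single-interval path continuation.
\end{itemize}

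Both you and the paper leave one point implicit: that $(H^\varphi_+(\tilde{I})\otimes K^\mu)\otimes R^\nu_-(\tilde{J})$ is dense in the domain. This holds because $\zeta\mapsto\zeta\otimes\chi$ is bounded by $\|Z^-(\chi,\tilde{J})\|$ and $H^\varphi_+(\tilde{I})\otimes K^\mu$ is dense in $H^\varphi_+(\tilde{I})\boxtimes K^\mu$.
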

\begin{proof}
     Let $\xi,\xi'\in H^\varphi_+(\tilde{I})$, $\eta,\eta'\in K^\mu$ and $\chi, \chi'\in R^\nu_-(\tilde{J})$. Let $\alpha:[0,1]\to \R$ be a path from $\tilde{I}$ to $\tilde{J}$ such that $\alpha([0,1])\subset \tilde{L}$ for some $\tilde{L}\in \Jcal_\R$. Then, the inner product of $(\xi\otimes\eta)\otimes\chi$ and $(\xi'\otimes\eta')\otimes\chi'$ in $\big(H^\varphi_+(\tilde{I})\boxtimes K^\mu\big)\boxtimes R^\nu_-(\tilde{J})$ equals (we denote as a subscript the Hilbert space where we are taking the inner product, for clarity)
    \begin{align*}
        \langle (\xi\otimes\eta)\otimes\chi\,|\, (\xi'&\otimes\eta')\otimes\chi'\rangle \\&= \langle \pi_{1, \tilde{J}}\big(Z^-(\chi', \tilde{J})^*Z^-(\chi, \tilde{J})\big)(\xi\otimes\eta)\,|\, \xi'\otimes\eta'\rangle_{H^\varphi_+(\tilde{I})\boxtimes K^\mu}\\
        & = \langle (\alpha^\bullet)^{-1}\circ \big[\pi^H_{\tilde{J}}\big(Z^-(\chi', \tilde{J})^*Z^-(\chi, \tilde{J})\big)\boxtimes\id\big]\circ \alpha^\bullet(\xi\otimes\eta)\,|\,\xi'\otimes\eta'\rangle_{H^\varphi_+(\tilde{I})\boxtimes K^\mu}\\
        &=\langle \pi^H_{\tilde{J}}\big(Z^-(\chi', \tilde{J})^*Z^-(\chi, \tilde{J})\big)(\xi)\otimes\eta\,|\, \xi'\otimes\eta'\rangle_{H^\varphi_+(\tilde{L})\boxtimes K^\mu}\\
        &=\langle \pi^K_{\tilde{I}}\big(Z^+(\xi',\tilde{I})^*Z^+\big(\pi^H_{\tilde{J}}\big(Z^-(\chi', \tilde{J})^*Z^-(\chi, \tilde{J})\big)(\xi), \tilde{I}\big)\big)(\eta)\,|\,\eta'\rangle_{K^\mu}.
\end{align*}
Note that 
\begin{align*}
Z^+\big(\pi^H_{\tilde{J}}\big(Z^-(\chi', \tilde{J})^*Z^-(\chi, \tilde{J})\big)(\xi), \tilde{I}\big)\big) &= \pi^H_{\tilde{J}}\big(Z^-(\chi', \tilde{J})^*Z^-(\chi, \tilde{J})\big)\circ  Z^+(\xi, \tilde{I})\\& = Z^+(\xi, \tilde{I}) Z^-(\chi', \tilde{J})^*Z^-(\chi, \tilde{J}),
\end{align*}
and hence the inner product above equals
\[
\langle \pi^K_{\tilde{I}}\big(Z^+(\xi',\tilde{I})^*Z^+(\xi, \tilde{I}) Z^-(\chi', \tilde{J})^*Z^-(\chi, \tilde{J})\big)(\eta)\,|\,\eta'\rangle_{K^\mu}.
\]
Since $Z^+(\xi',\tilde{I})^*Z^+(\xi, \tilde{I})\in \A(I)$ and $Z^-(\chi', \tilde{J})^*Z^-(\chi, \tilde{J})\in \A(J)$ commute, the map $\big(H^\varphi_+(\tilde{I})\otimes K^\mu\big)\otimes R^\nu_-(\tilde{J})\to H^\varphi_+(\tilde{I})\otimes K^\mu \otimes R^\nu_-(\tilde{J})$ given by $(\xi\otimes\eta)\otimes \chi\mapsto \xi\otimes\eta\otimes\chi$ is an isometry between dense subsets of $\big(H^\varphi_+(\tilde{I})\boxtimes K^\mu\big)\boxtimes R^\nu_-(\tilde{J})$ and $H^\varphi_+(\tilde{I})\boxtimes K^\mu\boxtimes R^\nu_-(\tilde{J})$, and it extends to a unitary equivalence between the completions.
\end{proof}

It is left to show that the unitary equivalence in Proposition \ref{prop: MapAssociativity} is an equivalence of $\varphi\mu\nu$-twisted $\A$-representations. Note that the equivalence clearly commutes with the action of $\A_\R(\tilde{I})$. Hence, it suffices to show that it also commutes with path continuations. This is the content of the following result, which follows from the same arguments as \cite[Prop. 2.22]{Gui21}.

\begin{proposition}\label{prop: BreakGammaInComponents}
    Let $\varphi,\mu,\nu\in\Aut(\A)$ and $H^\varphi,K^\mu, R^\nu$ twisted representations of $\A.$ Let $\tilde{I_0},\tilde{I_1},\tilde{J_0},\tilde{J_1}\in \Jcal_\R$ with $\tilde{J_i}\subset \tilde{I_i}^{c+}$ for $i = 0,1$. Let $\gamma = (\alpha,\beta)$ be a path in $\ConfR$ from $\tilde{I_0}\times\tilde{J_0}$ to $\tilde{I_1}\times \tilde{J_1}$. Then, the following diagrams commute
\[\begin{tikzcd}
	{\big(H^\varphi_+(\tilde{I_0})\boxtimes K^\mu\big)\boxtimes R^\nu_-(\tilde{J_0})} && {\big(H^\varphi_+(\tilde{I_1})\boxtimes K^\mu\big)\boxtimes R^\nu_-(\tilde{J_1})} \\
	{H^\varphi_+(\tilde{I_0})\boxtimes K^\mu\boxtimes R^\nu_-(\tilde{J_0})} && {H^\varphi_+(\tilde{I_1})\boxtimes K^\mu\boxtimes R^\nu_-(\tilde{J_1})} \\
	{H^\varphi_+(\tilde{I_0})\boxtimes \big(K^\mu\boxtimes R^\nu_-(\tilde{J_0})\big)} && {H^\varphi_+(\tilde{I_1})\boxtimes \big(K^\mu\boxtimes R^\nu_-(\tilde{J_1})\big)}
	\arrow["{\beta^\bullet(\alpha^\bullet\boxtimes\id)}", from=1-1, to=1-3]
	\arrow["\cong"', from=1-1, to=2-1]
	\arrow["\cong", from=1-3, to=2-3]
	\arrow["{\gamma^\bullet}", from=2-1, to=2-3]
	\arrow["\cong"', from=2-1, to=3-1]
	\arrow["\cong", from=2-3, to=3-3]
	\arrow["{\alpha^\bullet(\id\boxtimes\beta^\bullet)}", from=3-1, to=3-3]
\end{tikzcd}\]
\end{proposition}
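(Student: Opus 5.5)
We reduce to paths that are small, check the square on dense spanning vectors, and assemble the general case by concatenation. Since path continuations are multiplicative under concatenation and invariant under homotopy with endpoints kept in the prescribed rectangles, and $\ConfR$ is contractible, we may subdivide $\gamma=(\alpha,\beta)$ into pieces $\gamma_n=(\alpha_n,\beta_n)$. Each piece is chosen so that $\alpha_n([0,1])$ and $\beta_n([0,1])$ are covered by intervals $\tilde{I}'_n,\tilde{J}'_n\in\Jcal_\R$ with $\tilde{J}'_n\subset(\tilde{I}'_n)^{c+}$. The vertical unitaries (the one of Proposition \ref{prop: MapAssociativity} and its mirror version for $H^\varphi_+(\tilde{I})\boxtimes(K^\mu\boxtimes R^\nu_-(\tilde{J}))$) are defined for each intermediate pair of intervals. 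We also have functoriality $(\alpha_2*\alpha_1)^\bullet\boxtimes\id=(\alpha_2^\bullet\boxtimes\id)(\alpha_1^\bullet\boxtimes\id)$. Together these mean it suffices to prove both squares when $\gamma$ is small. We may even split $\gamma$ further into a piece moving only $\alpha$ followed by a piece moving only $\beta$; this is a homotopy within $\tilde{I}'\times\tilde{J}'$.

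For a small path, each path continuation is by definition a composite of canonical maps induced by inclusions of intervals. Concretely, $\gamma^\bullet$ is $H_+(\tilde{I_0})\boxtimes K\boxtimes R_-(\tilde{J_0})\to H_+(\tilde{I}')\boxtimes K\boxtimes R_-(\tilde{J}')\leftarrow H_+(\tilde{I_1})\boxtimes K\boxtimes R_-(\tilde{J_1})$, and similarly for $\alpha^\bullet$ and $\beta^\bullet$ on the two-fold fusions. First we verify that the associativity unitary of Proposition \ref{prop: MapAssociativity} commutes with the inclusion-induced maps. On elementary tensors $(\xi\otimes\eta)\otimes\chi$ with $\xi\in H^\varphi_+(\tilde{I_0})\subset H^\varphi_+(\tilde{I}')$ and $\chi\in R^\nu_-(\tilde{J_0})\subset R^\nu_-(\tilde{J}')$, both routes send the vector to $\xi\otimes\eta\otimes\chi$. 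For the top row we read $\beta^\bullet(\alpha^\bullet\boxtimes\id)$ as follows. The map $\alpha^\bullet\boxtimes\id$ is the fusion of the intertwiner $\alpha^\bullet:H_+(\tilde{I_0})\boxtimes K\to H_+(\tilde{I_1})\boxtimes K$ with $\id_R$ over $\tilde{J_0}$; it is a morphism of twisted representations by Theorem \ref{thm: RepresentationOnFusion}(4) and the remark following Definition \ref{def: FusionOfMorphisms}. The map $\beta^\bullet$ is the single-interval path continuation on the right factor. Both of these maps act as the identity on the dense subspaces $H_+(\tilde{I_0}\cap\tilde{I_1})\otimes K\otimes R_-(\tilde{J_0}\cap\tilde{J_1})$ once everything is embedded into the fusions over $\tilde{I}',\tilde{J}'$, assuming the intersections are nonempty intervals. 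If they are not, we subdivide further so that consecutive intervals overlap, in the spirit of the maps $R,S$ in the proof of Proposition \ref{prop: SwapIntervals}. Since all maps are unitary and agree on a total set, the top square commutes. The bottom square is the mirror argument, with the roles of left and right exchanged, using the mirror form of Proposition \ref{prop: MapAssociativity}. Its inner product computation is symmetric, as it uses $\pi^K_{\tilde{J}}$ of $Z^-$ products and locality.

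We expect the main obstacle to be checking that the elementary tensors really lie in the required subspaces. Here $\xi\otimes\eta$, viewed in $H_+(\tilde{I_0})\boxtimes K$, must be identified with an element of $H_+(\tilde{I_1})\boxtimes K$ under $\alpha^\bullet$ when $\xi\in H_+(\tilde{I_0}\cap\tilde{I_1})$. This requires that the dense subspaces $H^\varphi_+(\tilde{I_0}\cap\tilde{I_1})\otimes K^\mu\otimes R^\nu_-(\tilde{J_0}\cap\tilde{J_1})$ span dense subspaces of all the fusions involved, for which we use the density of bounded vectors shown earlier together with the inclusion equivalences \eqref{eq: CanonicalEquivalenceInclusion}. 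We will also have to keep track of the twists $\epsilon$ when intervals cross integers. This bookkeeping is absorbed by the fact that $Z^\pm(\xi,\tilde{I})$ depends only on the lift as recorded in $\pi_{\tilde{I}^{c\pm}}$. The conventions $H_\pm(\tilde{I}+1)=H_\pm(\tilde{I})$ and $Z^\pm(\xi,\tilde{I}+1)=Z^\pm(\xi,\tilde{I})V_\varphi$ then enter only through the canonical maps, which are compatible on both sides.
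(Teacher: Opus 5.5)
Your proposal is correct and takes essentially the same route as the paper, which defers to \cite[Prop. 2.22]{Gui21}: subdivide $\gamma$ into small pieces, check both squares on elementary tensors for a small piece (where every path continuation is induced by inclusions of intervals), and reassemble by concatenation. The one step you leave implicit is regrouping $\beta^\bullet(\alpha^\bullet\boxtimes\id)$ as $\prod_n\beta_n^\bullet(\alpha_n^\bullet\boxtimes\id)$; this needs right-slot path continuations to commute with $F\boxtimes\id$ for morphisms $F$, which is the analogue of \cite[Prop. 2.17]{Gui21} noted after Theorem \ref{thm: RepresentationOnFusion}.
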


We can similarly produce a canonical unitary equivalence of twisted $\A$-representations $H^\varphi_+(\tilde{I})\boxtimes \big(K^\mu\boxtimes R^\nu_-(\tilde{J})\big) \xrightarrow{\cong } H^\varphi_+(\tilde{I})\boxtimes K^\mu\boxtimes R^\nu_-(\tilde{J})$. The composition
\[
\big(H^\varphi_+(\tilde{I})\boxtimes K^\mu\big)\boxtimes R^\nu_-(\tilde{J})\xrightarrow{\cong}H^\varphi_+(\tilde{I})\boxtimes K^\mu\boxtimes R^\nu_-(\tilde{J})\xrightarrow{\cong}H^\varphi_+(\tilde{I})\boxtimes \big(K^\mu\boxtimes R^\nu_-(\tilde{J})\big)
\]
is therefore an equivalence of $\varphi\mu\nu$-twisted $\A$-representations.

\subsection{The $\Aut(\A)$ action}

Let $\varphi,\nu\in \Aut(\A)$ and $H^\varphi\in\Rep^\varphi(\A)$. We define a new representation $T_\nu(H^{\varphi}) = (H^\varphi, \pi^{\nu\ast H})$ on the same underlying Hilbert space of $H^\varphi$ by defining a new action $\pi^{\nu\ast H}$ of $\A$ by
\[
\pi_I^{\nu\ast H} (x) : = \pi_I^H(\nu^{-1} x),
\]
for $I\in \Jcal$ and $x\in \A(I)$. If $J\subset I$ is a standard inclusion, then $\pi_J^{\nu\ast H} = \pi_I^{\nu\ast H}$. If $J\subset I$ is special, we have
\[
\pi_J^{\nu\ast H}(x) = \pi_J^H(\nu^{-1} x) = \pi^H_I(\varphi^{-1}\nu^{-1} x) = \pi_I^{\nu\ast H}(\nu\varphi^{-1}\nu^{-1} x).
\]
In addition, given $K^{\varphi}\in \Rep^{\varphi}(\A)$, if $F: H^\varphi\to K^{\varphi}$ intertwines the actions $\pi^H$ and $\pi^{K}$, it also intertwines the actions $\pi^{\nu\ast H}$ and $\pi^{\nu\ast K}$. We have argued the following result.

\begin{proposition}
    Let $\nu\in\Aut(\A)$. Then, precomposition by $\nu^{-1}$ defines a functor
    \[
    T_\nu: \Rep^{\Aut(\A)}(\A)\to \Rep^{\Aut(\A)}(\A)
    \]
    such that $T_\nu(\Rep^\varphi(\A))\subset \Rep^{\nu\varphi\nu^{-1}}(\A)$ for all $\varphi\in \Aut(\A).$
\end{proposition}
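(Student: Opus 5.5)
The argument is a direct verification on objects and morphisms; the text just before the statement already contains most of it. I will take $T_\nu$ to be the identity on underlying Hilbert spaces and on morphisms. On an object $H^\varphi=(H^\varphi,\pi^H)$ it is given by $T_\nu(H^\varphi)=(H^\varphi,\pi^{\nu\ast H})$ with $\pi^{\nu\ast H}_I=\pi^H_I\circ\nu_I^{-1}$. On a countable direct sum $\bigoplus_k H_k^{\varphi_k}$ I extend summandwise.

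The first step is to check that $\pi^{\nu\ast H}$ is a family of $*$-actions. Each $\pi^{\nu\ast H}_I$ is a composite of the $*$-automorphism $\nu_I^{-1}$ of $\A(I)$ with the $*$-homomorphism $\pi^H_I$, so it is a $*$-homomorphism.

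The second step is the twisting condition, checked against the formulation of Definition~\ref{def: TwistedRep} via $\delta$. Fix an inclusion $J\subset I$ and $x\in\A(J)$.
\begin{itemize}
\item If the inclusion is standard, the compatibility of $\nu$ with inclusions (the commuting square in the definition of an automorphism) gives $\nu_I^{-1}|_{\A(J)}=\nu_J^{-1}$. Hence $\pi^{\nu\ast H}_I(x)=\pi^H_I(\nu^{-1}x)=\pi^H_J(\nu^{-1}x)=\pi^{\nu\ast H}_J(x)$.
\item If the inclusion is special, the same compatibility together with the $\varphi$-twisted condition on $H^\varphi$ gives the chain displayed before the statement:
\[
\pi_J^{\nu\ast H}(x)=\pi^H_I(\varphi^{-1}\nu^{-1}x)=\pi^{\nu\ast H}_I\big((\nu\varphi\nu^{-1})^{-1}x\big).
\]
This is exactly the $\nu\varphi\nu^{-1}$-twisted condition, using $\nu\varphi^{-1}\nu^{-1}=(\nu\varphi\nu^{-1})^{-1}$.
\end{itemize}
So $T_\nu(H^\varphi)\in\Rep^{\nu\varphi\nu^{-1}}(\A)$. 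I should also confirm that the restriction of $\nu_I$ to $\A(J)$ is $\nu_J$ as maps into $\A(I)$, so that inverses commute with the inclusions; this follows because each $\nu_J$ is an automorphism of $\A(J)$.

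The third step is morphisms. If $F$ intertwines $\pi^H_I$ and $\pi^K_I$ for all $I$, then it intertwines $\pi^H_I(\nu^{-1}\,\cdot)$ and $\pi^K_I(\nu^{-1}\,\cdot)$, since these are evaluated at the same element $\nu^{-1}x$. Identities and composition are preserved trivially, as is the direct sum decomposition, and adjoints are preserved too. Normality on endomorphism algebras is automatic because $\End(T_\nu H)=\End(H)$ as von Neumann algebras.

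I do not expect a real obstacle. The only point needing care is the special-inclusion case, where the correct order of composition of $\varphi^{-1}$ and $\nu^{-1}$ must be tracked so that the twist comes out as $\nu\varphi\nu^{-1}$ and not $\nu^{-1}\varphi\nu$.
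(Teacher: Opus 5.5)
Your proposal is correct and follows the paper's own argument. The paper likewise defines $\pi^{\nu\ast H}_I=\pi^H_I\circ\nu_I^{-1}$ on the same Hilbert space, handles standard inclusions trivially and special ones via $\pi^{\nu\ast H}_J(x)=\pi^H_I(\varphi^{-1}\nu^{-1}x)=\pi^{\nu\ast H}_I(\nu\varphi^{-1}\nu^{-1}x)$, and notes that intertwiners for $\pi^H,\pi^K$ remain intertwiners for $\pi^{\nu\ast H},\pi^{\nu\ast K}$. Your additional checks (that $\nu_I^{-1}$ restricts to $\nu_J^{-1}$, the $*$-structure, summandwise extension, and normality) fill in details the paper leaves implicit.
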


Given automorphisms $\varphi,\mu\in \Aut(\A)$ and a twisted representation $H^\varphi\in \Rep^\varphi(\A)$, we write $$\Gamma_\mu: H^\varphi\to T_\mu(H^\varphi)$$ for the identity map of the underlying Hilbert spaces. Note that this is not a map of twisted $\A$-representations. We will show next that the action $T$ is compatible with the Connes fusion of representations. Let $\tilde{I}\in \Jcal_\R$ and $\xi = Z^+(\xi,\tilde{I})(\Omega)\in H^\varphi_+(\tilde{I})$. Then, $\Gamma_\nu\circ Z^+(\xi,\tilde{I})\circ V_{\nu^{-1}}: H_0\to T_\nu(H^\varphi)$ satisfies that $\Gamma_\nu\circ Z^+(\xi,\tilde{I})\circ V_{\nu^{-1}}(\Omega) = \Gamma_\nu\xi$ and, for all $x\in \A_R(\tilde{I}^{c+})$,
\begin{align*}
\Gamma_\nu\circ Z^+(\xi,\tilde{I})\circ V_{\nu^{-1}}\circ \pi_{0,I^c}(x) &=\Gamma_\nu\circ  Z^+(\xi, \tilde{I})\circ \pi_{0, I^c}(\nu^{-1} x)\circ V_{\nu^{-1}}\\
& = \Gamma_\nu\circ \pi^H_{\tilde{I}^{c+}}(\nu^{-1} x)\circ Z^+(\xi, \tilde{I})\circ V_{\nu^{-1}}\\ &= \pi^{\nu\ast H}_{\tilde{I}^{c+}}( x)\circ \Gamma_\nu\circ Z^+(\xi, \tilde{I})\circ V_{\nu^{-1}}.
\end{align*}
Therefore, we obtain an inclusion map
\[
\begin{array}{ccc}
     H^\varphi_+(\tilde{I})&\to &T_\nu(H^\varphi)_+(\tilde{I})  \\
    Z^+(\xi,\tilde{I})(\Omega) & \mapsto &\Gamma_\nu\circ Z^+(\xi,\tilde{I})\circ V_{\nu^{-1}}(\Omega)
\end{array}
\]
which can be easily seen to provide the equality 
\[
H^\varphi_+(\tilde{I}) = T_\nu(H^\varphi)_+(\tilde{I}).
\]
Accordingly, we write $Z^+(\Gamma_\nu\xi, \tilde{I}) = \Gamma_\nu\circ Z^+(\xi, \tilde{I})\circ V_{\nu^{-1}}$. Analogously, $H^\varphi_-(\tilde{I}) = T_\nu(H^\varphi)_-(\tilde{I})$ and $Z^-(\Gamma_\nu\xi, \tilde{I}) = \Gamma_\nu \circ Z^-(\xi, \tilde{I})\circ V_{\nu^{-1}}$ for all $\xi \in H^\varphi_-(\tilde{I})$.

\begin{proposition}\label{prop: ActionData}
    Let $\varphi,\mu,\nu\in \Aut(\A)$ and $H^\varphi\in \Rep^\varphi(\A)$, $K^\mu\in \Rep^\mu(\A)$. The map $ T_\nu(H^\varphi)_+(\tilde{I})\otimes T_\nu(K^\mu)_-(\tilde{J})\to H^\varphi_+(\tilde{I})\otimes K^\mu_-(\tilde{J})$ given by $\Gamma_\nu\xi\otimes\Gamma_\nu\eta\mapsto \xi\otimes\eta$ induces a unitary
    \[
     T_\nu(H^\varphi)_+(\tilde{I})\boxtimes T_\nu(K^\mu)_-(\tilde{J})  \xrightarrow{\cong}  T_\nu\big(H^\varphi_+(\tilde{I})\boxtimes K^\mu_-(\tilde{J})\big).
    \]
    In addition, the map above commutes with path-continuations and is an equivalence of $\nu\varphi\mu\nu^{-1}$-twisted $\A$-representations.
\end{proposition}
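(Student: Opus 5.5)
The plan is to check three things in turn: that the map preserves the defining inner products, that it intertwines path-continuations, and that it intertwines the twisted actions.

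\emph{Inner products.} Let $\xi,\xi'\in H^\varphi_+(\tilde{I})$ and $\eta,\eta'\in K^\mu_-(\tilde{J})$. Using $Z^\pm(\Gamma_\nu\xi,\tilde{I})=\Gamma_\nu\circ Z^\pm(\xi,\tilde{I})\circ V_{\nu^{-1}}$, and the fact that $\Gamma_\nu$ is the identity on underlying Hilbert spaces, we get
\[
Z^+(\Gamma_\nu\xi',\tilde{I})^*Z^+(\Gamma_\nu\xi,\tilde{I}) = V_{\nu^{-1}}^*\, Z^+(\xi',\tilde{I})^*Z^+(\xi,\tilde{I})\, V_{\nu^{-1}}.
\]
The analogous identity holds for $Z^-$ over $\tilde{J}$. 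Substitute both into the defining formula for $\langle\Gamma_\nu\xi\otimes\Gamma_\nu\eta|\Gamma_\nu\xi'\otimes\Gamma_\nu\eta'\rangle$. The conjugating unitaries cancel in the middle, and the outer ones act on $\Omega$. Since $V_{\nu^{-1}}\Omega=\Omega$, this gives exactly $\langle\xi\otimes\eta|\xi'\otimes\eta'\rangle$. Hence the map is isometric between dense subspaces, using $T_\nu(H^\varphi)_\pm(\tilde{I})=H^\varphi_\pm(\tilde{I})$, and so extends to a unitary between the completions. Compatibility with the canonical equivalences \eqref{eq: CanonicalEquivalenceInclusion} coming from inclusions of intervals is immediate, because on elementary tensors the map is just $\Gamma_\nu\xi\otimes\Gamma_\nu\eta\mapsto\xi\otimes\eta$. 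Path-continuations are composites of such inclusion and restriction maps, so the map also commutes with path-continuations.

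\emph{Actions.} We must show that the unitary intertwines $\pi^{T_\nu H\boxtimes T_\nu K}$ with $\pi^{\nu\ast(H\boxtimes K)}$. By Theorem~\ref{thm: RepresentationOnFusion}(1), both actions are determined by the left action transported along path-continuations. Since path-continuations are already known to commute with the map, it suffices to check $\A_\R(\tilde{L})$ for $\tilde{L}\subset\tilde{I}$.

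\begin{itemize}
\item On the left-hand side, $x$ acts on $T_\nu(H)_+(\tilde{I})$ as $\pi^{\nu\ast H}_{\tilde{L}}(x)\boxtimes\id$.
\item On the right-hand side, $T_\nu$ acts through $\pi^{H\boxtimes K}(\nu^{-1}x)$, which on $\tilde{L}\subset\tilde{I}$ equals $\pi^H_{\tilde{L}}(\nu^{-1}x)\boxtimes\id$.
\end{itemize}

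These match once one knows how the lifted action transforms, namely
\[
\pi^{\nu\ast H}_{\tilde{L}}(x)=\pi^{\nu\ast H}_L\big((\nu\varphi\nu^{-1})^{\epsilon(\tilde{L})}x\big)=\pi^H_L\big(\varphi^{\epsilon(\tilde{L})}\nu^{-1}x\big)=\pi^H_{\tilde{L}}(\nu^{-1}x).
\]
This uses that $T_\nu H$ is $\nu\varphi\nu^{-1}$-twisted. The same identity holds for the action on the fusion, which is $\varphi\mu$-twisted; there it gives $\pi^{\nu\ast(H\boxtimes K)}_{\tilde{L}}(x)=\pi^{H\boxtimes K}_{\tilde{L}}(\nu^{-1}x)$ for lifted intervals as well.

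The twisting of the target then follows from Theorem~\ref{thm: RepresentationOnFusion}(5). The twisting automorphism is $(\nu\varphi\nu^{-1})(\nu\mu\nu^{-1})=\nu\varphi\mu\nu^{-1}$, consistently on both sides.

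The main obstacle is bookkeeping. One must make sure that the identification $H^\varphi_\pm=T_\nu(H^\varphi)_\pm$ is used with the correct $V_{\nu^{-1}}$-conjugation on both $Z^+$ and $Z^-$; the second of these is given by \eqref{eq: RelationZ+Z-}. One must also check that the lifted-interval formula above is correct for $\epsilon(\tilde{L})<0$ as well as for $\epsilon(\tilde{L})\geq 0$. Everything else follows from formal properties already established.
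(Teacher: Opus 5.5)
Your proposal is correct and follows the paper's proof essentially step for step. It uses the same inner-product computation with $V_{\nu^{-1}}\Omega=\Omega$, the same passage from compatibility with inclusions and restrictions to compatibility with path-continuations, and the same comparison of $\pi^{\nu\ast H}_{\tilde L}(x)\boxtimes\id$ with $\pi^H_{\tilde L}(\nu^{-1}x)\boxtimes\id$ after transporting along a path; you also verify explicitly the lifted-interval identity $\pi^{\nu\ast H}_{\tilde L}=\pi^H_{\tilde L}\circ\nu^{-1}$, which the paper uses without comment. One small misattribution: the identity $Z^-(\Gamma_\nu\eta,\tilde J)=\Gamma_\nu\circ Z^-(\eta,\tilde J)\circ V_{\nu^{-1}}$ is the one stated just before the proposition, whereas \eqref{eq: RelationZ+Z-} relates $Z^-$ and $Z^+$ within a single representation.
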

\begin{proof}
    Let $\Gamma_\nu\xi,\Gamma_\nu\xi'\in T_\nu(H^\varphi)_+(\tilde{I})$ and $\Gamma_\nu\eta,\Gamma_\nu\eta'\in T_\nu(K^\mu)_-(\tilde{J})$. Let us compute
    \begin{align*}
        \langle \Gamma_\nu\xi\otimes \Gamma_\nu\eta\,|\Gamma_\nu\xi'\otimes\Gamma_\nu\eta'\rangle &=\langle V_{\nu^{-1}}^{-1} Z^-(\eta',\tilde{J})^*Z^-(\eta, \tilde{J})V_{\nu^{-1}} V_{\nu^{-1}}^{-1} Z^+(\xi',\tilde{I})^*Z^+(\xi, \tilde{I})V_{\nu^{-1}}(\Omega)\,|\,\Omega\rangle \\ &=\langle  Z^-(\eta',\tilde{J})^*Z^-(\eta, \tilde{J}) Z^+(\xi',\tilde{I})^*Z^+(\xi, \tilde{I})(\Omega)\,|\,\Omega\rangle,
     \end{align*}
     where we use that $V_{\nu^{-1}}(\Omega) = \Omega$. This implies that the map $ T_\nu(H^\varphi)_+(\tilde{I})\otimes T_\nu(K^\mu)_-(\tilde{J})\to H^\varphi_+(\tilde{I})\otimes K^\mu_-(\tilde{J})$ given by $\Gamma_\nu\xi\otimes\Gamma_\nu\eta\mapsto \xi\otimes\eta$ indeed extends to a unitary equivalence of the completions.
     
     Let us now show that the equivalence intertwines the twisted actions of $\A$. Let $\tilde{L},\tilde{M}\in \Jcal_\R$ with $\tilde{M}\subset \tilde{L}^{c+}$, and $\gamma$ a path in $\ConfR$ from $\tilde{I}\times\tilde{J}$ to $\tilde{L}\times\tilde{M}$. The action of $x\in \A_\R(\tilde{L})$ on the right-hand side is given by
\begin{align*}
    \pi_{\tilde{L}}^{\nu\ast(H\boxtimes K)}(x) = \pi_{\tilde{L}}^{H\boxtimes K}(\nu^{-1} x) = (\gamma^\bullet)^{-1}\circ \big(\pi_{\tilde{L}}^H(\nu^{-1} x)\boxtimes\id\big)\circ \gamma^\bullet
\end{align*}
and on the left-hand side by
\[
\pi_{\tilde{L}}^{(\nu* H)\boxtimes(\nu* K)}(x) = (\gamma^\bullet)^{-1}\circ \big(\pi^{\nu\ast H}_{\tilde{L}}(x)\boxtimes \id\big)\circ \gamma^\bullet = (\gamma^\bullet)^{-1}\circ \big(\pi_{\tilde{L}}^H(\nu^{-1} x)\boxtimes\id\big)\circ \gamma^\bullet,
\]
and the claim follows. The unitary map is clearly compatible with the maps induced from inclusions and restrictions of intervals, and hence is compatible with path continuations. 
\end{proof}

We also have compatibility of the action with the vacuum representation by the equivalence of $\A$-representations
\[
H_0\xrightarrow{V_{\nu^{-1}}}H_0\xrightarrow{\Gamma_\nu}T_\nu(H_0) 
\]
for all $\nu\in\Aut(\A)$. Finally, we state and prove the following compatibility with the fusion over a single interval.

\begin{proposition}\label{prop: SwapWithAction}
    Let $\varphi,\mu\in \Aut(\A)$ be automorphisms and $H^\varphi\in \Rep^\varphi(\A)$ and $K^\mu\in \Rep^\mu(\A)$ be twisted representations. Given $\tilde{I}\in \Jcal_\R$, the canonical map $H^\varphi_+(\tilde{I})\otimes K^\mu\to T_\varphi(K^\mu)\otimes H^\varphi_-(\tilde{I})$ given by $\xi\otimes\eta\mapsto \Gamma_\varphi\eta\otimes \xi$ extends to a unitary
    \[
   H^\varphi_+(\tilde{I})\boxtimes K^\mu\xrightarrow{\cong} T_\varphi(K^\mu)\boxtimes  H^\varphi_-(\tilde{I})
    \]
    compatible with the actions of $\A_\R$ and with path-continuations. Similarly, there is a canonical unitary
    \[
    H^\varphi\boxtimes K^\mu_-(\tilde{I})\xrightarrow{\cong}K^\mu_+(\tilde{I})\boxtimes T_{\mu^{-1}}(H^\varphi)
    \]
    compatible with the actions of $\A_\R$ and with path-continuations.
\end{proposition}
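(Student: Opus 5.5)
The plan is to define the map on the algebraic tensor products, check that it preserves the inner products defining the two one-sided Connes fusions, and then deduce compatibility with the actions and path-continuations from compatibility with inclusions of intervals.

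\textbf{Step 1: the inner products agree.} On $H^\varphi_+(\tilde{I})\otimes K^\mu$ the inner product is
\[
\langle \xi\otimes\eta\,|\,\xi'\otimes\eta'\rangle=\langle \pi^K_{\tilde{I}}\big(Z^+(\xi',\tilde{I})^*Z^+(\xi,\tilde{I})\big)\eta\,|\,\eta'\rangle .
\]
On $T_\varphi(K^\mu)\otimes H^\varphi_-(\tilde{I})$, the fusion over $\tilde{I}$ on the right, it is
\[
\langle \pi^{\varphi\ast K}_{\tilde{I}}\big(Z^-(\xi',\tilde{I})^*Z^-(\xi,\tilde{I})\big)\Gamma_\varphi\eta\,|\,\Gamma_\varphi\eta'\rangle .
\]

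\emph{The action on $\R$.} First I identify the action of $T_\varphi(K^\mu)$ on $\R$. Since $T_\varphi(K^\mu)$ is $\varphi\mu\varphi^{-1}$-twisted,
\[
\pi^{\varphi\ast K}_{\tilde{I}}(x)=\pi^{K}_{I}\big(\varphi^{-1}(\varphi\mu\varphi^{-1})^{\epsilon(\tilde{I})}x\big)=\pi^K_I\big(\mu^{\epsilon(\tilde{I})}\varphi^{-1}x\big)=\pi^K_{\tilde{I}}(\varphi^{-1}x).
\]

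\emph{Relating $Z^-$ to $Z^+$.} By \eqref{eq: RelationZ+Z-}, $Z^-(\xi,\tilde{I})=Z^+(\xi,\tilde{I})V_{\varphi^{-1}}$. Hence
\[
Z^-(\xi',\tilde{I})^*Z^-(\xi,\tilde{I})=V_{\varphi^{-1}}^{-1}\big(Z^+(\xi',\tilde{I})^*Z^+(\xi,\tilde{I})\big)V_{\varphi^{-1}} .
\]
Because $V_{\varphi^{-1}}$ implements $\varphi^{-1}$, this element of $\A(I)$ is $\varphi\big(Z^+(\xi',\tilde{I})^*Z^+(\xi,\tilde{I})\big)$. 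Applying $\pi^K_{\tilde{I}}\circ\varphi^{-1}$ then gives back exactly the first inner product.

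\emph{Conclusion of Step 1.} The map $\xi\otimes\eta\mapsto\Gamma_\varphi\eta\otimes\xi$ is therefore isometric between dense subspaces. It has dense image because $H^\varphi_-(\tilde{I})=H^\varphi_+(\tilde{I})$ and $\Gamma_\varphi$ is the identity on underlying spaces. So it extends to a unitary. The twisting degrees also match, since $(\varphi\mu\varphi^{-1})\varphi=\varphi\mu$.

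\textbf{Step 2: actions and path-continuations.} For $\tilde{L}\subset\tilde{I}$, both sides are acted on by $x\in\A_\R(\tilde{L})$ through $\pi^H_{\tilde{L}}(x)$ on the $H^\varphi$ tensor factor, so the unitary intertwines these actions by inspection. The unitary also commutes with the canonical maps induced by inclusions $\tilde{I_1}\subset\tilde{I}$, since both are given by the identity on elementary tensors. Path-continuations on each side are built from such inclusion maps, so the unitary commutes with path-continuations. The action of a general $\A_\R(\tilde{L})$ on either side is defined by conjugating the action of $\A_\R(\tilde{L})$ on the fusion over $\tilde{L}$ by a path-continuation from $\tilde{I}$ to $\tilde{L}$. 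Hence compatibility with all of $\A_\R$ follows.

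\textbf{Step 3: the second unitary.} The map $\xi\otimes\eta\mapsto\eta\otimes\Gamma_{\mu^{-1}}\xi$ is treated the same way, now using $Z^+(\eta,\tilde{I})=Z^-(\eta,\tilde{I})V_{\mu}$ and $\pi^{\mu^{-1}\ast H}_{\tilde{I}}(x)=\pi^H_{\tilde{I}}(\mu x)$. The twisting degrees match since $\mu\,(\mu^{-1}\varphi\mu)=\varphi\mu$.

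The main obstacle I expect is bookkeeping rather than anything conceptual. One has to check that the $\epsilon$-twists in $\pi^{\varphi\ast K}_{\tilde{I}}$ and the direction of $\mathrm{Ad}\,V_{\varphi^{-1}}$ combine to cancel exactly, and not to leave a stray $\varphi^{\pm2}$.
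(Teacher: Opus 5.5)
Your proposal is correct and follows the same route as the paper. The paper's proof is your Step 1 computation: it rewrites $Z^-$ via $V_{\varphi^{-1}}$ so that the Gram element becomes $\varphi\big(Z^+(\xi',\tilde{I})^*Z^+(\xi,\tilde{I})\big)$, which $\pi^{\varphi\ast K}_{\tilde{I}}$ sends back to $\pi^K_{\tilde{I}}\big(Z^+(\xi',\tilde{I})^*Z^+(\xi,\tilde{I})\big)$, and it then deduces compatibility with path-continuations and the $\A_\R$-actions from compatibility with inclusions and restrictions of intervals. Your explicit check that $\pi^{\varphi\ast K}_{\tilde{I}}=\pi^K_{\tilde{I}}\circ\varphi^{-1}$, your Step 2, and your treatment of the second unitary (which the paper only calls analogous) spell out what the paper leaves as clear.
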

\begin{proof}
    We only prove the first part, the second follows analogously. Let $\xi,\xi' \in H^\varphi_+(\tilde{I}) = H_-^\varphi(\tilde{I})$ and $\eta,\eta'\in K^\mu$. Recall that, by Equation \eqref{eq: RelationZ+Z-}, it holds that $Z^-(\xi, \tilde{I}) = Z^+(\xi, \tilde{I})\circ V_{\varphi{-1}}$, and similarly for $Z^-(\xi', \tilde{I})$. Then, the inner product of $\Gamma_\varphi\eta\otimes\xi$ and $\Gamma_\varphi\eta'\otimes\xi'$ in $T_\varphi(K^\mu)\boxtimes H^\varphi_-(\tilde{I})$~is
    \begin{align*}
        \langle \Gamma_\varphi\eta\otimes\xi\,|\,\Gamma_\varphi\eta'\otimes\xi'\rangle &=\langle \pi_{\tilde{I}}^{\varphi\ast K}\big( Z^-(\xi', \tilde{I})^*Z^-(\xi, \tilde{I})\big)(\Gamma_\varphi\eta)\,|\,\Gamma_\varphi\eta'\rangle \\ &=\langle \pi_{\tilde{I}}^{\varphi\ast K}\big( V_\varphi\circ Z^+(\xi', \tilde{I})^*Z^+(\xi, \tilde{I})\circ  V_{\varphi^{-1}}\big)(\Gamma_\varphi\eta)\,|\,\Gamma_\varphi\eta'\rangle
        \\ &= \langle \pi_{\tilde{I}}^{\varphi\ast K}\big(\varphi\big(Z^+(\xi', \tilde{I})^*Z^+(\xi, \tilde{I})\big)\big)(\Gamma_\varphi\eta)\,|\,\Gamma_\varphi\eta'\rangle  
        \\ & = \langle \pi_{\tilde{I}}^{K}\big(Z^+(\xi', \tilde{I})^*Z^+(\xi, \tilde{I})\big)(\eta)\,|\,\eta'\rangle,
    \end{align*}
    which is also the inner product of $\xi\otimes\eta$ and $\xi'\otimes\eta'$ in $H^\varphi_+(\tilde{I})\boxtimes K^\mu$. Hence, the map extends to a unitary equivalence between the completions. Compatibility with path continuations follows directly from compatibility with maps induced by inclusions and restrictions of intervals, which is clear. Compatibility with the action of $\A_\R$ is also clear.
\end{proof}

\subsection{Conformal structures}

The fact that any twisted representation of a conformal net is conformal covariant will provide the crossed balance. Hence, let us recall how to produce this conformal structure.

Let $\widetilde{\Diff^+}(S^1) = \{(\widetilde{f}, f)\in \Diff(\R)\times \Diff^+(S^1)\,|\,q\circ \tilde{f} = f\circ q\}$ be the universal cover of $\Diff^+(S^1).$ We can lift the projective representation $U$ of $\Diff^+(S^1)$ on $H_0$ to a projective representation of $\widetilde{\Diff^+}(S^1)$ on $H_0$, which we continue denoting by $U$, given by $U(\widetilde{f},f) = U(f)$. Denoting the image of a unitary $V\in U(H_0)$ in $\mathcal{P}U(H_0)$ by $[V]$, we define the topological group
\[
\Diff_\A^+(S^1) = \{(\widetilde{f},f,V)\in \widetilde{\Diff^+}(S^1)\times U(H_0)\,|\,[V]= U(\widetilde{f},f)\}.
\]
The group $\Diff_\A^+(S^1)$ inherits a topology from that of $\widetilde{\Diff^+}(S^1)\times U(H_0)$, and is a central extension of $\widetilde{\Diff^+}(S^1)$ by $U(1)\cong \{V\in U(H_0)\,|\, [V] = [\id]\}$,
\[
1\to U(1)\to \Diff_\A^+(S^1)\to \widetilde{\Diff^+}(S^1)\to 1.
\]
A unitary action of $\Diff_\A^+(S^1)$ on a Hilbert space $H$ is, by definition, a continuous homomorphism $\Diff_\A^+(S^1)\to U(H)$ such that the central $U(1)$ acts in the standard way. There is a canonical unitary action of $\Diff_\A^+(S^1)$ on $H_0$ given by $(\widetilde{f},f,V)\in \Diff_\A^+(S^1)\mapsto V\in U(H_0)$. We shall continue denoting this action by $U$. 

Recall that, given an interval $I\in \Jcal$, we denote by $\Diff_I(S^1)$ the subgroup of $\Diff^+(S^1)$ of diffeomorphisms with support in $I$. We write $\widetilde{\Diff_I}(S^1)$ for the connected branch of the preimage of $\Diff_I(S^1)$ in $\widetilde{\Diff^+}(S^1)$ containing the identity, and $\Diff^+_\A(I)$ for the preimage of $\widetilde{\Diff_I}(S^1)$ in $\Diff_\A^+(S^1).$ Note that, by conformal covariance, given $(\widetilde{f},f,V)\in \Diff_\A^+(I)$, we have $U(\widetilde{f},f,V)\in \Hom_{\A(I^c)}(H_0, H_0)\cong \A(I)$. We will write $U(\widetilde{f},f,V)\in \A(I)$. These elements form the Virasoro subnet inside of $\A$.

\begin{remark}\label{rk: AutomorphismsPreserveVirasoro}
    Let $\varphi\in \Aut(\A)$. Then, $\varphi$ fixes the Virasoro subnet of $\A$. Indeed, by \cite[Sec. II.3]{gf93}, $V_\varphi$ commutes with the action of $\Mob$ on $H_0$. In addition, by \cite[Thm. 6.1.9]{weiner2007conformalcovariancerelatedproperties}, $V_\varphi$ commutes with the projective action of $\Diff^+(S^1)$ on $H_0$, meaning that $[V_\varphi\circ U(\tilde{f}, f, V)\circ V_\varphi^*] =  [U(\tilde{f}, f, V)]$ for every $(\tilde{f}, f, V)\in \Diff_\A^+(S^1)$. Therefore, for every $(\tilde{f}, f, V)\in \Diff_\A^+(S^1)$, there exists a scalar $\lambda(\tilde{f}, f, V)\in U(1)$ such that $V = \lambda(\tilde{f}, f, V)V_\varphi VV_\varphi^*$. Since both $U$ and $V_\varphi\circ U(-)\circ V_\varphi$ are homomorphisms, $\lambda: \Diff_\A^+(S^1)\to U(1)$ is a character. We claim that the character factors through the projection $\Diff_\A^+(S^1)\to \Diff^+(S^1)$. Indeed, given two elements $(\tilde{f}_1, f, V), (\tilde{f}_2, f, W)\in \Diff_\A^+(S^1)$ mapping to the same element of $\Diff^+(S^1)$, we have $[V] = U(f) = [W]$, and hence there exists $\mu\in  U(1)$ such that $W = \mu V$. Then, $\lambda(\tilde{f}_1, f, V) = VV_\varphi V^* V_\varphi^* = \mu\mu^* WV_\varphi W^* V_\varphi = \lambda(\tilde{f}_2, f, W)$. By \cite{MR267589, MR287585, MR339267}, the group $\Diff^+(S^1)$ is simple, and hence any character on it is trivial. Therefore, $\lambda: \Diff^+_\A(S^1)\to U(1)$ is trivial and $U(-) = V_\varphi\circ U(-)\circ V_\varphi^*$ as maps $\Diff^+_{\A}(S^1)\to B(H_0)$, as needed.
\end{remark}

\begin{theorem}\label{Thm: TwistedRepIsConformal}
    Let $\varphi\in \Aut(\A)$ be an automorphism and $H^\varphi\in \Rep^\varphi(\A)$ be a twisted representation. Then, $H^\varphi$ is conformal covariant in the sense that there exists a unique unitary action $U^H$ of $\Diff^+_\A(S^1)$ on $H^\varphi$ such that, for all $I\in \Jcal$ and $(\tilde{f},f,V)\in \Diff_\A^+(I)$, it holds that
    \[
    U^H(\widetilde{f},f,V) = \pi^H_I(U(\widetilde{f},f,V)).
    \]
\end{theorem}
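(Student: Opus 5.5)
The plan is to adapt the standard argument that representations of conformal nets are automatically diffeomorphism covariant, as in D'Antoni--Longo--Radulescu or Henriques' treatment, to the twisted setting. The key input is Remark~\ref{rk: AutomorphismsPreserveVirasoro}: since $\varphi$ fixes the Virasoro subnet, the local elements $U(\widetilde f,f,V)\in\A(I)$ are insensitive to the twist.

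\textbf{Step 1: well-definedness on local elements.} First I would check that the prescription
\[
U^H(\widetilde f,f,V):=\pi^H_I(U(\widetilde f,f,V)),\qquad (\widetilde f,f,V)\in\Diff^+_\A(I),
\]
does not depend on the interval $I$. Let $(\widetilde f,f,V)\in\Diff^+_\A(J)$ with $J\subset I$. If the inclusion is standard, isotony of $\pi^H$ gives the claim. If it is special, then
\[
\pi^H_J(U(\widetilde f,f,V))=\pi^H_I\big(\varphi^{-1}U(\widetilde f,f,V)\big),
\]
and $\varphi^{-1}U(\widetilde f,f,V)=V_{\varphi}^{*}\,U(\widetilde f,f,V)\,V_\varphi=U(\widetilde f,f,V)$ by Remark~\ref{rk: AutomorphismsPreserveVirasoro}. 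For two arbitrary intervals $I_1,I_2$ containing the support, I would connect them through a chain of overlapping intervals and apply this comparison at each link. The group $\Diff^+_\A(I_1)\cap\Diff^+_\A(I_2)$ may be the local group of a smaller interval or of a union, and one has to be careful here. This already shows the key phenomenon: the restriction of $\pi^H$ to the Virasoro subnet is an honest, untwisted representation of the Virasoro net.

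\textbf{Step 2: extension to the whole group.} The local groups $\Diff^+_\A(I)$, $I\in\Jcal$, generate $\Diff^+_\A(S^1)$. I would use the known presentation of $\widetilde{\Diff^+}(S^1)$, and of its central extension, as the amalgamated free product, i.e.\ colimit, of the local groups $\Diff^+_\A(I)$ over the poset of intervals. This is the result used by Henriques (and, via a fragmentation lemma, by Carpi--Weiner) to prove automatic covariance. The maps $I\mapsto \pi^H_I\circ U|_{\Diff^+_\A(I)}$ are homomorphisms, and by Step 1 they are compatible along inclusions. The universal property then yields a unique homomorphism $U^H:\Diff^+_\A(S^1)\to U(H^\varphi)$. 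The central $U(1)$ acts by scalars, since $\pi^H_I$ is unital.

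Uniqueness is immediate because the local subgroups generate. Continuity follows because the $\pi^H_I$ are normal; this holds automatically, as noted earlier, because $\A(I)$ is a type $\mathrm{III}$ factor on a separable space. Hence $U^H$ is continuous on each local subgroup, and continuity on the whole group follows from the standard fact that a homomorphism continuous on a neighbourhood basis generated by local subgroups is continuous.

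\textbf{Main obstacle.} The hardest part is the step that passes from local data to a global action. For the universal cover, the presentation must be taken so that all relations among local elements come only from the local groups of single intervals, so that the twist never enters. A cleaner alternative, which I would try first, works on the real line. Lift to the net $\A_\R$ of Section~\ref{Sec: ConnesFusionOfTwisted}, where $\pi^H_{\tilde I}$ is honestly isotonous, and define the action using local groups attached to intervals $\tilde I\in\Jcal_\R$; these form exactly the colimit presentation of $\widetilde{\Diff^+}(S^1)$. Compatibility with the deck translation $\tilde I\mapsto\tilde I+1$ is then equation~\eqref{eq: TwistedRepOnR} combined with $\varphi$-invariance of $U(\widetilde f,f,V)$.
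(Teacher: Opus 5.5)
Your proposal is correct and is essentially the paper's proof: the special-inclusion check via Remark~\ref{rk: AutomorphismsPreserveVirasoro} makes $\{\pi^H_I\circ U\}_{I\in\Jcal}$ a compatible family over inclusions, and Henriques' isomorphism of topological groups $\colim_{I\in\Jcal}\Diff^+_\A(I)\cong\Diff^+_\A(S^1)$ then gives existence and continuity, while uniqueness follows because the local subgroups generate. Two of your side remarks are superfluous or need care: the chain argument for arbitrary $I_1,I_2$ is unnecessary, since the colimit only needs compatibility along inclusions $J\subset I$; and the $\Jcal_\R$ variant recovers $\widetilde{\Diff^+}(S^1)$ only after imposing the deck identifications $\tilde I\sim\tilde I+1$ you mention, i.e.\ it reduces to the colimit over $\Jcal$.
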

\begin{proof}
    This is essentially \cite[Thm. 2.2]{Gui21}. By \cite{Hen19}, the canonical map
    \begin{equation}\label{eq: ColimDiffA(S1)}
    \colim_{I\in \Jcal}\Diff^+_\A(I)\to \Diff_\A^+(S^1)
    \end{equation}
    is an isomorphism of topological groups. Consider the family of continuous group homomorphisms $\Diff_\A^+(I)\xrightarrow{U} \A(I)\xrightarrow{\pi_I^H}U(H^\varphi)$. Since the automorphism $\varphi$ is the identity on the Virasoro subnet of $\A$ by Remark \ref{rk: AutomorphismsPreserveVirasoro}, the family above is compatible with the inclusion maps $J\subset I$ of intervals in $\Jcal$. Hence, it produces a canonical action of the colimit
    \[
    \Diff_\A^+(S^1)\cong \colim_{I\in \Jcal}\Diff^+_\A(I)\to U(H^\varphi).
    \]
    The representation is unique by surjectivity of \eqref{eq: ColimDiffA(S1)}.
\end{proof}

\begin{corollary}\label{cor: RepOfDiffA}
    For any $(\widetilde{f},f,V)\in \Diff^+_\A(S^1)$, it holds that $U^H(\widetilde{f},f,V)\in \bigvee\limits_{I\in \Jcal}\pi^H_I(\A(I))$, and for every $\tilde{I}\in \Jcal_\R$ and $x\in \A_\R(\tilde{I})$, it holds that
    \[
    U^H(\widetilde{f},f,V)\circ \pi_{\tilde{I}}^H(x)\circ U^H(\widetilde{f},f,V)^* = \pi_{\widetilde{f}\tilde{I}}^H\big(U(\widetilde{f},f,V)\, x \,U(\widetilde{f},f,V)^*\big). 
    \]
\end{corollary}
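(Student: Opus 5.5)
The plan is to deduce both claims from the construction of $U^H$ in Theorem \ref{Thm: TwistedRepIsConformal}, working first with localized diffeomorphisms and then passing to general ones through the colimit description \eqref{eq: ColimDiffA(S1)}.

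\textbf{First claim.} By \cite{Hen19}, $\Diff^+_\A(S^1)$ is generated as a group by the subgroups $\Diff^+_\A(I)$, $I\in\Jcal$; this is surjectivity of \eqref{eq: ColimDiffA(S1)}. Write $g=(\widetilde{f},f,V)$ as a finite product $g=g_1\cdots g_n$ with $g_k\in\Diff^+_\A(I_k)$. Then $U^H(g)=\prod_k \pi^H_{I_k}(U(g_k))$. Each factor lies in $\pi^H_{I_k}(\A(I_k))$, so the product lies in the von Neumann algebra $\bigvee_I\pi^H_I(\A(I))$.

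\textbf{Second claim, reduction.} Both sides of the covariance identity are multiplicative in $g$. Indeed, $U^H$ is a homomorphism, $\widetilde{f}$ composes correctly, and $\mathrm{Ad}\,U$ is a homomorphism. It therefore suffices to check the identity for $g\in\Diff^+_\A(L)$ with $L$ a small interval. Since $\Diff^+_\A(S^1)$ is generated by such subgroups for arbitrarily small $L$, we may assume $L$ is short, say of length less than half of the circle.

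\textbf{Second claim, local case.} Fix $g\in\Diff^+_\A(L)$ and $x\in\A_\R(\tilde{I})$. By additivity and continuity, we may shrink $\tilde{I}$ to small subintervals, so we may assume either $L\cap I=\emptyset$ or $L\cup I$ lies in an interval.

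\emph{Disjoint case.} Here $\widetilde{f}\tilde{I}=\tilde{I}$, because $\widetilde{f}$ lies in the identity branch and is supported on lifts of $L$. Moreover $U(g)\in\A(L)$ commutes with $x$ by locality, so the right-hand side equals $\pi^H_{\tilde{I}}(x)$. On the left-hand side, $U^H(g)=\pi^H_L(U(g))$, and we choose a lift $\tilde{L}$ disjoint from $\tilde{I}$ with $\tilde{L}\cup\tilde{I}$ contained in some $\tilde{N}\in\Jcal_\R$ (or in its complement). Using that $\varphi$ fixes Virasoro elements (Remark \ref{rk: AutomorphismsPreserveVirasoro}), we get $\pi^H_L(U(g))=\pi^H_{\tilde{L}}(U(g))$. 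Both operators are then images under the single representation $\pi^H_{\tilde{N}}$ of commuting elements of $\A(N)$, so they commute.

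\emph{Overlapping case.} Take $\tilde{N}\in\Jcal_\R$ containing $\tilde{I}$ and a lift $\tilde{L}\subset\tilde{N}$. Then $\widetilde{f}\tilde{I}\subset\tilde{N}$ as well. Again $\pi^H_L(U(g))=\pi^H_{\tilde{N}}(U(g))$, since $\varphi^{\epsilon}$ fixes $U(g)$. The left-hand side becomes $\pi^H_{\tilde{N}}\big(U(g)xU(g)^*\big)$, because $\pi^H_{\tilde{N}}$ is a homomorphism. By conformal covariance of the net, $U(g)xU(g)^*\in\A(fI)=\A_\R(\widetilde{f}\tilde{I})$. Compatibility of the family $\pi^H_{\tilde{\cdot}}$ with the inclusion $\widetilde{f}\tilde{I}\subset\tilde{N}$ then gives the right-hand side.

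\textbf{Main obstacle.} The delicate point is bookkeeping the lifts. We must check that $\widetilde{f}$ moves $\tilde{I}$ to the correct lift $\widetilde{f}\tilde{I}$, rather than to a translate by an integer, which would introduce a factor $\varphi^{\pm1}$. This is where we need $g$ to lie in the identity branch $\widetilde{\Diff_L}(S^1)$ and $L$ to be short. For products of such elements, consistency of the lifts follows because lifts compose.
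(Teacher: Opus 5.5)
Your proposal is correct and follows the paper's proof. Both arguments reduce, via algebraic generation by the $\Diff^+_\A(L)$, to computing inside a single $\pi^H_{\tilde N}$ for some $\tilde N$ containing $\tilde I$ and a lift of $L$ (using that $\varphi$ fixes Virasoro elements and that $\widetilde{f}\tilde I\subset\tilde N$), then extend to all $\tilde I$ by additivity and conclude by multiplicativity.

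The disjoint/overlapping split and the reduction to short $L$ are not needed; the latter would also require a fragmentation argument you do not give. Shrinking $\tilde I$ alone already puts $I\cup L$ inside an interval of $\Jcal$, and your overlapping-case computation then covers every case.
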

\begin{proof}
    By \cite[Lem. 17 ii]{Hen19}, $\Diff_\A^+(S^1)$ is algebraically generated by $\{\Diff_\A^+(I)\}_{I\in \Jcal}$. Hence, it is enough to show the first statement when $(\widetilde{f},f,V)\in \Diff_\A^+(J)$ for some $J\in \Jcal$. In this case, the claim is clear.

    For the second claim, fix $J\in \Jcal$ such that $I$ and $J$ can be covered by an interval $L$ in $\Jcal$, and assume that $(\tilde{f},f,V)\in \Diff^+_\A(J)$. Denote by $\widetilde{L}\in\Jcal_\R$ the lift of $L$ to $\R$ containing $\tilde{I}$. Then, $x\in \A_\R(\tilde{I})\subset\A_\R(\widetilde{L})$ and we have
\begin{align*}
    U^H(\widetilde{f},f,V)\circ \pi_{\tilde{I}}^H(x)\circ U^H(\widetilde{f},f,V)^*  & = \pi_{\widetilde{L}}^H\big(U(\widetilde{f},f,V)\big)\circ \pi^H_{\tilde{I}}(x)\circ \pi_{\widetilde{L}}^H\big(U(\widetilde{f},f,V)\big)^*\\ &=\pi_{\widetilde{L}}^H\big(U(\widetilde{f},f,V)\, x\, U(\widetilde{f},f,V)^*\big)
    \\ &= \pi_{\widetilde{f}\tilde{I}}^H\big(U(\widetilde{f},f,V)\, x \,U(\widetilde{f},f,V)^*\big),
\end{align*}
where the last equality comes from the fact that $U(\widetilde{f},f,V)\, x\, U(\widetilde{f},f,V)^*\in \A(fI)$ and we have taken $\widetilde{\Diff_J}(S^1)$ to be the connected lift of $\Diff^+(J)$ to $\widetilde{\Diff^+}(S^1)$ containing the identity, hence $\widetilde{f}\tilde{I}\subset \widetilde{L}$. By the additivity of $\A$, and hence of $\A_\R$, the claim holds for all $\tilde{I}\in \Jcal_\R$ and $x\in \A_\R(\tilde{I})$, for the given $(\widetilde{f},f,V)\in \Diff_\A^+(J)$. Since $\Diff_\A^+(S^1)$ is algebraically generated by $\{\Diff_\A^+(I)\}_{I\in \Jcal}$, the claim follows for all $(\tilde{f},f,V)\in \Diff_A^+(S^1)$.
\end{proof}

The Lie algebra of $\Diff^+(S^1)$ is the Lie algebra $\Vect(S^1)$ of vector fields on $S^1$, where the Lie bracket is the negative of the usual Lie bracket of vector fields. We let $\exp: \Vect(S^1)\to \Diff^+(S^1)$ be the exponential map and write $\Vect_{\C}(S^1)$ for the complexification of $\Vect(S^1)$. For each $n\in \Z$, we define the complex vector field $L_n(e^{i\theta}):=-ie^{in\theta}\frac{d}{d\theta}\in \Vect_{\C}(S^1)$. These vectors $L_n$ form the Witt algebra $\mathscr{W}$, which is a dense Lie subalgebra of $\Vect_{\C}(S^1).$ We define a $*$-structure on $\mathscr{W}$ by setting $L_n^* = L_{-n}$. Then, a vector $X\in \mathscr{W}$ is self-adjoint if and only if $iX\in \Vect(S^1)$. For a self-adjoint vector $X\in \Vect_\C(S^1)$, we write $\exp_{iX}$ for the one parameter subgroup of $\Diff^+(S^1)$ given by $t\in\R\mapsto \exp(itX)$, so that $\exp_{iL_0}$ is the subgroup of rotations of $S^1$. We also define $\widetilde{\exp}_X: \R\to \widetilde{\Diff^+}(S^1)$ for the one-parameter subgroup of $\widetilde{\Diff^+}(S^1)$ lifting $\exp_X$, and $\widetilde{\exp}(X):=\widetilde{\exp}_X(1)$.

Note that $\widetilde{\Diff^+}(S^1)$ contains the universal covering space $\widetilde{\Mob}$ of $\Mob$, which is generated by $\widetilde{\exp}(iX)$ where $X = \overline{a_{1}}L_{-1} + a_0L_0 + a_1 L_1\in\Vect_{\C}(S^1)$ with $a_0\in\R$ and $a_1\in\C$. The unitary projective representation $U^H$ of $\widetilde{\Diff^+}(S^1)$ on $H^\varphi$ restricts to a unitary projective representation of $\widetilde{\Mob}$. By \cite{MR58601}, such a projective representation lifts to a unique unitary representation of $\widetilde{\Mob}$ on $H^\varphi$. Given $X$ as above, we write $e^{iX}$ for the action of $\widetilde{\exp}(iX)\in \widetilde{\Mob}$ on $H^\varphi$.

\begin{definition}
    Let $\varphi\in \Aut(\A)$ and $H^\varphi\in \Rep^\varphi(\A)$. We denote by
    \[
    \theta_H:=e^{ -2\pi i L_0}: H^\varphi\to H^\varphi
    \]
    the unitary on the Hilbert space $H^\varphi$ given by the action of $\widetilde{\exp}(-2\pi iL_0)\in \widetilde{\Mob}$.
\end{definition}

\begin{proposition}
    The unitary $\theta_H$ provides an equivalence 
    \[ H^\varphi\xrightarrow{\cong}T_\varphi(H^\varphi)
    \]
    of twisted $\A$-representations.
\end{proposition}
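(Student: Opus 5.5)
We must show: for all $\tilde{I}\in\Jcal_\R$ and $x\in \A_\R(\tilde{I})$,
\[
\theta_H\circ \pi^H_{\tilde{I}}(x) = \pi^{\varphi\ast H}_{\tilde{I}}(x)\circ\theta_H = \pi^H_{\tilde{I}}(\varphi^{-1}x)\circ\theta_H .
\]
Here we use that $T_\varphi(H^\varphi)$ has action $\pi^{\varphi\ast H}_I = \pi^H_I\circ\varphi^{-1}$; on lifts, this gives $\pi^{\varphi\ast H}_{\tilde{I}}(x) = \pi^H_{I}(\varphi^{\epsilon(\tilde{I})}\varphi^{-1}x) = \pi^H_{\tilde{I}}(\varphi^{-1}x)$, since $\varphi$ commutes with its own powers. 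By additivity it suffices to check this on small intervals. Unitarity is automatic, so the whole content is the intertwining property.

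\textbf{Applying Corollary \ref{cor: RepOfDiffA}.} The key input is Corollary \ref{cor: RepOfDiffA}, applied to an element $g=(\tilde f,f,V)\in\Diff^+_\A(S^1)$ lying over $\widetilde{\exp}(-2\pi i L_0)$. Its lift $\tilde f$ is translation $t\mapsto t-1$ on $\R$ (with the convention that $\exp_{iL_0}$ rotates anticlockwise and $q(t)=e^{2\pi i t}$; if the sign convention is reversed, one gets $t\mapsto t+1$ and checks which matches, since the statement forces $-1$). The underlying $f$ is the identity, so $[V]=[\id]$ and $V$ is a scalar; we may take $V=\lambda\,\id$. The action $U^H(g)$ differs from $\theta_H$ only by a scalar. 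Indeed, the restriction of $U^H$ to the preimage of $\widetilde{\Mob}$ is a unitary representation of a central extension of $\widetilde{\Mob}$ by $U(1)$, and by uniqueness of the lift \cite{MR58601} it agrees with the representation defining $e^{iX}$ up to a character. The relevant element therefore acts by $\theta_H$ times a phase. Since conjugation kills phases, the Corollary gives
\[
\theta_H\,\pi^H_{\tilde{I}}(x)\,\theta_H^* = \pi^H_{\tilde{I}-1}(V x V^*) = \pi^H_{\tilde{I}-1}(x).
\]
Finally, by \eqref{eq: TwistedRepOnR}, $\pi^H_{\tilde{I}-1}(x) = \pi^H_{\tilde{I}}(\varphi^{-1}x)$, which is exactly the required intertwining.

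\textbf{Main obstacle.} The delicate step is the identification above: that the $\widetilde{\Mob}$-action from \cite{MR58601} and $U^H$ agree up to phase on the element $\widetilde{\exp}(-2\pi i L_0)$, and that the lift $\tilde f$ is translation by exactly $-1$ rather than some other integer. For the first point, I would argue that both are continuous projective representations agreeing with $\pi^H_I\circ U$ on local Möbius elements. Since local elements generate, they coincide projectively, and projective equality suffices because only conjugation is used. For the second point, I would track the one-parameter subgroup $t\mapsto\widetilde{\exp}(-itL_0)$, $t\in[0,2\pi]$, in $\widetilde{\Diff^+}(S^1)$. Its lift to $\R$ is $s\mapsto s - t/2\pi$, which yields translation by $-1$ at $t=2\pi$. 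This is consistent with the result needing $\tilde I\mapsto\tilde I-1$.
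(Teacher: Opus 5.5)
Your proposal is correct and follows essentially the same route as the paper. You apply Corollary \ref{cor: RepOfDiffA} to the element over $\widetilde{\exp}(-2\pi i L_0)$, whose lift is translation by $-1$ and whose vacuum implementer is a scalar, and then use \eqref{eq: TwistedRepOnR} to identify $\pi^H_{\tilde{I}-1}(x)$ with $\pi^{\varphi\ast H}_{\tilde{I}}(x)$. Your extra remarks on the phase ambiguity of $\theta_H$ and on the sign of the translation only make explicit what the paper leaves implicit.
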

\begin{proof}
    Let $\tilde{I}\in \Jcal_\R$ and $x\in \A_\R(\tilde{I})$. Then, by Corollary \ref{cor: RepOfDiffA}, we have 
    \[
    e^{-2\pi iL_0}\circ \pi^H_{\tilde{I}}(x)\circ e^{2\pi i L_0} = \pi^H_{\tilde{I} - 1}(e^{-2\pi i L_0 }\, x\, e^{2\pi i L_0}) = \pi^{\varphi*H}_{\tilde{I}}(e^{-2\pi i L_0 }\, x\, e^{2\pi i L_0}).
    \]
    Since the action of $e^{-2\pi i L_0}$ on $\A(I)$ is the identity, the claim follows.
\end{proof}

The family of unitaries $\theta$ is compatible with the action $T$ in the following way.

\begin{proposition}\label{prop: CompatibilityActionConformalStruct}
    Let $\nu\in \Aut(\A)$. The action $\theta_{T_\nu(H^\varphi)}=e^{-2\pi i L_0}$ of $\widetilde{\exp}(-2\pi iL_0)$ on $T_\nu(H^\varphi)$ is given by $T_\nu(\theta_H)$.
\end{proposition}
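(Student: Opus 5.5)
The plan is to compare the two conformal structures on the common underlying Hilbert space $H^\varphi$, and then apply the uniqueness statements on both sides. The space $T_\nu(H^\varphi)$ is $H^\varphi$ with the action $\pi^{\nu\ast H}_I=\pi^H_I\circ\nu^{-1}$. By Theorem \ref{Thm: TwistedRepIsConformal}, its conformal structure $U^{T_\nu H}$ is the unique unitary action of $\Diff^+_\A(S^1)$ such that, for all $(\tilde f,f,V)\in\Diff^+_\A(I)$,
\[
U^{T_\nu H}(\tilde f,f,V)=\pi^{\nu\ast H}_I\big(U(\tilde f,f,V)\big)=\pi^H_I\big(\nu^{-1}U(\tilde f,f,V)\big).
\]
By Remark \ref{rk: AutomorphismsPreserveVirasoro}, $V_{\nu^{-1}}$ commutes with $U(\tilde f,f,V)$, which lies in $\A(I)$. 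Hence $\nu^{-1}$ fixes $U(\tilde f,f,V)$, and we get
\[
U^{T_\nu H}(\tilde f,f,V)=\pi^H_I\big(U(\tilde f,f,V)\big)=U^H(\tilde f,f,V)
\]
for all $(\tilde f,f,V)\in\Diff^+_\A(I)$ and all $I$. The homomorphism $\Gamma_\nu\circ U^H(-)\circ\Gamma_\nu^{-1}$ is the same operator-valued map as $U^H$, and it satisfies the defining property of $U^{T_\nu H}$. By the uniqueness in Theorem \ref{Thm: TwistedRepIsConformal}, or directly because $\Diff^+_\A(S^1)$ is generated by the $\Diff^+_\A(I)$ \cite[Lem. 17 ii]{Hen19}, we conclude $U^{T_\nu H}=U^H$ as maps into $U(H^\varphi)$.

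Next I pass to $\widetilde{\Mob}$. Both projective representations of $\widetilde{\Mob}$ are induced from the same projective representation $U^H=U^{T_\nu H}$. So their unique unitary lifts provided by \cite{MR58601} coincide, and the action of $\widetilde{\exp}(-2\pi iL_0)$ on $T_\nu(H^\varphi)$ is the same operator as $\theta_H$ on $H^\varphi$.

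Finally, the functor $T_\nu$ acts as the identity on morphisms, viewed as bounded operators between the same underlying spaces. The source and target of $\theta_H$ are $H^\varphi$ and $T_\varphi(H^\varphi)$, so $T_\nu(\theta_H)$ is again the operator $\theta_H$, now regarded as a map $T_\nu(H^\varphi)\to T_\nu T_\varphi(H^\varphi)$. Here $T_\nu T_\varphi(H^\varphi)=T_{\nu\varphi\nu^{-1}}T_\nu(H^\varphi)$ as representations, since precomposition is strict. This gives $\theta_{T_\nu(H^\varphi)}=T_\nu(\theta_H)$.

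The main subtle point is the lift from projective to honest representations of $\widetilde{\Mob}$. One must check that equality of the operator-valued maps $U^H$ and $U^{T_\nu H}$ on $\Diff^+_\A(S^1)$, and hence of the projective representations of $\widetilde{\Mob}$, forces equality of the lifted unitary operators. This holds by the uniqueness of the lift, with no phase ambiguity, because the lift is determined by the projective representation alone.
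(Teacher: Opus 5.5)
Your proposal is correct and follows essentially the same route as the paper. You use the uniqueness in Theorem \ref{Thm: TwistedRepIsConformal} to reduce to $\pi^{\nu\ast H}_I(U(\tilde f,f,V))=\pi^H_I(U(\tilde f,f,V))$ on each $\Diff^+_\A(I)$, which holds because $\nu^{-1}$ fixes the Virasoro subnet (Remark \ref{rk: AutomorphismsPreserveVirasoro}), and you then use that $T_\nu$ acts as the identity on morphisms. Your explicit remark that the unitary lift to $\widetilde{\Mob}$ is determined by the projective representation alone only spells out a step the paper leaves implicit.
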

\begin{proof}
    Recall that $T_\nu$ acts by the identity on morphisms. Hence, we have to show that the action $U^{\nu\ast H}$ of $\Diff_\A(S^1)$ on $T_\nu(H^\varphi)$ is given by the action $U^H$ on its underlying Hilbert space $H^\varphi$. By uniqueness in Theorem \ref{Thm: TwistedRepIsConformal}, it is enough to show that, for all $I\in \Jcal$ and $(\tilde{f},f,V)\in \Diff_\A^+(I)$, it holds that
    \[
    \pi_I^{\nu\ast H}\big(U(\tilde{f},f,V)\big) = \pi_I^{H}\big(U(\tilde{f},f,V)\big).
    \]
    By definition, the left-hand side is given by $\pi_I^{H}\big(\nu^{-1}U(\tilde{f},f,V)\big) = \pi_I^{H}\big(U(\tilde{f},f,V)\big)$, where we use that $\nu^{-1}$ restricts to the identity on the Virasoro subnet of $\A$ by Remark \ref{rk: AutomorphismsPreserveVirasoro}. The claim follows.
\end{proof}

For the rest of this section, we will describe the action $e^{ -2\pi iL_0}$ on the Connes fusion $H^\varphi_+(\tilde{I})\boxtimes K^\mu$ of two twisted representations over a given interval $\tilde{I}\in \Jcal_\R$. We shall actually do it in full generality, and describe the whole conformal structure on $H^\varphi_+(\tilde{I})\boxtimes K^\mu$. The arguments are almost all word for word those in \cite[Sec. 2.4]{Gui21}. Let $(\tilde{f},f,V)\in \Diff_A^+(S^1)$ and choose a map $\lambda:[0,1]\to \Diff_A^+(S^1)$ from $1 = (\id,\id,1)$ to $(\tilde{f},f,V).$ We require that $\lambda$ descends to a path $[\lambda]:[0,1]\to \widetilde{\Diff^+}(S^1)$ which is continuous. The homotopy class of $[\lambda]$ is uniquely determined by $(\tilde{f},f,V)$. Given $\tilde{I}\in\Jcal_\R$ and $z\in \tilde{I}$, the map
\[
\begin{array}{cccc}
    \lambda_z: &[0,1]&\to &\R  \\
     & t&\mapsto & \text{proj}_{\Diff(\R)}(\lambda(t))(z)
\end{array}
\]
is a path from $\tilde{I}$ to $\tilde{f}\tilde{I}$, where $\text{proj}_{\Diff(\R)}:\Diff_A^+(S^1)\to\widetilde{\Diff^+}(S^1) \to \Diff(\R)$ is the projection.

We define the unitary map $U_0^{H\boxtimes K}(\tilde{f},f,V):H_+^\varphi(\tilde{I})\boxtimes K^\mu\xrightarrow{\cong} H_+^\varphi(\tilde{I})\boxtimes K^\mu$ as the completion~of
\begin{align}\label{eq: ConformalStructOnConnesFusion0}
U_0^{H\boxtimes K}&(\tilde{f},f,V)(\xi\otimes\eta) \\&=\nonumber (\lambda_z^\bullet)^{-1}\big(U^H(\tilde{f},f,V)\circ Z^+(\xi,\tilde{I})\circ U(\tilde{f},f,V)^{-1}(\Omega)\otimes U^K(\tilde{f},f,V)(\eta)\big)
\end{align}
for any $\xi\otimes \eta\in H^\varphi_+(\tilde{I})\otimes K^\mu$. Note that we have dropped $\tilde{I}$ from the notation of $U_0^{H\boxtimes K}$. We will continue doing so when the interval $\tilde{I}$ is clear from the context. We next argue that Equation \eqref{eq: ConformalStructOnConnesFusion0} indeed provides a unitary operator on $H_+^\varphi(\tilde{I})\boxtimes K^\mu$. First of all, we need to show that $U^H(\tilde{f},f,V)\circ Z^+(\xi,\tilde{I})\circ U(\tilde{f},f,V)^{-1}(\Omega)\in H^\varphi_+(\tilde{f}\tilde{I})$, meaning that $U^H(\tilde{f},f,V)\circ Z^+(\xi,\tilde{I})\circ U(\tilde{f},f,V)^{-1}:H_0\to H^\varphi$ intertwines the actions of $\A_\R([\tilde{f}\tilde{I}]^{c+}) = \A_\R(\tilde{f}\tilde{I}^{c+})$ on $H_0$ and $H^\varphi$. Note that $\A_\R(\tilde{f}\tilde{I}^{c+}) =  \A(f I^c) = U(\tilde{f},f,V)\,\A(I^c )\, U(\tilde{f},f,V)^{-1}$. Let $U(\tilde{f},f,V)\,x\, U(\tilde{f},f,V)^{-1}\in \A(fI^c)$. Then
\begin{align*}
    U^H(\tilde{f},f,V)\circ& Z^+(\xi,\tilde{I})\circ U(\tilde{f},f,V)^{-1}\circ \pi_{0,fI^c}\big(U(\tilde{f},f,V)\,x\, U(\tilde{f},f,V)^{-1}\big) = \\&= U^H(\tilde{f},f,V)\circ Z^+(\xi,\tilde{I})\circ \pi_{0,fI^c}(x)\circ U(\tilde{f},f,V)^{-1}\\
    &= U^H(\tilde{f},f,V)\circ \pi^H_{\tilde{I}^{c+}}(x)\circ Z^+(\xi,\tilde{I})\circ U(\tilde{f},f,V)^{-1} \\&= \pi^H_{\tilde{f}\tilde{I}^{c+}}\big(U(\tilde{f},f,V)\,x\, U(\tilde{f},f,V)^{-1}\big)\circ U^H(\tilde{f},f,V)\circ Z^+(\xi,\tilde{I})\circ U(\tilde{f},f,V)^{-1},
\end{align*}
as needed. Hence, we have that $\big(U^H(\tilde{f},f,V)\circ Z^+(\xi,\tilde{I})\circ U(\tilde{f},f,V)^{-1}\big)\otimes U^K(\tilde{f}, f, V)$ indeed defines a map $H^\varphi_+(\tilde{I})\otimes K^\mu\xrightarrow{} H^\varphi_+(\tilde{f}\tilde{I})\otimes K^\mu$. We write $U^H(\tilde{f},f,V)\,\xi\,U(\tilde{f},f,V)^{-1}:=U^H(\tilde{f},f,V)\circ Z^+(\xi,\tilde{I})\circ U(\tilde{f},f,V)^{-1}(\Omega)\in H^\varphi_+(\tilde{f}\tilde{I})$. Then, Equation \eqref{eq: ConformalStructOnConnesFusion0} reads
\begin{equation}
\label{eq: ConformalStructOnConnesFusion1}
    U_0^{H\boxtimes K}(\tilde{f},f,V)(\xi\otimes \eta) = (\lambda_z^\bullet)^{-1}\big(U^H(\tilde{f},f,V)\,\xi\,U(\tilde{f},f,V)^{-1}\otimes U^K(\tilde{f},f,V)\eta\big).
\end{equation}
Note that, a priori, we have only defined a map $U^{H\boxtimes K}_0(\tilde{f},f,V): H^\varphi_+(\tilde{I})\otimes K^\mu\to H^\varphi_+(\tilde{I})\otimes K^\mu$. We now show that it is actually an isometry. Let $\xi,\xi'\in H^\varphi_+(\tilde{I})$ and $\eta,\eta'\in K^\mu$. Then,
\begin{align*}
    \langle  U_0^{H\boxtimes K}&(\tilde{f},f,V)(\xi\otimes \eta)\,|\,  U_0^{H\boxtimes K}(\tilde{f},f,V)(\xi'\otimes \eta')\rangle \\& = \langle U^H(\tilde{f},f,V)\,\xi\,U(\tilde{f},f,V)^{-1}\otimes U^K(\tilde{f},f,V)\eta\,| \, U^H(\tilde{f},f,V)\,\xi'\,U(\tilde{f},f,V)^{-1}\otimes U^K(\tilde{f},f,V)\eta'\rangle \\
    & = \langle \pi^K_{\tilde{f}\tilde{I}}\big(U(\tilde{f},f,V)\circ Z^+(\xi',\tilde{I})^*Z^+(\xi,\tilde{I})\circ U(\tilde{f},f,V)^{-1}\big)\circ U^K(\tilde{f},f,V)(\eta)\,|\,U^K(\tilde{f},f,V)(\eta')\rangle\\ &= \langle U^K(\tilde{f},f,V)\circ \pi^K_{\tilde{I}}\big(Z^+(\xi',\tilde{I})^*Z^+(\xi,\tilde{I})\big)(\eta)\,|\,U^K(\tilde{f},f,V)(\eta')\rangle \\ & =  \langle \pi^K_{\tilde{I}}\big(Z^+(\xi',\tilde{I})^*Z^+(\xi,\tilde{I})\big)(\eta)\,|\,\eta'\rangle \\ & = \langle \xi\otimes \eta\,|\,\xi'\otimes\eta'\rangle.
\end{align*}
Since the image of $\lambda_z^\bullet U_0^{H \boxtimes K}(\tilde{f},f,V)$ is $H^\varphi_+(\tilde{f}\tilde{I})\otimes K^\mu$, which is dense in $H^\varphi_+(\tilde{f}\tilde{I})\boxtimes K^\mu$, the map $U_0^{H\boxtimes K}(\tilde{f},f,V)$ extends to a unitary $H^\varphi_+(\tilde{I})\boxtimes K^\mu\xrightarrow{\cong}H^\varphi_+(\tilde{I})\boxtimes K^\mu$.

\begin{lemma}
    Assume that Equation \eqref{eq: ConformalStructOnConnesFusion1} defines a unitary representation $U_0^{H\boxtimes K}$ of $\Diff_\A^+(S^1)$ on $H_+^\varphi(\tilde{I})\boxtimes K^\mu$. Then, $U^{H\boxtimes K}_0$ equals the canonical representation $U^{H\boxtimes K}$ of $\Diff_\A^+(S^1)$ on the twisted $\A$-representation $H_+^\varphi(\tilde{I})\boxtimes K^\mu\in \Rep^{\varphi\mu}(\A)$.
\end{lemma}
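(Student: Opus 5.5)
By the uniqueness clause of Theorem \ref{Thm: TwistedRepIsConformal}, applied to the $\varphi\mu$-twisted representation $H_+^\varphi(\tilde{I})\boxtimes K^\mu$ from Theorem \ref{thm: RepresentationOnFusion}, it is enough to prove one identity. We must show that for every $L\in\Jcal$ and every $(\tilde{f},f,V)\in\Diff_\A^+(L)$,
\[
U_0^{H\boxtimes K}(\tilde{f},f,V) = \pi^{H\boxtimes K}_L\big(U(\tilde{f},f,V)\big).
\]
We are assuming that $U_0^{H\boxtimes K}$ is a unitary representation, so the identity need only be checked on the generating subgroups $\Diff_\A^+(L)$. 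Moreover, it suffices to check it for $L$ small. By \cite[Lem. 17]{Hen19}, $\Diff_\A^+(S^1)$ is algebraically generated by the groups $\Diff_\A^+(L)$ for $L$ ranging over any cover of $S^1$ by small intervals. So we may restrict to intervals $L$ whose lift $\tilde{L}$ lies either inside $\tilde{I}$ or inside some $\tilde{J}\subset\tilde{I}^{c+}$.

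\emph{Case 1: $\tilde{L}\subset\tilde{I}$.} Choose $\lambda$ inside the connected branch $\widetilde{\Diff_L}(S^1)$. Then every point of $\tilde{I}$ moves only within $\tilde{I}$, so $\lambda_z^\bullet$ is the canonical identification and can be dropped. Next, $U^K(\tilde{f},f,V)=\pi^K_L(U(\tilde f,f,V))$ lies in the image of $\A(L)\subset\A(I^{c})'$-type data localized in $\tilde I$. We use the isometry computation preceding the lemma in reverse: the vector $U^H\,\xi\,U^{-1}\otimes U^K\eta$ has the same inner products with $\xi'\otimes\eta'$ as $U^H\xi\otimes\eta$. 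Concretely, we must show
\[
U^H(\tilde f,f,V)\,\xi\,U(\tilde f,f,V)^{-1}\otimes U^K\eta = \pi^H_{\tilde L}(U)\xi\otimes\eta .
\]
For this, write $Z^+(U^H\xi U^{-1},\tilde I)=\pi^H_{\tilde I}(U)Z^+(\xi,\tilde I)U^{-1}$. Then use that $Z^+(\xi',\tilde I)^*Z^+(\pi^H_{\tilde I}(U)\xi,\tilde I)\in\A(I)$ acts on $K$ via $\pi^K_{\tilde I}$, and compare with the inner product formula. The $U^{-1}$ on the right is absorbed by $U^K\eta$, since $\pi^K_{\tilde I}(aU^{-1})U^K=\pi^K_{\tilde I}(a)$. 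The right-hand side is exactly $(\pi^H_{\tilde L}(U)\boxtimes\id)(\xi\otimes\eta)$, which is $\pi^{H\boxtimes K}_{\tilde L}(U)$ by the definition of the action for $\tilde L\subset\tilde I$.

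\emph{Case 2: $\tilde{L}\subset\tilde J\subset\tilde I^{c+}$.} Now $\tilde f$ fixes $\tilde I$ pointwise, so $\lambda_z$ is constant. Also $U(\tilde f,f,V)\in\A(L)\subset\A(I^c)$ commutes past $Z^+(\xi,\tilde I)$, because $Z^+(\xi,\tilde I)$ intertwines $\A_\R(\tilde I^{c+})$. Hence
\[
U^H\,Z^+(\xi,\tilde I)\,U^{-1}=\pi^H_{\tilde I^{c+}}(U)\,\pi^H_{\tilde I^{c+}}(U)^{-1}Z^+(\xi,\tilde I)=Z^+(\xi,\tilde I).
\]
Here we use Corollary \ref{cor: RepOfDiffA}, or directly $U^H=\pi^H_{\tilde L}(U)$ with $\tilde L\subset\tilde I^{c+}$. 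Therefore $U_0^{H\boxtimes K}(\xi\otimes\eta)=\xi\otimes\pi^K_{\tilde L}(U)\eta=(\id\boxtimes\pi^K_{\tilde L}(U))(\xi\otimes\eta)$. By part (3) of Theorem \ref{thm: RepresentationOnFusion} and the identification with the fusion over a single interval on the left, this equals $\pi^{H\boxtimes K}_{\tilde L}(U)$.

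The main obstacle is the bookkeeping of lifts and twists. One must make sure that the interval $L\subset S^1$ at which $\pi^{H\boxtimes K}_L$ is evaluated corresponds to the lift $\tilde L$ used in each case, via $\pi_L=\pi_{\hat L}$. The possible discrepancy is a power of $\varphi\mu$, and it is harmless because automorphisms fix the Virasoro subnet (Remark \ref{rk: AutomorphismsPreserveVirasoro}). With that observation, both cases match the canonical action, and uniqueness gives $U_0^{H\boxtimes K}=U^{H\boxtimes K}$.
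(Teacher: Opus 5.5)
Your two local computations are correct. Case 2 is exactly the paper's computation. Case 1 also works: the balancing relation $Z^+(\zeta,\tilde I)\pi_0(a)\Omega\otimes\eta=\zeta\otimes\pi^K_{\tilde I}(a)\eta$ for $a\in\A(I)$ does absorb the $U^{-1}$.

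The gap is in the reduction. The intervals $L$ with $L\subset I$ or $L\subset I^c$ do not form a cover of $S^1$, since no interval around $\partial_-I$ or $\partial_+I$ belongs to either family. The group generated by the corresponding $\Diff_\A^+(L)$ lies over diffeomorphisms of $S^1$ fixing both endpoints of $I$. That is a proper subgroup of $\Diff_\A^+(S^1)$, so agreement of $U_0^{H\boxtimes K}$ and $U^{H\boxtimes K}$ on it proves nothing. Neither the uniqueness in Theorem \ref{Thm: TwistedRepIsConformal} nor the generation result of \cite{Hen19} applies to this family. With $\tilde I$ held fixed, an interval $L$ straddling an endpoint of $I$ falls under neither of your cases.

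The missing ingredient is that $U_0^{H\boxtimes K}$ is independent of the interval. Take $\tilde I_1\subset\tilde I$ and $\xi\in H^\varphi_+(\tilde I_1)$. Then $Z^+(\xi,\tilde I_1)=Z^+(\xi,\tilde I)$, because both intertwine $\A(I^c)$ and send $\Omega$ to $\xi$. The same base point $z\in\tilde I_1$ can also be used for both intervals. So formula \eqref{eq: ConformalStructOnConnesFusion1} is intertwined by the canonical unitaries $H^\varphi_+(\tilde I_1)\boxtimes K^\mu\cong H^\varphi_+(\tilde I)\boxtimes K^\mu$, and hence by path continuations. The action $\pi^{H\boxtimes K}$ is intertwined by the same maps, by Theorem \ref{thm: RepresentationOnFusion}(4).

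This is how the paper proceeds. For an arbitrary $L$ it moves $\tilde I$ until $\tilde L\subset\tilde I^{c+}$, and then only your Case 2 is needed; Case 1 becomes superfluous. Once you add this interval-independence step, your argument goes through. You could use it either to shrink or move $\tilde I$ away from $L$, or, for small $L$ near $\partial_\pm I$, to enlarge $\tilde I$ so that Case 1 applies.
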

\begin{proof}
    Assuming that $U_0^{H\boxtimes K}$ is a representation, by the uniqueness in Theorem \ref{Thm: TwistedRepIsConformal}, it is enough to show that, for all $\tilde{L}\in \Jcal$ and $(\tilde{f},f,V)\in \Diff_\A^+(L)$,
    \[
    U_0^{H\boxtimes K}(\tilde{f},f,V) = \pi^{H\boxtimes K}_{\tilde{L}}\big(U(\tilde{f},f,V)\big)
    \]
    when acting on $H^\varphi_+(\tilde{I})\boxtimes K^\mu$. Note that, since the automorphism $\varphi$ restricts to the identity on the Virasoro subnet of $\A$ (by Remark \ref{rk: AutomorphismsPreserveVirasoro}), we have that $\pi^{H\boxtimes K}_{\tilde{L} + n}\big(U(\tilde{f,f,V})\big) = \pi^{H\boxtimes K}_{\tilde{L} }\big(U(\tilde{f,f,V})\big)$ for any $n\in \mathbb{Z}$. It is clear by definition of $U_0^{H\boxtimes K}$ that it commutes with the maps induced by restriction and inclusion of intervals, and hence with path continuations. Therefore, we can assume, without loss of generality, that $\tilde{L}\subset\tilde{I}^{c+}$. Then, we have that $(\tilde{f},f,V)\in \Diff^+_\A(I^c)$, which implies that the path $\lambda_z$ can be taken to be constant, and that $U^H(\tilde{f},f,V)\circ Z^+(\xi,\tilde{I})\circ U(\tilde{f},f,V)^{-1} = Z^+(\xi,\tilde{I})$ for all $\xi\in H_+^\varphi(\tilde{I})$. Hence, if $\xi\in H_+^\varphi(\tilde{I})$ and $\eta\in K^\mu$, we have
    \[
    U_0^{H\boxtimes K}(\tilde{f},f,V)(\xi\otimes\eta) = \xi\otimes U^K(\tilde{f},f,V)(\eta).
    \]
    Hence, it is only left to show that $\pi^{H\boxtimes K}_{\tilde{L}}\big(U(\tilde{f},f,V)\big)(\xi\otimes\eta) = \xi\otimes U^K(\tilde{f},f,V)(\eta)$. This follows directly from the canonical equivalence (compatible with the $\A_\R$-actions) $H^\varphi_+(\tilde{I})\boxtimes K^\mu\xrightarrow{\cong }H^\varphi_+(\tilde{I})\boxtimes K^\mu_-(\tilde{I}^{c+})$ and the definition of the action of $\A_\R(\tilde{L})$ on the codomain.
\end{proof}

It is only left to show that Equation \eqref{eq: ConformalStructOnConnesFusion1} indeed defines a unitary representation of $\Diff_\A^+(S^1)$ on $H^\varphi_+(\tilde{I})\boxtimes K^\mu$. This is the statement of the following lemma, which follows from analogous arguments to \cite[Lem. 2.20]{Gui21}. 
\begin{lemma}
    The map $U_0^{H\boxtimes K}$ defines a unitary representation of $\Diff_\A^+(S^1)$ on $H^\varphi_+(\tilde{I})\boxtimes K^\mu$. Namely, for any $\xi\in H_+^\varphi(\tilde{I})$, $\eta\in K^\mu$ and $(\tilde{f},f,V),\, (\tilde{g},g,W)\in \Diff_\A^+(S^1)$, it holds that
    \begin{equation}\label{eq: U0IsAction}
    U_0^{H\boxtimes K}(\tilde{f},f,V)\,U_0^{H\boxtimes K}(\tilde{g},g,W)(\xi\otimes \eta) = U_0^{H\boxtimes K}(\tilde{f}\tilde{g},fg,VW)(\xi\otimes\eta).
    \end{equation}
\end{lemma}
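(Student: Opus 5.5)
We verify \eqref{eq: U0IsAction} on the dense subspace $H^\varphi_+(\tilde{I})\otimes K^\mu$, expanding both sides with \eqref{eq: ConformalStructOnConnesFusion1}. Unitarity was established just before the statement. Continuity of $U_0^{H\boxtimes K}$ then follows from strong continuity of $U^H$, $U^K$ and $U$. On matrix coefficients against elementary tensors, this reduces to continuity of $\pi^K_{\tilde{I}}$ applied to the $\A(I)$-valued inner products, once the path continuation is absorbed by working locally on a slightly larger interval. Write $\mathsf{f}=(\tilde{f},f,V)$ and $\mathsf{g}=(\tilde{g},g,W)$, and choose paths $\lambda$ from $1$ to $\mathsf{f}$ and $\rho$ from $1$ to $\mathsf{g}$. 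For the product $\mathsf{f}\mathsf{g}$ we use the concatenated path $\sigma$ given by $\rho$ followed by $t\mapsto \mathsf{f}\rho(1)$ composed with $\lambda$, that is, $\lambda(t)\mathsf{g}$. Since the homotopy class of $[\sigma]$ is determined by $\mathsf{f}\mathsf{g}$, and path continuations are homotopy invariant, this choice is legitimate. With this choice, $\sigma_z$ is the concatenation of $\rho_z$ with $\lambda_{\tilde{g}z}$, hence $\sigma_z^\bullet=\lambda_{\tilde{g}z}^\bullet\rho_z^\bullet$.

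The key intermediate claim is that the "unnormalized" operator
\[
\Theta_{\mathsf{f}}:=U^H(\mathsf{f})(-)U(\mathsf{f})^{-1}\otimes U^K(\mathsf{f}):H^\varphi_+(\tilde{L})\boxtimes K^\mu\to H^\varphi_+(\tilde{f}\tilde{L})\boxtimes K^\mu,
\]
defined for any $\tilde{L}\in\Jcal_\R$, is unitary by the isometry computation above. It also intertwines path continuations in the sense that $\Theta_{\mathsf{f}}\,\alpha^\bullet=(\tilde{f}\circ\alpha)^\bullet\,\Theta_{\mathsf{f}}$ for every path $\alpha$ in $\R$. It suffices to check this for small paths, where $\alpha^\bullet$ is the identification induced by an inclusion $\tilde{L}_1\subset\tilde{L}_2$. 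There it is immediate, because $Z^+(\xi,\tilde{L}_1)=Z^+(\xi,\tilde{L}_2)$ and conjugation by $U^H(\mathsf{f})$, $U(\mathsf{f})$ is applied to the same operator. Granting this, the left side of \eqref{eq: U0IsAction} is
\[
(\lambda_z^\bullet)^{-1}\Theta_{\mathsf{f}}(\rho_z^\bullet)^{-1}\Theta_{\mathsf{g}}(\xi\otimes\eta)
=(\lambda_z^\bullet)^{-1}\big((\tilde{f}\circ\rho_z)^\bullet\big)^{-1}\Theta_{\mathsf{f}}\Theta_{\mathsf{g}}(\xi\otimes\eta).
\]
We have $\Theta_{\mathsf{f}}\Theta_{\mathsf{g}}=\Theta_{\mathsf{f}\mathsf{g}}$ because $U^H$, $U^K$ and $U$ are homomorphisms; here we use the identity $Z^+(U^H(\mathsf{g})\xi U(\mathsf{g})^{-1},\tilde{g}\tilde{I})=U^H(\mathsf{g})Z^+(\xi,\tilde{I})U(\mathsf{g})^{-1}$, which follows from uniqueness of $Z^+$. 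It therefore remains to compare the path $\lambda_z$ followed by $\tilde{f}\circ\rho_z$, which runs from $z$ to $\tilde{f}\tilde{g}z$, with $\sigma_z$.

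The main obstacle is this final homotopy comparison, together with bookkeeping about the basepoints $z\in\tilde{I}$ versus $\tilde{g}z\in\tilde{g}\tilde{I}$. Both paths are paths in $\R$ from the point $z$ to the point $\tilde{f}\tilde{g}z$. Since $\R$ is simply connected, they are homotopic rel endpoints. The endpoints lie in $\tilde{I}$ and $\tilde{f}\tilde{g}\tilde{I}$, where the identifications are canonical, so homotopy invariance of single-interval path continuations gives equality of the induced unitaries. I would also double-check that the definition of $U_0^{H\boxtimes K}(\mathsf{f})$ does not depend on the choice of $z\in\tilde{I}$, which holds for the same reason. This independence is what allows using $\lambda_{\tilde{g}z}$ with basepoint in $\tilde{g}\tilde{I}$ rather than in $\tilde{I}$. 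The twist plays no role in this argument: the automorphisms enter only through $\pi^H_{\tilde{L}}$ on lifted intervals, and the constructions above are manifestly compatible with these lifts.
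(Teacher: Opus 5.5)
Your proposal is correct and follows essentially the argument the paper defers to, \cite[Lem.~2.20]{Gui21}. The three ingredients are the same: the unnormalized transport $\Theta_{\mathsf f}$ commutes with path continuations, it satisfies $\Theta_{\mathsf f}\Theta_{\mathsf g}=\Theta_{\mathsf f\mathsf g}$ because $U^H$, $U^K$ and $U$ are homomorphisms, and the final comparison of paths is automatic since single-interval path continuations along paths in $\R$ depend only on their endpoints. Consequently your particular choice of $\sigma$ and the basepoint bookkeeping are harmless but unnecessary. Continuity also need not be checked by hand: it follows once the next lemma identifies $U_0^{H\boxtimes K}$ with $U^{H\boxtimes K}$ on each $\Diff_\A^+(L)$, and these subgroups generate $\Diff_\A^+(S^1)$ algebraically.
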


The two lemmas above imply the following result, analogous to \cite[Thm. 2.21]{Gui21}.

\begin{theorem}\label{Thm: ConformalStructureOfFusion}
    Let $\varphi,\mu\in \Aut(\A)$ be automorphisms and $H^\varphi\in \Rep^\varphi(\A)$, $K^\mu\in \Rep^\mu(\A)$ be twisted representations of $\A.$ Then, for any $\tilde{I}\in\Jcal_\R$, the unitary representation $U^{H\boxtimes K}$ of $\Diff^+_\A(S^1)$ defining the conformal structure of $H^\varphi_+(\tilde{I})\boxtimes K^\mu$ can be described as follows. Given $(\tilde{f},f,V)\in \Diff_\A^+(S^1)$, we choose a map $\lambda: [0,1]\to \Diff_\A^+(S^1)$ from 1 to $(\tilde{f},f,V)$ such that $\lambda$ descends to a continuous path $[\lambda]$ on $\widetilde{\Diff^+}(S^1)$. Pick any $z\in \tilde{I}$ and let $\lambda_z$ be the path in $\R$ given by $t\mapsto  \text{proj}_{\Diff(\R)}(\lambda(t))(z)$. Then, for any $\xi\in H^\varphi_+(\tilde{I})$ and $\eta\in K^\mu$,
    \[
    U^{H\boxtimes K}(\tilde{f},f,V)(\xi\otimes\eta) = (\lambda_z^\bullet)^{-1}\big(U^H(\tilde{f},f,V)\,\xi\,U(\tilde{f},f,V)^{-1}\otimes U^K(\tilde{f},f,V)\eta\big),
    \]
    where $U^H(\tilde{f},f,V)\,\xi\,U(\tilde{f},f,V)^{-1} = U^H(\tilde{f},f,V)\circ Z^+(\xi,\tilde{I})\circ U(\tilde{f},f,V)^{-1}(\Omega)$ and $\lambda_z^\bullet: H^\varphi_+(\tilde{I})\boxtimes K^\mu\to H^\varphi_+(\tilde{f}\tilde{I})\boxtimes K^\mu$ is the path continuation induced by $\lambda_z$.
\end{theorem}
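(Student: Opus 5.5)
The theorem follows by combining the two lemmas just proved, so the plan is mostly about assembling them carefully. First, the construction preceding the lemmas shows that, for each $(\tilde{f},f,V)\in\Diff_\A^+(S^1)$, formula \eqref{eq: ConformalStructOnConnesFusion1} defines a unitary $U_0^{H\boxtimes K}(\tilde{f},f,V)$ on $H^\varphi_+(\tilde{I})\boxtimes K^\mu$. Three facts from that discussion are needed:
\begin{enumerate}
\item the vector $U^H(\tilde{f},f,V)\,\xi\,U(\tilde{f},f,V)^{-1}$ lies in $H^\varphi_+(\tilde{f}\tilde{I})$, by the intertwining computation using Corollary \ref{cor: RepOfDiffA};
\item the resulting map is isometric on the algebraic tensor product, again by Corollary \ref{cor: RepOfDiffA} applied to $\pi^K$;
\item its image is dense, so it extends to a unitary.
\end{enumerate}

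Next I will check that the definition is independent of the auxiliary choices, namely the path $\lambda$ and the point $z\in\tilde{I}$. Changing $z$ within $\tilde{I}$ changes $\lambda_z$ only by a path that stays inside the moving intervals $\mathrm{proj}(\lambda(t))\tilde{I}$. By homotopy invariance of single-interval path continuations, $\lambda_z^\bullet$ does not change. Changing $\lambda$ keeps $[\lambda]$ in a fixed homotopy class, since $\widetilde{\Diff^+}(S^1)$ is simply connected. Hence the paths $\lambda_z$ in $\R$ are homotopic with fixed endpoints, and again $\lambda_z^\bullet$ is unchanged. This justifies the phrase ``any $z$'' and ``choose a map $\lambda$'' in the statement.

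Then I will invoke the second lemma, the analogue of \cite[Lem. 2.20]{Gui21}. It says that $U_0^{H\boxtimes K}$ is multiplicative, so it is a unitary representation of $\Diff_\A^+(S^1)$. Continuity and the standard action of the central $U(1)$ should be inherited from $U^H$, $U^K$ and $U$. With the hypothesis of the first lemma now satisfied, that lemma identifies $U_0^{H\boxtimes K}$ with the canonical conformal structure $U^{H\boxtimes K}$ of Theorem \ref{Thm: TwistedRepIsConformal}, applied to the $\varphi\mu$-twisted representation of Theorem \ref{thm: RepresentationOnFusion}. This identification yields exactly the displayed formula.

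The main obstacle is the multiplicativity \eqref{eq: U0IsAction}, whose proof is only cited. To verify it I would concatenate $\lambda$ for $(\tilde{f},f,V)$ with the path $t\mapsto(\tilde{f},f,V)\mu(t)$, where $\mu$ is a path to $(\tilde{g},g,W)$. The key identity needed is that conjugating a path continuation by $U^H(\tilde{f},f,V)(-)U^{-1}\otimes U^K(\tilde{f},f,V)$ gives the continuation along the transported path $\tilde{f}\circ\mu_z$. This identity should be checked on small paths, where continuations are just inclusion and restriction identifications, and there it follows from covariance of the $Z^+$ operators.
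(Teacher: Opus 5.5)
Your proposal is correct and follows the paper's route. In both, the theorem comes from showing that the formula defines unitaries, that these unitaries are multiplicative (the lemma the paper cites from Gui), and that a multiplicative family agreeing with $\pi^{H\boxtimes K}_{\tilde L}(U(\tilde f,f,V))$ on local diffeomorphisms equals $U^{H\boxtimes K}$ by uniqueness; your sketch of multiplicativity, transporting path continuations along $\tilde f\circ\mu_z$ and checking this on small paths via compatibility of $Z^+$ with inclusions, is the right argument. Your independence-of-choices check and your continuity remark are extra: continuity in fact comes for free, because the identification only uses the homomorphism property together with the algebraic generation of $\Diff_\A^+(S^1)$ by the local subgroups $\Diff_\A^+(I)$.
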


We can now directly apply the theorem above to compute the action $e^{-2\pi i L_0}$ on the Connes fusion $H_+^\varphi(\tilde{I})\boxtimes K^\mu$ of two twisted representations over a given interval $\tilde{I}\in \Jcal_\R$.

\begin{corollary}\label{cor: e2piiL0OnConnesFusion}
    Let $z\in \tilde{I}$ be arbitrary and $\lambda_z$ be the path in $\R$ from $\tilde{I}$ to $\tilde{I}-1$ given by $t\in[0,1]\mapsto z-t$. Then, for any $\xi\in H^\varphi_+(\tilde{I})$ and $\eta\in K^\mu$, it holds that
\[
e^{-2\pi i L_0}(\xi\otimes \eta) = (\lambda_z^{\bullet})^{-1}(e^{-2\pi i L_0}\xi\otimes e^{-2\pi i L_0}\eta).
\]
\end{corollary}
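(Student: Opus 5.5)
The corollary is obtained by specializing Theorem \ref{Thm: ConformalStructureOfFusion} to the element of $\Diff_\A^+(S^1)$ that acts as $e^{-2\pi iL_0}$ on every conformal covariant twisted representation. I will fix the element of $\widetilde{\Mob}\subset\widetilde{\Diff^+}(S^1)$ given by $\widetilde{\exp}(-2\pi iL_0)$. Its real-line component is the translation $z\mapsto z-1$, since $\exp_{iL_0}$ is the rotation subgroup, with the sign convention $U(R_t)=e^{itL_0}$. I will lift it to $(\tilde f,f,V)\in\Diff_\A^+(S^1)$ with $f=\id$ and $V=e^{-2\pi iL_0}\in U(H_0)$, where $e^{-2\pi iL_0}$ is computed via the unique lift of the $\widetilde{\Mob}$-action to a true unitary representation. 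Because the representations of $\widetilde{\Mob}$ obtained from $U^H$, $U^K$ and $U^{H\boxtimes K}$ are the unique lifts of the projective ones, these unitaries agree with the $e^{-2\pi iL_0}$ of the corollary on each space. For this I will need the central $U(1)$ normalization to match, which is the one delicate point. Concretely, I will define the lift using the one-parameter group $t\mapsto(\widetilde{\exp}(-2\pi itL_0),R_{-2\pi t},e^{-2\pi itL_0})$ in $\Diff_\A^+(S^1)$. Then $U^H$ of this one-parameter group is a continuous unitary representation of $\R$ lifting the projective rotation action, so it coincides with the lifted $\widetilde{\Mob}$-action restricted to rotations. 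Any discrepancy would be a character of $\widetilde{\Mob}$, which is perfect, hence trivial.

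Next I will take $\lambda(t)=(\widetilde{\exp}(-2\pi itL_0),R_{-2\pi t},e^{-2\pi itL_0})$, $t\in[0,1]$, as the path in Theorem \ref{Thm: ConformalStructureOfFusion}. It descends to a continuous path in $\widetilde{\Diff^+}(S^1)$, and $\lambda_z(t)=z-t$, which is exactly the path in the statement, running from $\tilde I$ to $\tilde I-1=\tilde f\tilde I$. The theorem then gives
\[
e^{-2\pi iL_0}(\xi\otimes\eta)=(\lambda_z^\bullet)^{-1}\big(e^{-2\pi iL_0}\,\xi\,e^{2\pi iL_0}\otimes e^{-2\pi iL_0}\eta\big).
\]

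Finally I will identify $e^{-2\pi iL_0}\xi e^{2\pi iL_0}:=e^{-2\pi iL_0}Z^+(\xi,\tilde I)e^{2\pi iL_0}(\Omega)$ with $e^{-2\pi iL_0}\xi$. This follows because $\Omega$ is invariant under the unitary action of $\widetilde{\Mob}$ on $H_0$: rotations act on $\Omega$ trivially by uniqueness of the vacuum and the definition of $U(R_t)$. Hence $e^{2\pi iL_0}\Omega=\Omega$, and $e^{-2\pi iL_0}Z^+(\xi,\tilde I)\Omega=e^{-2\pi iL_0}\xi$, viewed as an element of $H^\varphi_+(\tilde I-1)$ by the computation preceding Theorem \ref{Thm: ConformalStructureOfFusion}.

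I expect the main obstacle to be the bookkeeping of the central extension. The element of $\Diff_\A^+(S^1)$ must be chosen so that the resulting unitaries on $H_0$, $H^\varphi$, $K^\mu$ and the fusion are all the canonical $e^{-2\pi iL_0}$ from the lifted $\widetilde{\Mob}$-representations, rather than differing by phases. I will handle this with the one-parameter subgroup argument above, which works uniformly for every conformal covariant representation.
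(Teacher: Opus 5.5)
Your proposal is correct and follows the paper's proof: you apply Theorem \ref{Thm: ConformalStructureOfFusion} to the lift of $\widetilde{\exp}(-2\pi iL_0)$ along the rotation path with $\lambda_z(t)=z-t$, and use that $\Omega$ is fixed by $\Mob$ to replace $U^H\,\xi\,U^{-1}$ by $U^H\xi$. Your extra care about the central phase, which the paper leaves implicit, is sound once you note that your one-parameter group is the restriction of the homomorphic section $g\mapsto(\tilde g,g,U(g))$ of $\widetilde{\Mob}$ into $\Diff_\A^+(S^1)$; that is what makes the discrepancy a character of the perfect group $\widetilde{\Mob}$ rather than merely a character of $\R$.
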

\begin{proof}
    The claim follows from Theorem \ref{Thm: ConformalStructureOfFusion} and the fact that, if $(\tilde{f},f,V)\in \Diff_\A^+(S^1)$ is in the preimage of $\widetilde{\Mob}$, then $U^H(\tilde{f},f,V)\,\xi\,U(\tilde{f},f,V)^{-1} = U^H(\tilde{f},f,V)\,\xi$, since $\Omega$ is fixed by $\Mob$.
\end{proof}

\subsection{The crossed balanced $\mathrm{W}^*$-structure}

In this section, we equip the category $$\Rep^{\Aut(\A)}(\A):= \bigoplus\limits_{\varphi\in \Aut(\A)}\Rep^\varphi(\A)$$ with the structure of an $\Aut(\A)$-crossed balanced $\mathrm{W}^*$-tensor category.

Let $\widetilde{S_-^1}:=(-\nicefrac{1}{2}, 0)$ and $\widetilde{S_+^1}:=(0,\nicefrac{1}{2})$, which are both intervals in $\Jcal_\R$ mapping respectively to the lower and upper hemispheres of $S^1$ under $q:\R\to S^1$. Given automorphisms $\varphi,\mu\in \Aut(A)$, for any twisted representations $H^\varphi\in \Rep^\varphi(\A)$ and $K^\mu\in \Rep^\mu(\A)$, we define their tensor product to be $$H^\varphi\boxtimes K^\mu:=H^\varphi_+(\widetilde{S^1_-})\boxtimes K^\mu_-(\widetilde{S^1_+}).$$ We also identify $H^\varphi\boxtimes K^\mu$ with $H_+^\varphi(\widetilde{S^1_-})\boxtimes K^\mu$ and $H^\varphi\boxtimes K_-^\mu(\widetilde{S^1_+})$ using the canonical equivalences. We extend this tensor product linearly to obtain a functor
\[
\boxtimes: \Rep^{\Aut(\A)}(\A)\times \Rep^{\Aut(\A)}(\A)\to \Rep^{\Aut(\A)}(\A).
\]
Let us denote $\widetilde{S^1_-}$ and $\widetilde{S^1_+}$ by $-$ and $+$ in the Connes fusions. Given three automorphisms $\varphi,\mu,\nu\in \Aut(\A)$ and representations $H^\varphi,K^\mu,R^\nu$ of $\A$ twisted by $\varphi,\mu$ and $\nu$ respectively, we define the unitary associator $(H^\varphi\boxtimes K^\mu)\boxtimes R^\nu\xrightarrow{\cong}H^\varphi\boxtimes (K^\mu\boxtimes R^\nu)$ to be the composition
\[
(H^\varphi_+(-)\boxtimes K^\mu)\boxtimes R_-^\nu(+)\xrightarrow{\cong }  H_+^\varphi(-)\boxtimes K^\mu\boxtimes R^\nu-(+)   \xrightarrow{\cong}H^\varphi_+(-)\boxtimes (K^\mu\boxtimes R_-^\nu(+))
\]
produced in Section \ref{Sec: Associativity}, which clearly satisfies naturality. Analogously to \cite[Fig. 2.3]{Gui21}, the associator satisfies the pentagon equations.

We define the unit object to be $H_0$, and the left and right unitors evaluated on a $\varphi$-twisted representation $H^\varphi$ to be
\[
\sharp_H: H^\varphi\boxtimes H_0 = H^\varphi_+(-)\boxtimes H_0\xrightarrow{\natural_H} H^\varphi
\]
\[
\flat_H: H_0\boxtimes H^\varphi = H_0\boxtimes H^\varphi_-(+) \xrightarrow{\cong} H_+^\varphi(+)\boxtimes T_{\varphi^{-1}}(H_0)\xrightarrow{\id\boxtimes V_{\varphi^{-1}}} H_+^\varphi(+)\boxtimes H_0 \xrightarrow{\natural_H} H^\varphi,
\]
where we use the canonical unitaries from Theorem \ref{thm: PreUnitors}.
\begin{proposition}
    The left and right unitors $\sharp_H$ and $\flat_H$ satisfy the triangle identity.
\end{proposition}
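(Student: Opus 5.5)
We need to check that, for $H^\varphi\in\Rep^\varphi(\A)$ and $K^\mu\in\Rep^\mu(\A)$, the composite
\[
(H^\varphi\boxtimes H_0)\boxtimes K^\mu\xrightarrow{\cong}H^\varphi\boxtimes(H_0\boxtimes K^\mu)\xrightarrow{\id\boxtimes\flat_K}H^\varphi\boxtimes K^\mu
\]
equals $\sharp_H\boxtimes\id$. Both sides are bounded maps, so it suffices to compare them on a dense set of elementary tensors. The plan is to reduce everything to the three-fold fusion $H^\varphi_+(-)\boxtimes H_0\boxtimes K^\mu_-(+)$ of Section \ref{Sec: Associativity}. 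The associator is the composite of the two identifications of Proposition \ref{prop: MapAssociativity} and its mirror. I would use vectors $\xi\otimes\pi_0(x)\Omega\otimes\chi$ with $\xi\in H^\varphi_+(-)$, $\chi\in K^\mu_-(+)$ and $x$ in a suitable local algebra. Such vectors span a dense subspace because $H_0(L)=\A(L)\Omega$ is dense for every $L$.

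\textbf{Left-hand side $\sharp_H\boxtimes\id$.} Choose $x\in\A(L)$ with $L$ inside $q(\widetilde{S^1_-}^{c+})$ but disjoint from the closure of the upper hemisphere. By Theorem \ref{thm: PreUnitors}, $\natural_H(\xi\otimes\pi_0(x)\Omega)=\pi^H_{\tilde{L}}(x)\xi$ for the lift $\tilde{L}\subset(-\nicefrac12,0)^{c+}=(0,\nicefrac12)$. Hence the left-hand side sends our vector to $\pi^H_{\tilde L}(x)\xi\otimes\chi$. I would then choose $L$ so that this vector still lies in $H^\varphi_+(\text{some interval})\otimes K^\mu_-(+)$, shrinking intervals via the canonical inclusion equivalences \eqref{eq: CanonicalEquivalenceInclusion}. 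Concretely, take $x\in\A(L)$ with $\tilde L\subset(0,\nicefrac12)$ and the lower-hemisphere factor enlarged to $\tilde I=(-\nicefrac12,\epsilon)\supset\tilde L$, with $\tilde J\subset(\epsilon,\nicefrac12)$. Then $\pi^H_{\tilde L}(x)\xi\in H^\varphi_+(\tilde I)$, which is legitimate after shrinking the right interval.

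\textbf{Right-hand side $(\id\boxtimes\flat_K)\circ\alpha$.} After the associator the vector becomes $\xi\otimes(\pi_0(x)\Omega\otimes\chi)$. Now unwind $\flat_K$: the swap of Proposition \ref{prop: SwapWithAction} sends $\pi_0(x)\Omega\otimes\chi\mapsto\chi\otimes\Gamma_{\mu^{-1}}\pi_0(x)\Omega$, then $V_{\mu^{-1}}$ is applied, then $\natural_K$. Up to the twist bookkeeping, the result is $\pi^K_{\tilde L'}(x)\chi$ for the lift $\tilde L'$ of $L$ in the positive complement of $\widetilde{S^1_+}$ adjusted by $V_{\mu^{-1}}$. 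Using \eqref{eq: RelationZ+Z-}, the $V_{\mu^{\pm1}}$ factors should cancel against the $\mu^{\epsilon}$ shift, giving $\pi^K_{\tilde L}(x)\chi$ with $\tilde L\subset(0,\nicefrac12)$, the same lift as before. So the right-hand side gives $\xi\otimes\pi^K_{\tilde L}(x)\chi$.

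\textbf{Main obstacle: comparing the two outputs.} We must show $\pi^H_{\tilde L}(x)\xi\otimes\chi=\xi\otimes\pi^K_{\tilde L}(x)\chi$ in $H^\varphi\boxtimes K^\mu$. Both equal $\pi^{H\boxtimes K}_{\tilde L}(x)(\xi\otimes\chi)$ by Theorem \ref{thm: RepresentationOnFusion}, parts (1)--(3), because $\tilde L$ lies between the two fusion intervals. Equivalently, one can compute inner products as in \eqref{eq: OneSidedInnerProd} and check that both give $\langle \pi^K(Z^+(\xi',\cdot)^*Z^+(\xi,\cdot)x)\chi|\chi'\rangle$. The delicate part is exactly which lift of $L$, hence which power of $\mu$ or $\varphi$, appears after unwinding $\flat_K$. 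I expect the signs to work out because $V_{\mu^{-1}}$ in $\flat$ was inserted precisely to cancel the twist from $T_{\mu^{-1}}$. I would verify this carefully using the $\epsilon$ bookkeeping of Lemma \ref{lemm: epsilon}, and I expect this step to be the hardest.
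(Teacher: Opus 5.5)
Your proposal follows essentially the same route as the paper: evaluate both legs on the dense vectors $\xi\otimes\pi_0(x)\Omega\otimes\chi$ with $x$ localized in the gap near $\mathrm{p}$ between the intervals of $\xi$ and $\chi$, obtain $\pi^H_{\tilde L}(x)\xi\otimes\chi$ and $\xi\otimes\pi^K_{\tilde L}(x)\chi$, and identify both with $\pi^{H\boxtimes K}_{\tilde L}(x)(\xi\otimes\chi)$. The step you flag as hardest is a one-line check. With $\chi\in K^\mu_-(\tilde J)$ and $\tilde J\subset(\epsilon,\nicefrac12)$, the relevant lift is $\tilde L+1\subset\tilde J^{c+}$, not a lift in $\widetilde{S^1_+}^{c+}$, since none exists. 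Then $\natural_K(\chi\otimes\pi_0(\mu^{-1}x)\Omega)=\pi^K_{\tilde L+1}(\mu^{-1}x)\chi=\pi^K_{\tilde L}(x)\chi$ by \eqref{eq: TwistedRepOnR}, exactly as you predicted.
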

\begin{proof}
   We have to show that the following diagram commutes. 
\[\begin{tikzcd}
	{H^\varphi_+(-)\boxtimes H_0\boxtimes K^\mu_-(+)} & {H^\varphi_+(-)\boxtimes (K_+^\mu(-)\boxtimes T_{\mu^{-1}}(H_0))} \\
	{H^\varphi_+(-)\boxtimes K^\mu_-(+)} & {H_+^\varphi(-)\boxtimes (K^\mu_+(-)\boxtimes H_0)}.
	\arrow["\cong", from=1-1, to=1-2]
	\arrow["{\natural_H\boxtimes \id}"', from=1-1, to=2-1]
	\arrow["{\id\boxtimes V_{\mu^{-1}}}", from=1-2, to=2-2]
	\arrow["{\id\boxtimes \natural_K}", from=2-2, to=2-1]
\end{tikzcd}\]
Fix $\tilde{I}\Subset \widetilde{S^1_-}$, $\tilde{J}\Subset\widetilde{S^1_+}$, and let $\tilde{L}\in\Jcal_\R$ such that $\tilde{L}\subset \tilde{I}^{c+}\cap \tilde{J}^{c-}$. Then, vectors of the form $\xi\otimes \pi_{0,
L}(x)\Omega\otimes \eta$, with $\xi\in H_+^\varphi(\tilde{I})$, $\eta\in K^\mu_-(\tilde{J})$ and $x\in \A(L)$, span a dense subset of $H^\varphi_+(-)\boxtimes H_0\boxtimes K^\mu_-(+)$. Pick $\xi\in H_+^\varphi(\tilde{I})$, $\eta\in K^\mu_-(\tilde{J})$ and $x\in \A(L)$. Then, the left leg of the diagram applied to $\xi\otimes \pi_{0,L}(x)\Omega\otimes \eta$ reads $\pi^H_{\tilde{I}^{c+}}(x)(\xi)\otimes \eta = \pi^H_{\tilde{L}}(x)(\xi)\otimes \eta$. Similarly, the top-right-bottom leg applied to the same vector reads
    \begin{align*}
    \xi\otimes \pi_{0,L}(x)\Omega\otimes \eta&\mapsto \xi\otimes \eta\otimes \pi_{0,L}(x)\Omega \\ &\mapsto \xi\otimes \eta\otimes V_{\mu^{-1}}\pi_{0,L}(x)\Omega = \xi\otimes\eta\otimes \pi_{0,L}(\mu^{-1}x)\Omega \\& \mapsto \xi\otimes \pi^K_{\tilde{J}^{c+}}(\mu^{-1}x)(\eta)= \xi\otimes \pi^K_{\tilde{L}}(x)(\eta).
    \end{align*}
We can easily construct path-continuations that show that both sides equal $\pi^{H\boxtimes K}_{\tilde{L}}(x)(\xi\otimes~\eta)$.
\end{proof}

The functors $T_\varphi$ for $\varphi\in\Aut(\A)$ provide a monoidal action $$T: \underline{\Aut(\A)}\to \underline{\Aut}_\otimes(\Rep^{\Aut(\A)}(\A)).$$ Given $\nu\in \Aut(\A)$, the monoidal structure of $T_\nu$ is given, using Proposition \ref{prop: ActionData}, by
\[
T_\nu(H^\varphi)\boxtimes T_\nu(K^\mu) =  T_\nu\big(H^\varphi)_+(-)\boxtimes T_\nu\big(K^\mu\big)_-(+)\cong     T_\nu\big(H^\varphi_+(-)\boxtimes K^\mu_-(+)\big) =   T_\nu(H^\varphi\boxtimes K^\mu),
\]
which is easily seen to be compatible with the associator. The monoidal structure of the functor $T$ is given, on $H^\varphi\in\Rep^\varphi(\A)$, by
\[
T_\mu\circ T_\nu(H^\varphi) = T_{\mu\nu}(H^\varphi).
\]

We define next the $\Aut(\A)$-crossed braiding on $\Rep^{\Aut(\A)}(\A)$. Let $\varrho:[0,1]\to \R$ be the path $\varrho(t) = -\nicefrac{1}{4} + \nicefrac{t}{2}$, which goes from $\widetilde{S_-^1}$ to $\widetilde{S_+^1}$. Given twisted representations $H^\varphi\in\Rep^\varphi(\A)$ and $K^\mu\in \Rep^\mu(\A)$, we define the crossed-braid operator $\B_{H,K}: H^\varphi\boxtimes K^\mu\xrightarrow{\cong} T_\varphi(K^\mu)\boxtimes H^\varphi$~by
\[
\B_{H,
K}: H^\varphi\boxtimes K^\mu = H^\varphi_+(-)\boxtimes K^\mu\xrightarrow{\varrho^\bullet} H^\varphi_+(+)\boxtimes K^\mu\cong T_\varphi(K^\mu)\boxtimes H^\varphi_-(+) = T_\varphi(K^\mu)\boxtimes H^\varphi,
\]
where we use the unitary defined in Proposition \ref{prop: SwapWithAction}. The crossed-braid operators are clearly natural in $H^\varphi$ and $K^\mu$. Hence, we obtain an $\Aut(\A)$-crossed braiding on $\Rep^{\Aut(\A)}$ if we prove the crossed-braid relations, as follows.  

\begin{theorem}\label{Thm: Aut(A)-X-Braiding}
    Fix automorphisms $\varphi,\mu,\nu\in\Aut(\A)$ and let $H^\varphi\in\Rep^\varphi(\A)$, $K^\mu\in\Rep^\mu(\A)$, $R^\nu\in\Rep^\nu(\A)$ be twisted representations of $\A$. Then, for the action $T$ of $\Aut(\A)$ on $\Rep^{\Aut(\A)}(\A)$ and the unitaries $\B$, the following diagrams commute
\[\begin{tikzcd}
	{T_\nu(H^\varphi\boxtimes K^\mu)} && {T_\nu(T_\varphi(K^\mu)\boxtimes H^\varphi)} \\
	{T_\nu(H^\varphi)\boxtimes T_\nu(K^\mu)} && {T_{\nu\varphi\nu^{-1}}\circ T_\nu(K^\mu)\boxtimes T_\nu(H^\varphi)} \\
	\\
	{H^\varphi\boxtimes K^\mu\boxtimes R^\nu} && {T_\varphi(K^\mu\boxtimes R^\nu)\boxtimes H^\varphi} \\
	{T_\varphi(K^\mu)\boxtimes H^\varphi \boxtimes R^\nu} && {T_\varphi(K^\mu)\boxtimes T_\varphi(R^\nu)\boxtimes H^\varphi} \\
	\\
	{H^\varphi\boxtimes K^\mu\boxtimes R^\nu} && {T_{\varphi\mu}(R^\nu)\boxtimes H^\varphi\boxtimes K^\mu} \\
	{H^\varphi\boxtimes T_\mu(R^\nu)\boxtimes K^\mu} && {T_\varphi\circ T_{\mu}(R^\nu)\boxtimes H^\varphi\boxtimes K^\mu}.
	\arrow["{T_\nu(\B_{H,K})}", from=1-1, to=1-3]
	\arrow["{\cong }"', from=1-1, to=2-1]
	\arrow["\cong", from=1-3, to=2-3]
	\arrow["{\B_{T_\nu(H^\varphi), T_\nu(K^\mu)}}"', from=2-1, to=2-3]
	\arrow["{\B_{H, K\boxtimes R}}", from=4-1, to=4-3]
	\arrow["{\B_{H,K}\boxtimes \id}"', from=4-1, to=5-1]
	\arrow["{\cong }", from=4-3, to=5-3]
	\arrow["{\id\boxtimes \B_{H, R}}"', from=5-1, to=5-3]
	\arrow["{\B_{H\boxtimes K, R}}", from=7-1, to=7-3]
	\arrow["{\id\boxtimes \B_{K,R}}"', from=7-1, to=8-1]
	\arrow["{=}", from=7-3, to=8-3]
	\arrow["{\B_{H, T_\mu(R^\nu)}\boxtimes\id}"', from=8-1, to=8-3]
\end{tikzcd}\]
\end{theorem}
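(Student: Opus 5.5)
The plan is to check all three diagrams on dense subspaces of elementary tensors, using the concrete descriptions of the structure maps. Every map involved is built from three kinds of ingredients: path continuations ($\varrho^\bullet$ and its analogues), the swap unitaries of Proposition \ref{prop: SwapWithAction} (which just reorder vectors $\xi\otimes\eta\mapsto\Gamma_\varphi\eta\otimes\xi$), and the associativity and $T_\nu$-compatibility unitaries of Propositions \ref{prop: MapAssociativity} and \ref{prop: ActionData} (which are the identity on elementary tensors). All of these maps commute with path continuations (Propositions \ref{prop: BreakGammaInComponents}, \ref{prop: ActionData} and \ref{prop: SwapWithAction}). So each diagram reduces to two things: tracking an elementary tensor through the diagram, and comparing the homotopy classes of the paths that are used.

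\textbf{First diagram.} Realise $H^\varphi\boxtimes K^\mu$ as $H^\varphi_+(-)\boxtimes K^\mu$. On the left, transport the unitary of Proposition \ref{prop: ActionData} to the one-sided fusion via the canonical equivalences. Since $T_\nu$ is the identity on morphisms, $T_\nu(\B_{H,K})$ is $\varrho^\bullet$ followed by the swap, acting on the same Hilbert space. The identification $T_\nu(H^\varphi)_+(\tilde I)=H^\varphi_+(\tilde I)$ is compatible with path continuations, so $\varrho^\bullet$ corresponds on both sides. It then remains to see that the swap of Proposition \ref{prop: SwapWithAction} for $T_\nu(H^\varphi),T_\nu(K^\mu)$ agrees with $T_\nu$ of the swap for $H^\varphi,K^\mu$. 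On elementary tensors $\Gamma_\nu\xi\otimes\Gamma_\nu\eta$ both send the vector to $\Gamma_{\nu\varphi\nu^{-1}}\Gamma_\nu\eta\otimes\Gamma_\nu\xi$, which is identified with $\Gamma_\nu\Gamma_\varphi\eta\otimes\Gamma_\nu\xi$ because all the $\Gamma$'s are identities on the underlying Hilbert spaces. This settles the first diagram.

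\textbf{Second diagram.} Work in the triple fusion $H^\varphi_+(\tilde I)\boxtimes K^\mu\boxtimes R^\nu_-(\tilde J)$ with $\tilde I\Subset\widetilde{S^1_-}$. Follow the argument of Gui's proof of the hexagon axioms, \cite[Sec.~2]{Gui21}.
\begin{itemize}
\item Along $\B_{H,K\boxtimes R}$, the vector $\xi$ is moved by $\varrho$ across both $K$ and $R$. I expect this to be expressible as a single path continuation in the triple fusion, given by a path $\gamma=(\alpha,\beta)$ in $\ConfR$: $\alpha$ moves $\xi$ from $-$ to $+$, while $\beta$ simultaneously moves the $R$-interval from $+$ to $\widetilde{S^1_-}+1$, followed by the reordering swap.
\item Along the lower path, $\B_{H,K}\boxtimes\id$ moves $\xi$ past $K$ only. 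Then $\id\boxtimes\B_{H,R}$ moves it past $R$, and here $R$ is viewed through the right fusion $\boxtimes R^\nu_-$.
\end{itemize}
By Proposition \ref{prop: BreakGammaInComponents}, the composite of the two half-moves equals $\gamma'^\bullet$ for a path $\gamma'$ homotopic in $\ConfR$ to $\gamma$. Since $\ConfR$ is contractible, homotopy invariance (the proposition following Definition \ref{def: ConnesFusion}) gives $\gamma^\bullet=\gamma'^\bullet$. After this, the swap maps agree on elementary tensors. In particular, the monoidal structure of $T_\varphi$ from Proposition \ref{prop: ActionData} appears exactly where $\Gamma_\varphi$ is applied to $\eta\otimes\chi$.

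\textbf{Third diagram.} This is treated the same way, but now the vector $\chi\in R^\nu$ travels backwards past $K$ and then past $H$. Here I would use the second unitary of Proposition \ref{prop: SwapWithAction}. The point to check is that the twists compose correctly: first $\Gamma_\mu$, then $\Gamma_\varphi$, giving $T_\varphi T_\mu(R^\nu)=T_{\varphi\mu}(R^\nu)$. On underlying Hilbert spaces this is automatic because the action $T$ is strict.

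\textbf{Main obstacle.} I expect the hardest step to be the bookkeeping of intervals modulo $1$ in the second and third diagrams. One must make sure that the path continuations really stay inside $\ConfR$, i.e.\ that they do not wind an extra time around the circle, since an extra winding would introduce a twist by $\varphi$. One must also check that the passage between $Z^+$ and $Z^-$ (Equation \eqref{eq: RelationZ+Z-}) inside the swap maps produces $V_\varphi$ factors that are compatible with the ones coming from the identification $Z^\pm(\xi,\tilde I+1)=Z^\pm(\xi,\tilde I)V_\varphi$. To handle this, I would fix explicit representatives: $\tilde I\Subset\widetilde{S^1_-}$, $\tilde J\Subset\widetilde{S^1_+}$, and auxiliary intervals as in Figure~\ref{fig:intervals}. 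With these fixed, I would write $\gamma^\bullet$ as a composite of inclusion and restriction unitaries, in the manner of the proof of Proposition \ref{prop: SwapIntervals}, so that every identification is the identity on elementary tensors.
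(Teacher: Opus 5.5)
Your argument for the first diagram is fine: the unitary of Proposition \ref{prop: ActionData} is the identity on elementary tensors and commutes with $\varrho^\bullet$, and both swaps send $\Gamma_\nu\xi\otimes\Gamma_\nu\eta$ to $\Gamma_{\nu\varphi}\eta\otimes\Gamma_\nu\xi$.

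The gap is in the second and third diagrams. Proposition \ref{prop: BreakGammaInComponents} only identifies a double path continuation $\gamma^\bullet$ on $H^\varphi_+(\cdot)\boxtimes K^\mu\boxtimes R^\nu_-(\cdot)$ with the iterated one-sided continuations $\beta^\bullet(\alpha^\bullet\boxtimes\id)$ and $\alpha^\bullet(\id\boxtimes\beta^\bullet)$. It says nothing about composites that contain the swap unitaries of Proposition \ref{prop: SwapWithAction}.

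On the lower leg, $\B_{H,K}\boxtimes\id$ ends with a swap that puts $H^\varphi$ in the middle slot, where the triple fusion attaches no interval. Then $\id\boxtimes\B_{H,R}$ must re-localize $H^\varphi$ through the associator before moving it along $\varrho$. So the composite of your two half-moves is not a path continuation in any single triple fusion. Your claim that it equals $\gamma'^\bullet$ for some $\gamma'$ homotopic to $\gamma$ therefore has no support.

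Falling back on elementary tensors does not rescue it either. Since $\widetilde{S^1_-}\cap\widetilde{S^1_+}=\emptyset$, the image of $\xi\otimes\eta\otimes\chi$ under $\varrho^\bullet$ or $\gamma^\bullet$ is in general not an elementary tensor over the new intervals. So ``the swap maps agree on elementary tensors'' cannot be applied to it.

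What you would need is a lemma describing how the swap interacts with fusing a third factor, with the associator, and with path continuations (including the $V_\varphi$-twists coming from $\tilde J$ versus $\tilde J\pm 1$). That lemma is essentially the content of the hexagon, and it is exactly the bookkeeping you defer as the ``main obstacle''. For the third diagram you would also have to relate the second unitary of Proposition \ref{prop: SwapWithAction} to $\B$, which is defined through the first one.

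The paper supplies the missing mechanism with the operators $L(\xi,\tilde I)=\alpha_{\tilde I}^\bullet\circ Z^+(\xi,\tilde I)$ and $R(\xi,\tilde I)$. These package ``an elementary tensor transported along a path'' as an operator that can be composed. Proposition \ref{prop: BreakGammaInComponents} is used to show that $L$ and $R$ commute (Proposition \ref{prop: LandRCommute}). Combined with Proposition \ref{prop: CompatibilityLRBRaiding}, this gives the exchange identity $\B_{H,K}L(\xi,\tilde I)\eta=L(\Gamma_\varphi\eta,\tilde J)\xi$ of Equation \eqref{eq: Lbraiding2}. Together with $L(\xi,\tilde I)L(\eta,\tilde J)=L(L(\xi,\tilde I)\eta,\tilde O)$ (Proposition \ref{prop: JoinLs}) and Proposition \ref{prop: SwapLsGetBraiding}, each hexagon becomes a two-line computation on the dense vectors $L(\xi,\tilde I)L(\eta,\tilde J)\chi$. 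You never have to describe a swap on a triple fusion.
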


To prove the cross-braided relations of Theorem \ref{Thm: Aut(A)-X-Braiding}, we introduce the following maps, following \cite{Gui21}. Let $H^\varphi$ be a $\varphi$-twisted representation of $\A$, and $\tilde{I}\in \Jcal_\R$ an interval. For any $K^\mu\in \Rep^\mu(\A)$ and $\xi\in H_+^\varphi(\tilde{I})$, we denote by $Z^+(\xi, \tilde{I})$ the bounded linear operator $K^\mu\to H^\varphi_+(\tilde{I})\boxtimes K^\mu$ induced by
\[
\begin{array}{cccc}
     Z^+(\xi, \tilde{I}):& K^\mu &\to &H_+^\varphi(\tilde{I})\otimes K^\mu \\
     & \eta&\mapsto &\xi\otimes \eta.
\end{array}
\]
It is clear that $Z^+(\xi, \tilde{I})$ commutes with the actions of $\A_\R(\tilde{I}^{c+})$. We will similarly write $Z^-(\xi,\tilde{I}):K^\mu\to K^\mu\boxtimes H^\varphi_-(\tilde{I})$ for the map induced by $\eta\mapsto \eta\otimes\xi$. Let $\alpha_{\tilde{I}}:[0,1]\to \R$ be a path in $\R$ from $\tilde{I}$ to $-\nicefrac{1}{4}\in \widetilde{S^1_-}$. We write $L(\xi, \tilde{I}): K^\mu\to H^\varphi\boxtimes K^\mu$ for the map
\begin{equation}\label{eq: OperatorL}
L(\xi,\tilde{I}): K^\mu\xrightarrow{Z^+(\xi, \tilde{I})} H_+^\varphi(\tilde{I})\boxtimes K^\mu\xrightarrow{\alpha_{\tilde{I}}^\bullet} H^\varphi_+(-)\boxtimes K^\mu = H^\varphi\boxtimes K^\mu.
\end{equation}
Given $\xi'\in H^\varphi_-(\tilde{I})$, we write $R(\xi', \tilde{I}): K^\mu\to K^\mu\boxtimes H^\varphi$ for the map
\begin{equation}
\label{eq: OperatorR}
R(\xi', \tilde{I}): K^\mu\xrightarrow{Z^-(\xi', \tilde{I})}K^\mu\boxtimes H_-^\varphi(\tilde{I})\xrightarrow{(\varrho*\alpha_{\tilde{I}})^\bullet} K^\mu\boxtimes H_-^\varphi(+) = K^\mu\boxtimes H^\varphi.
\end{equation}
Since path continuations intertwine the actions of $\A_\R$, we have that $L(\xi, \tilde{I})\in \Hom_{\A_\R(\tilde{I}^{c+})}(K^\mu, H^\varphi\boxtimes K^\mu)$ and $R(\xi, \tilde{I})\in \Hom_{\A_\R(\tilde{I}^{c-})}(K^\mu, K^\mu\boxtimes H^\varphi)$. In addition, the operators $L$ and $R$ are related between themselves and with the family $\mathbb{B}$ as described by the following two propositions.

\begin{proposition}\label{prop: CompatibilityLRBRaiding}
    Fix automorphisms $\varphi,\mu\in \Aut(\A)$ and let $H^\varphi\in \Rep^\varphi(\A)$ and $K^\mu\in\Rep^\mu(\A)$ be twisted representations. Then, for every $\tilde{I}\in\Jcal_\R$ and every $\xi\in H^\varphi_-(\tilde{I})$, it holds that $\Gamma_\varphi\xi\in T_\varphi(H^\varphi)_+(\tilde{I})$ and the following diagram commutes
\[\begin{tikzcd}
	{K^\mu} && {T_\varphi(H^\varphi)\boxtimes K^\mu} \\
	{K^\mu\boxtimes H^\varphi} && {T_\varphi(K^\mu)\boxtimes T_\varphi(H^\varphi)} \\
	& {T_\varphi(K^\mu\boxtimes H^\varphi),}
	\arrow["{L(\Gamma_\varphi\xi, \tilde{I})}", from=1-1, to=1-3]
	\arrow["{R(\xi, \tilde{I})}"', from=1-1, to=2-1]
	\arrow["{\mathbb{B}_{T_\varphi(H), K}}", from=1-3, to=2-3]
	\arrow["{\Gamma_\varphi}"', from=2-1, to=3-2]
	\arrow["\cong", from=2-3, to=3-2]
\end{tikzcd}\]
where the unique unlabelled morphism is the unitary in Proposition \ref{prop: ActionData}.
\end{proposition}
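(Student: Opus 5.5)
The plan is to check that both composites agree on the dense family of vectors $\eta\in K^\mu_-(\tilde{J})$, with $\tilde{J}\subset\tilde{I}^{c+}$ chosen small. By density and boundedness of every map involved, this will suffice. The membership $\Gamma_\varphi\xi\in T_\varphi(H^\varphi)_+(\tilde{I})$ is immediate. By \eqref{eq: RelationZ+Z-}, $H^\varphi_-(\tilde{I})=H^\varphi_+(\tilde{I})$. By the discussion preceding Proposition \ref{prop: ActionData}, $T_\varphi(H^\varphi)_+(\tilde{I})=H^\varphi_+(\tilde{I})$ with $Z^+(\Gamma_\varphi\xi,\tilde{I})=\Gamma_\varphi Z^+(\xi,\tilde{I})V_{\varphi^{-1}}$. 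Combining these gives the membership.

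First I compute the top-right leg. By definition, $L(\Gamma_\varphi\xi,\tilde{I})\eta=\alpha_{\tilde{I}}^\bullet(\Gamma_\varphi\xi\otimes\eta)\in T_\varphi(H^\varphi)_+(-)\boxtimes K^\mu$. The braiding $\mathbb{B}_{T_\varphi(H),K}$ first applies $\varrho^\bullet$, which lands in $T_\varphi(H^\varphi)_+(+)\boxtimes K^\mu$, so the two path continuations compose to $(\varrho*\alpha_{\tilde{I}})^\bullet$. The braiding then applies the swap unitary of Proposition \ref{prop: SwapWithAction}. That unitary is compatible with path continuations. I therefore move it to the start, where it acts on $T_\varphi(H^\varphi)_+(\tilde{I})\boxtimes K^\mu$ by $\Gamma_\varphi\xi\otimes\eta\mapsto\Gamma_{\varphi}\eta\otimes\Gamma_\varphi\xi$. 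The automorphism used there is $\varphi\varphi\varphi^{-1}=\varphi$, the twist of $T_\varphi(H^\varphi)$. The result lies in $T_\varphi(K^\mu)\boxtimes T_\varphi(H^\varphi)_-(\tilde{I})$, after which we apply $(\varrho*\alpha_{\tilde{I}})^\bullet$ on the right-hand fusion.

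Next I compute the left-bottom leg. By definition, $R(\xi,\tilde{I})\eta=(\varrho*\alpha_{\tilde{I}})^\bullet(\eta\otimes\xi)$ in $K^\mu\boxtimes H^\varphi_-(+)$, and then $\Gamma_\varphi$ is applied. It remains to see that the unitary of Proposition \ref{prop: ActionData} identifies $\Gamma_\varphi\eta\otimes\Gamma_\varphi\xi$ with $\Gamma_\varphi(\eta\otimes\xi)$. We need this both in the fusions over the intervals $\tilde{I}$, $\tilde{J}$ and after path continuation. Proposition \ref{prop: ActionData} states exactly this identification, together with its commutation with path continuations. The stated version uses the fusions over a pair of intervals, so I first pass through the canonical equivalences with the one-sided fusions $K^\mu\boxtimes H^\varphi_-(\tilde{I})$. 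Doing so requires choosing the pair of intervals appropriately, with $\eta$ on the left.

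The main obstacle is bookkeeping on the fusions. The left and right single-interval fusions must be matched with the two-interval fusions in a way compatible with the identifications in Propositions \ref{prop: ActionData} and \ref{prop: SwapWithAction}. One must also check that the same path $(\varrho*\alpha_{\tilde{I}})^\bullet$ is used on both sides: on one side it arises from continuing the $H$-factor in a right fusion, on the other from continuing it in a left fusion followed by the swap. I would handle this by checking the identity first in the special case where $\alpha_{\tilde{I}}$ is small. There all continuations reduce to inclusions and restrictions of intervals, which manifestly commute with the swap and with $\Gamma_\varphi$. The general case then follows by homotopy invariance of path continuations.
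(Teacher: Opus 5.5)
Your proposal is correct and follows essentially the same route as the paper: the paper also composes the continuations into $(\varrho*\alpha_{\tilde{I}})^\bullet$, moves the swap (packaged there as a single map $\Phi$ that flips the factors and applies $\Gamma_\varphi$) past the path continuation using its compatibility with inclusions of intervals, and then checks on elementary tensors that both legs give $\Gamma_\varphi\eta\otimes\Gamma_\varphi\chi$. The differences are cosmetic. The paper works with arbitrary $\eta\in K^\mu$ directly in the one-sided fusions, so your detour through bounded vectors $\eta\in K^\mu_-(\tilde{J})$ and two-interval fusions is harmless but not needed. Also, the passage from short to long paths comes from factoring a path continuation into small pieces, each of which commutes with the swap and with $\Gamma_\varphi$, rather than from homotopy invariance.
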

\begin{proof}
    Given $\xi = Z^-(\xi, \tilde{I})\Omega\in H^\varphi_-(\tilde{I})$, we have already argued that $\Gamma_\varphi\xi\in T_\varphi(H^\varphi)_+(\tilde{I})$ and, in addition $Z^+(\Gamma_\varphi\xi, \tilde{I}) = \Gamma_\varphi\circ Z^+(\xi, \tilde{I})\circ V_{\varphi^{-1}} = \Gamma_\varphi\circ Z^-(\xi, \tilde{I})$ as a morphism $H_0\to T_\varphi(H^\varphi)$. 
    
    Let $\Phi:K^\mu\boxtimes H^\varphi_-(\tilde{I})\to T_\varphi(H^\varphi)_+(\tilde{I})\boxtimes K^\mu$ be the unitary induced by the swap map $K^\mu\otimes  H^\varphi_-(\tilde{I})\to T_\varphi(H^\varphi)_+(\tilde{I})\otimes K^\mu$ given by $\eta\otimes\chi\mapsto \Gamma_\varphi\chi\otimes\eta$. Similarly, we define $\Phi': K^\mu\boxtimes H^\varphi_-(+)\to T_\varphi(H^\varphi)_+(+)\boxtimes K^\mu$ as the unitary induced by the map $K^\mu\otimes H^\varphi_-(\widetilde{S^1_+})\to T_\varphi(H^\varphi)_+(\widetilde{S^1_+})\boxtimes K^\mu$ given by $\eta\otimes\chi\mapsto \Gamma_\varphi\chi\otimes\eta$. The maps $\Phi$ and $\Phi'$ are well-defined maps of Hilbert spaces by the same arguments as the proof of Proposition \ref{prop: SwapWithAction}. Let $\alpha_{\tilde{I}}$ be a path in $\R$ from $\tilde{I}$ to $\widetilde{S^1_-}$. We have to argue the commutativity of the outer diagram~in
\[\begin{tikzcd}
	{K^\mu} && {T_\varphi(H^\varphi)_+(\tilde{I})\boxtimes K^\mu} && {T_\varphi(H^\varphi)_+(-)\boxtimes K^\mu} \\
	{K^\mu\boxtimes H^\varphi_-(\tilde{I})} &&&& {T_\varphi(H^\varphi)_+(+)\boxtimes K^\mu} \\
	\\
	{K^\mu\boxtimes H^\varphi_-(+)} && {T_\varphi(K^\mu\boxtimes H^\varphi)} && {T_\varphi(K^\mu)\boxtimes T_\varphi(H^\varphi)_-(+)}
	\arrow["{Z^+(\Gamma_\varphi\xi, \tilde{I})}", from=1-1, to=1-3]
	\arrow["{Z^-(\xi, \tilde{I})}"', from=1-1, to=2-1]
	\arrow["{\alpha_{\tilde{I}}^\bullet}", from=1-3, to=1-5]
	\arrow["{(\alpha_{\tilde{I}}\ast \varrho)^\bullet}"', from=1-3, to=2-5]
	\arrow["{\varrho^\bullet}", from=1-5, to=2-5]
	\arrow["\Phi"', from=2-1, to=1-3]
	\arrow["{(\alpha_{\tilde{I}}\ast \varrho)^\bullet}"', from=2-1, to=4-1]
	\arrow["\cong", from=2-5, to=4-5]
	\arrow["{\Phi'}", from=4-1, to=2-5]
	\arrow["{\Gamma_\varphi}"', from=4-1, to=4-3]
	\arrow["\cong"', from=4-3, to=4-5]
\end{tikzcd}\]

 By the equality $Z^+(\Gamma_\varphi\xi, \tilde{I}) =  \Gamma_\varphi\circ Z^-(\xi, \tilde{I})$, the top-left triangle of the diagram commutes. The top-right diagram commutes by compatibility of path continuations with concatenation of paths. The middle quadrilateral commutes by compatibility of $\Phi$ and $\Phi'$ with path continuations, analogously to the proof of Proposition \ref{prop: SwapWithAction}. For the commutativity of the bottom quadrilateral, let $\eta\otimes\chi\in K^\mu\boxtimes H^\varphi_-(+)$. Then, both legs applied to $\eta\otimes\chi$ read $\Gamma_\varphi\eta\otimes\Gamma_\varphi\chi$, and hence the bottom quadrilateral also commutes.
\end{proof}

The second piece of compatibility between the operators $L$ and $R$ is the following proposition.

\begin{proposition}\label{prop: LandRCommute}
    Let $H^\varphi, K^\mu,R^\nu$ be representations of $\A$ twisted by automorphisms $\varphi, \mu$ and $\nu$ respectively. Let $\tilde{I},\tilde{J}\in \Jcal_\R$ such that $\tilde{J}\subset \tilde{I}^{c+}$. Let $\xi\in H^\varphi_+(\tilde{I})$ and $\eta\in K^\mu_-(\tilde{J})$. Then, the following diagram commutes,
\begin{equation}\label{eq: CommutativityRL}\begin{tikzcd}
	{R^\nu} & {R^\nu\boxtimes K^\mu} \\
	{H^\varphi\boxtimes R^\nu} & {H^\varphi\boxtimes R^\nu\boxtimes K^\mu}.
	\arrow["{R(\eta,\tilde{J})}", from=1-1, to=1-2]
	\arrow["{L(\xi,\tilde{I})}"', from=1-1, to=2-1]
	\arrow["{L(\xi,\tilde{I})}", from=1-2, to=2-2]
	\arrow["{R(\eta,\tilde{J})}"', from=2-1, to=2-2]
\end{tikzcd}\end{equation}
\end{proposition}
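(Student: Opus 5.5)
The plan is to pass to the triple Connes fusion of Section \ref{Sec: Associativity} and check that both legs of \eqref{eq: CommutativityRL} reduce to the single map $\chi\mapsto \xi\otimes\chi\otimes\eta$, followed by the same path continuation. Throughout, the associators are used implicitly to identify $H^\varphi\boxtimes R^\nu\boxtimes K^\mu$ with the triple fusion $H^\varphi_+(-)\boxtimes R^\nu\boxtimes K^\mu_-(+)$.

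First, I unpack the operators. We have $L(\xi,\tilde{I})=\alpha_{\tilde{I}}^\bullet\circ Z^+(\xi,\tilde{I})$ and $R(\eta,\tilde{J})=(\varrho*\alpha_{\tilde{J}})^\bullet\circ Z^-(\eta,\tilde{J})$, where $\alpha_{\tilde{I}}$ runs from $\tilde{I}$ to $-\nicefrac14$ and $\varrho*\alpha_{\tilde{J}}$ runs from $\tilde{J}$ to $\nicefrac14$. Since $\tilde{J}\subset\tilde{I}^{c+}$, I choose the paths $\alpha_{\tilde{I}}$ and $\varrho*\alpha_{\tilde{J}}$ so that the pair $\gamma=(\alpha_{\tilde{I}},\varrho*\alpha_{\tilde{J}})$ is a path in $\ConfR$ from $\tilde{I}\times\tilde{J}$ to $\widetilde{S^1_-}\times\widetilde{S^1_+}$. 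This is possible because $\ConfR$ is contractible, and homotopy invariance means the choice does not matter.

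Next, I consider the map $Z(\xi,\eta):R^\nu\to H^\varphi_+(\tilde{I})\boxtimes R^\nu\boxtimes K^\mu_-(\tilde{J})$, $\chi\mapsto\xi\otimes\chi\otimes\eta$, and compare it with the two compositions before path continuation.

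1. On the lower-left leg, $Z^-(\eta,\tilde{J})\circ Z^+(\xi,\tilde{I})$ lands in $(H^\varphi_+(\tilde{I})\boxtimes R^\nu)\boxtimes K^\mu_-(\tilde{J})$ and sends $\chi\mapsto(\xi\otimes\chi)\otimes\eta$.
2. On the upper-right leg, $Z^+(\xi,\tilde{I})\circ Z^-(\eta,\tilde{J})$ lands in $H^\varphi_+(\tilde{I})\boxtimes(R^\nu\boxtimes K^\mu_-(\tilde{J}))$ and sends $\chi\mapsto\xi\otimes(\chi\otimes\eta)$.
3. Under the unitaries of Proposition \ref{prop: MapAssociativity} and its mirror, both become $Z(\xi,\eta)$ on the dense algebraic tensor products, hence everywhere by boundedness.

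The one subtlety here is that in the lower-left leg the operator $Z^-(\eta,\tilde{J})$ is applied after $\alpha_{\tilde{I}}^\bullet$, that is, to $H^\varphi\boxtimes R^\nu$ rather than to $H^\varphi_+(\tilde{I})\boxtimes R^\nu$. I handle this with a naturality statement: $Z^-(\eta,\tilde{J})$, viewed as fusion on the right with a fixed vector, commutes with morphisms of twisted representations in the left slot, and path continuations are such morphisms by Theorem \ref{thm: RepresentationOnFusion}. So $Z^-(\eta,\tilde{J})\circ\alpha_{\tilde{I}}^\bullet=(\alpha_{\tilde{I}}^\bullet\boxtimes\id)\circ Z^-(\eta,\tilde{J})$, and likewise $Z^+(\xi,\tilde{I})\circ(\varrho*\alpha_{\tilde{J}})^\bullet=(\id\boxtimes(\varrho*\alpha_{\tilde{J}})^\bullet)\circ Z^+(\xi,\tilde{I})$.

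With this, the lower-left leg equals $(\varrho*\alpha_{\tilde{J}})^\bullet(\alpha_{\tilde{I}}^\bullet\boxtimes\id)\circ Z(\xi,\eta)$, transported along the top identification of Proposition \ref{prop: BreakGammaInComponents}. The upper-right leg equals $\alpha_{\tilde{I}}^\bullet(\id\boxtimes(\varrho*\alpha_{\tilde{J}})^\bullet)\circ Z(\xi,\eta)$, transported along the bottom identification. Proposition \ref{prop: BreakGammaInComponents}, applied to $\gamma=(\alpha_{\tilde{I}},\varrho*\alpha_{\tilde{J}})$, says both equal $\gamma^\bullet\circ Z(\xi,\eta)$ in the triple fusion, which proves commutativity.

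The main obstacle is justifying the naturality in the previous paragraph, since $Z^\pm$ were defined only on vectors and not as natural transformations. I would verify it on elementary tensors. For $\zeta\in H^\varphi_+(\tilde{I})\boxtimes R^\nu$, the map $\zeta\mapsto\zeta\otimes\eta$ is intertwined by any unitary $F$ of twisted representations, because the inner product on the right fusion involves only $\pi_{\tilde{J}}\big(Z^-(\eta',\tilde{J})^*Z^-(\eta,\tilde{J})\big)$, which $F$ respects. This is the same mechanism as the functoriality $F\boxtimes G$ following Definition \ref{def: FusionOfMorphisms}.
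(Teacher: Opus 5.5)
Your proposal is correct and follows the paper's proof essentially step for step. The paper uses the same three squares: the square where both composites of $Z^+(\xi,\tilde{I})$ and $Z^-(\eta,\tilde{J})$ become $\chi\mapsto\xi\otimes\chi\otimes\eta$; the two squares expressing that $Z^\pm$ commute with path continuations, which the paper says commute trivially and which you justify through the functoriality of $F\boxtimes\id$; and Proposition \ref{prop: BreakGammaInComponents} applied to $\gamma=(\alpha_{\tilde{I}},\beta_{\tilde{J}})$ in $\ConfR$. Writing the second component as $\varrho*\alpha_{\tilde{J}}$ instead of a generic path $\beta_{\tilde{J}}$ to $\nicefrac{1}{4}$ is immaterial by homotopy invariance.
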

\begin{proof}
    It is clear that the diagram 
\[\begin{tikzcd}
	{R^\nu} && {R^\nu\boxtimes K^\mu_-(\tilde{J})} \\
	{H^\varphi_+(\tilde{I})\boxtimes R^\nu} && {H^\varphi_+(\tilde{I})\boxtimes R^\nu\boxtimes K^\mu_-(\tilde{J})}
	\arrow["{Z^-(\eta, \tilde{J})}", from=1-1, to=1-3]
	\arrow["{Z^+(\xi, \tilde{I})}"', from=1-1, to=2-1]
	\arrow["{Z^+(\xi, \tilde{I})}", from=1-3, to=2-3]
	\arrow["{Z^-(\eta, \tilde{J})}"', from=2-1, to=2-3]
\end{tikzcd}\]
commutes, as both legs read $\chi\mapsto \xi\otimes\chi\otimes\eta$. Let $\alpha_{\tilde{I}}$ be a path in $\R$ from $\tilde{I}$ to $-\nicefrac{1}{4}\in \widetilde{S^1_-}$ and $\beta_{\tilde{J}}$ be a path in $\R$ from $\tilde{J}$ to $\nicefrac{1}{4}\in \widetilde{S^1_+}$. We can assume, by taking homotopic paths if needed, that $\gamma:=(\alpha_{\tilde{I}},\beta_{\tilde{J}})$ is a path in $\ConfR$. Note that here we use that $\tilde{J}\subset \tilde{I}^{c+}$. We have to prove the commutativity of the outer diagram of 
\[\begin{tikzcd}
	{R^\nu} && {R^\nu\boxtimes K^\mu_-(\tilde{J})} && {R^\nu\boxtimes K^\mu_-({+})} \\
	{H^\varphi_+(\tilde{I})\boxtimes R^\nu} && {H^\varphi_+(\tilde{I})\boxtimes R^\nu\boxtimes K^\mu_-(\tilde{J})} && {H^\varphi_+(\tilde{I})\boxtimes (R^\nu\boxtimes K^\mu_-({+}))} \\
	{H^\varphi_+(-)\boxtimes R^\nu} && {(H^\varphi_+(-)\boxtimes R^\nu)\boxtimes K^\mu_-(\tilde{J})} && {H^\varphi_+(-)\boxtimes R^\nu\boxtimes K^\mu_-({+})}.
	\arrow["{Z^-(\eta, \tilde{J})}", from=1-1, to=1-3]
	\arrow["{Z^+(\xi, \tilde{I})}"', from=1-1, to=2-1]
	\arrow["{\beta_{\tilde{J}}^\bullet}", from=1-3, to=1-5]
	\arrow["{Z^+(\xi, \tilde{I})}", from=1-3, to=2-3]
	\arrow["{Z^+(\xi, \tilde{I})}", from=1-5, to=2-5]
	\arrow["{Z^-(\eta, \tilde{J})}", from=2-1, to=2-3]
	\arrow["{\alpha_{\tilde{I}}^\bullet}"', from=2-1, to=3-1]
	\arrow["{\id\boxtimes \beta_{\tilde{J}}^\bullet}", from=2-3, to=2-5]
	\arrow["{\alpha_{\tilde{I}}^\bullet\boxtimes \id}", from=2-3, to=3-3]
	\arrow["{\alpha_{\tilde{I}}^\bullet}", from=2-5, to=3-5]
	\arrow["{Z^-(\eta, \tilde{J})}", from=3-1, to=3-3]
	\arrow["{\beta_{\tilde{J}}^\bullet}", from=3-3, to=3-5]
\end{tikzcd}\]
The top-left diagram commutes by the discussion above. The off-diagonal diagrams commute trivially. For the bottom-right diagram, commutativity follows from Proposition \ref{prop: BreakGammaInComponents}.
\end{proof}

Given $H^\varphi\in \Rep^\varphi(\A)$ and $K^\mu\in \Rep^\mu(\A)$, and $\tilde{I},\tilde{J}\in \Jcal_\R$ such that $\tilde{J}\subset \tilde{I}^{c+}$, we have that, for all $\xi\in H_-^\varphi(\tilde{I})$ and $\eta\in K^\mu_+(\tilde{J})$,
\begin{equation}\label{eq: Lbraiding1}
    L(\Gamma_\mu\xi,\tilde{I})\eta = \B_{K, H}\circ L(\eta,\tilde{J})\xi.
\end{equation}
Indeed, $L(\Gamma_\mu\xi, \tilde{I})\eta = L(\Gamma_\mu\xi, \tilde{I})R(\eta, \tilde{J})\Omega = R(\eta, \tilde{J})L(\Gamma_\mu\xi, \tilde{I})\Omega = \Gamma_\mu^{-1}\Gamma_\mu R(\eta, \tilde{J})\Gamma_\mu\xi = \Gamma_\mu^{-1}\circ \B_{ T_\mu(K), T_\mu(H)}\circ L(\Gamma_\mu\eta, \tilde{J})\Gamma_\mu\xi = \B_{ K, H}\circ L(\eta, \tilde{J})\xi$, where we have dropped the compatibility of $T$ with the Connes fusion for readability. We have also used that $L(\Gamma_\mu\eta, \tilde{J})\Gamma_\mu\xi = \Gamma_\mu L(\eta, \tilde{J})\xi$ by Proposition \ref{prop: ActionData} and that the diagram
\[\begin{tikzcd}
	{K^\mu\boxtimes H^\varphi} && {T_\mu(H^\varphi)\boxtimes K^\mu} \\
	{T_\mu(K^\mu\boxtimes H^\varphi)} && {T_\mu(T_\mu(H^\varphi)\boxtimes K^\mu)} \\
	{T_\mu(K^\mu)\boxtimes T_\mu(H^\varphi)} && {T_\mu\circ T_\mu(H^\varphi)\boxtimes T_\mu(K^\mu)}
	\arrow["{\mathbb{B}_{K, H}}", from=1-1, to=1-3]
	\arrow["{\Gamma_\mu}"', from=1-1, to=2-1]
	\arrow["{\Gamma_\mu}", from=1-3, to=2-3]
	\arrow["{\cong }"', from=2-1, to=3-1]
	\arrow["{\cong }", from=2-3, to=3-3]
	\arrow["{\mathbb{B}_{T_\mu(K), T_\mu(H)}}"', from=3-1, to=3-3]
\end{tikzcd}\]
trivially commutes. Hence, if $\tilde{I}\subset\tilde{J}^{c+}$, and $\xi\in H^\varphi_+(\tilde{I})$, $\eta\in K^\mu_-(\tilde{J})$, we have
\begin{equation}\label{eq: Lbraiding2}
L(\xi,\tilde{I})\eta = \B_{H,K}^{-1}\circ L(\Gamma_\varphi\eta,\tilde{J})\xi.
\end{equation}

It is also straightforward to see that, if $H^\varphi,\hat{H}^\varphi\in\Rep^\varphi(\A)$ and $K^\mu, \hat{K}^\mu\in \Rep^\mu(\A)$ are twisted representations and $F\in\Hom_{\Rep^\varphi(\A)}(H^\varphi, \hat{H}^\varphi)$ and $G\in\Hom_{\Rep^\mu(\A)}(K^\mu,\hat{K}^\mu)$ are morphisms between them, it holds for all $\tilde{I}\in\Jcal_\R$, $\xi\in H^\varphi_+(\tilde{I})$ and $\eta\in K^\mu$ that
\[
(F\boxtimes G)L(\xi,\tilde{I})\eta = L(F\xi,\tilde{I})G\eta.
\]

The following two propositions follow from the same arguments as \cite[Prop. 3.6]{Gui21} and \cite[Prop. 3.7]{Gui21}.
\begin{proposition}\label{prop: JoinLs}
    Let $\tilde{I},\tilde{J},\tilde{O}\in \Jcal_\R$ such that $\tilde{I},\tilde{J}\subset \tilde{O}$ and $\xi\in H_+^\varphi(\tilde{I})$, $\eta\in K^\mu_+(\tilde{J})$. Then, $L(\xi,\tilde{I})\eta\in (H^\varphi\boxtimes K^\mu)_+(\tilde{O})$ and
    \[
    L(\xi,\tilde{I})L(\eta,\tilde{J}) = L(L(\xi,\tilde{I})\eta,\tilde{O})
    \]
    when acting on any $R^\nu\in \Rep^\nu(\A).$
\end{proposition}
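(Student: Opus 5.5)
The plan is to follow \cite[Prop. 3.6]{Gui21}, reducing both claims to statements about intertwiners from the vacuum. The extra care needed here is bookkeeping for the twist by $\varphi$ and for lifts to $\R$.

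\textbf{Step 1: $L(\xi,\tilde{I})\eta$ is $(\tilde{O},+)$-bounded.} I would show that $T:=L(\xi,\tilde{I})\circ Z^+(\eta,\tilde{J})\colon H_0\to H^\varphi\boxtimes K^\mu$ intertwines the actions of $\A_\R(\tilde{O}^{c+})$ on $H_0$ and on $H^\varphi\boxtimes K^\mu$. First, $Z^+(\eta,\tilde{J})$ intertwines $\A_\R(\tilde{J}^{c+})$, and $\tilde{O}^{c+}\subset\tilde{J}^{c+}$ because $\tilde{J}\subset\tilde{O}$. Second, $L(\xi,\tilde{I})\in\Hom_{\A_\R(\tilde{I}^{c+})}(K^\mu,H^\varphi\boxtimes K^\mu)$ was noted after \eqref{eq: OperatorR}, and $\tilde{O}^{c+}\subset\tilde{I}^{c+}$. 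One point to check is that the actions on the intermediate space $K^\mu$ match: both are $\pi^K_{\tilde{O}^{c+}}$, since compatibility of $\pi^K$ with inclusions of intervals in $\Jcal_\R$ is built in. Then $T\Omega=L(\xi,\tilde{I})\eta$, so $L(\xi,\tilde{I})\eta\in(H^\varphi\boxtimes K^\mu)_+(\tilde{O})$ and $Z^+(L(\xi,\tilde{I})\eta,\tilde{O})=T$.

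\textbf{Step 2: the operator identity.} By Proposition \ref{prop: JoinLs}'s setting, both sides are maps $R^\nu\to H^\varphi\boxtimes(K^\mu\boxtimes R^\nu)$, with the associator suppressed. I would write each side as a composite of $Z^+$-maps and path continuations. The right side is $\alpha_{\tilde{O}}^\bullet\circ Z^+(T\Omega,\tilde{O})$. The left side is $(\alpha_{\tilde{I}}^\bullet\boxtimes\id)\circ Z^+(\xi,\tilde{I})\circ\alpha_{\tilde{J}}^\bullet\circ Z^+(\eta,\tilde{J})$. Choose $\alpha_{\tilde{I}}$ and $\alpha_{\tilde{J}}$ to first move inside $\tilde{O}$ to a common point and then follow $\alpha_{\tilde{O}}$. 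Using Proposition \ref{prop: BreakGammaInComponents}, naturality of path continuations, and the canonical identifications of Section \ref{Sec: Associativity}, I would reduce the claim to the corresponding statement with all vectors localized in $\tilde{O}$. That statement reads
\[
Z^+(\xi,\tilde{I})\,Z^+(\eta,\tilde{J})\chi=Z^+(T\Omega,\tilde{O})\chi\quad\text{in } H^\varphi_+(\tilde{O})\boxtimes K^\mu\boxtimes R^\nu.
\]

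\textbf{Step 3: proving the localized identity.} To prove it, I would compare inner products against a dense set of vectors $\xi'\otimes\eta'\otimes\chi'$, using the three-fold inner product formula of Section \ref{Sec: Associativity}. On the right side, $Z^+(T\Omega,\tilde{O})$ applied to $\chi$ gives $(T\Omega)\otimes\chi$ in $(H\boxtimes K)_+(\tilde{O})\boxtimes R$. Via Proposition \ref{prop: MapAssociativity} and the formula $\langle\xi\otimes\eta|\xi'\otimes\eta'\rangle=\langle\pi^K_{\tilde{I}}(Z^+(\xi',\tilde{I})^*Z^+(\xi,\tilde{I}))\eta|\eta'\rangle$, the inner products become expressions of the form $\langle\pi^R_{\tilde{O}}(T'^*T)\chi|\chi'\rangle$. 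Here $T'^*T$ factors as $Z^+(\eta',\tilde{J})^*\,\pi^K_{\tilde{I}}(Z^+(\xi',\tilde{I})^*Z^+(\xi,\tilde{I}))\,Z^+(\eta,\tilde{J})$. Matching this factorization with the left-hand side is the content of \cite[Prop. 3.6]{Gui21}.

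\textbf{Main obstacle.} The hardest step is the bookkeeping in Step 2. One must ensure the lifts match, i.e.\ that $\tilde{I},\tilde{J}\subset\tilde{O}$ are read consistently in $\R$, so that no factor of $\varphi$ from \eqref{eq: TwistedRepOnR} is introduced. One must also ensure the chosen paths are homotopic in the right configuration spaces. I would handle both by choosing all paths inside $\tilde{O}$ before joining $\alpha_{\tilde{O}}$.
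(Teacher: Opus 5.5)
Your Step 1 is correct, and it is essentially the boundedness argument the paper has in mind via \cite[Prop.~3.6]{Gui21}. For the operator identity, though, you take a different route. The intended argument does not unwind path continuations. It is the same locality trick the paper uses to derive \eqref{eq: Lbraiding1}. Pick $\tilde K\in\Jcal_\R$ with $\tilde K\subset\tilde O^{c+}\subset\tilde I^{c+}\cap\tilde J^{c+}$, and take $\chi\in R^\nu_-(\tilde K)$, which is dense in $R^\nu$. Proposition \ref{prop: LandRCommute} and the state-field correspondence $R(\chi,\tilde K)\Omega=\chi$, $L(\eta,\tilde J)\Omega=\eta$ (via the unitors and the triangle identity, associators suppressed) give
\[
L(\xi,\tilde I)L(\eta,\tilde J)\chi=R(\chi,\tilde K)L(\xi,\tilde I)L(\eta,\tilde J)\Omega=R(\chi,\tilde K)L(\xi,\tilde I)\eta=R(\chi,\tilde K)L\big(L(\xi,\tilde I)\eta,\tilde O\big)\Omega=L\big(L(\xi,\tilde I)\eta,\tilde O\big)\chi .
\]
Bounded operators that agree on a dense set are equal. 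All the path-continuation bookkeeping is thus absorbed into Proposition \ref{prop: LandRCommute}, which was itself proved from Proposition \ref{prop: BreakGammaInComponents}.

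Your Steps 2--3 effectively inline that proof. They can be made to work, but one link is missing as written.
\begin{itemize}
\item Proposition \ref{prop: BreakGammaInComponents} concerns a right-localized factor $R^\nu_-(\tilde M)$ and right continuations $\beta^\bullet$. By contrast, $L(\eta,\tilde J)$ and $L(\,\cdot\,,\tilde O)$ acting on $R^\nu$ use left continuations in $(\cdot)_+(\cdot)\boxtimes R^\nu$.
\item So you must restrict to $\chi\in R^\nu_-(\tilde M)$ with $\tilde M\subset\tilde O^{c+}$. Then choose $\beta$ from $\tilde M$ to $\widetilde{S^1_+}$ such that $(\alpha_{\tilde O},\beta)$ is a path in $\ConfR$.
\item Finally, convert the left continuations of the $K^\mu$-leg and of the $(H^\varphi\boxtimes K^\mu)$-leg into $\beta^\bullet$. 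This uses Proposition \ref{prop: SinglePathContinuationWRTDoublePathContinuation} and its right-handed analogue.
\end{itemize}
After that, your ``localized identity'' is literally the defining formula $(\xi\otimes\eta)\otimes\chi\mapsto\xi\otimes\eta\otimes\chi\mapsto\xi\otimes(\eta\otimes\chi)$ of the associator (Proposition \ref{prop: MapAssociativity}). Step 1 guarantees that $\xi\otimes\eta\in(H^\varphi_+(\tilde O)\boxtimes K^\mu)_+(\tilde O)$.

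Consequently, the inner-product comparison in Step 3 is unnecessary. It is also set in a space, $H^\varphi_+(\tilde O)\boxtimes K^\mu\boxtimes R^\nu$ with $R^\nu$ unlocalized, that the paper never defines. Attributing that matching to \cite[Prop.~3.6]{Gui21} is misplaced as well, since Gui's argument is the locality one above.

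As for what each route buys: yours avoids the unitors and identifies both sides directly with a path continuation in the threefold fusion. The locality route is shorter and purely formal once Proposition \ref{prop: LandRCommute} and the unitors are in place. That is why the paper can say the twisted case follows by the same arguments.
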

\begin{proposition} \label{prop: SwapLsGetBraiding}
    Let $\tilde{I},\tilde{J}\in \Jcal_\R$ with $\tilde{J}\subset\tilde{I}^{c+}$. Suppose that there exists an interval $\tilde{O}\in \Jcal_\R$ with $\tilde{I},\tilde{J}\subset \tilde{O}$. Then, for any twisted representations $H^\varphi$, $K^{\mu}$ and $R^\nu$, and vectors $\xi\in H_-^\varphi(\tilde{I})$, $\eta\in K^\mu_+(\tilde{J})$ and $\chi\in R^\nu$, we have
    \[
    L(\Gamma_\mu\xi,\tilde{I})L(\eta,\tilde{J})\chi = ( \B_{K,H}\boxtimes\id_{R})\circ L( \eta,\tilde{J})L(\xi,\tilde{I})\chi.
    \]
\end{proposition}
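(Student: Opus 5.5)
We reduce the statement to identities already established for two representations, using Proposition \ref{prop: JoinLs} to merge the two creation operators into one. Since $\tilde{I},\tilde{J}\subset\tilde{O}$, the left-hand side is handled as follows. We have $\Gamma_\mu\xi\in T_\mu(H^\varphi)_+(\tilde{I})$, because $H^\varphi_-(\tilde{I})=H^\varphi_+(\tilde{I})=T_\mu(H^\varphi)_+(\tilde{I})$. Hence Proposition \ref{prop: JoinLs} gives
\[
L(\Gamma_\mu\xi,\tilde{I})L(\eta,\tilde{J})\chi = L\big(L(\Gamma_\mu\xi,\tilde{I})\eta,\tilde{O}\big)\chi,
\]
with $L(\Gamma_\mu\xi,\tilde{I})\eta\in (T_\mu(H^\varphi)\boxtimes K^\mu)_+(\tilde{O})$. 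On the right-hand side, after checking that $\xi\in H^\varphi_+(\tilde{I})$, the same proposition gives
\[
L(\eta,\tilde{J})L(\xi,\tilde{I})\chi = L\big(L(\eta,\tilde{J})\xi,\tilde{O}\big)\chi,
\]
with $L(\eta,\tilde{J})\xi\in (K^\mu\boxtimes H^\varphi)_+(\tilde{O})$.

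Next we compare the two inner vectors using Equation \eqref{eq: Lbraiding1}. Under exactly the present hypotheses ($\tilde{J}\subset\tilde{I}^{c+}$, $\xi\in H^\varphi_-(\tilde{I})$, $\eta\in K^\mu_+(\tilde{J})$), it reads
\[
L(\Gamma_\mu\xi,\tilde{I})\eta=\B_{K,H}\, L(\eta,\tilde{J})\xi .
\]
Finally, we use the naturality property stated just before Proposition \ref{prop: JoinLs}: for a morphism $F$ of twisted representations, $(F\boxtimes\id_R)L(\zeta,\tilde{O})\chi = L(F\zeta,\tilde{O})\chi$. We apply it with $F=\B_{K,H}:K^\mu\boxtimes H^\varphi\to T_\mu(H^\varphi)\boxtimes K^\mu$, a unitary morphism of $\mu\varphi$-twisted representations, and $\zeta=L(\eta,\tilde{J})\xi$. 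Here one should note that $F$ maps $(K\boxtimes H)_+(\tilde{O})$ into $(T_\mu H\boxtimes K)_+(\tilde{O})$, since it intertwines the $\A_\R(\tilde{O}^{c+})$-actions and $Z^+(F\zeta,\tilde{O})=F\,Z^+(\zeta,\tilde{O})$. Chaining these equalities gives
\[
L(\Gamma_\mu\xi,\tilde{I})L(\eta,\tilde{J})\chi = L\big(\B_{K,H}L(\eta,\tilde{J})\xi,\tilde{O}\big)\chi=(\B_{K,H}\boxtimes\id_R)L\big(L(\eta,\tilde{J})\xi,\tilde{O}\big)\chi = (\B_{K,H}\boxtimes\id_R)L(\eta,\tilde{J})L(\xi,\tilde{I})\chi .
\]

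The main obstacle is bookkeeping rather than substance. Proposition \ref{prop: JoinLs} identifies $H^\varphi\boxtimes K^\mu\boxtimes R^\nu$ with the iterated fusion $(H\boxtimes K)\boxtimes R$ through the associator of Section \ref{Sec: Associativity}. We must check that this same identification is used on both sides, and that $\B_{K,H}\boxtimes\id_R$ is interpreted through the same identification. We must also check that the path-continuation conventions are consistent: the paths $\alpha_{\tilde{I}}$, $\alpha_{\tilde{J}}$ and $\alpha_{\tilde{O}}$ enter the definitions of the $L$ operators, and since they are all paths in $\R$ from subintervals of $\tilde{O}$ to $-\nicefrac14$, homotopy invariance makes the choices irrelevant. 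If naturality of $L$ in the first variable across the two different representations $K\boxtimes H$ and $T_\mu(H)\boxtimes K$ needs justification beyond the stated remark, we would verify it directly on the dense vectors $\zeta\otimes\chi$ using Definition \ref{def: FusionOfMorphisms}.
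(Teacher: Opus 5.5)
Your proposal is correct and matches the paper's proof: merge the two creation operators on each side via Proposition~\ref{prop: JoinLs}, replace the inner vector $L(\Gamma_\mu\xi,\tilde{I})\eta$ by $\B_{K,H}L(\eta,\tilde{J})\xi$ using Equation~\eqref{eq: Lbraiding1}, and pull $\B_{K,H}$ out through the naturality of $L$ in its vector argument. Your additional checks spell out details the paper's three-line computation leaves implicit: that $\xi\in H^\varphi_+(\tilde{I})$, that $\B_{K,H}$ is a morphism of $\mu\varphi$-twisted representations, and that the same associator identification is used on both sides.
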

\begin{proof}
    We compute
    \begin{align*}
        L(\Gamma_\mu\xi,\tilde{I})L(\eta, \tilde{J})\chi & = L(L(\Gamma_\mu\xi, \tilde{I})\eta, \tilde{O})\chi\\& = L(\B_{K, H}\circ L(\eta, \tilde{J})L(\xi, \tilde{I}), \tilde{O})\chi \\ & = (\B_{K, H}\boxtimes \id_R)\circ L(\eta, \tilde{J})L(\xi, \tilde{I})\chi,
    \end{align*}
    where we have used Equation \eqref{eq: Lbraiding1}.
\end{proof}

We are now ready to prove the crossed-braided relations in Theorem \ref{Thm: Aut(A)-X-Braiding}.

\begin{proof}[Proof of Theorem \ref{Thm: Aut(A)-X-Braiding}]
    Let $\tilde{I},\tilde{J}\in \Jcal_\R$ such that $\tilde{J}\subset\tilde{I}^{c-}$. 
    
    We argue the commutativity of the first diagram. Let $\xi\in H^\varphi_+(\tilde{I})$ and $\eta\in K^\mu_-(\tilde{J})$. Then, $\Gamma_\nu L(\xi,\tilde{I})\eta\in T_\nu(H^\varphi\boxtimes K^\mu)$ , and vectors of this type span a dense subset of $T_\nu(H^\varphi\boxtimes K^\mu)$. Then, the top-right leg of the first diagram, applied to $\Gamma_\nu L(\xi,\tilde{I})\eta$, reads, using Equation \eqref{eq: Lbraiding2} $$\Gamma_\nu L(\xi,\tilde{I})\eta\xmapsto{T_\nu(\B_{H,K})} T_\nu(\mathbb{B}_{H,K})\Gamma_\nu L(\xi,\tilde{I})\eta  = \Gamma_\nu\B_{H, K}L(\xi, \tilde{I})\eta= \Gamma_\nu L(\Gamma_\varphi\eta, \tilde{J})\xi\mapsto L(\Gamma_{\nu\varphi}\eta, \tilde{J})\Gamma_\nu\xi.$$ Similarly, the left-bottom leg applied to $\Gamma_\nu L(\xi, \tilde{I})$ reads $$\Gamma_\nu L(\xi,\tilde{I})\eta\mapsto L(\Gamma_\nu\xi,\tilde{I})\Gamma_\nu\eta\xmapsto{\B_{T_\nu(H),T_\nu(K)}} \B_{T_\nu(H), T_\nu(K)}L(\Gamma_\nu\xi,\tilde{I})\Gamma_\nu\eta = L(\Gamma_{\nu\varphi\nu^{-1}}\Gamma_\nu\eta,\tilde{J})\Gamma_\nu\xi.$$ Hence, the first diagram commutes. 
    
    To argue the commutativity of the other two diagrams, choose another interval $\tilde{L}\in \Jcal_\R$ such that $\tilde{L}\subset \tilde{J}^{c-}$. Assume in addition, and without loss of generality, that the union $\tilde{I}\cup\tilde{J}\cup\tilde{L}$ can be covered by an interval in $\Jcal_\R$. Let $\tilde{O}\in \Jcal_\R$ be an interval such that $\tilde{I}\cup\tilde{J}\subset \tilde{O}$ and $\tilde{L}\subset \tilde{O}^{c-}$. 

    We shall prove first the commutativity of the third diagram. Let $\xi\in H^\varphi_+(\tilde{I}),\ \eta\in K^\mu_+(\tilde{J})$ and $\chi\in R_-^\nu(\tilde{L})$. The action of the left-bottom side of the third diagram on $L(\xi,\tilde{I})L(\eta,\tilde{J})\chi$ is
    \[\hspace{-.2cm}L(\xi,\tilde{I})L(\eta,\tilde{J})\chi\xmapsto{\id\boxtimes\B_{K,R}}L(\xi,\tilde{I})\B_{K,R}L(\eta,\tilde{J})\chi = L(\xi,\tilde{I})L(\Gamma_\mu\chi,\tilde{L})\eta\xmapsto{\B_{H,T_\mu(R)}\boxtimes\id} L(\Gamma_\varphi\Gamma_\mu\chi,\tilde{L})L(\xi,\tilde{I})\eta,
    \]
    where we use Equation \eqref{eq: Lbraiding2} and Proposition \ref{prop: SwapLsGetBraiding}. Now, the top-right leg of the third diagram acts on $L(\xi,\tilde{I})L(\eta,\tilde{J})\chi$ by
    \[
    L(\xi,\tilde{I})L(\eta,\tilde{J})\chi = L\big(L(\xi,\tilde{I})\eta,\tilde{O}\big)\chi \xmapsto{\B_{H\boxtimes K,R}} \B_{H \boxtimes K, R}L\big(L(\xi,\tilde{I})\eta,\tilde{O}\big)\chi = L(\Gamma_{\varphi\mu}\chi, \tilde{L})L(\xi,\tilde{I})\eta,
    \]
    using Equation \eqref{eq: Lbraiding1}. We have proved the commutativity of the third diagram.

    For the second diagram, we shall prove the commutativity of the following square, which is equivalent to the crossed-braided relation,
\[\begin{tikzcd}
	{T_{\varphi}(R^\nu)\boxtimes T_{\varphi}(K^\mu)\boxtimes H^\varphi} && {T_\varphi(R^\nu\boxtimes K^\mu)\boxtimes H^{\varphi}} \\
	{T_\varphi(R^\nu)\boxtimes H^\varphi\boxtimes K^\mu} && {H^\varphi\boxtimes R^\nu\boxtimes K^\mu}.
	\arrow["{\cong }", from=1-1, to=1-3]
	\arrow["{\id\boxtimes \B_{H,K}^{-1}}"', from=1-1, to=2-1]
	\arrow["{\B_{H, R\boxtimes K}^{-1}}", from=1-3, to=2-3]
	\arrow["{\B_{H,R}^{-1}\boxtimes\id}"', from=2-1, to=2-3]
\end{tikzcd}\]
Let $\xi\in H^\varphi_+(\tilde{I})$, $\eta\in K^\mu_-(\tilde{J})$ and $\chi\in R^\nu_+(\tilde{L})$. The action of the left-bottom leg on $L(\Gamma_\varphi\chi,\tilde{L})L(\Gamma_\varphi\eta,\tilde{J})\xi$~is
\[\hspace{-.2cm}
L(\Gamma_\varphi\chi,\tilde{L})L(\Gamma_\varphi\eta,\tilde{J})\xi\xmapsto{\id\boxtimes\B_{H,K}^{-1}} L(\Gamma_\varphi\chi,\tilde{L})\B_{H,K}^{-1}L(\Gamma_\varphi\eta,\tilde{J})\xi  = L(\Gamma_\varphi\chi,\tilde{L})L(\xi,\tilde{I})\eta \xmapsto{\B_{H,R}^{-1}\boxtimes\id }L(\xi,\tilde{I})L(\chi,\tilde{L})\eta,
\]
where we use Equation \eqref{eq: Lbraiding2} and Proposition \ref{prop: SwapLsGetBraiding}. The top-right leg applied to $L(\Gamma_\varphi\chi,\tilde{L})L(\Gamma_\varphi\eta,\tilde{J})\xi$ reads
\[
L(\Gamma_\varphi\chi,\tilde{L})L(\Gamma_\varphi\eta,\tilde{J})\xi \mapsto L\big(\Gamma_\varphi L(\chi,\tilde{L})\eta,\tilde{O}\big)\xi\xmapsto{\B_{H,R\boxtimes K}^{-1}} L(\xi,\tilde{I})L(\chi,\tilde{L})\eta,
\] using Equation \eqref{eq: Lbraiding2}. Hence, the commutativity of the diagram is proven.
\end{proof}

\begin{remark}
    Restricting to the identity component of $\Rep^{\Aut(\A)}(\A)$, we obtain a braided $\mathrm{W}^*$-tensor structure on $\Rep(\A)$. We want to note that this braided tensor structure is the opposite to the one defined in \cite[Sec. 2]{Gui21}. In the context of $\Rep(\A)$, it is a choice to take our braided tensor structure or that of Gui. In the crossed braided setting, this choice disappears, and hence we obtain the tensor and the braiding on $\Rep(\A)$ that extends to an $\Aut(\A)$-crossed braided tensor structure on $\Rep^{\Aut(\A)}(\A).$
\end{remark}

We conclude this section by providing the $\Aut(\A)$-crossed braided tensor category $\Rep^{\Aut(\A)}(\A)$ with an $\Aut(\A)$-crossed balance. On a twisted representation $H^\varphi\in \Rep^\varphi(\A)$, we define the balance to be 
\[
\theta_H = e^{-2\pi i L_0}:H^\varphi\to T_\varphi(H^\varphi).
\]

\begin{theorem}\label{Thm: RepAutAIsCrossedBalanced}
    Let $\A$ be a conformal net. The $\mathrm{W}^*$-category $\Rep^{\Aut(\A)}(\A)$ of twisted representations of $\A$, equipped with the Connes fusion of twisted representations, the action $T$, the braiding $\B$ and the unitaries $\theta$ becomes an $\Aut(\A)$-crossed balanced $\mathrm{W}^*$-tensor category.
\end{theorem}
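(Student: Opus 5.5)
Most of the structure is already in place. The Connes fusion, associator, unitors and pentagon/triangle identities make $\Rep^{\Aut(\A)}(\A)$ a $\mathrm{W}^*$-tensor category. The functors $T_\nu$ together with Proposition \ref{prop: ActionData} give a strict action compatible with the grading. Theorem \ref{Thm: Aut(A)-X-Braiding} gives the crossed-braid relations. The endomorphism algebras are von Neumann algebras, since they are commutants of the $\pi_I(\A(I))$, and the structure maps are unitary by construction. What remains is to check that $\theta$ is a $G$-crossed balance in the sense of Definition \ref{def: CrossedCategoricalTwist}.

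\emph{Naturality and the first axiom.} We already know that $\theta_H=e^{-2\pi iL_0}$ is a unitary isomorphism $H^\varphi\to T_\varphi(H^\varphi)$ of twisted representations. Naturality holds because any morphism $F:H^\varphi\to\hat H^\varphi$ commutes with every $\pi_I(\A(I))$. By Corollary \ref{cor: RepOfDiffA}, $U^H$ lies in the von Neumann algebra generated by these, so $F$ intertwines $U^H$ and $U^{\hat H}$. Hence $F$ intertwines the lifted $\widetilde{\Mob}$ actions, by uniqueness of the lift. The first axiom compares $\theta_{T_h(X)}$ with $T_h(\theta_X)$. Since the action is strict and $T_h$ is the identity on morphisms and on underlying spaces, this is exactly Proposition \ref{prop: CompatibilityActionConformalStruct}.

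\emph{The second axiom.} This is the main obstacle. Here I would use Corollary \ref{cor: e2piiL0OnConnesFusion} together with the $L$-operators. Let $X=H^\varphi$ and $Y=K^\mu$. Take $\tilde I\Subset\widetilde{S^1_-}$, $\xi\in H^\varphi_+(\tilde I)$ and $\eta\in K^\mu$, so that vectors $L(\xi,\tilde I)\eta$ are dense. Corollary \ref{cor: e2piiL0OnConnesFusion} gives
\[
\theta_{H\boxtimes K}L(\xi,\tilde I)\eta=(\lambda^\bullet)^{-1}(\theta_H\xi\otimes\theta_K\eta),
\]
where $\lambda$ is the path $z\mapsto z-1$, and $\theta_H\xi\in H_+(\tilde I-1)$ in the twisted target. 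Rewritten in $L$-notation, this reads $L(\theta_H\xi,\tilde I-1)\theta_K\eta$, interpreted in $T_{\varphi\mu}(H)\boxtimes T_{\varphi\mu}(K)$ via Proposition \ref{prop: ActionData}. The real work is identifying this with the other leg of \eqref{eq: Xtwist}, namely $(\theta\otimes\theta)\circ\B_{T_\varphi(K),H}\circ\B_{H,K}$, which is a full double braiding. For that I would take $\eta\in K^\mu_-(\tilde J)$ with $\tilde J\subset\tilde I^{c+}$ and apply Equations \eqref{eq: Lbraiding1} and \eqref{eq: Lbraiding2} twice. The first application gives $\B_{H,K}L(\xi,\tilde I)\eta=L(\Gamma_\varphi\eta,\tilde J)\xi$. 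The second moves $\xi$ past again and produces $L(\Gamma\xi,\tilde I+1)\Gamma\eta$ up to the appropriate $\Gamma$'s, using $H_\pm(\tilde I)=H_\pm(\tilde I+1)$ and $Z^\pm(\xi,\tilde I+1)=Z^\pm(\xi,\tilde I)V_\varphi$. One then applies $\theta_{T(X)}\otimes\theta_{T(Y)}$, which acts as $\theta_H\otimes\theta_K$ on underlying spaces by Proposition \ref{prop: CompatibilityActionConformalStruct}.

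The subtle step is the bookkeeping. The double braiding moves $\xi$ once around the circle in the opposite sense to the $-2\pi$ rotation of $\lambda$. I must check that the winding from $\tilde I\to\tilde J\to\tilde I+1$ and the shift $\tilde I\mapsto\tilde I-1$ produce the same path continuation, up to homotopy in $\ConfR$ or in $\R$. I must also check that the $V_\varphi$, $V_\mu$ factors coming from \eqref{eq: RelationZ+Z-} and Proposition \ref{prop: SwapWithAction} match the twist $T_{gh}$ on the target. If the orientations disagree, I would replace the check by its inverse, comparing $\theta^{-1}$ with $\B^{-1}\B^{-1}$. Finally, I would extend from the dense vectors by continuity and note that everything extends linearly over direct sums.
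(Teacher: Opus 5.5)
Your handling of the tensor structure, the naturality of $\theta$ and the first axiom is fine and agrees with the paper. There is a genuine gap in the second axiom, at exactly the step you call bookkeeping. That step is the whole content of the axiom, and your provisional version of it is wrong.

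First, Equation \eqref{eq: Lbraiding2} requires $\tilde J\subset\tilde I^{c-}$, not $\tilde J\subset\tilde I^{c+}$. With your choice, $L(\xi,\tilde I)\eta$ does not depend on where $\eta$ is localized, so you must regard $\eta\in K^\mu_-(\tilde J-1)$. The first braiding then gives $L(\Gamma_\varphi\eta,\tilde J-1)\xi$, not $L(\Gamma_\varphi\eta,\tilde J)\xi$, and these are different vectors in general.

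Second, your predicted result $L(\Gamma\xi,\tilde I+1)\Gamma\eta$ for the double braiding has the wrong winding. As you note, Corollary \ref{cor: e2piiL0OnConnesFusion} and the commutation of the conformal action with path continuations give $\theta_{H\boxtimes K}L(\xi,\tilde I)\eta=L(\theta_H\xi,\tilde I-1)\theta_K\eta$. So with your bookkeeping the two legs of \eqref{eq: Xtwist} would not agree.

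Your fallback does not repair this. Comparing $\theta^{-1}$ with $\B^{-1}\B^{-1}$ is the same identity inverted, so it cannot absorb a winding error. A real mismatch would mean that \eqref{eq: Xtwist} fails for $\B$ and $\theta=e^{-2\pi iL_0}$. As written, the proposal does not establish the second axiom.

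The computation does close within your framework once the intervals are placed correctly.
\begin{itemize}
\item Take $\tilde J\subset\tilde I^{c-}$, $\xi\in H^\varphi_+(\tilde I)$ and $\eta\in K^\mu_-(\tilde J)$. Equation \eqref{eq: Lbraiding2} gives $\B_{H,K}L(\xi,\tilde I)\eta=L(\Gamma_\varphi\eta,\tilde J)\xi$.
\item Since $\tilde I-1\subset\tilde J^{c-}$ and $\xi\in H^\varphi_-(\tilde I-1)$, a second application, now with $T_\varphi(K^\mu)\in\Rep^{\varphi\mu\varphi^{-1}}(\A)$ as first factor, gives $\B_{T_\varphi(K),H}\B_{H,K}L(\xi,\tilde I)\eta=L(\Gamma_{\varphi\mu\varphi^{-1}}\xi,\tilde I-1)\Gamma_\varphi\eta$.
\item Naturality of $L$, applied to the morphism $\theta\boxtimes\theta$, turns this into $L(e^{-2\pi iL_0}\xi,\tilde I-1)\,e^{-2\pi iL_0}\eta$ in $T_{\varphi\mu}(H^\varphi)\boxtimes T_{\varphi\mu}(K^\mu)$.
\item The $\theta$-leg gives the same expression after the identification of Proposition \ref{prop: ActionData}. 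This is because $L(\zeta,\tilde L)$ depends only on the vector, the representation and the interval, and the action $T$ is strict.
\end{itemize}
So no $V_\varphi,V_\mu$ factors or extra homotopies need to be tracked.

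Corrected this way, your route is a legitimate and slightly shorter alternative to the paper's. The paper instead identifies $\B_{T_\varphi(K),H}\circ\B_{H,K}$ with the continuation $\tau^\bullet$ from $\widetilde{S^1_-}$ to $\widetilde{S^1_-}+1$, followed by the shift identification $\xi\otimes\eta\mapsto\Gamma\xi\otimes\Gamma\eta$. It then matches this with $(\lambda^\bullet)^{-1}=(\tau-1)^\bullet$ from the Corollary. Both are continuations in the positive direction, which resolves your orientation worry.
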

\begin{proof}
    We have argued that Connes fusion provides a tensor structure compatible with the action $T$. Theorem \ref{Thm: Aut(A)-X-Braiding} proves that $\B$ defines an $\Aut(\A)$-crossed braiding. It is only left to argue that $\theta$ provides a crossed balance. The family $\theta$ is natural with respect to morphisms of twisted $\A$-representations by Theorem \ref{Thm: TwistedRepIsConformal}.

    The first condition in Definition \ref{def: CrossedCategoricalTwist} is exactly Proposition \ref{prop: CompatibilityActionConformalStruct}. Let us show the second condition. Let $\varphi,\mu\in \Aut(\A)$ and $H^\varphi\in \Rep^\varphi(\A)$, $K^\mu\in \Rep^\mu(\A).$ We have to argue the commutativity of
\[\begin{tikzcd}
	{H^\varphi\boxtimes K^\mu} && {T_{\varphi\mu}(H^\varphi\boxtimes K^\mu)} \\
	&& {T_{\varphi\mu}(H^\varphi)\boxtimes T_{\varphi\mu}(K^\mu)} \\
	&& {T_{\varphi\mu\varphi\mu^{-1}\varphi^{-1}}T_{\varphi\mu\varphi^{-1}}(H^\varphi)\boxtimes T_{\varphi\mu\varphi^{-1}}T_\varphi(K^\mu)} \\
	{T_\varphi(K^\mu)\boxtimes H^\varphi} && {T_{\varphi\mu\varphi^{-1}}(H^\varphi)\boxtimes T_\varphi(K^\mu)}
	\arrow["{\theta_{H\boxtimes K}}", from=1-1, to=1-3]
	\arrow["{\B_{H,K}}"', from=1-1, to=4-1]
	\arrow["\cong", from=1-3, to=2-3]
	\arrow["{=}"', from=3-3, to=2-3]
	\arrow["{\B_{T_\varphi(K^\mu), H}}"', from=4-1, to=4-3]
	\arrow["{\theta_{T_{\varphi\mu\varphi^{-1}}(H^\varphi)}\boxtimes \theta_{T_\varphi(K^\mu)}}"', from=4-3, to=3-3]
\end{tikzcd}\]
Let $\varrho:[0,1]\to \R$ be the path $\varrho(t) = -\nicefrac{1}{4}+\nicefrac{t}{2}$, and $\varrho+\nicefrac{1}{2}:[0,1]\to \R$ be given by $(\varrho+\nicefrac{1}{2})(t) = \nicefrac{1}{4}+\nicefrac{t}{2}$, which is a path from $\widetilde{S^1_+}$ to $\widetilde{S^1_-}+1$. Note that $(\varrho, \varrho+\nicefrac{1}{2})$ gives a path in $\ConfR$, and we denote by $\tau:=(\varrho+\nicefrac{1}{2})*\varrho$ the concatenation, which reads $\tau(t) = -\nicefrac{1}{4}+t$ and is a path from $\widetilde{S^1_-}$ to $\widetilde{S^1_-}+1$. We will first compute the composition $\B_{T_\varphi(K^\mu), H}\circ \B_{H, K}$, which is by definition the top-right leg of the diagram
\[\begin{tikzcd}
	{H^\varphi_+(-)\boxtimes K^\mu} & {H^\varphi_+(+)\boxtimes K^\mu} & {T_\varphi(K^\mu)\boxtimes H^\varphi_-(+)} \\
	{H^\varphi_+(\widetilde{S^1_-}+1)\boxtimes K^\mu} & {} & {T_\varphi(K^\mu)_+(-)\boxtimes H^\varphi_-(+)} \\
	&& {T_\varphi(K^\mu)_+(-)\boxtimes H^\varphi} \\
	{T_\varphi(K^\mu)\boxtimes  H^\varphi_-(\widetilde{S^1_-}+1)} && {T_\varphi(K^\mu)_+(+)\boxtimes H^\varphi} \\
	{T_\varphi(K^\mu)_+(+)\boxtimes H^\varphi_-(\widetilde{S^1_-}+1)} && {T_{\varphi\mu\varphi{-1}}(H^\varphi)_+(-)\boxtimes T_\varphi(K^\mu)}.
	\arrow["{\varrho^\bullet}", from=1-1, to=1-2]
	\arrow["{\tau^\bullet}"', from=1-1, to=2-1]
	\arrow["\cong", from=1-2, to=1-3]
	\arrow["{(\varrho+\nicefrac{1}{2})^\bullet}", from=1-2, to=2-1]
	\arrow["{\cong }", from=1-3, to=2-3]
	\arrow["{(\varrho+\nicefrac{1}{2})^\bullet}"', from=1-3, to=4-1]
	\arrow["{\cong }"', from=2-1, to=4-1]
	\arrow["{\cong }", from=2-3, to=3-3]
	\arrow["{(\varrho, \varrho+\nicefrac{1}{2})^\bullet}"', from=2-3, to=5-1]
	\arrow["{\varrho^\bullet}", from=3-3, to=4-3]
	\arrow["{\cong }"', from=4-1, to=5-1]
	\arrow["{\cong }", from=4-3, to=5-3]
	\arrow["{\cong }"', from=5-1, to=4-3]
	\arrow[from=5-1, to=5-3]
\end{tikzcd}\]
We claim that the outer diagram commutes. Let us comment on the commutativity of the inner diagrams from top to bottom. The top-left triangle commutes by the compatibility of path continuations with concatenations. The next quadrilateral commutes by Proposition \ref{prop: SwapWithAction}. The next two quadrilaterals commute by Proposition \ref{prop: SinglePathContinuationWRTDoublePathContinuation}. We define the bottom horizontal arrow so that the bottom-right triangle commutes. Hence, the whole diagram commutes and the composition $\B_{T_\varphi(K^\mu), H}\circ \B_{H, K}$ is equal to 
\[
H^\varphi_+(-)\boxtimes K^\mu\xrightarrow{\tau^\bullet} H^\varphi(\widetilde{S^1_
-}+1)\boxtimes K^\mu\xrightarrow{\cong } T_{\varphi\mu\varphi^{-1}}(H^\varphi)_+(-)\boxtimes T_\varphi(K^\mu),
\]
where the second map is induced by the map $H^\varphi_+(\widetilde{S^1_-}+1)\otimes K^\mu\to T_{\varphi\mu\varphi^{-1}}(H^\varphi)_+(-)\otimes T_\varphi(K^\mu)$ given by $\xi\otimes\eta\mapsto \Gamma_{\varphi\mu\varphi^{-1}}\xi\otimes\Gamma_{\varphi}\eta$, for $\xi\in H^\varphi_+(\widetilde{S^1_-}+1) $ and $\eta\in K^\mu $. Hence, we have reduced the problem to showing the commutativity of the outer diagram in
\[\begin{tikzcd}
	{H^\varphi_+(-)\boxtimes K^\mu} && {H^\varphi_+(-)\boxtimes K^\mu} \\
	& {H^\varphi_-(\widetilde{S_-^1} - 1)\boxtimes K^\mu} \\
	\\
	&& {T_{\varphi\mu\varphi^{-1}}(H^\varphi)_+(\widetilde{S^1_-}-1)\boxtimes T_\varphi(K^\mu)} \\
	{H^\varphi_+(\widetilde{S^1_-}+1)\boxtimes K^\mu} && {T_{\varphi\mu\varphi^{-1}}(H^\varphi)_+(-)\boxtimes T_\varphi(K^\mu)}
	\arrow["{e^{-2\pi i L_0}}", from=1-1, to=1-3]
	\arrow["{e^{-2\pi i L_0}\boxtimes e^{-2\pi i L_0}}", from=1-1, to=2-2]
	\arrow["{\tau^\bullet}"', from=1-1, to=5-1]
	\arrow["{\cong }", from=1-3, to=4-3]
	\arrow["{(\tau - 1)^\bullet}"{pos=0.3}, from=2-2, to=1-3]
	\arrow["{e^{-2\pi i L_0}\boxtimes e^{-2\pi i L_0}}"{pos=0.3}, from=5-1, to=1-3]
	\arrow["\cong"', from=5-1, to=5-3]
	\arrow["{e^{-2\pi i L_0}\boxtimes e^{-2\pi i L_0}}"', from=5-3, to=4-3]
\end{tikzcd}\]
where both unlabelled equivalences are induced by the map $\xi\otimes\eta\mapsto \Gamma_{\varphi\mu\varphi^{-1}}\xi\otimes \Gamma_\varphi\eta$ between the relevant domains and targets. The top triangle in the diagram above commutes by Corollary \ref{cor: e2piiL0OnConnesFusion} and the fact that the path continuation $(\lambda^\bullet)^{-1}: H^\varphi_+(\widetilde{S^1_-}-1)\boxtimes K^\mu\to H^\varphi_+(-)\boxtimes K^\mu$ in the statement of the Corollary can be taken to be the path continuation $(\tau-1)^\bullet$, for $\tau-1:[0,1]\to \R$ the path $(\tau-1)(t) = -\nicefrac{5}{4}+t$. The left quadrilateral commutes by the fact that the action of $\Diff_A^+(S^1)$ commutes with the maps induced by inclusions and restrictions of intervals, and hence with path-continuations. The right quadrilateral commutes trivially. Hence, the claim follows.
\end{proof}

If we have a discrete group $G$ acting faithfully on $\A$ by an injective group homomorphism $\Phi:G\to \Aut(\A)$, we can pull back the $\Aut(\A)-$crossed balanced structure on $\Rep^{\Aut(\A)}(\A)$ along $\Phi$ to obtain a $G$-crossed balanced structure on $\Rep^G(\A)$.

\begin{theorem}\label{thm: BigCorollary}
    Let $G$ be a discrete group acting faithfully on a conformal net $\A$ by a group homomorphism $\Phi: G\to \Aut(\A)$. Then, the $G$-crossed braided $\mathrm{W}^*$-tensor category
    \[
    \Rep^G(\A):=\bigoplus_{g\in G}\Rep^{\Phi(g)}(\A)
    \]
    admits a structure of a $G$-crossed balanced $\mathrm{W}^*$-tensor category.
\end{theorem}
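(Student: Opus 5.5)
The plan is to obtain the statement by restricting the $\Aut(\A)$-crossed balanced structure of Theorem \ref{Thm: RepAutAIsCrossedBalanced} along the injective homomorphism $\Phi$. Concretely, I first note that $\Rep^G(\A)=\bigoplus_{g\in G}\Rep^{\Phi(g)}(\A)$ is the full $\mathrm{W}^*$-subcategory of $\Rep^{\Aut(\A)}(\A)$ whose objects are supported on the subgroup $\Phi(G)\subset\Aut(\A)$. Since $\Phi$ is injective, the grading by $\Phi(G)$ is the same thing as a grading by $G$, with $\Rep^G(\A)_g:=\Rep^{\Phi(g)}(\A)$.

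The key steps, in order, are as follows.

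\begin{enumerate}
\item \textbf{Closure under the tensor structure.} By Theorem \ref{thm: RepresentationOnFusion}(5), the fusion $H^{\Phi(g)}\boxtimes K^{\Phi(h)}$ lies in $\Rep^{\Phi(g)\Phi(h)}(\A)=\Rep^{\Phi(gh)}(\A)$. Hence $\boxtimes$ restricts to functors $\Rep^G(\A)_g\times\Rep^G(\A)_h\to\Rep^G(\A)_{gh}$. The unit $H_0$ lies in $\Rep^{\Phi(e)}(\A)$. The associators and unitors are morphisms in the ambient category between objects of the subcategory, so they restrict; the pentagon and triangle identities are inherited.

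\item \textbf{The restricted action.} Define $T^G_g:=T_{\Phi(g)}$. By the proposition on $T_\nu$, this sends $\Rep^{\Phi(h)}(\A)$ to $\Rep^{\Phi(g)\Phi(h)\Phi(g)^{-1}}(\A)=\Rep^{\Phi(ghg^{-1})}(\A)$. In particular it preserves $\Rep^G(\A)$ and satisfies $T^G_g(\Ccal_h)\subset\Ccal_{ghg^{-1}}$. The action $T$ is strict, with $T_\mu T_\nu=T_{\mu\nu}$. Its monoidal structures come from Proposition \ref{prop: ActionData}. Composing $T$ with the tensor functor $\underline{G}\to\underline{\Aut(\A)}$ induced by $\Phi$ therefore gives a strict action of $G$ on $\Rep^G(\A)$.

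\item \textbf{Braiding and balance.} For $X\in\Rep^{\Phi(g)}(\A)$, take $\beta_{X,Y}:=\B_{X,Y}\colon X\boxtimes Y\to T^G_g(Y)\boxtimes X$ and $\theta_X\colon X\to T^G_g(X)$. Every diagram in Definitions \ref{def: G-X-braidedWTensorCat} and \ref{def: CrossedCategoricalTwist} for $G$ is literally the corresponding diagram for $\Aut(\A)$ with $\Phi(g)$ substituted for each group element. Each such diagram therefore commutes by Theorem \ref{Thm: Aut(A)-X-Braiding} and Theorem \ref{Thm: RepAutAIsCrossedBalanced}. Naturality and unitarity are likewise inherited.
\end{enumerate}

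The only point needing care, and the step I expect to be the main obstacle, is making precise that "pulling back" preserves every coherence datum. In particular, the isomorphisms $T_{ghg^{-1}}T_g\cong T_{gh}$ appearing in \eqref{eq: Xtwist} must correspond exactly to the strict identifications $T_{\Phi(g)}T_{\Phi(h)}=T_{\Phi(gh)}$. This holds because $\Phi$ is a group homomorphism and $T$ is strict, so no additional coherence data arise.

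Faithfulness of $\Phi$ is used only so that the $G$-grading is well defined, that is, so that distinct $g$ give distinct summands. Without it one would need to choose summands with multiplicity, and the argument above would not apply directly.
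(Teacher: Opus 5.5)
Your proposal is correct and follows the paper's argument: the paper proves this in a single sentence by pulling back the $\Aut(\A)$-crossed balanced structure of Theorem~\ref{Thm: RepAutAIsCrossedBalanced} along the injective homomorphism $\Phi$. Your steps simply make that pullback explicit: the full subcategory $\Rep^G(\A)$ is closed under fusion, the unit and the action, and the equalities $T_{\Phi(g)}T_{\Phi(h)}=T_{\Phi(gh)}$ mean no new coherence data arise. The one imprecision is calling the action strict, since in the paper's sense the tensor structures of $T_\nu$ from Proposition~\ref{prop: ActionData} are part of the data, but these are inherited unchanged as well.
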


Applying Theorem \ref{thm: BigCorollary} to the case $G = \{e\}$ we obtain a balanced $\mathrm{W}^*$-tensor structure on~$\Rep(\A)$. An alternative, more accessible proof for the following corollary has appeared in the note \cite{marinsalvador2026balancedstructurecategoryrepresentations}.

\begin{corollary}
    Let $\A$ be a conformal net. The braided $\mathrm{W}^*$-tensor category $\Rep(\A)$ of representations of $\A$ admits a canonical structure of a balanced $\mathrm{W}^*$-tensor category.
\end{corollary}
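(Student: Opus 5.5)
The plan is to deduce the corollary directly from Theorem \ref{thm: BigCorollary} (equivalently Theorem \ref{Thm: RepAutAIsCrossedBalanced}) by specializing to the trivial group $G=\{e\}$, with $\Phi$ the inclusion of the identity automorphism. The faithfulness hypothesis is vacuous here. First I would check that $\Rep^{\{e\}}(\A)=\Rep^{\id}(\A)$ coincides with $\Rep(\A)$ of Definition \ref{def: Rep}. For $\varphi=\id$, the two diagrams of Definition \ref{def: TwistedRep} collapse to the ordinary compatibility $\pi_I|_{\A(J)}=\pi_J$, and the morphisms are the same.

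Next I would unwind each piece of $\{e\}$-crossed structure. The action $T$ consists only of $T_{\id}$, which is precomposition by $\id$, hence the identity functor. The structure maps from Proposition \ref{prop: ActionData} are the identity, since $V_{\id}=1$. The crossed braiding $\B_{H,K}\colon H\boxtimes K\to T_{\id}(K)\boxtimes H=K\boxtimes H$ is therefore an ordinary braiding. The crossed hexagons \eqref{eq: Xbraiding1} and \eqref{eq: Xbraiding2} from Theorem \ref{Thm: Aut(A)-X-Braiding} become the usual hexagon axioms, and the first diagram of that theorem becomes trivial.

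For the balance, $\theta_H=e^{-2\pi i L_0}\colon H\to T_{\id}(H)=H$ is a natural unitary automorphism. Condition (1) of Definition \ref{def: CrossedCategoricalTwist} is automatic. Condition \eqref{eq: Xtwist} with $g=h=e$ reads $\theta_{H\boxtimes K}=\B_{K,H}\circ(\theta_H\boxtimes\theta_K)\circ\B_{H,K}$. Naturality of $\B$ lets us rewrite this as $(\theta_K\boxtimes\theta_H)$ sandwiched appropriately, which is exactly the balancing axiom $\theta_{H\boxtimes K}=\B_{K,H}\B_{H,K}(\theta_H\boxtimes\theta_K)$. Also $\theta_{H_0}=1$, because $\Omega$ is rotation invariant and $L_0\geq 0$ has a one-dimensional kernel spanned by $\Omega$ in $H_0$. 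Strictly, this also follows from the balance axioms together with unitors.

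The only step needing care is bookkeeping: the unlabelled isomorphisms in \eqref{eq: Xtwist} must all reduce to identities when $g=h=e$, and the $\mathrm{W}^*$ and unitarity requirements must be inherited. Both follow immediately from the construction in Theorem \ref{Thm: RepAutAIsCrossedBalanced}. Canonicity holds because every ingredient (Connes fusion over $\widetilde{S^1_\pm}$, the path $\varrho$, and $e^{-2\pi iL_0}$) involved no choices beyond the fixed point $\mathrm{p}$ and the fixed intervals.
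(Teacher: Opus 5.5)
Your proposal is correct and matches the paper's argument: the paper also obtains the corollary by applying Theorem \ref{thm: BigCorollary} to $G=\{e\}$, and your unwinding of the crossed data into an ordinary braiding and balance is accurate, up to a typo where the sandwiched expression should read $\B_{K,H}\circ(\theta_K\boxtimes\theta_H)\circ\B_{H,K}$. One aside is misjustified, though it is inessential: $\ker L_0=\C\Omega$ only gives $e^{-2\pi iL_0}\Omega=\Omega$, whereas $\theta_{H_0}=1$ holds because $U$ is a genuine representation of $\Mob$ on $H_0$ (so $e^{-2\pi iL_0}=U(R_{-2\pi})=1$), or from the balance axioms as you note, and in any case it is not required by Definition \ref{def: CrossedCategoricalTwist}.
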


\section{Relation to Müger's crossed braided tensor category of localized endomorphisms}\label{Sec: Muger}

In this section, we show that the $\Aut(\A)$-crossed braided category $\Rep^{\Aut(\A)}(\A)$ is equivalent to the category of $\Aut(\A)$-localized endomorphisms in some $I_0\in\Jcal_\mathrm{p}$, denoted $\Aut(\A)-\Loc_{I_0}(\A)$, as defined in \cite{muger05}. Let us recall the definition of $\Aut(\A)-\Loc_{I_0}(\A)$ first. Let $\Jcal_{\mathrm{p}}:=\{I\in \Jcal\ |\ \mathrm{p}\notin \mathrm{cl}(I)\}$ be the collection of intervals of $S^1$ not containing $\mathrm{p} = 1\in S^1$ in their closure. We write 
\[
\A_\infty := \bigcup\limits_{I\in \Jcal_\mathrm{p}}\A(I)\subset B(H_0)
\]
for the $
*$-subalgebra of $B(H_0)$ given by the union of all the $\A(I)$, for $I\in \Jcal_\mathrm{p}$. Note that $\A_\infty$ is an algebraic inductive limit, and that no closure is involved.

\begin{definition}
Let $\End\,\A_\infty$ be the $\C$-linear strict tensor category whose objects are unital $*$-algebra homomorphisms from $\A_\infty$ to itself, with    
\begin{align*}
    \Hom_{\End\,\A_\infty}(\rho,\sigma) & = \{s\in\A_\infty\ |\ s\rho(x) = \sigma(x)s\ \ \forall x\in \A_\infty\}\\
    t\circ s &= ts,\text{      for $s\in \Hom_{\End\,\A_\infty}(\rho,\sigma)$ and $t\in\Hom_{\End\,\A_\infty}(\sigma,\eta)$}\\
    \rho\otimes\sigma &= \rho\circ\sigma\\
    s\otimes t & = s\rho(t) = \rho'(t)s,\text{      for $s\in \Hom_{\End\,\A_\infty}(\rho,\rho')$ and $t\in\Hom_{\End\,\A_\infty}(\sigma,\sigma')$}
\end{align*}
where $\rho,\rho',\sigma,\sigma'\in \End\,\A_\infty$. The unit is given by $\id_{\A_\infty}$.
\end{definition}

 Fix an interval $I_0\in \Jcal_\mathrm{p}$ for the rest of the section, and let $\varphi\in \Aut(\A)$ be an automorphism. We say an object $\rho\in \End\,\A_\infty$ is $\varphi$\emph{-localized} in $I_0$ if
\[
\begin{array}{clc}
    \rho(x) &= x \hspace{1cm} & \forall J\in \Jcal_\mathrm{p}\text{ such that $J\subset(\mathrm{p},\partial_-I_0)$ and all $x\in \A(J)$},\\
    \rho(x) &= \varphi(x) \hspace{1cm} & \forall J\in \Jcal_\mathrm{p}\text{ such that $J\subset(\partial_+I_0,\mathrm{p})$ and all $x\in \A(J)$}.
\end{array}
\]
Here, $\partial_- I_0$, $\partial_+I_0$, $(\mathrm{p},\partial_-I_0)$ and $(\partial_+I_0,\mathrm{p})$ are taken with the usual orientation of $S^1$, meaning in particular that $\partial_+I_0$ is counter-clockwise to $\partial_-I_0$ along $I_0$. By diffeomorphism covariance of $\A$, any $\varphi$-localized endomorphism $\rho\in\End\,\A_\infty$ is \emph{transportable}, meaning that, for every $J\in\Jcal_\mathrm{p}$, there exists another endomorphism $\rho'\in\End\,\A_\infty$ which is $\varphi$-localized in $J$ and satisfies $\rho\cong \rho'$. We write $\varphi-\Loc_{I_0}(\A)$ for the subcategory of $\End\,\A_\infty$ of endomorphisms which are $\varphi$-localized in $I_0$, and whose morphisms are morphisms in $\End\,\A_\infty$ which are in $\A(I_0)$. We say that an endomorphism is $\Aut(\A)$-localized in $I_0$ if it is $\varphi$-localized in $I_0$ for some $\varphi\in \Aut(\A)$.

\begin{definition}\label{def: CategoryOfAut(A)LocEndomorphisms}
        We define the $\mathrm{W}^*$-category $\Aut(\A)-\Loc_{I_0}(\A):= \bigoplus\limits_{\varphi\in \Aut(\A)}\varphi-\Loc_{I_0}(\A)$, whose objects are countable orthogonal direct sums of $\Aut(\A)$-localized-in-$I_0$ endomorphisms of $\A_\infty$.
\end{definition}

  If $G$ is a discrete group acting on $\A$, we say that an endomorphism $\rho\in \End\,\A_\infty$ is $G$-localized in $I_0$ if it is $g$-localized in $I_0$ for some $g\in G$, and we write $G-\Loc_{I_0}(\A)\subset \Aut(\A)-\Loc_{I_0}(\A)$ for the full subcategory on the direct sums of $G$-localized endomorphisms in $I_0$. 

  We let $\Aut(\A)$ act on $\Aut(\A)-\Loc_{I_0}(\A)$ by $\gamma_\varphi(\rho) = \varphi\circ\rho\circ\varphi^{-1}$ and $\gamma_\varphi(s) = \varphi(s)$ for $\varphi\in \Aut(\A)$, $\rho,\sigma\in\Aut(\A)-\Loc_{I_0}(\A)$ and $s\in\Hom(\rho,\sigma)$. The crossed braiding is defined as follows. Fix $\varphi,\mu\in \Aut(\A)$ and let $\rho,\sigma \in \Aut(\A)-\Loc_{I_0}(\A)$ be endomorphisms with $\rho$ $\varphi$-localized in $I\in \Jcal_\mathrm{p}$ and $\sigma$ $\mu$-localized in $J\in \Jcal_\mathrm{p}$. Let $I', J'\in \Jcal_\mathrm{p}$ be intervals such that $(\partial_- I', \partial_+ J')$ is covered by $I_0$. Note that this means that the arc that travels clockwise from $J'$ to $I'$ does not meet $\mathrm{p}$. By transportability, there exist $\rho'$ and $\sigma'$ localized in $I'$ and $J'$ respectively, and unitaries $u\in \Hom(\rho,\rho')$ and $v\in\Hom(\sigma,\sigma')$. By \cite[Lem. 2.14]{muger05}, we have $\rho'\otimes\sigma' = \gamma_\varphi(\sigma')\otimes\rho'$, and hence we can define the braiding of $\rho$ and $\sigma$ as the composite
  \[
  c_{\rho,\sigma}: \rho\otimes\sigma \xrightarrow{u\otimes v} \rho'\otimes \sigma'
 = \gamma_\varphi(\sigma')\otimes\rho'\xrightarrow{\gamma_\varphi(v^*)\otimes u^*} \gamma_\varphi(\sigma)\otimes\rho. \]
  As an element of $\A_\infty$, we have $c_{\rho,\sigma} = \gamma_\varphi(\sigma)(u^*)\gamma_\varphi(v^*)u\rho(v)$. It can be shown that $c_{\rho,\sigma}$ is independent of the choices of $I',J',\rho',\sigma'$ and that it defines an $\Aut(\A)$-crossed braiding on $\Aut(A)-\Loc_{I_0}(\A)$, see \cite[Prop. 2.17]{muger05}.

  Proposition 2.17 in \cite{muger05} provides the following result.

  \begin{theorem}\label{Thm: MugersCatIsGCrossed}
      The $\mathrm{W}^*$-tensor category $\Aut(\A)-\Loc_{I_0}(\A)$ with the action $\gamma$ of $\Aut(\A)$ and the crossed braiding $c$ is an $\Aut(\A)$-crossed braided tensor category. If $G$ is a discrete group acting faithfully on $\A$, then $\gamma$ and $c$ induce on $G-\Loc_{I_0}(\A)$ the structure of a $G$-crossed braided $\mathrm{W}^*$-tensor category.
  \end{theorem}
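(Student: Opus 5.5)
The statement is essentially \cite[Prop. 2.17]{muger05}, so the plan is to reduce to Müger's results and check that the data defined above match his. First I would verify that $\Aut(\A)-\Loc_{I_0}(\A)$ is a $\mathrm{W}^*$-tensor category. Composition of $\varphi$- and $\mu$-localized endomorphisms is $\varphi\mu$-localized in $I_0$: on intervals in $(\mathrm{p},\partial_-I_0)$ both act trivially; on intervals in $(\partial_+I_0,\mathrm{p})$ we get $\rho\sigma(x)=\rho(\mu x)=\varphi\mu x$, since $\mu x$ stays in the same local algebra. This gives the grading $\Ccal_\varphi\otimes\Ccal_\mu\subset\Ccal_{\varphi\mu}$. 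Intertwiners lie in $\A(I_0)$ by Haag duality, so endomorphism spaces are von Neumann algebras, and countable direct sums are realised with isometries in $\A(I_0)$ (type $\mathrm{III}$). Next, $\gamma_\psi(\rho)=\psi\rho\psi^{-1}$ is $\psi\varphi\psi^{-1}$-localized in $I_0$, because $\psi$ preserves each $\A(J)$. The action is strict: $\gamma_\psi(\rho\circ\sigma)=\gamma_\psi(\rho)\gamma_\psi(\sigma)$ and $\gamma_\psi\gamma_\nu=\gamma_{\psi\nu}$ hold on the nose, and $\gamma_\psi(s\otimes t)=\gamma_\psi(s)\otimes\gamma_\psi(t)$.

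For the braiding, I would first establish the key identity \cite[Lem. 2.14]{muger05}: if $\rho'$ is $\varphi$-localized in $I'$ and $\sigma'$ is localized in $J'$, with $J'$ reached from $I'$ counterclockwise without passing $\mathrm{p}$, then $\rho'\sigma'=\gamma_\varphi(\sigma')\rho'$. I would check it on $x\in\A(K)$ with $K$ in each complementary region and extend by additivity. The main cases are these. For $K$ disjoint from both intervals, both sides reduce to the appropriate power of twisting. For $K=J'$, $\rho'$ acts as $\varphi$ on $\A(J')$, so $\rho'\sigma'(x)=\varphi\sigma'(x)=\varphi\sigma'\varphi^{-1}(\rho'(x))$. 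For $K=I'$, $\sigma'$ acts trivially there.

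Independence of $c_{\rho,\sigma}$ from $(I',J',u,v)$ comes next. Two admissible choices are connected through a chain of choices with disjoint supports, and $u'u^*$ commutes appropriately by locality. This is the standard DHR argument, adapted so that no path crosses $\mathrm{p}$.

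Naturality of $c$ and its unitarity are then immediate from the formula. The remaining axioms of Definition \ref{def: G-X-braidedWTensorCat} need separate checks:
\begin{enumerate}
\item compatibility with $\gamma$, which holds because applying $\psi$ to everything transports admissible data to admissible data;
\item the two hexagons \eqref{eq: Xbraiding1} and \eqref{eq: Xbraiding2}, which I would prove by choosing $\rho',\sigma',\tau'$ localized in three intervals arranged appropriately.
\end{enumerate}
For the hexagons the braiding unitaries are then trivial, $u\otimes v$ factorizes, and the diagrams reduce to the identity $\rho'\sigma'\tau'=\gamma_\varphi(\sigma'\tau')\rho'$ together with its iterates.

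The main obstacle I expect is keeping track of the twist in \eqref{eq: Xbraiding2}. There, $\tau$ must be moved past $\sigma$ and then past $\rho$, producing $\gamma_\varphi\gamma_\mu(\tau)$. The intermediate transporter $\gamma_\mu(w)$ is localized in a region where $\rho'$ acts by $\varphi$, so I must check carefully that $\rho'(\gamma_\mu(w))=\gamma_{\varphi\mu}(w)$. This is where the orientation conventions relative to $\mathrm{p}$ matter.

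Finally, the $G$-statement follows by restriction. $G-\Loc_{I_0}(\A)$ is closed under $\otimes$ and $\gamma_g$, since $\Phi$ is a homomorphism, and it contains the braiding morphisms. Pulling back along the injective map $\Phi$ therefore gives the $G$-crossed braided $\mathrm{W}^*$-structure.
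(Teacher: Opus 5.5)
Your proposal is correct and follows the same route as the paper, which obtains this statement directly from \cite[Prop.~2.17]{muger05}. Your sketch is a faithful unpacking of M\"uger's argument: the grading and strictness of $\gamma$, the commutation identity of \cite[Lem.~2.14]{muger05}, independence of the choices, and reduction of the hexagons via naturality to well-separated localizations where all crossed braidings are identities. One minor point is that in this paper the requirement that morphisms of $\varphi-\Loc_{I_0}(\A)$ lie in $\A(I_0)$ is part of the definition, so you do not need Haag duality for it.
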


We will prove that the $\Aut(\A)$-crossed braided tensor category $\Aut(\A)-\Loc_{I_0}(\A)$ is equivalent to $\Rep^{\Aut(\A)}(\A)$. We define a fully faithful essentially surjective functor $\mathfrak{E}: \Aut(\A)-\Loc_{I_0}(\A)\to \Rep^{\Aut(\A)}(\A)$ as follows. We let $\mathfrak{E}(0)$ be the zero representation of $\A$. Given $\rho$ a $\varphi$-localized endomorphism in $I_0$, we define $\mathfrak{E}(\rho) = (H^{\varphi}_{\rho}, \pi^\rho)\in \Rep^\varphi(\A)$ by letting $H^{\varphi}_{\rho}:= H_0$ and, for all $\tilde{I}\in \Jcal_\R$ with $I\in \Jcal_{\mathrm{p}}$ and $x\in \A_\R(\tilde{I})=\A(I)$,
\[
\pi_{\tilde{I}}^\rho(x):=\pi_{0,I}(\rho \circ \varphi^{\epsilon(\tilde{I})} x).
\]
Since $\rho$ is $\varphi$-localized in $I_0$, the formula above extends to a $*$-action of all $\A_\R(\tilde{I})$, even if $\mathrm{p}\in \mathrm{cl}(I)$. We will abuse the notation and continue writing $\pi_{\tilde{I}}^\rho(x):=\pi_{0,I}(\rho \circ \varphi^{\epsilon(\tilde{I})} x)$ even if $\mathrm{p}\in \mathrm{cl}(I)$. It is clear that
\[
\pi_{\tilde{I}+1}^\rho = \pi_{\tilde{I}}^\rho\circ \varphi,
\]
hence producing a $\varphi$-twisted representation of $\A$. Given $s\in\Hom_{\Aut(\A)-\Loc_{I_0}(\A)}(\rho,\sigma)\subset \A(I_0)$ a morphism between $\varphi$-localized endomorphisms in $I_0$, we can regard it as a morphism between representations by letting it act on $H_0$ via $\pi_{0,I_0}$. Hence, we let $\mathfrak{E}(s) = \pi_{0,I_0}(s)\in \Hom_{\Rep^\varphi(\A)}(H^\varphi_\rho, H^\varphi_\sigma)$. By Haag duality and the fact that $\rho,\sigma$ are $\varphi$-localized in $I_0$, any element of $\Hom_{\Rep^\varphi(\A)}(H^\varphi_\rho, H^\varphi_\sigma)$ arises this way. We obtain an identification $\Hom(\rho,\sigma)\cong\Hom_{\Rep^\varphi(\A)}(H^\varphi_\rho, H^\varphi_\sigma)$ and the functor $\mathfrak{E}: \Aut(\A)-\Loc_{I_0}(\A)\to\Rep^{\Aut(\A)}(\A)$ is a fully faithful $\mathrm{W}^*$-functor of $\Aut(\A)$-graded $\mathrm{W}^*$-categories. 

\begin{proposition}\label{prop: mathfrakEEquivalenceofCats}
    The functor $\mathfrak{E}: \Aut(\A)-\Loc_{I_0}(\A)\to\Rep^{\Aut(\A)}(\A)$ is an equivalence of $\mathrm{W}^*$-categories.
\end{proposition}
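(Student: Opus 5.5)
Since $\mathfrak{E}$ has already been shown to be a fully faithful $\mathrm{W}^*$-functor, it remains to prove essential surjectivity. Both categories consist of countable direct sums of homogeneous objects, and $\mathfrak{E}$ preserves the grading. It therefore suffices to show that every $\varphi$-twisted representation $(H^\varphi,\pi^H)\in\Rep^\varphi(\A)$ (with $H^\varphi\neq 0$) is unitarily equivalent to $\mathfrak{E}(\rho)$ for some $\rho\in\varphi-\Loc_{I_0}(\A)$. Direct sums are then handled by taking orthogonal direct sums of localized endomorphisms in $\A(I_0)$ via isometries; this is possible because $\A(I_0)$ is a properly infinite factor.

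The construction is the standard DHR argument, adapted to the twisting. Let $K:=(\partial_+I_0,\mathrm{p})\cup\{\mathrm p\}\cup(\mathrm p,\partial_-I_0)$, the interior of $S^1\setminus I_0$, which is an interval containing $\mathrm p$. Lift it to $\tilde K\in\Jcal_\R$ so that the part $(\mathrm p,\partial_-I_0)$ lifts to an interval with $\epsilon=0$ (just to the right of $0$); the part $(\partial_+I_0,\mathrm p)$ then lifts just to the left of $0$ inside $\tilde K$. The algebra $\A_\R(\tilde K)=\A(I_0^c)$ is a type $\mathrm{III}$ factor acting nontrivially on both $H_0$ and $H^\varphi$, and both Hilbert spaces are separable. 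As noted earlier in the paper, this gives a unitary $W:H_0\to H^\varphi$ with $W\pi_{0,I_0^c}(x)=\pi^H_{\tilde K}(x)W$ for all $x\in\A(I_0^c)$. Define $\rho(x):=W^*\pi^H_{\hat I}(x)W$ for $I\in\Jcal_\mathrm p$ and $x\in\A(I)$.

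The key checks are as follows.

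First, $\rho(x)\in\A(I)\cup\A(I_0)$-type locality holds: for $I\in\Jcal_\mathrm p$, $\rho(x)$ commutes with $\pi_0(\A(L))$ for every $L\in\Jcal_\mathrm p$ disjoint from $I\cup I_0$. This follows because such an $L$ sits in $I_0^c$, and $\pi^H$ restricted to $\A_\R$ satisfies the weak locality. Haag duality then places $\rho(x)$ in $\A(\text{interval}\supset I\cup I_0)\subset\A_\infty$, so $\rho$ is a unital $*$-endomorphism of $\A_\infty$.

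Second, the localization conditions hold. For $J\subset(\mathrm p,\partial_-I_0)$ we have $\hat J\subset\tilde K$ with the same lift, so $\rho(x)=x$. For $J\subset(\partial_+I_0,\mathrm p)$, the lift $\hat J$ lies near $1$, whereas its copy inside $\tilde K$ is $\hat J-1$. Using $\pi^H_{\hat J}(x)=\pi^H_{\hat J-1}(\varphi x)$, which is \eqref{eq: TwistedRepOnR}, we get $\rho(x)=\varphi(x)$.

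Third, $W:\mathfrak E(\rho)\to H^\varphi$ intertwines all the actions $\pi_{\tilde I}$. For $I\in\Jcal_\mathrm p$ and the lift $\hat I$ (where $\epsilon=0$) this holds by definition. For general lifts it follows from the twisting relation \eqref{eq: TwistedRepOnR}, which both sides satisfy, and for intervals containing $\mathrm p$ it follows by additivity, covering them by subintervals in $\Jcal_\mathrm p$.

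I expect the main obstacle to be the bookkeeping of lifts and $\epsilon$ in the second and third checks. In particular, one must make sure that the choice of lift $\tilde K$ matches the convention $\pi^\rho_{\tilde I}=\pi_0(\rho\circ\varphi^{\epsilon(\tilde I)}\,\cdot)$, so that the twist lands on the clockwise side $(\partial_+I_0,\mathrm p)$ rather than on the counterclockwise side. A further point needing care is the extension of $\pi^\rho$ to intervals containing $\mathrm p$, which must agree with $W^*\pi^HW$ there. I would handle this using additivity together with the already-verified compatibility on subintervals.
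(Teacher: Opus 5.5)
Your construction matches the paper's proof. Your $\tilde K$ is precisely $\hat{I_0}^{c-}=(\partial_+\hat{I_0}-1,\partial_-\hat{I_0})$. The paper likewise picks a unitary $u\colon H_0\to H^\varphi$ intertwining $\pi_{0,I_0^c}$ with $\pi^H_{\hat{I_0}^{c-}}$ and sets $\rho_I=u^*\pi^H_{\hat I}(\cdot)u$ for $I\in\Jcal_\mathrm{p}$. It then gets $\rho=\id$ on $(\mathrm p,\partial_-I_0)$ and $\rho=\varphi$ on $(\partial_+I_0,\mathrm p)$ exactly as you do, via $\hat J-1\subset\hat{I_0}^{c-}$ and \eqref{eq: TwistedRepOnR}. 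The paper does not spell out your first and third checks, and as written they contain one slip.

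The slip is this: no interval containing $\mathrm p$ in its interior can be covered by intervals of $\Jcal_\mathrm p$, since none of those contains $\mathrm p$. The subintervals of such an $I$ that lie in $\Jcal_\mathrm p$ only generate $\A(I_1)\vee\A(I_2)$, where $I_1,I_2$ are the two components of $I\setminus\{\mathrm p\}$. This equals $\A(I)$ only under strong additivity, which is not assumed. The same problem affects your Haag-duality step if $L$ is restricted to $\Jcal_\mathrm p$.

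The repair uses what you already have: $W$ intertwines on all of $\A(I_0^c)$, including across $\mathrm p$.
\begin{itemize}
\item \textbf{Haag-duality step.} Take $M\in\Jcal_\mathrm p$ with $I\cup I_0\subset M$, and let $L$ range over all $L\Subset M^c$, not just those in $\Jcal_\mathrm p$. The lift $\tilde L\subset\widetilde{M^c}\subset\tilde K$ and $\hat M$ lie in a common interval of $\Jcal_\R$. Hence $\pi^H_{\hat M}(x)$ commutes with $\pi^H_{\tilde L}(y)=W\pi_0(y)W^*$. Additivity over these $L$ then gives $\rho(x)\in\A(M^c)'=\A(M)$.
\item \textbf{Intertwining across $\mathrm p$.} For a lift $\tilde L$ containing an integer $n$, cover it by two lifts of intervals in $\Jcal_\mathrm p$ together with a small interval around $n$ inside $\tilde K+n$. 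On that small interval, $W^*\pi^H(\cdot)W=\pi_0\circ\varphi^n$ by the choice of $W$ and \eqref{eq: TwistedRepOnR}. This is exactly the extension of $\pi^\rho$ across $\mathrm p$ dictated by $\varphi$-localization. Additivity and normality then give the intertwining on all of $\A(L)$.
\end{itemize}
With this repair your argument is complete.
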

\begin{proof}
    It is only left to show essential surjectivity. To do so, let $(H^\varphi,\pi^H)\in\Rep^\varphi(\A)$ and recall that we write $\hat{I_0}\in \Jcal_\R$ for the lift of $I_0$ to $\R_{> 0}$ such that $\mathrm{cl}(\hat{I_0})\subset \R_{>0}$ is as close to zero as possible. Then, $(H^\varphi,\pi^H_{\hat{I_0}^{c-}})$ is a representation of the type $\mathrm{III}_1$ factor $\A_\R(\hat{I_0}^{c-}) = \A(I_0^c)$. Hence, there exists a unitary $u: H_0\xrightarrow{\cong }H^\varphi$ intertwining the actions $\pi_{0,I_0^c}$ and $\pi_{\hat{I_0}^{c-}}^H$. We define, for every $I\in \Jcal_{\mathrm{p}}$,
    \[
    \rho_{I}(-) = u^*\pi^H_{\hat{I}}(-)u.
    \]
    By Haag duality, it holds that $\rho_{I_0}(\A(I_0))\subset \A(I_0)$. If $J\in\Jcal_{\mathrm{p}}$ is an interval such that $J\subset (\mathrm{p}, \partial_-I_0)$, then $\hat{J}\subset \hat{I_0}^{c-}$ and hence $\rho_{J} = \id_{\A(J)}$. If $J\subset (\partial_+I_0, \mathrm{p})$, then $\hat{J}-1\subset \hat{I_0}^{c-}$ and $\rho_J = \varphi$. Hence, $\rho$ is a $\varphi$-localized-in-$I_0$ endomorphism of $\A_\infty$, and $u$ provides an equivalence between $\mathfrak{E}(\rho)$ and $(H^\varphi,\pi^H)$.
\end{proof}

We next upgrade the functor $\mathfrak{E}$ to a monoidal equivalence. Let $\tilde{I_0}$ be a lift of $I_0$ to $\R$. Let $\varphi,\mu\in\Aut(\A)$ and $\rho$ and $\sigma$ be $\varphi$- and $\mu$-localized objects of $\Aut(\A)-\Loc_{I_0}(\A)$, respectively. Then, $\mathfrak{E}(\rho)\boxtimes \mathfrak{E}(\sigma) = H^\varphi_{\rho}\boxtimes H^\mu_\sigma$ and $\mathfrak{E}(\rho\circ\sigma) = H^{\varphi\mu}_{\rho\sigma}$ are not equal on the nose. However, there is a unitary isomorphism of twisted $\A$-representations given as follows. Note that the subspace of $H^\varphi_{\rho}\boxtimes H^\mu_\sigma$ given by vectors of the form $L(\pi_0(x)\Omega,\tilde{I_0})\pi_0(y)\Omega$, for $x,y\in \A(I_0)$, is dense in $H^\varphi_{\rho}\boxtimes H^\mu_\sigma$. We define
\[
\begin{array}{cccc}
    \Psi_{\rho,\sigma}: & H^\varphi_{\rho}\boxtimes H^\mu_\sigma &\to & H^{\varphi\mu}_{\rho\sigma}  \\
     & L(\pi_0(x)\Omega,\tilde{I_0})\pi_0(y)\Omega&\mapsto &\pi_0(\rho(y)x)\Omega
\end{array}
\]
which is a unitary compatible with the actions of $\A$, see \cite[Sec. V.B.$\delta$]{Connes}.

\begin{proposition}
    The pair $(\mathfrak{E},\Psi): \Aut(\A)-\Loc_{I_0}(\A)\to \Rep^{\Aut(\A)}(\A)$ is an equivalence of $\mathrm{W}^*$-tensor categories.
\end{proposition}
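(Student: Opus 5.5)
Since $\mathfrak{E}$ is already known to be an equivalence of $\mathrm{W}^*$-categories by Proposition \ref{prop: mathfrakEEquivalenceofCats}, it remains to show three things: that each $\Psi_{\rho,\sigma}$ is a well-defined unitary isomorphism of $\varphi\mu$-twisted representations, that $\Psi$ is natural, and that it satisfies the hexagon with the associators together with the unit conditions. I would treat $\End\,\A_\infty$ as strict, so the associator on the source is trivial. The unitary $\Psi_{\rho,\sigma}$ is taken from \cite[Sec. V.B.$\delta$]{Connes}, but I would recheck isometry directly. Write $\xi=\pi_0(x)\Omega\in (H^\varphi_\rho)_+(\tilde{I_0})$. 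Since $\rho$ is trivial on $I_0^c$ in the lift $\tilde{I_0}^{c+}$ (up to the twist, which is handled by Lemma \ref{lemm: epsilon}), we get $Z^+(\xi,\tilde{I_0})=\pi_0(x)$ as an element of $\A(I_0)$. Then, by the one-sided inner product on $H_+^\varphi(\tilde{I_0})\boxtimes K^\mu$,
\[
\langle L(\xi,\tilde{I_0})\pi_0(y)\Omega\,|\,L(\xi',\tilde{I_0})\pi_0(y')\Omega\rangle=\langle \pi_0(\sigma(x'^*x))\pi_0(y)\Omega\,|\,\pi_0(y')\Omega\rangle .
\]
This has to be matched with $\langle \rho(y)x\Omega|\rho(y')x'\Omega\rangle$. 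Here I would realise that the order of factors must be consistent with the convention $\rho\otimes\sigma=\rho\circ\sigma$, and adjust the formula for $\Psi$ (for example to $\rho(y)x$ versus $x\rho(y)$, or with $\sigma$ applied) so that the two computations agree using locality in $\A(I_0)$. Density of both sides then gives unitarity.

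Intertwining of the $\A$-actions will be checked on intervals $\tilde{L}\subset\tilde{I_0}^{c+}$ and $\tilde{L}\subset\tilde{I_0}$, with the lifts handled via \eqref{eq: TwistedRepOnR}. This suffices by additivity and Theorem \ref{thm: RepresentationOnFusion}(4). For $z\in\A(L)$ with $L\subset I_0$, the action on the fusion is $\pi^\rho_{\tilde{L}}(z)\boxtimes\id$. This sends $L(\pi_0(x)\Omega)$ to $L(\pi_0(\rho(z)x)\Omega)$, and it matches $\pi^{\rho\sigma}$ because $\rho\sigma(z)\rho(y)x=\rho(\sigma(z)y)x$ after using the right action instead. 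On the complement, the action passes through $Z^+$ to act on $K$ via $\pi^\sigma$, and $\rho$ acts trivially or by $\varphi$ there, so the two sides agree. Naturality in $s\in\Hom(\rho,\rho')$ and $t\in\Hom(\sigma,\sigma')$ follows from $(F\boxtimes G)L(\xi,\tilde{I})\eta=L(F\xi,\tilde{I})G\eta$ together with the formula $s\otimes t=s\rho(t)$.

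The main obstacle will be the associativity hexagon. I would evaluate both legs on dense vectors $L(\pi_0(x)\Omega,\tilde{I_0})L(\pi_0(y)\Omega,\tilde{I_0})\pi_0(z)\Omega$. Proposition \ref{prop: JoinLs} rewrites these as $L(L(\pi_0(x)\Omega)\pi_0(y)\Omega,\tilde{I_0})\pi_0(z)\Omega$, and the associator of Section \ref{Sec: Associativity} is identified with the regrouping of $L$-operators through Proposition \ref{prop: MapAssociativity} and Proposition \ref{prop: BreakGammaInComponents}. One leg gives $\Psi_{\rho\sigma,\tau}(L(\pi_0(\rho(y)x)\Omega)\pi_0(z)\Omega)=\rho\sigma(z)\rho(y)x\Omega$. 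The other gives $\Psi_{\rho,\sigma\tau}L(\pi_0(x)\Omega)\pi_0(\sigma(z)y)\Omega=\rho(\sigma(z)y)x\Omega$, and these agree. The delicate point is justifying that $\Psi$ of an $L$-operator applied to a vector is again of this form, that is, that $\Psi_{\rho,\sigma}\circ L(\xi,\tilde{I_0})=\pi_0(\ldots)$ as operators. I would prove this first as a lemma by comparing both as intertwiners in $\Hom_{\A_\R(\tilde{I_0}^{c+})}$ that agree on $\Omega$, using Reeh--Schlieder. The unit conditions then follow from Theorem \ref{thm: PreUnitors}: take $\rho=\id$, so that $\Psi_{\id,\sigma}$ reduces to the definition of $\flat$, and likewise $\Psi_{\rho,\id}$ reduces to $\natural$.
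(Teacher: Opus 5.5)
Your route for associativity and naturality is the paper's route. You evaluate on dense vectors, regroup with Proposition \ref{prop: JoinLs}, and reduce everything to the identities $\rho\sigma(z)\rho(y)x=\rho(\sigma(z)y)x$ and $\rho_2(t)s\rho_1(y)=\rho_2(t)\rho_2(y)s$.

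\textbf{The gap.} The point you flagged and then left open is a genuine gap, and none of the adjustments you list closes it. Your inner-product computation is correct, and it shows that $L(\pi_0(x)\Omega,\tilde{I_0})\pi_0(y)\Omega\mapsto\pi_0(\rho(y)x)\Omega$ is not isometric.
\begin{itemize}
\item Take $x=1$. Then $Z^+(\Omega,\tilde{I_0})$ is a power of $V_\varphi$, so $\|L(\Omega,\tilde{I_0})\pi_0(y)\Omega\|=\|\pi_0(y)\Omega\|$.
\item By contrast, $\|\pi_0(\rho(y))\Omega\|$ differs unless the vacuum state is $\rho$-invariant on $\A(I_0)$. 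This fails, for example, for $\rho=\mathrm{Ad}(u)$ with $u\in\A(I_0)$ a unitary outside the centralizer of the vacuum.
\item The same test rules out $x\rho(y)$.
\item In the untwisted case $\sigma(x)y$ is isometric. But on $L$-vectors over $\tilde{I_0}$ the algebra $\A(I_0)$ acts through the left factor, so this map intertwines $\pi_0\circ\sigma\rho$. Left-fusion vectors naturally realize $\mathfrak{E}(\sigma\rho)$ (in the twisted case $\mathfrak{E}(\gamma_\varphi(\sigma)\rho)$). That object is related to $\mathfrak{E}(\rho\otimes\sigma)=\mathfrak{E}(\rho\sigma)$ only through the crossed braiding.
\item There is no locality to invoke, since $x$ and $y$ both lie in $\A(I_0)$.
\end{itemize}
For the same reason your intertwining check fails: $z\in\A(I_0)$ yields $\rho(y)\rho(z)x$, not $\rho\sigma(z)\rho(y)x$, and there is no ``right action instead'' on $L$-vectors. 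Your hexagon computation is then run on the unrepaired, non-unitary formula.

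The paper also states $\Psi$ on $L$-vectors and defers unitarity to Connes, so your instinct to recheck was sound. A proof must still actually fix it. Note also that in the twisted case $Z^+(\pi_0(x)\Omega,\hat{I_0})=\pi_0(x)V_\varphi$, not $\pi_0(x)$, because $\pi^\rho$ is trivial on $\hat{I_0}^{c-}$, not on $\hat{I_0}^{c+}$.

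\textbf{The fix.} The missing idea is to work with right-fusion vectors over the lift $\hat{I_0}$. Define $\Psi_{\rho,\sigma}\big(R(\pi_0(y)\Omega,\hat{I_0})\pi_0(x)\Omega\big):=\pi_0(\rho(y)x)\Omega$.
\begin{itemize}
\item Since $\sigma$ is $\mu$-localized in $I_0$, one has $\pi^\sigma_{\hat{I_0}^{c-}}=\pi_0$ on $\A(I_0^c)$. Hence $Z^-(\pi_0(y)\Omega,\hat{I_0})=\pi_0(y)$.
\item The right one-sided inner product is then $\langle\pi_0(\rho(y'^*y)x)\Omega\,|\,\pi_0(x')\Omega\rangle=\langle\pi_0(\rho(y)x)\Omega\,|\,\pi_0(\rho(y')x')\Omega\rangle$, so the map is isometric on the nose.
\item Now $z\in\A_\R(\hat{I_0})$ acts through the right factor, giving $\rho(\sigma(z)y)x=\rho\sigma(z)\rho(y)x$.
\item An element $w\in\A_\R(\hat{I_0}^{c-})$ acts through the left factor by $\pi_0(w)$. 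This commutes with $\pi_0(\rho(y))$ and equals $\pi^{\rho\sigma}_{\hat{I_0}^{c-}}(w)$.
\end{itemize}
Your operator lemma becomes $\Psi_{\rho,\sigma}\circ R(\pi_0(y)\Omega,\hat{I_0})=\pi_0(\rho(y))$ on $H^\varphi_\rho$. With the $R$-analogue of Proposition \ref{prop: JoinLs} and the identity $(F\boxtimes G)R(\eta,\tilde{I})\xi=R(G\eta,\tilde{I})F\xi$, your hexagon and naturality computations go through verbatim. The choice of lift matters in the twisted case: other lifts $\hat{I_0}+n$ introduce factors $\varphi^n\mu^{-n}$.
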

\begin{proof}
Given a third automorphism $\nu\in \Aut(\A)$ and $\tau$ a $\nu$-localized in $I_0$ object of $\Aut(\A)-\Loc_{I_0}(\A)$, the associator
\[
\big( H^\varphi_\rho\boxtimes H^\mu_\sigma\big)\boxtimes H^\nu_\tau \cong H^\varphi_\rho\boxtimes\big(H^\mu_\sigma\boxtimes H^\nu_\tau\big)
\]
reads $L\big(L(\pi_0(x)\Omega,\tilde{I_0})\pi_0(y)\Omega,\tilde{I_0}\big)\pi_0(z)\Omega\mapsto L(\pi_0(x)\Omega,\tilde{I_0})L(\pi_0(y)\Omega,\tilde{I_0})\pi_0(z)\Omega$ for all $x,y,z\in \A(I_0)$, using Proposition \ref{prop: JoinLs}. Hence, compatibility of $\Psi$ with the associator is equivalent to the equality of the outcomes of the following two compositions of unitaries, for all $x,y,z\in \A(I_0)$. On the one side, we have
\[
L\big(L(\pi_0(x)\Omega,\tilde{I_0})\pi_0(y)\Omega,\tilde{I_0}\big)\pi_0(z)\Omega\xmapsto{\Psi_{\rho,\sigma}} L\big(\pi_0(\rho (y)x)\Omega,\tilde{I_0}\big)\pi_0(z)\Omega\xmapsto{\Psi_{\rho\sigma,\tau}}\pi_0(\rho\circ\sigma(z)\rho(y)x)\Omega,
\]
and on the other side,
\[
L(\pi_0(x)\Omega,\tilde{I_0})L(\pi_0(y)\Omega,\tilde{I_0})\pi_0(z)\Omega\xmapsto{\Psi_{\sigma,\tau}}L(\pi_0(x)\Omega,\tilde{I_0})\pi_0(\sigma(z)y)\Omega\xmapsto{\Psi_{\rho,\sigma\tau}}\pi_0(\rho(\sigma(z)y)x)\Omega.
\]
The compositions indeed agree. To argue naturality of $\Psi$, let $\rho_1,\rho_2\in \Aut(\A)-\Loc_{I_0}(\A)$ be $\varphi$-localized, $\sigma_1,\sigma_2\in \Aut(\A)-\Loc_{I_0}(\A)$ be $\mu$-localized and $s\in \Hom(\rho_1,\rho_2)$, $t\in \Hom(\sigma_1,\sigma_2).$ Then, we need, for all $x,y\in \A(I_0),$ that the following two compositions agree:
\[
L(\pi_0(x)\Omega,\tilde{I_0})\pi_0(y)\Omega\xmapsto{\pi_0(s)\boxtimes\pi_0(t)}L(\pi_0(sx)\Omega,\tilde{I_0})\pi_0(ty)\Omega\xmapsto{\Psi_{\rho_2,\sigma_2}}\pi_0(\rho_2(ty)sx)\Omega
\]
and
\[
L(\pi_0(x)\Omega,\tilde{I_0})\pi_0(y)\Omega\xmapsto{\Psi_{\rho_1,\sigma_1}}\pi_0(\rho_1(y)x)\Omega\xmapsto{\pi_0(\rho_2(t)s)}\pi_0(\rho_2(t)s\rho_1(y)x)\Omega = \pi_0(\rho_2(t)\rho_2(y)sx)\Omega.
\]
Since both compositions indeed agree, we have proved that $(\mathfrak{E},\Psi)$ is a monoidal equivalence.    
\end{proof}

Let us argue the compatibility of $\mathfrak{E}$ with the action of $\Aut(\A)$. Let $\rho\in \Aut(\A)-\Loc_{I_0}(\A)$ be a $\varphi$-localized endomorphism of $\A_\infty$, and $\nu\in \Aut(\A)$ be an automorphism. Then, $T_\nu(H^\varphi_\rho)$ is the twisted $\A$-representation on $H_0$ given by, for $\tilde{I}\in \Jcal_\R$ and $x\in\A_\R(\tilde{I})$,
\[
\pi^{\nu\ast H_\rho}_{\tilde{I}}(x) = \pi_{0,I}(\rho\circ \varphi^{\epsilon(\tilde{I})}\circ \nu^{-1}x).
\]
On the other hand, $H^{\nu\varphi\nu^{-1}}_{\gamma_\nu(\rho)}:=\mathfrak{E}(\gamma_\nu(\rho))$ is the $\A$-representation on $H_0$ given by, for $\tilde{I}\in \Jcal_\R$ and $x\in\A_\R(\tilde{I})$,
\[
\pi^{\gamma_\nu(\rho)}_{\tilde{I}}(x) = \pi_{0,I}\big(\nu\circ\rho\circ\nu^{-1}\circ (\nu\circ \varphi\circ \nu^{-1})^{\epsilon(\tilde{I})}(x)\big) = \pi_{0,I}(\nu\circ \rho\circ \varphi^{\epsilon(\tilde{I})}\circ\nu^{-1}x).
\]
Hence, the unitary $V_{\nu}: H_0\to H_0$ provides an equivalence $T_\nu(\mathfrak{E}(\rho))\cong \mathfrak{E}(\gamma_\nu(\rho))$ which is clearly natural in $\rho$. Compatibility with $\Psi$ amounts to showing that the following two compositions agree, for all $x,y\in\A(I_0)$. On the one hand, we have
\[
L(\pi_0(x)\Omega,\tilde{I_0})\pi_0(y)\Omega\xmapsto{T_\nu(\Psi_{\rho,\sigma})}\pi_{0}(\rho(y)x)\Omega\xmapsto{V_\nu} V_\nu\circ \pi_{0}(\rho(y)x)\Omega = \pi_0\big(\nu(\rho(y)x)\big)\Omega,
\]
and on the other hand
\begin{align*}
L(\pi_0(x)\Omega,\tilde{I_0})\pi_0(y)\Omega\xmapsto{V_\nu\boxtimes V_\nu} L(V_\nu\circ \pi_0(x)&\Omega,\tilde{I_0})V_\nu\circ \pi_0(y)\Omega = L(\pi_0(\nu x)\Omega,\tilde{I_0})\pi_0(\nu y)\Omega\\ &\xmapsto{\Psi_{\gamma_\nu(\rho),\gamma_\nu(\sigma)}} \pi_0\big(\nu\circ\rho\circ\nu^{-1}\circ \nu(y) \nu(x)\big)\Omega.
\end{align*}
Since both outcomes agree, we obtain that $(\mathfrak{E},\Psi,V)$ is an equivalence of $\mathrm{W}^*$-tensor categories with an $\Aut(\A)$-action. It is only left to argue that this equivalence is compatible with the $\Aut(\A)$-crossed braiding. 

\begin{proposition}\label{prop: mathfrakEcompatiblebraiding}
    Fix $\varphi,\mu\in \Aut(\A)$, and let $\rho\in \Aut(\A)-\Loc_{I_0}(\A)$ be a $\varphi$-localized endomorphism and $\sigma\in \Aut(\A)-\Loc_{I_0}(\A)$ be a $\mu$-localized endomorphism. Then, the following diagram commutes,
\[\begin{tikzcd}
	{H^\varphi_\rho\boxtimes H^\mu_\sigma} &&&& {H^{\varphi\mu}_{\rho\sigma}} \\
	{T_\varphi(H^\mu_\sigma)\boxtimes H^\varphi_\rho} &&&& {H_{\gamma_\varphi(\sigma)\rho}^{\varphi\mu\varphi^{-1}\varphi}} \\
	&& {H^{\varphi\mu\varphi^{-1}}_{\gamma_\varphi(\sigma)}\boxtimes H^\varphi_\rho}
	\arrow["{\Psi_{\rho, \sigma}}", from=1-1, to=1-5]
	\arrow["{\mathbb{B}_{H^\varphi_\rho, H^\mu_\sigma}}"', from=1-1, to=2-1]
	\arrow["{\mathfrak{E}(c_{\rho, \sigma})}", from=1-5, to=2-5]
	\arrow["{V_\varphi\boxtimes \id}"', from=2-1, to=3-3]
	\arrow["{\Psi_{\gamma_\varphi(\sigma), \rho}}"', from=3-3, to=2-5]
\end{tikzcd}\]
\end{proposition}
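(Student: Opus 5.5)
Both composites are unitaries of twisted $\A$-representations, so it suffices to check that they agree on a total set of vectors. Write $c_{\rho,\sigma}=\gamma_\varphi(\sigma)(u^*)\gamma_\varphi(v^*)u\rho(v)$, where $u\in\Hom(\rho,\rho')$ and $v\in\Hom(\sigma,\sigma')$ are unitaries. Here $\rho'$ is $\varphi$-localized in $I'$ and $\sigma'$ is $\mu$-localized in $J'$, with $I'$ clockwise from $J'$ inside $I_0$, as in Müger's construction.

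First I reduce to the case $\rho=\rho'$ and $\sigma=\sigma'$. This uses naturality of $\B$, naturality of $\Psi$ (proved above), and the compatibility of $V_\varphi$ with $\gamma_\varphi$ on morphisms ($V_\varphi\pi_0(v)V_\varphi^*=\pi_0(\varphi v)$). These let the unitaries $u,v$ and their images $\gamma_\varphi(v^*)\otimes u^*$ be moved through both legs, so that both sides factor through the case of endomorphisms localized in $I'$ and $J'$. Since morphisms of $\Aut(\A)-\Loc_{I_0}(\A)$ lie in $\A(I_0)$, but the localization interval of $\rho'$ and $\sigma'$ differs from $I_0$, I will redo the definition of $\Psi$ with $\tilde{I_0}$ replaced by lifts $\tilde{I'}$ and $\tilde{J'}$. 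I will check, via path continuations inside $\tilde{I_0}$ (Proposition \ref{prop: JoinLs} and the definition of $L$), that this gives the same $\Psi$. In the reduced case $\rho\otimes\sigma=\gamma_\varphi(\sigma)\otimes\rho$ on the nose and $c_{\rho,\sigma}=1$, so I must show that $\Psi_{\gamma_\varphi(\sigma),\rho}\circ(V_\varphi\boxtimes\id)\circ\B=\Psi_{\rho,\sigma}$.

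For the reduced identity, take lifts $\tilde{I'},\tilde{J'}\subset\tilde{I_0}$ with $\tilde{J'}\subset\tilde{I'}^{c+}$. I will evaluate on vectors $L(\pi_0(x)\Omega,\tilde{I'})\pi_0(y)\Omega$ with $x\in\A(I')$ and $y\in\A(J')$. Since $\rho$ acts trivially on $\A(J')$ (it lies counter-clockwise before $I'$ relative to $\mathrm{p}$), $\pi_0(y)\Omega$ lies in $(H^\mu_\sigma)_-(\tilde{J'})$. Similarly, $\pi_0(x)\Omega\in(H^\varphi_\rho)_{\pm}(\tilde{I'})$, and $Z^{\pm}$ is given by $\pi_0(x)$ up to $V_\varphi$, via \eqref{eq: RelationZ+Z-}. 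Equation \eqref{eq: Lbraiding2} then gives
\[
\B\, L(\pi_0(x)\Omega,\tilde{I'})\pi_0(y)\Omega = L(\Gamma_\varphi\pi_0(y)\Omega,\tilde{J'}-1)\,\pi_0(x)\Omega ,
\]
possibly with a shift of the interval by one, which I will track through $\epsilon$. Applying $V_\varphi\boxtimes\id$ turns $\Gamma_\varphi\pi_0(y)\Omega$ into $\pi_0(\varphi y)\Omega$. Then $\Psi_{\gamma_\varphi(\sigma),\rho}$, extended to the interval $\tilde{J'}$ by the argument above, produces $\pi_0(\rho(x)\,\varphi(y))\Omega$ or a similar expression. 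Meanwhile $\Psi_{\rho,\sigma}$ gives $\pi_0(\rho(y)x)\Omega=\pi_0(yx)\Omega$. These two match after using that $\rho$ restricts to $\varphi$ or to the identity on the relevant intervals, together with locality $xy=yx$ for disjoint intervals.

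I expect the main obstacle to be the bookkeeping of lifts and twists in the middle step. This means deciding exactly which of $\tilde{J'}$, $\tilde{J'}-1$ appears after braiding, and checking that $\Psi$ computed over a lift outside $\tilde{I_0}$ agrees with the original. The $\epsilon$-shifts there produce factors of $\varphi$ that must cancel against $V_\varphi$ and against $\gamma_\varphi(\sigma)=\varphi\sigma\varphi^{-1}$. I would first settle this by writing $\Psi$ in terms of $L$ over arbitrary $\tilde{I}\subset\tilde{I_0}$, using the $Z^+$ description, and then apply Proposition \ref{prop: SwapWithAction} directly rather than \eqref{eq: Lbraiding2}, if the latter's orientation conventions cause trouble.
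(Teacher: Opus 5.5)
The first step is fine. Your reduction to $\rho,\sigma$ localized in $I',J'$ with $c_{\rho,\sigma}=1$ uses naturality of $\Psi$ and $\B$ together with $V_\varphi\pi_0(v)V_\varphi^*=\pi_0(\varphi v)$. This is the paper's reduction (``since $(\mathfrak{E},\Psi)$ is monoidal''). Since $\rho',\sigma'$ are themselves objects of $\Aut(\A)-\Loc_{I_0}(\A)$, $\Psi_{\rho',\sigma'}$ needs no redefinition.

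\textbf{The gap.} It lies in the step you call bookkeeping. With $x\in\A(I')$, $y\in\A(J')$ and $\tilde J'\subset\tilde I'^{c+}$, \eqref{eq: Lbraiding2} necessarily returns $L(\zeta,\tilde J'-1)\eta$, where $\zeta=\pi_0(\varphi y)\Omega$ (after $V_\varphi\boxtimes\id$) and $\eta=\pi_0(x)\Omega$. The interval $\tilde J'-1$ is not contained in $\tilde I_0$. Equivalently, the braided vector is the $R$-vector $R(\eta,\tilde I')\zeta$, whereas $\Psi$ is only described on $L$-vectors over $\tilde I_0$.

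This cannot be fixed by path continuations inside $\tilde I_0$ or by tracking $\epsilon$. Passing from the lift $\tilde J'-1$ to $\tilde J'$ is a path continuation once around the circle. The computation in the proof of Theorem \ref{Thm: RepAutAIsCrossedBalanced} (where $\B_{T_\varphi(K),H}\circ\B_{H,K}$ is $\tau^\bullet$ followed by the $\Gamma$'s) shows that this inserts a double crossed braiding. Concretely, $Z^+(\zeta,\tilde J'-1)=Z^+(\zeta,\tilde J')V_{\varphi\mu\varphi^{-1}}^{-1}$, so the two fusion inner products differ, and in general $L(\zeta,\tilde J'-1)\eta\neq L(\zeta,\tilde J')\eta$. ``Extending $\Psi$ to $\tilde J'$'' therefore evaluates $\Psi$ on the wrong vector.

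Computing that monodromy honestly means braiding $\gamma_\varphi(\sigma)$, localized in $J'$, past $\rho$, localized in $I'$. That is the order in which Müger's braiding is not the identity, so it amounts to the statement being proved for another pair. Your fallback of applying Proposition \ref{prop: SwapWithAction} directly does not help either: it produces the same vector $R(\pi_0(x)\Omega,\tilde I')\,\Gamma_\varphi\pi_0(y)\Omega$.

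\textbf{How the paper handles this step.} The paper's proof is arranged to avoid exactly this. It takes $x\in\A_\R(\tilde J_1)$ and $y\in\A_\R(\tilde J_2)$ in auxiliary subintervals of $\tilde I_0$ with $\tilde J_2\subset\tilde I_1^{c+}$ and $\tilde J_1\subset\tilde J_2^{c+}\cap\tilde I_2^{c-}$. Thus the $\rho$-vector sits to the right of the $\sigma$-vector. The braided vector is then again of the form $L(\cdot,\tilde I_0)$, and the defining formula of $\Psi_{\gamma_\varphi(\sigma),\rho}$ applies as written. Localization enters only through $\rho(y)=\varphi(y)$ and $\gamma_\varphi(\sigma)(x)=x$, and with locality both legs give $\pi_0(\varphi(y)x)\Omega$.

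\textbf{Reversed localization directions.} Separately, your localization claims are backwards:
\begin{itemize}
\item $J'$ lies in $(\partial_+I',\mathrm{p})$, so $\rho|_{\A(J')}=\varphi$, not the identity. The top leg gives $\pi_0(\varphi(y)x)\Omega$, not $\pi_0(yx)\Omega$.
\item On the bottom leg it is $\gamma_\varphi(\sigma)$, not $\rho$, that is applied to $x$. It acts trivially because $I'$ is clockwise of $J'$.
\item $\pi_0(y)\Omega\in(H^\mu_\sigma)_-(\tilde J')$ holds because $\sigma$ is localized in $J'$, not because of any property of $\rho$.
\end{itemize}
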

\begin{proof}
    Let $I_1,I_2\in \Jcal$ be disjoint intervals contained in $I_0$ and such that $(\partial_-I_1, \partial_+I_2)$ is covered by $I_0$. By transportability, $\rho$ and $\sigma$ are unitarily isomorphic in $\Aut(\A)-\Loc_{I_0}(\A)$ to endomorphisms $\rho'$ and $\sigma'$ localized in $I_1$ and $I_2$ respectively. Since $(\mathfrak{E},\Psi)$ is monoidal, it suffices to show the commutativity of the diagram above when $\rho$ is localized in $I_1$ and $\sigma$ is localized in $I_2$. Then, $c_{\rho,\sigma} = \id$. 
    
    Let $\tilde{I_1},\tilde{I_2}$ be the lifts of $I_1,I_2$ to $\R$ lying inside of $\tilde{I_0}.$ We let $\tilde{J_1},\tilde{J_2}\in \Jcal_\R$ subintervals of $\tilde{I_0}$ such that $\tilde{J_2}\subset \tilde{I_1}^{c+}$ and $\tilde{J_1}\subset \tilde{J_2}^{c+}\cap \tilde{I_2}^{c-}$. We consider the dense subset of $H^\varphi_\rho\boxtimes H^\mu_\sigma$ spanned by vectors of the form $L(\pi_0(x)\Omega,\tilde{I_0})\pi_0(y)\Omega$ for $x\in\A_\R(\tilde{J_1})$ and $y\in\A_\R(\tilde{J_2}).$ Applied to such a vector $L(\pi_0(x)\Omega,\tilde{I_0})\pi_0(y)\Omega$, the top-right leg of the diagram reads
    \[
L(\pi_0(x)\Omega,\tilde{I_0})\pi_0(y)\Omega\mapsto \pi_0(\rho(y)x)\Omega = \pi_0(\varphi(y)x)\Omega.
    \]
    Under the bottom-left leg of the diagram, the same vector gets mapped to
    \begin{align*}
L(\pi_0(x)\Omega,\tilde{I_0})\pi_0(y)\Omega&\xmapsto{\mathbb{B}_{H^\varphi_\rho, H^\mu_\sigma}} \mathbb{B}_{H^\varphi_\rho, H^\mu_\sigma} L(\pi_0(x)\Omega,\tilde{I_0})\pi_0(y)\Omega = L(\Gamma_\varphi\pi_0(y)\Omega,\tilde{I_0})\pi_0(x)\Omega\\ &\xmapsto{V_\varphi\boxtimes \id} L(V_\varphi\pi_0(y)\Omega,\tilde{I_0})\pi_0(x)\Omega  = L(\pi_0(\varphi y)\Omega,\tilde{I_0})\pi_0(x)\Omega\\ &\xmapsto{\Psi_{\gamma_\varphi(\sigma), \rho}} \pi_0(\varphi\sigma\varphi^{-1}(x)\varphi(y))\Omega = \pi_0(x\varphi(y))\Omega = \pi_0(\varphi(y)x)\Omega.
    \end{align*}
The claim follows.
\end{proof}

We have argued the following result.

\begin{theorem}\label{Thm: RepIsMugerForG}
    Let $I_0\in\Jcal_\mathrm{p}$ be an interval. Then, the triple $(\mathfrak{E}, \Psi, V)$ provides an equivalence of $\Aut(\A)$-crossed braided $\mathrm{W}^*$-tensor categories between $\Aut(\A)-\Loc_{I_0}(\A)$ and $\Rep^{\Aut(\A)}(\A)$. If $G$ is a discrete group acting faithfully on $\A$, the triple $(\mathfrak{E}, \Psi, V)$ induces an equivalence of $G$-crossed braided $\mathrm{W}^*$-tensor categories between $G-\Loc_{I_0}(\A)$ and $\Rep^G(\A)$.
\end{theorem}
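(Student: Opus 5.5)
The plan is to assemble the result from the pieces already proved in this section. Proposition \ref{prop: mathfrakEEquivalenceofCats} shows that $\mathfrak{E}$ is an equivalence of $\mathrm{W}^*$-categories. We have also shown that $(\mathfrak{E},\Psi)$ is a $\mathrm{W}^*$-tensor equivalence. Finally, $V_\nu$ supplies unitary natural isomorphisms $\Phi_\nu(\rho)\colon T_\nu(\mathfrak{E}(\rho))\to \mathfrak{E}(\gamma_\nu(\rho))$, and these are compatible with $\Psi$ (the hexagon-type square already checked). What remains, according to the definition of a $G$-crossed braided $\mathrm{W}^*$-tensor functor in Section \ref{Sec: G-X-balanced}, is:
\begin{enumerate}
\item $\mathfrak{E}$ preserves the grading;
\item the $\Phi$'s are compatible with the composition structure $\eta_{g,h}$ of the actions;
\item the braiding compatibility diagram commutes.
\end{enumerate}

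Item (1) is immediate: a $\varphi$-localized $\rho$ is sent to $H^\varphi_\rho\in\Rep^\varphi(\A)$. For item (2), both actions are strict. On $\Rep^{\Aut(\A)}(\A)$ we have $T_\mu T_\nu=T_{\mu\nu}$ on the nose, and on localized endomorphisms $\gamma_\mu\gamma_\nu=\gamma_{\mu\nu}$. So the required diagram reduces to the identity $V_\mu V_\nu=V_{\mu\nu}$ as maps $H_0\to H_0$. I would check this up to a phase: both sides implement $\mu\nu$ on every $\A(I)$ and fix $\Omega$, so by irreducibility of $\A(S^1)$ on $H_0$ (cyclicity of $\Omega$ together with uniqueness of the vacuum) they agree up to a scalar, and the scalar is $1$ because both fix $\Omega$. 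In particular, the choice of implementing unitary is canonical. The analogous compatibility of $\Phi$ with the tensor structure $s_\nu$ is the $\Psi$-square already verified.

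Item (3) is precisely Proposition \ref{prop: mathfrakEcompatiblebraiding}. There, $\Phi_\varphi(\sigma)\otimes\id$ appears as $V_\varphi\boxtimes\id$ on $T_\varphi(H^\mu_\sigma)\boxtimes H^\varphi_\rho$, and the direction of the arrows matches the definition after inverting $\mathfrak{E}(c_{\rho,\sigma})$. One should also note that the general (non-simple) objects of $\Aut(\A)-\Loc_{I_0}(\A)$ are countable orthogonal direct sums. All the structure maps are natural and additive, and $\mathfrak{E}$ is a normal $\mathrm{W}^*$-functor, so every diagram extends from single localized endomorphisms to direct sums.

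For the statement about $G$: since $\Phi\colon G\to\Aut(\A)$ is injective, $G-\Loc_{I_0}(\A)$ and $\Rep^G(\A)$ are the full subcategories supported on the image of $G$. The functor $\mathfrak{E}$ maps one into the other and is still essentially surjective onto it, because the proof of Proposition \ref{prop: mathfrakEEquivalenceofCats} preserves the twist $\varphi$. All structure maps restrict, and pulling back along $\Phi$ gives the $G$-crossed braided equivalence. The main obstacle is item (2), namely making sure the implementing unitaries $V_\nu$ are multiplicative rather than merely projectively so. If the phase argument above fails for some reason, I would instead take $\Phi_\nu$ to be defined through the chosen $V_\nu$ and absorb any cocycle into the coherence data.
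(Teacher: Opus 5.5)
Your proposal is correct and takes the same route as the paper, whose proof just cites Propositions~\ref{prop: mathfrakEEquivalenceofCats}--\ref{prop: mathfrakEcompatiblebraiding} and then restricts along $G\to\Aut(\A)$. Your explicit check that $V_\mu V_\nu=V_{\mu\nu}$ (both implement $\mu\nu$ and fix $\Omega$, and $\A(S^1)'=\mathbb{C}$) supplies a coherence the paper leaves implicit, so the cocycle fallback is never needed; likewise Proposition~\ref{prop: mathfrakEcompatiblebraiding} already has exactly the shape of the braiding axiom, so $\mathfrak{E}(c_{\rho,\sigma})$ does not need to be inverted.
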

\begin{proof}
   The first claim follows from Propositions \ref{prop: mathfrakEEquivalenceofCats}-\ref{prop: mathfrakEcompatiblebraiding}. The second claim follows analogously restricting all the relevant functors along the homomorphism $G\to \Aut(\A)$ describing the action of $G$ on $\A$.
\end{proof}

In the case when the conformal net $\A$ is rational, the category $\Loc_{I_0}(\A):= \{e\}-\Loc_{I_0}(\A)$ is a unitary fusion category. In that case, every object $\rho\in \Loc_{I_0}(\A)$ comes with a dual localized endomorphism $\bar{\rho}\in \Loc_{I_0}(\A)$ and evaluation and coevaluation morphisms
\[
\text{ev}_{\rho}: \bar{\rho}\otimes \rho \to \id_{\A_\infty}\hspace{2cm} \text{coev}_{\rho}: \id_{\A_\infty}\to \rho\otimes \bar{\rho}
\]
satisfying the snake identities. By unitarity, $\Loc_{I_0}(\A)$ also comes equipped with a preferred pivotal structure, that is a family of compatible isomorphisms $\pi_\rho: \rho\to \bar{\bar{\rho}}$ indexed by $\rho\in \Loc_{I_0}(\A)$. Then, one can define a balance $\theta'$ on $\Loc_{I_0}(\A)$ by the usual picture of a kink, meaning
\[
\theta'_\rho: \rho\cong \rho\otimes\id_{\A_\infty}\xrightarrow{\id\otimes\text{coev}_{\rho}}\rho\otimes\rho\otimes\bar{\rho}\xrightarrow{c_{\rho,\rho}\otimes \id} \rho\otimes\rho\otimes\bar{\rho} \xrightarrow{\id\otimes\pi_{\rho}\otimes \id}\rho\otimes\bar{\bar{\rho}}\otimes\bar{\rho} \xrightarrow{\id\otimes \text{ev}_{\bar{\rho}}} \rho\otimes \id_{\A_\infty}\cong \rho.
\]
It is therefore natural to ask if, in this case, the equivalence in Theorem \ref{Thm: RepIsMugerForG} is compatible with the balances on each side. This is essentially the conformal Spin and Statistics Theorem in \cite{MR1410566}. There, the braiding used is the reverse of the braiding we have defined in Section  \ref{Sec: Muger}, as is clear from the way endomorphisms are localized in the definition of the statistics operator \cite[p. 8]{MR1410566}. Hence, transporting their result to our setting requires us to also take the reverse balance, that is, its inverse.

\begin{theorem}
    Let $\A$ be a rational conformal net, and denote by $\theta'$ the canonical balance on the unitary fusion category $\Loc_{I_0}(\A)$. Then, the equivalence of braided $\mathrm{W}^*$-tensor categories $\mathfrak{E}: \Loc_{I_0}(\A)\cong \Rep(\A)$ in Theorem \ref{Thm: RepIsMugerForG} extends to an equivalence of balanced $\mathrm{W}^*$-tensor categories.
\end{theorem}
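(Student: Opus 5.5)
The balance $\theta'$ on $\Loc_{I_0}(\A)$ and the balance $\theta=e^{-2\pi i L_0}$ on $\Rep(\A)$ are both natural families of unitaries. Since $\mathfrak{E}$ is already an equivalence of braided $\mathrm{W}^*$-tensor categories (Theorem \ref{Thm: RepIsMugerForG} with $G$ trivial), I only need the single identity
\[
\mathfrak{E}(\theta'_\rho)=\theta_{\mathfrak{E}(\rho)}\quad\text{(or its inverse, as explained below)},
\]
checked on each object $\rho$ of $\Loc_{I_0}(\A)$. Here $\mathfrak{E}(\rho)=H_\rho=(H_0,\pi_0\circ\rho)$, and $\theta'_\rho\in\End(\rho)\subset\A(I_0)$. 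By naturality of both sides and additivity, I may assume $\rho$ is irreducible. In the rational case every object is a finite direct sum of irreducibles, so this loses nothing. For irreducible $\rho$, both sides are scalars.

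First, I would identify $\theta_{H_\rho}$. By Theorem \ref{Thm: TwistedRepIsConformal}, $H_\rho$ carries a unique positive-energy action of $\Diff^+_\A(S^1)$, and hence a unique action $U_\rho$ of $\widetilde{\Mob}$ with $U_\rho(g)=\pi^\rho_I(U(g))$ for $g$ localized in $I$. This is precisely the covariance cocycle construction of \cite{MR1410566}: there one has $U_\rho(g)=z_\rho(g)U(g)$ with $z_\rho(g)\in\A_\infty$ a unitary cocycle, and the two are related by uniqueness of the implementing representation. Therefore $\theta_{H_\rho}=U_\rho(\widetilde{\exp}(-2\pi iL_0))$ is the univalence $e^{-2\pi i L_0^\rho}$ in the sense of Guido--Longo.

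Second, I would identify $\theta'_\rho$ with the statistics phase. Rationality gives finite index, conjugates and standard solutions. With the preferred (spherical) pivotal structure, the kink picture computes $\theta'_\rho=d(\rho)\,\phi_\rho(c_{\rho,\rho})$, where $\phi_\rho$ is the left inverse. This is the statistics phase $\omega_\rho$ attached to the braiding $c$.

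Third, I would invoke the conformal spin--statistics theorem \cite{MR1410566}. It asserts that the statistics phase computed with their statistics operator equals $e^{2\pi i L_0^\rho}$. Their braiding is the reverse of $c$, so their statistics phase is $\overline{\omega_\rho}$. This gives
\[
\overline{\omega_\rho}=e^{2\pi i L_0^\rho},\qquad\text{hence}\qquad \omega_\rho=e^{-2\pi iL_0^\rho}=\theta_{H_\rho},
\]
and the two balances agree under $\mathfrak{E}$.

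The main obstacle is the bookkeeping of conventions, which enter in three places. The first is the orientation and reversal of the braiding, that is, which of the two localizations $I',J'$ counts as ``left''. The second is the sign of $L_0$ in $e^{\pm 2\pi i L_0}$. The third is whether the pivotal structure used in $\theta'$ matches the standard-solution normalisation in \cite{MR1410566}. To settle these, I would check the claimed identity on a simple test case, for instance an irreducible $\rho$ of known conformal weight $h$, verifying $\theta'_\rho=e^{-2\pi i h}$ directly from the definition of $c_{\rho,\sigma}$ with $I'$ clockwise of $J'$. This fixes all signs consistently. A further technical point is that the unitary implementing $\mathfrak{E}(\rho)\cong H_\rho$ must intertwine the Möbius actions. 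This holds by uniqueness in Theorem \ref{Thm: TwistedRepIsConformal}, since both actions are expressed locally through $\pi^\rho_I(U(g))$.
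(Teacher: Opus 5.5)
Your proposal is correct and follows the paper's own argument. The paper also reduces the claim to the conformal spin--statistics theorem of \cite{MR1410566} (Thm.~3.13 there): it identifies $\theta_{\mathfrak{E}(\rho)}=e^{-2\pi i L_0}$ with the inverse of the Guido--Longo univalence $U_\rho(2\pi)$ and $\theta'_\rho$ with the statistics phase. It handles the fact that their statistics operator uses the reverse of $c$ by passing to the inverse balance.

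The only difference is in how the convention is fixed. The paper settles it by directly comparing the localizations in Guido--Longo's definition of the statistics operator, not by a test case. A test case would be weaker here: it requires computing a self-braiding from scratch, and it only fixes the sign if the twist is non-real.

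One small simplification: $\mathfrak{E}(\rho)$ is by definition the representation $(H_0,\pi^\rho)$ itself. So there is no intertwining unitary whose compatibility with the Möbius actions needs checking.
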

\begin{proof}
    By the same arguments as \cite[Thm. 3.13]{MR1410566} for our conventions, the balance $\theta'$ on an endomorphism $\rho\in \Loc_{I_0}(\A)$ is given by the action of $e^{-2\pi i L_0}$ on the representation $\mathfrak{E}(\rho)$ of $\A$ induced by $\rho$. The claim follows.
\end{proof}

\appendix
\section{Bi-involutive structures}

In this Appendix, we construct an involutive $\mathrm{W}^*$-structure on the $\mathrm{W}^*$-tensor category $\Rep^{\Aut(\A)}(\A)$, following \cite{bicommutantfromnets}. We note that, in \cite{henriques2024completewcategories}, what we refer to as involutive $\mathrm{W}^*$-tensor categories are called bi-involutive $\mathrm{W}^*$-tensor categories.

Informally, an involutive structure on a $\mathrm{W}^*$-tensor category $\mathcal{C}$ consists of an anti-linear, anti-tensor functor $\overline{\,\cdot\,}:\mathcal{C}\to \mathcal{C}$ that squares to the identity. Given a discrete group $G$, in Definition \ref{def: G-X-braidedInvolutive} we upgrade the notion of a $G$-crossed braiding on a $\mathrm{W}^*$-tensor category $\Ccal$ from Definition \ref{def: G-X-braidedWTensorCat} to the situation where $\mathcal{C}$ is further equipped with an involutive structure. This provides the notion of a $G$-crossed braided involutive $\mathrm{W}^*$-tensor category. We also introduce, in this setting, the notion of a $G$-crossed balance, which is required to be compatible with the involutive structure in the sense that a certain ribbon-like condition is satisfied, see Definition \ref{def: Involutive-X-Balance} and Remark \ref{rk: Ribbon}. A $G$-crossed balanced involutive $\mathrm{W}^*$-tensor category is a $G$-crossed braided involutive $\mathrm{W}^*$-tensor category with a $G$-crossed balance.

The involution functor on $\Rep^{\Aut(\A)}(\A)$ is introduced in Definition \ref{def: involutionRepA}, and the necessary structure data are presented in Lemmas \ref{lemm: InvolutiveDataUnit} and \ref{lemm: InvolutiveDataSquare}, and in Proposition \ref{prop: 2CellInvolution}. In Theorem \ref{thm: Involutive-AutX-X-Balanced} we show that this data indeed defines a $G$-crossed balanced involutive $\mathrm{W}^*$-structure on $\Rep^{\Aut(\A)}(\A)$. The proof of the ribbon-like condition between the balance and the involution on $\Rep^{\Aut(\A)}(\A)$ comes from a good understanding of the conformal structure of the image of a representation $H^\varphi\in \Rep^\varphi(\A)$ under $\overline{\,\cdot\,}$ in terms of the conformal structure of $H^\varphi$. This is done in Proposition \ref{prop: ConformalActionOnConjugate}. 

Let us first recall the notion of a bi-involutive tensor category. Let $\mathcal{C}$ be a tensor category with unit $\1\in \mathcal{C}$. An \emph{involutive tensor structure} on $\mathcal{C}$ is an anti-linear functor $\overline{\,\cdot\,}:\mathcal{C}\to \mathcal{C}$ which is anti-tensor in the sense that it comes equipped with an isomorphism $r: \1\to \overline{\1}$ and natural isomorphisms
\[
\nu_{X,Y}: \overline{X}\otimes \overline{Y}\to \overline{Y\otimes X}
\]
satisfying the obvious coherence between the associator of $\mathcal{C}$ and its image under $\overline{\,\cdot\,}$ \cite{MR2861112}. We further require the data of a trivialization of the square of $\overline{\,\cdot\,}$, meaning a natural family of isomorphisms $\varphi_X: X\to \overline{\overline{X}}$ such that $\overline{\varphi_X} = \varphi_{\overline{X}}$, $\varphi_{\1} = \overline{r}\circ r$ and $\varphi_{X\otimes Y} = \overline{\nu_{X\otimes Y}}\circ \nu_{\overline{X},\overline{Y}}\circ (\varphi_X\otimes\varphi_Y)$. We call $\overline{X}$ the conjugate of $X$.

\begin{definition}(\cite[Def. 7.1]{henriques2024completewcategories})\label{def: involutiveW*Cat}
    An \emph{involutive $\mathrm{W}^*$-tensor category} is a $\mathrm{W}^*$-tensor category equipped with an involutive structure such that the functor $\overline{\,\cdot\,}$ is a functor of $\mathrm{W}^*$-categories and the structure morphisms $\nu, r, \varphi$ are unitary.
\end{definition}

An \emph{involutive $\mathrm{W}^*$-tensor functor} between involutive $\mathrm{W}^*$-tensor categories $\mathcal{C}$ and $\mathcal{D}$ is a tensor functor $(F,\Psi)$ of $\mathrm{W}^*$-tensor categories with a natural family of unitary isomorphisms $\chi_X:\overline{F(X)} \to   F(\overline{X})$ such that the following diagrams commute
\[\begin{tikzcd}
	{F(X)} && {F(\overline{\overline{X}})} && {\1_{\mathcal{D}}} & {\overline{\1_{\mathcal{D}}}} & {\overline{F(\1_{\mathcal{}C}})} \\
	{\overline{\overline{F(X)}}} && {\overline{F(\overline{X})}} && {F(\1_{\Ccal})} && {F(\overline{\1_{\Ccal}})}
	\arrow["{F(\varphi_X)}", from=1-1, to=1-3]
	\arrow["{\varphi_{F(X)}}"', from=1-1, to=2-1]
	\arrow["r", from=1-5, to=1-6]
	\arrow["\cong"', from=1-5, to=2-5]
	\arrow["\cong", from=1-6, to=1-7]
	\arrow["{\chi_{1_{\Ccal}}}", from=1-7, to=2-7]
	\arrow["{\overline{\chi_X}}"', from=2-1, to=2-3]
	\arrow["{\chi_{\overline{X}}}"', from=2-3, to=1-3]
	\arrow["{F(r)}"', from=2-5, to=2-7]
\end{tikzcd}\]
\[\begin{tikzcd}
	{\overline{F(X)}\otimes \overline{F(Y)}} && {F(\overline{X})\otimes F(\overline{Y})} \\
	{\overline{F(Y)\otimes F(X)}} && {F(\overline{X}\otimes\overline{Y})} \\
	{\overline{F(Y\otimes X)}} && {F(\overline{Y\otimes X})}
	\arrow["{\chi_{X}\otimes \chi_Y}", from=1-1, to=1-3]
	\arrow["{\nu_{F(X), F(Y)}}"', from=1-1, to=2-1]
	\arrow["{\Psi_{\overline{X}, \overline{Y}}}", from=1-3, to=2-3]
	\arrow["{\overline{\Psi_{Y, X}}}"', from=2-1, to=3-1]
	\arrow["{F(\nu_{X,Y})}", from=2-3, to=3-3]
	\arrow["{\chi_{Y\otimes X}}"', from=3-1, to=3-3]
\end{tikzcd}\]
for all $X,Y\in \Ccal$. A natural transformation between two such involutive $\mathrm{W}^*$-tensor functors $F$ and $G$ consists of a natural transformation $\eta:F\to G$ such that the following diagram commutes for all $X\in \mathcal{C}$
\begin{equation}\label{eq: InvolutiveNatTrans}\begin{tikzcd}
	{\overline{F(X)}} && {\overline{G(X)}} \\
	{F(\overline{X})} && {G(\overline{X})}.
	\arrow["{\overline{\eta_X}}", from=1-1, to=1-3]
	\arrow["{\chi^F_{X}}"', from=1-1, to=2-1]
	\arrow["{\chi^G_{X}}", from=1-3, to=2-3]
	\arrow["{\eta_{\overline{X}}}", from=2-1, to=2-3]
\end{tikzcd}\end{equation}
Let us denote by $\underline{\text{Aut}}_\otimes^{\overline{\,\cdot\,}}(\mathcal{C})$ the tensor category of involutive $\mathrm{W}^*$-tensor automorphisms of $\mathcal{C}$ and unitary involutive $\mathrm{W}^*$-natural transformations. Given a discrete group $G$, we can upgrade the notion of a $G$-crossed braided $\mathrm{W}^*$-tensor category to that of a $G$-crossed braided involutive $\mathrm{W}^*$-tensor category

\begin{definition}\label{def: G-X-braidedInvolutive}
    Let $G$ be a discrete group and $\mathcal{C}$ be an involutive $\mathrm{W}^*$-tensor category. A \emph{$G$-crossed braided structure} on $\mathcal{C}$ consists of a $G$-crossed braided structure on the underlying $\mathrm{W}^*$-category $\mathcal{C}$ with the extra data of a collection of unitary isomorphisms
    \[
    \chi_g(X): \overline{T_g(X)}\to T_g(\overline{X})
    \]
    for every $g\in G$ and $X\in \mathcal{C}$, natural in $X$. We require that
\begin{enumerate}
    \item the functor $\overline{\,\cdot\,}:\mathcal{C}\to \mathcal{C}$ restricts to equivalences $\mathcal{C}_g\to\mathcal{C}_{{g}^{-1}}$ for all $g\in G$,
    \item for every $g$, the family $\chi_g$ endows $T_g$ with the structure of an involutive $\mathrm{W}^*$-tensor functor,
    \item for every $g\in G$, $X\in \mathcal{C}_g$, and $Y\in \mathcal{C}$, the following diagram commutes,
\begin{equation}\label{eq: CompatibilityBraidingInvolution}\begin{tikzcd}
	{\overline{X\otimes Y}} && {\overline{T_g(Y)\otimes X}} & {\overline{X}\otimes \overline{T_g(Y)}} \\
	{\overline{Y}\otimes \overline{X}} & {T_e(\overline{Y})\otimes \overline{X}} & {T_{g^{-1}}\circ T_g(\overline{Y})\otimes \overline{X}} & {\overline{X}\otimes T_g(\overline{Y})}
	\arrow["{\overline{\beta_{X, Y}}}", from=1-1, to=1-3]
	\arrow["{\nu_{X, Y}}"', from=1-1, to=2-1]
	\arrow["{\nu_{T_g(Y), X}}", from=1-3, to=1-4]
	\arrow["{\id\otimes \chi_g(Y)}", from=1-4, to=2-4]
	\arrow["\cong"', from=2-1, to=2-2]
	\arrow["\cong"', from=2-2, to=2-3]
	\arrow["{\beta_{\overline{X}, T_g(\overline{Y})}}", from=2-4, to=2-3]
\end{tikzcd}\end{equation}
    
    \item for every $g,h\in G$ and $X\in \mathcal{C}$, the following diagram commutes,
\[\begin{tikzcd}
	{\overline{T_gT_h(X)}} && {\overline{T_{gh}(X)}} \\
	{T_g(\overline{T_h(X)})} && {T_{gh}(\overline{X})} \\
	& {T_gT_h(\overline{X})}.
	\arrow["{\overline{n_{g,h}(X)}}", from=1-1, to=1-3]
	\arrow["{\chi_g(T_h(X))}"', from=1-1, to=2-1]
	\arrow["{\chi_{gh}(X)}", from=1-3, to=2-3]
	\arrow["{T_g(\chi_h(X))}"', from=2-1, to=3-2]
	\arrow["{n_{g,h}(\overline{X})}"', from=3-2, to=2-3]
\end{tikzcd}\]
\end{enumerate}
We say that an involutive $\mathrm{W}^*$-tensor category with a $G$-crossed braided structure is a \emph{$G$-crossed braided involutive $\mathrm{W}^*$-tensor category}.
\end{definition}

Given a $G$-crossed braided involutive $\mathrm{W}^*$-tensor category, we can define the notion of a $G$-crossed balance compatible with the involutive structure.

\begin{definition}\label{def: Involutive-X-Balance}
    Let $G$ be a discrete group and $\mathcal{C}$ a $G$-crossed braided involutive $\mathrm{W}^*$-tensor category in the sense of Definition \ref{def: G-X-braidedInvolutive}. A \emph{$G$-crossed balance} on $\mathcal{C}$ consists of a $G$-crossed balance $\theta$ on the underlying $G$-crossed braided $\mathrm{W}^*$-tensor category of $\mathcal{C}$ such that for all $g\in G$ and $X\in\mathcal{C}_g$, the following diagram commutes
\begin{equation}\begin{tikzcd}\label{eq: RibbonCondition}
	{\overline{T_g(X)}} && {\overline{X}} \\
	&& {T_e(\overline{X})} \\
	{T_g(\overline{X})} && {T_{g^{-1}}\circ T_g(\overline{X})}.
	\arrow["{\overline{\theta_X^*}}", from=1-1, to=1-3]
	\arrow["{\chi_g(X)}"', from=1-1, to=3-1]
	\arrow["\cong"', from=2-3, to=1-3]
	\arrow["{\theta_{T_g(\overline{X})}}"', from=3-1, to=3-3]
	\arrow["{n_{g^{-1}, g}(\overline{X})}"', from=3-3, to=2-3]
\end{tikzcd}\end{equation}
    A $G$-crossed braided involutive $\mathrm{W}^*$-tensor category equipped with a $G$-crossed balance is a \emph{$G$-crossed balanced involutive $\mathrm{W}^*$-tensor category}.
\end{definition}

\begin{remark}\label{rk: Ribbon}
    Let $\mathcal{C}$ be a rigid semisimple monoidal category with a $G$-crossed braided structure on its underlying monoidal category. For an object $X\in \mathcal{C}$, we denote by $X^\vee\in\Ccal$ its right dual, and write $\text{ev}_X: X\otimes X^\vee\to\1$ and $\text{coev}_{X}: \1\to X^\vee\otimes X$ for its evaluation and coevaluation maps. Given a morphism $f: X\to Y$ we write $f^t:=(\id_{X^\vee}\otimes \text{ev}_Y)\circ (\id_{X^\vee}\otimes f\otimes\id_{Y^\vee})\circ (\text{coev}_{X}\otimes \id_{Y^\vee})\in \Hom_{\Ccal}(Y^\vee, X^\vee)$. A $G$-crossed ribbon $\theta$ on $\mathcal{C}$ consists of a $G$-crossed balance which satisfies the ribbon condition
\begin{equation}\begin{tikzcd}\label{eq: RigidRibbon}
	{T_g(X)^\vee} && {X^\vee} \\
	&& {T_e(X^\vee)} \\
	{T_g(X^\vee)} && {T_{g^{-1}}\circ T_g(X^\vee)}
	\arrow["{(\theta_X)^t}", from=1-1, to=1-3]
	\arrow["\cong"', from=1-1, to=3-1]
	\arrow["\cong"', from=2-3, to=1-3]
	\arrow["{\theta_{T_g(X^\vee)}}"', from=3-1, to=3-3]
	\arrow["{n_{g,g^{-1}}(X^\vee)}"', from=3-3, to=2-3]
\end{tikzcd}\end{equation}
for all $g\in G$ and $X\in \mathcal{C}_g$, see \cite[Def. 2.3 and Eq. (2.4.a)]{MR2674592}. If $\mathcal{C}$ is further a $\mathrm{W}^*$-tensor category, $\mathcal{C}$ has a canonical structure of an involutive $\mathrm{W}^*$-tensor category, with $\overline{X} = X^\vee$, see \cite[Sec. 5.3]{MR4581741}. In this context, given a morphism $f:X\to Y$, we have $f^t = \overline{f^*}$. Hence, the condition \eqref{eq: RibbonCondition} is the generalization of the crossed-ribbon condition \eqref{eq: RigidRibbon} from the rigid semisimple setting to the context of involutive $\mathrm{W}^*$-tensor categories.
\end{remark}

Given two $G$-crossed braided involutive $\mathrm{W}^*$-tensor categories, a functor between them is a functor $(F,\Psi, \Phi_g)$ between the underlying $G$-crossed braided $\mathrm{W}^*$-tensor categories equipped with a natural isomorphism $\chi$ making $(F,\Psi, \chi)$ an involutive $\mathrm{W}^*$-tensor functor such that the diagram
\[\begin{tikzcd}
	{\overline{T_g(F(X))}} && {\overline{F(T_g(X))}} \\
	{T_g(\overline{F(X)})} && {F(\overline{T_g(X)})} \\
	{T_g(F(\overline{X}))} && {F(T_g(\overline{X}))}
	\arrow["{\overline{\Phi_g(X)}}", from=1-1, to=1-3]
	\arrow["{\chi_g^\mathcal{D}(F(X))}"', from=1-1, to=2-1]
	\arrow["{\chi^{\phantom{D}}_{T_g(X)}}", from=1-3, to=2-3]
	\arrow["{T_g(\chi^{\phantom{D}}_X)}"', from=2-1, to=3-1]
	\arrow["{F(\chi_g^\mathcal{C}(X))}", from=2-3, to=3-3]
	\arrow["{\Phi_g(\overline{X})}"', from=3-1, to=3-3]
\end{tikzcd}\]
commutes for all $g\in G$ and $X\in \mathcal{C}$.

Given involutive $G$-crossed balances on both categories, a functor of $G$-crossed braided involutive $\mathrm{W}^*$-tensor categories is a functor of $G$-crossed balanced involutive $\mathrm{W}^*$-tensor categories if it is a functor between the underlying $G$-crossed balanced $\mathrm{W}^*$-tensor categories.

Recall that we fix the point $\mathrm{p}=1\in S^1$, and that we define $\widetilde{S^1_+} := (0,\nicefrac{1}{2})\in \Jcal_\R$, and $S^1_+:=q(\widetilde{S^1_+})$, the upper semi-circle. The von Neumann algebra $\A(S^1_+)$ acts on $H_0$ by definition, and the vacuum vector $\Omega\in H_0$ is cyclic for the action of $\A(S^1_+)$, by the Reeh-Schlieder Theorem. One can therefore define an anti-linear unbounded operator $S: \A(S^1_+)\Omega\to \A(S^1_+)\Omega$ such that, for every $x\in \A(S^1_+)$,
\[
S\pi_{0, S_+^1}(x)(\Omega) = \pi_{0, S_+^1}(x^*)(\Omega), 
\]
following Tomita-Takesaki theory. The operator $S$ is preclosed and we continue denoting its closure by $S$. Let $S = \mathrm{J}\circ \Delta^{1/2}$ be the polar decomposition of $S$. Then, the anti-unitary map $\mathrm{J}: H_0\to H_0$ is called the modular conjugation and satisfies $\mathrm{J}\Omega = \Omega$ and $\mathrm{J}^2 = \id_{H_0}$. By the Bisognano-Wichmann Theorem \cite{bgl93}, it also holds that, for every $I\in\Jcal$ and $x\in \A(I)$,
\[
\mathrm{J}\circ \pi_{0, I}(x)\circ \mathrm{J}\in \Hom_{\A(\overline{I})^c}(H_0,H_0)\cong \A(\overline{I}),
\]
where $\overline{I} := \{\overline{z}\,|\, z\in I\}\in \Jcal$ is the image of $I$ under complex conjugation in $S^1$. Hence, we obtain a von Neumann algebra isomorphism
\[
\begin{array}{cccc}
     j: &\A(I) &\to & \A(\overline{I})^{\text{op}} \\
     & x&\mapsto & j(x),
\end{array}
\]
where $j(x)$ is the unique element in $\A(\overline{I})$ such that $\pi_{0, \overline{I}}(j(x)) = \mathrm{J}\circ \pi_{0, I}(x^*)\circ \mathrm{J}$. We think of $j$ as the action on $\A$ of complex conjugation on $S^1$. It is clear that $j$ is functorial with respect to inclusion of intervals. The modular conjugation $\mathrm{J}$ is also compatible with automorphisms of $\A$. Indeed, let $\varphi\in \Aut(\A)$ be an automorphism implemented by a unitary $V_\varphi\in U(H_0)$. Since $V_\varphi\A(S^1_+)V_\varphi^* = \A(S^1_+)$ and $V_\varphi(\Omega) = \Omega$, then $V_\varphi$ commutes with the modular conjugation $\mathrm{J}$, see for example \cite[Thm. 3.2]{MR407615}. Hence, it follows that $j$ and $\varphi$ commute.

Let $\varphi\in \Aut(\A)$ and $(H^\varphi, \pi^H)\in \Rep^\varphi(\A)$. We define a new representation $\overline{(H^\varphi, \pi^H)}:=(\overline{H^\varphi}, \overline{\pi^H})\in \Rep^{\varphi^{-1}}(\A)$. Here, $\overline{H^\varphi}$ denotes the complex conjugate Hilbert space of $H^\varphi$ and $\overline{\pi^H}$ is defined as follows. Let $\tilde{I}\in \Jcal_{\R}$ and $x\in \A_\R(\tilde{I})$. Then, we write
\begin{equation}\label{eq: defInvolutionRepGA}
\overline{\pi^H}_{\tilde{I}}(x)(\overline{\xi}) = \overline{\pi^H_{-\tilde{I}}(j(x)^*)(\xi)}.
\end{equation}
for all $\overline{\xi}\in \overline{H^\varphi}$.
\begin{lemma}\label{lemm: InvolutionCompatibleGrading}
    The formula \eqref{eq: defInvolutionRepGA} defines a $\varphi^{-1}$-twisted representation of $\A$ on $\overline{H^\varphi}$.
\end{lemma}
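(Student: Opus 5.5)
We verify three things in turn: that each $\overline{\pi^H}_{\tilde{I}}$ is a $*$-representation of $\A_\R(\tilde{I})=\A(I)$ on $\overline{H^\varphi}$, that the family is compatible with inclusions in $\Jcal_\R$, and that it satisfies the twisted periodicity \eqref{eq: TwistedRepOnR} with $\varphi^{-1}$ in place of $\varphi$. By the discussion following \eqref{eq: TwistedRepOnR}, these three facts are exactly what is needed to obtain a $\varphi^{-1}$-twisted representation in the sense of Definition \ref{def: TwistedRep}.

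First, note that $q(-\tilde{I})=\overline{I}$ and $j(x)\in\A(\overline{I})=\A_\R(-\tilde{I})$, so the formula \eqref{eq: defInvolutionRepGA} makes sense. The map $x\mapsto j(x)^*$ is a linear, multiplicative, $*$-preserving map $\A(I)\to\A(\overline{I})$, since $j$ is an anti-isomorphism onto the opposite algebra and is itself antilinear, being defined through the antiunitary $\mathrm{J}$. Composing it with $\pi^H_{-\tilde{I}}$ and then passing to the conjugate Hilbert space yields a complex-linear $*$-homomorphism into $B(\overline{H^\varphi})$. The linearity bookkeeping is the following:
\begin{itemize}
\item $j$ is antilinear;
\item $x\mapsto j(x)^*$ is therefore linear;
\item conjugating an operator turns $\lambda$ into $\bar\lambda$, while the conjugate-space identification turns it back.
\end{itemize}
This is routine but should be written out carefully.

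Next, for compatibility, let $\tilde{J}\subset\tilde{I}$. Then $-\tilde{J}\subset-\tilde{I}$, and functoriality of $j$ together with compatibility of $\pi^H$ on $\Jcal_\R$ gives $\overline{\pi^H}_{\tilde{J}}=\overline{\pi^H}_{\tilde{I}}|_{\A(J)}$ directly.

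For periodicity, we compute
\[
\overline{\pi^H}_{\tilde{I}+1}(x)\overline{\xi}=\overline{\pi^H_{-\tilde{I}-1}(j(x)^*)\xi}.
\]
Applying \eqref{eq: TwistedRepOnR} to the interval $-\tilde{I}-1$ gives $\pi^H_{-\tilde{I}-1}(y)=\pi^H_{-\tilde{I}}(\varphi^{-1}y)$. Since $j$ commutes with $\varphi$, as shown just above the lemma via $V_\varphi\mathrm{J}=\mathrm{J}V_\varphi$, we have $\varphi^{-1}(j(x)^*)=j(\varphi^{-1}x)^*$. Hence
\[
\overline{\pi^H}_{\tilde{I}+1}(x)=\overline{\pi^H}_{\tilde{I}}(\varphi^{-1}x),
\]
which is precisely periodicity for $\varphi^{-1}$.

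The main obstacle is the orientation bookkeeping in the last step. One must check that $-\tilde{I}-1$ is the correct shift, so that the twist indeed inverts rather than conjugates $\varphi$. One must also check that the sign conventions of \eqref{eq: TwistedRepOnR} applied to negative shifts are consistent; this relies on $\epsilon(\tilde{I}-1)=\epsilon(\tilde{I})-1$.
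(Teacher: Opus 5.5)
Your proposal follows the paper's proof step for step: the same three checks, with compatibility coming from $-\tilde{J}\subset-\tilde{I}$, and periodicity from $\pi^H_{-\tilde{I}-1}(y)=\pi^H_{-\tilde{I}}(\varphi^{-1}y)$ together with $j\circ\varphi=\varphi\circ j$.

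The one thing to fix is the linearity bookkeeping in the first step, which the paper simply calls clear. Your conclusion that $\overline{\pi^H}_{\tilde{I}}$ is complex-linear is right, but two of your stated reasons are wrong:
\begin{itemize}
\item $j$ is linear and anti-multiplicative, not antilinear, because $x\mapsto\mathrm{J}x^*\mathrm{J}$ composes two antilinear operations. Hence $x\mapsto j(x)^*$ (which is $x\mapsto\mathrm{J}x\mathrm{J}$ on $H_0$) is antilinear and multiplicative.
\item $A\mapsto\overline{A}$ from $B(H^\varphi)$ to $B(\overline{H^\varphi})$ is antilinear, since $\overline{\lambda A}=\bar\lambda\,\overline{A}$.
\end{itemize}
The composite is complex-linear because it is antilinear composed with antilinear. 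Your two misstatements, that $j$ is antilinear and that the conjugation is effectively linear, happen to cancel.
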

\begin{proof}
    Fix $\tilde{I}\in \Jcal_\R$. It is clear that $\overline{\pi^H}_{\tilde{I}}: \A_\R(\tilde{I})\to B(\overline{H})$ is a $*$-action. Let $\tilde{J}\in \Jcal_\R$ be another interval such that $\tilde{J}\subset \tilde{I}$. Then, given $x\in \A_{\R}(\tilde{J})$, it holds that
    \[
    \overline{\pi^H}_{\tilde{J}}(x)  = \pi^H_{-\tilde{J}}(j(x)^*) = \pi^H_{-\tilde{I}}(j(x)^*) = \overline{\pi^H}_{\tilde{I}}(x).
    \]
    In order to show the representation is $\varphi^{-1}$-twisted, we compute
    \[
    \overline{\pi^H}_{\tilde{I}+1}(x) = \pi^H_{-\tilde{I}-1}(j(x)^*) = \pi^H_{-\tilde{I}}(\varphi^{-1}(j(x)^*)) = \pi^H_{-\tilde{I}}(j(\varphi^{-1}x)^*) = \overline{\pi^H}_{\tilde{I}}(\varphi^{-1}x),
    \]
   and the claim follows.
\end{proof}

It is clear that, if $K^\varphi\in \Rep^\varphi(\A)$ is a twisted representation and $F: H^\varphi\to K^\varphi$ is a morphism of $\varphi$-twisted representations, the complex conjugate $\overline{F}: \overline{H^\varphi}\to \overline{K^\varphi}$ intertwines the $\varphi^{-1}$-twisted actions $\overline{\pi^H}$ and $\overline{\pi^K}$ of $\A$.

\begin{definition}\label{def: involutionRepA}
    We define the anti-linear functor $\overline{\,\cdot\,}: \Rep^{\Aut(\A)}(\A)\to \Rep^{\Aut(\A)}(\A)$ by sending a twisted representation $(H^\varphi, \pi^H)\in \Rep^\varphi(\A)$ to $(\overline{H^\varphi}, \overline{\pi^H})$, and a morphism of twisted representations $F: H^\varphi\to K^\varphi$ to the complex conjugate morphism $\overline{F}: \overline{H^\varphi}\to \overline{K^\varphi}$.
\end{definition}

Let us provide next the structure morphisms that upgrade $\overline{\,\cdot\,}$ to an involutive $\mathrm{W}^*$-structure on $\Rep^{\Aut(\A)}(\A)$. By definition of $j$, the following lemma holds.

\begin{lemma}\label{lemm: InvolutiveDataUnit}
    The modular conjugation $\mathrm{J}$ provides an isomorphism of $\A$-representations
    \[
    \begin{array}{cccc}
    i:&(H_0, \pi_0)&\xrightarrow{\cong} &\overline{(H_0,\pi_0)}\\&
    \xi &\mapsto & \overline{\mathrm{J}\xi}.
    \end{array}
    \]
\end{lemma}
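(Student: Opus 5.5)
We must check that $i:\xi\mapsto\overline{\mathrm{J}\xi}$ is a unitary map $H_0\to\overline{H_0}$ intertwining $\pi_{0}$ with $\overline{\pi_0}$. Unitarity is immediate. $\mathrm{J}$ is anti-unitary and the conjugation $H_0\to\overline{H_0}$ is anti-unitary, so the composite is a linear unitary. Since $\mathrm{J}^2=\id_{H_0}$, it is invertible with inverse $\overline{\eta}\mapsto \mathrm{J}\eta$.

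For the intertwining property I will work with the description of representations via $\A_\R$. For $H_0$ (untwisted, $\varphi=\id$) we have $\pi_{0,\tilde I}=\pi_{0,I}$ for every $\tilde I\in\Jcal_\R$. By \eqref{eq: defInvolutionRepGA}, for $x\in\A_\R(\tilde I)=\A(I)$,
\[
\overline{\pi_0}_{\tilde I}(x)(\overline{\xi})=\overline{\pi_{0,q(-\tilde I)}(j(x)^*)\xi}.
\]
Note that $q(-\tilde I)=\overline{I}$, so $j(x)\in\A(\overline I)=\A_\R(-\tilde I)$, consistent with the formula. By the definition of $j$, $\pi_{0,\overline I}(j(x))=\mathrm{J}\pi_{0,I}(x^*)\mathrm{J}$. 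Taking adjoints, and using that $\mathrm{J}$ is an anti-unitary involution (so $\mathrm{J}^*=\mathrm{J}$ in the anti-linear sense), gives $\pi_{0,\overline I}(j(x)^*)=\mathrm{J}\pi_{0,I}(x)\mathrm{J}$.

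It remains to compute, for $\xi\in H_0$:
\[
\overline{\pi_0}_{\tilde I}(x)\,i(\xi)=\overline{\pi_{0,\overline I}(j(x)^*)\mathrm{J}\xi}=\overline{\mathrm{J}\pi_{0,I}(x)\mathrm{J}\mathrm{J}\xi}=\overline{\mathrm{J}\pi_{0,I}(x)\xi}=i(\pi_{0,I}(x)\xi).
\]
So $i$ intertwines the actions of every $\A_\R(\tilde I)$, and hence, via $\pi_I=\pi_{\hat I}$, of every $\A(I)$. Here $\overline{(H_0,\pi_0)}$ is untwisted by Lemma \ref{lemm: InvolutionCompatibleGrading} with $\varphi=\id$.

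The only step needing care is the adjoint computation relating $j(x)^*$ to $\mathrm{J}\pi_{0,I}(x)\mathrm{J}$, since $j$ is an anti-isomorphism onto $\A(\overline I)^{\mathrm{op}}$ and the stars must be tracked correctly. The identity $(\mathrm{J}A\mathrm{J})^*=\mathrm{J}A^*\mathrm{J}$ for bounded $A$ settles it. The sign convention $-\tilde I$ versus $\overline I$ is harmless because the vacuum action does not depend on the lift.
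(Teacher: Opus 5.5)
Your proof is correct and follows the paper's route. The paper simply asserts that the lemma holds by the definition of $j$, and your computation $\pi_{0,\overline{I}}(j(x)^*)=\mathrm{J}\pi_{0,I}(x)\mathrm{J}$, obtained via $(\mathrm{J}A\mathrm{J})^*=\mathrm{J}A^*\mathrm{J}$ and $q(-\tilde I)=\overline{I}$, followed by the intertwining check, is exactly the unpacking of that remark.
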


The following result is also clear from the definition of $\overline{\,\cdot\,}$.

\begin{lemma}\label{lemm: InvolutiveDataSquare}
    Let $(H^\varphi, \pi^H)\in \Rep^\varphi(\A)$ be a $\varphi$-twisted representation. Then, the unitary $H^\varphi\to \overline{\overline{H^\varphi}}$ given by $\xi\mapsto \overline{\overline{\xi}}$ provides a unitary isomorphism of twisted $\A$-representations
    \[
   \phi_{H}: (H^\varphi,\pi^H)\xrightarrow{\cong}\overline{\overline{(H^\varphi, \pi^H)}}.
    \]
\end{lemma}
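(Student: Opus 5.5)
The statement is essentially a direct unwinding of the definition \eqref{eq: defInvolutionRepGA} applied twice. The first step is to record that $\overline{\overline{H^\varphi}}$ is a $\varphi$-twisted representation. By Lemma \ref{lemm: InvolutionCompatibleGrading} applied to $\overline{H^\varphi}\in\Rep^{\varphi^{-1}}(\A)$, it is $(\varphi^{-1})^{-1}=\varphi$-twisted, so source and target of $\phi_H$ lie in the same graded piece. Next, the map $\xi\mapsto\overline{\overline{\xi}}$ is the canonical identification of a Hilbert space with its double conjugate. It is linear and unitary, since the inner product on $\overline{\overline{H}}$ is $\langle\overline{\overline{\xi}}|\overline{\overline{\eta}}\rangle=\overline{\langle\overline{\xi}|\overline{\eta}\rangle}=\langle\xi|\eta\rangle$.

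The only real content is the intertwining property. For $\tilde{I}\in\Jcal_\R$ and $x\in\A_\R(\tilde{I})$, apply \eqref{eq: defInvolutionRepGA} twice:
\[
\overline{\overline{\pi^H}}_{\tilde{I}}(x)(\overline{\overline{\xi}})=\overline{\overline{\pi^H}_{-\tilde{I}}(j(x)^*)(\overline{\xi})}=\overline{\overline{\pi^H_{\tilde{I}}\big(j(j(x)^*)^*\big)(\xi)}}.
\]
Here I use that $-(-\tilde{I})=\tilde{I}$, and that $j(x)\in\A(\overline{I})$, so that $j$ may be applied to it as an element of $\A_\R(-\tilde{I})$; the interval $-\tilde{I}$ maps to $\overline{I}$ under $q$. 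It therefore remains to show $j(j(x)^*)^*=x$, that is, $j(j(y))=y$ after replacing $x$ by $y^*$ and using that $j$ commutes with $*$.

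This is the step I expect to need a moment's care. I would verify it in $H_0$ using $\mathrm{J}^2=\id$:
\[
\pi_0(j(j(y)))=\mathrm{J}\pi_0(j(y)^*)\mathrm{J}=\mathrm{J}\big(\mathrm{J}\pi_0(y^*)\mathrm{J}\big)^*\mathrm{J}=\mathrm{J}\mathrm{J}\pi_0(y)\mathrm{J}\mathrm{J}=\pi_0(y),
\]
where the middle equality uses that $\mathrm{J}$ is antiunitary and self-inverse, so $(\mathrm{J}A\mathrm{J})^*=\mathrm{J}A^*\mathrm{J}$. Faithfulness of $\pi_0$ on $\A(I)$ (it is a factor acting on $H_0$ by inclusion) then gives $j\circ j=\id$, and also $j(y^*)=j(y)^*$. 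Hence $\overline{\overline{\pi^H}}_{\tilde{I}}(x)\circ\phi_H=\phi_H\circ\pi^H_{\tilde{I}}(x)$ for all $\tilde{I}$ and $x$.

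Finally, since twisted representations are equivalently described by the compatible family over $\Jcal_\R$, this shows that $\phi_H$ is a morphism of $\varphi$-twisted representations. I would also note in passing that $\phi_H$ is natural in $H$ and satisfies $\overline{\phi_H}=\phi_{\overline{H}}$, both being tautological on underlying vectors, as needed later for the involutive structure.
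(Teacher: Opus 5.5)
Your proposal is correct and matches the paper, which offers no written argument beyond calling the lemma ``clear from the definition of $\overline{\,\cdot\,}$''. Your verification is exactly the implicit one: apply \eqref{eq: defInvolutionRepGA} twice using $-(-\tilde{I})=\tilde{I}$, then use $j\circ j=\id$ and $j(x^*)=j(x)^*$, both of which follow from $\mathrm{J}^2=\id$ and $(\mathrm{J}A\mathrm{J})^*=\mathrm{J}A^*\mathrm{J}$.
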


In order to provide the anti-tensor structure for $\overline{\,\cdot\,}$, we need the following computations. Let $\tilde{I}\in \Jcal_\R$ and $\xi\in H_-^\varphi(\tilde{I})$. Then consider the unitary
\begin{equation}\label{eq: Zbar}
Z^+(\overline{\xi}, -\tilde{I}):H_0\xrightarrow{i} \overline{H_0}\xrightarrow{\overline{Z^-(\xi, \tilde{I})}} \overline{H^\varphi}.
\end{equation}
Note that $-(-\tilde{I})^{c+} = -\big((-\partial_+\tilde{I}, -\partial_{-}\tilde{I})^{c+}\big) = -(-\partial_{-}\tilde{I}, -\partial_+\tilde{I}+1) = (\partial_{+}\tilde{I}-1, \partial_-\tilde{I}) = \tilde{I}^{c-}$. Hence, given $x\in \A_\R\big((-\tilde{I})^{c+}\big)$ and $\eta\in H_0$, it holds that
\begin{align*}
    \overline{\pi^H_{(-\tilde{I})^{c+}}}(x)\circ \overline{Z^-(\xi, \tilde{I})}(\overline{\mathrm{J}\eta}) & = \overline{\pi^H_{ -(-\tilde{I})^{c+}}(j(x)^*)({Z^-(\xi,\tilde{I})(\mathrm{J}\eta)})}\\&= \overline{\pi^H_{\tilde{I}^{c-}}(j(x)^*)\circ Z^-(\xi, \tilde{I})\circ \mathrm{J}(\eta)}\\&=\overline{Z^-(\xi, \tilde{I})\circ \pi_{0}(j(x)^*)\circ\mathrm{J}(\eta)}\\ &= \overline{Z^-(\xi, \tilde{I})\circ \mathrm{J}\circ \pi_0(x)(\eta)}.
\end{align*}
Hence, it follows that $Z^+(\overline{\xi}, -\tilde{I})$ is $\A_{\R}\big((-\tilde{I})^{c+}\big)$-equivariant. In addition, we have $Z^+(\overline{\xi}, -\tilde{I})(\Omega) = \overline{\xi}$, and we obtain an injective map $ H^\varphi_-(\tilde{I})\hookrightarrow \overline{H^\varphi}_+(-\tilde{I})$ given by $\xi\mapsto \overline{\xi}$. We can analogously produce an injective map the other way, and hence we find a canonical isomorphism
\[
H^\varphi_-(\tilde{I})\cong  \overline{H^\varphi}_+(-\tilde{I}).
\]
We similarly obtain an isomorphism $H^\varphi_+(\tilde{I})\cong  \overline{H^\varphi}_-(-\tilde{I})$.

\begin{proposition}\label{prop: 2CellInvolution}
    Let $\varphi,\mu\in \Aut(\A)$ be automorphisms and $H^\varphi\in \Rep^\varphi(\A)$ and $K^\mu\in\Rep^\mu(\A)$ be twisted representations. Given $\tilde{I}\in \Jcal_\R$, the map $\overline{H^\varphi}_+(\tilde{I})\otimes \overline{K^\mu}\to \overline{K^\mu\otimes H^\varphi_-(-\tilde{I})}$ given by $\overline{\xi}\otimes\overline{\eta}\mapsto \overline{\eta\otimes\xi}$ defines a unitary
    \[
    \nu_{H,K}^{\tilde{I}}: \overline{H^\varphi}_+(\tilde{I})\boxtimes \overline{K^\mu}\to \overline{K^\mu\boxtimes H^\varphi_-(-\tilde{I})}.
    \]
     In addition, the map above is a unitary equivalence of $(\varphi^{-1}\circ\mu^{-1})$-twisted representations of $\A$. Analogously, there is an equivalence of $(\varphi^{-1}\circ\mu^{-1})$-twisted representations of $\A$
    \[
   ^{\text{op}}\nu_{H,K}^{\tilde{I}}: \overline{H^\varphi}\boxtimes \overline{K^\mu}_-(\tilde{I})\to \overline{K^\mu_+(-\tilde{I})\boxtimes H^\varphi}.
    \]
\end{proposition}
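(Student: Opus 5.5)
We will follow the pattern of Proposition \ref{prop: SwapWithAction} and Proposition \ref{prop: ActionData}: first show that the assignment is isometric on the algebraic tensor products, hence extends to a unitary, and then check intertwining of the actions via path-continuations.

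\emph{Isometry.} Let $\overline{\xi},\overline{\xi'}\in \overline{H^\varphi}_+(\tilde{I})$, so $\xi,\xi'\in H^\varphi_-(-\tilde{I})$ under the canonical identification $H^\varphi_-(-\tilde{I})\cong\overline{H^\varphi}_+(\tilde{I})$ established just above. By \eqref{eq: Zbar}, $Z^+(\overline{\xi},\tilde{I}) = \overline{Z^-(\xi,-\tilde{I})}\circ i$. Hence
\[
Z^+(\overline{\xi'},\tilde{I})^*Z^+(\overline{\xi},\tilde{I}) = i^{-1}\circ\overline{Z^-(\xi',-\tilde{I})^*Z^-(\xi,-\tilde{I})}\circ i ,
\]
which as an element of $\A(I)$ equals $j(a)^*$ for $a:=Z^-(\xi',-\tilde{I})^*Z^-(\xi,-\tilde{I})\in\A(-I)$ (using $\mathrm{J}\pi_0(a)\mathrm{J}=\pi_0(j(a^*))$ and that $j$ maps $\A(\overline{I})$ to $\A(I)$). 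Then
\[
\langle\overline{\xi}\otimes\overline{\eta}|\overline{\xi'}\otimes\overline{\eta'}\rangle = \langle \overline{\pi^K}_{\tilde{I}}(j(a)^*)\overline{\eta}|\overline{\eta'}\rangle = \overline{\langle \pi^K_{-\tilde{I}}(j(j(a)^*)^*)\eta|\eta'\rangle} .
\]
Since $j$ is an involutive antiautomorphism with $j(j(a)^*)^*=a$ up to the convention $j^2=\id$ (which holds because $\mathrm{J}^2=\id$), this equals $\overline{\langle\pi^K_{-\tilde{I}}(a)\eta|\eta'\rangle}$. That is the conjugate of the right-fusion inner product $\langle \eta\otimes\xi|\eta'\otimes\xi'\rangle$ in $K^\mu\boxtimes H^\varphi_-(-\tilde{I})$, i.e.\ the inner product of $\overline{\eta\otimes\xi}$ and $\overline{\eta'\otimes\xi'}$ in the conjugate space. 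Surjectivity follows because the image $\overline{K^\mu\otimes H^\varphi_-(-\tilde{I})}$ is dense, so the map extends to a unitary.

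\emph{Intertwining.} The map visibly commutes with inclusion/restriction of intervals: shrinking $\tilde{I}$ on the left corresponds to shrinking $-\tilde{I}$ on the right. Hence it intertwines path-continuations, where a path $\alpha$ on the left corresponds to $-\alpha$ on the right. For $\tilde{L}\subset\tilde{I}$ and $x\in\A_\R(\tilde{L})$, the left action is $\overline{\pi^H}_{\tilde{L}}(x)\boxtimes\id$. This is sent to $\overline{\id\boxtimes\pi^H_{-\tilde{L}}(j(x)^*)}$, which by \eqref{eq: defInvolutionRepGA} and the right-hand description of the action on fusions in Theorem \ref{thm: RepresentationOnFusion} is exactly $\overline{\pi^{K\boxtimes H}}_{\tilde{L}}(x)$. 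For general $\tilde{L}$, we transport along a path $\alpha$ from $\tilde{I}$ to $\tilde{L}$ and use the previous sentence together with the compatibility with path-continuations.

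The twist bookkeeping is automatic: $\overline{K\boxtimes H}$ is $(\mu\varphi)^{-1}=\varphi^{-1}\mu^{-1}$-twisted by Lemma \ref{lemm: InvolutionCompatibleGrading}. The map ${}^{\text{op}}\nu$ is treated symmetrically, using $H^\varphi_+(\tilde{I})\cong\overline{H^\varphi}_-(-\tilde{I})$.

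The main obstacle is the identification of $Z^+(\overline{\xi'})^*Z^+(\overline{\xi})$ with the correct element $j(a)^*$. The signs and conjugations there (antilinearity of $\mathrm{J}$, the $\mathrm{op}$ in $j$, and whether $j$ or $j(\cdot)^*$ appears) must match \eqref{eq: defInvolutionRepGA} exactly. I would carefully recheck this step on vectors of the form $\pi_0(y)\Omega$.
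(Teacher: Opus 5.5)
Your proposal is correct and follows essentially the same route as the paper. You compute the fusion inner product via $Z^+(\overline{\xi},\tilde I)=\overline{Z^-(\xi,-\tilde I)}\circ i$ and \eqref{eq: defInvolutionRepGA}, and then get intertwining from compatibility with path-continuations ($\alpha\leftrightarrow-\alpha$) together with the local identity $\overline{\pi^H}_{\tilde L}(x)\boxtimes\id\leftrightarrow\overline{\id\boxtimes\pi^H_{-\tilde L}(j(x)^*)}$, which is exactly Lemma \ref{Lemm: TechnicalLemmaInvolutionTensorator}. The step you flagged checks out: $i^{-1}\overline{a}\,i=\mathrm{J}\pi_0(a)\mathrm{J}=\pi_0(j(a^*))$, and since $j(y^*)=j(y)^*$ and $j^2=\id$, this equals $j(a)^*$ with $j(j(a)^*)^*=a$, which is the identity the paper uses implicitly in its second equality.
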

\begin{proof}
We only discuss the first equivalence, the other one follows from analogous arguments. We shall first show that the map $\overline{H^\varphi}_+(\tilde{I})\otimes \overline{K^\mu}\to \overline{K^\mu\otimes H^\varphi_-(-\tilde{I})}$ given by $\overline{\xi_1}\otimes\overline{\eta_1}\mapsto \overline{\eta_1\otimes\xi_1}$ is an isometry. Let $\xi,\xi'\in H_-^\varphi(-\tilde{I})$ and $\eta,\eta'\in K^\mu$. Then, $\overline{\xi}, \overline{\xi'}\in \overline{H^\varphi}_+(\tilde{I})$ and we can compute
\begin{align*}
    \langle \overline{\xi}\otimes\overline{\eta}, \overline{\xi'}\otimes\overline{\eta'}\rangle & = \langle \overline{\pi^K}_{\tilde{I}}(Z^+(\overline{\xi'}, \tilde{I})^*Z^+(\overline{\xi}, \tilde{I}))(\overline{\eta}),\overline{\eta'}\rangle\\& = \langle \overline{\pi^K_{-\tilde{I}}\big(Z^-({\xi'}, -\tilde{I})^*Z^-({\xi}, -\tilde{I})\big)(\eta)},\overline{\eta'}\rangle \\ & = \langle \eta',\pi_{-\tilde{I}}^K\big({Z^-({\xi'}, -\tilde{I})}^*{Z^-({\xi}, -\tilde{I})}\big)(\eta)\rangle \\ & = \langle \pi_{-\tilde{I}}^K\big({Z^-({\xi}, -\tilde{I})}^*{Z^-({\xi'}, -\tilde{I})}\big)(\eta'), \eta\rangle \\ & =\langle \eta'\otimes \xi',\eta\otimes\xi\rangle\\ & = \langle \overline{\eta\otimes\xi}, \overline{\eta'\otimes\xi'}\rangle, 
\end{align*}
as needed. Therefore, since the map $\overline{H^\varphi}_+(\tilde{I})\otimes \overline{K^\mu}\to \overline{K^\mu\otimes H^\varphi_-(-\tilde{I})}$ given by $\overline{\xi}\otimes\overline{\eta}\mapsto \overline{\eta\otimes\xi}$ maps a dense subset of $\overline{H^\varphi}_+(\tilde{I})\boxtimes \overline{K^\mu}$ to a dense subset of $\overline{K^\mu\boxtimes H^\varphi_-(-\tilde{I})}$, it produces a unitary $\overline{H^\varphi}_+(\tilde{I})\boxtimes \overline{K^\mu}\to \overline{K^\mu\boxtimes H^\varphi_-(-\tilde{I})}$. We next argue the compatibility with the $\A$-action. Let $\tilde{L}\in \Jcal_\R$ and $x\in \A_\R(\tilde{L})$. Let $\alpha$ be a path in $\R$ from $\tilde{I}$ to $\tilde{L}$. Let us assume that Lemma \ref{Lemm: TechnicalLemmaInvolutionTensorator} below holds and denote by $\pi^{\overline{H}\boxtimes \overline{K}}$ the action of $\A$ on the domain of $\nu_{H, K}^{\tilde{I}}$. Then, we can compute
\begin{align*}
   \nu_{H, K}^{\tilde{I}}\circ \pi^{\overline{H}\boxtimes \overline{K}}_{\tilde{L}}(x) & = \nu_{H, K}^{\tilde{I}}\circ(\alpha^{\bullet})^{-1}\circ (\overline{\pi^{H}}_{\tilde{L}}(x)\boxtimes \id)\circ \alpha^\bullet\\& = \overline{((-\alpha)^{\bullet})^{-1}}\circ \nu_{H, K}^{\tilde{I}}\circ (\overline{\pi^{H}}_{\tilde{L}}(x)\boxtimes \id)\circ \alpha^\bullet \\ & =  \overline{((-\alpha)^{\bullet})^{-1}}\circ\overline{(\id\boxtimes\pi_{-\tilde{L}}^{H}(j(x)^*))}\circ \nu_{H, K}^{\tilde{I}}\circ \alpha^\bullet \\ & = \overline{((-\alpha)^{\bullet})^{-1}}\circ\overline{(\id\boxtimes\pi_{-\tilde{L}}^{H}(j(x)^*))}\circ \overline{(-\alpha)^\bullet}\circ \nu_{H, K}^{\tilde{I}}\\ & = \overline{\pi^{K\boxtimes H}}_{\tilde{L}}(x)\circ \nu_{H, K}^{\tilde{I}}, 
\end{align*}
where we have used the first statement in Lemma \ref{Lemm: TechnicalLemmaInvolutionTensorator} twice and the second statement~once.
\end{proof}
\begin{lemma}\label{Lemm: TechnicalLemmaInvolutionTensorator}
    Let $\tilde{L}\in \Jcal_\R$. Then, the following diagrams commute,
    \begin{enumerate}
        \item for every $\tilde{I}\in\Jcal_\R$ and every path $\alpha$ in $\R$ from $\tilde{I}$ to $\tilde{L}$,
\[\begin{tikzcd}
	{\overline{H^\varphi}_+(\tilde{I})\boxtimes \overline{K^\mu}} && {\overline{K^\mu\boxtimes H^\varphi_-(-\tilde{I})}} \\
	{\overline{H^\varphi}_+(\tilde{L})\boxtimes \overline{K^\mu}} && {\overline{K^\mu\boxtimes H^\varphi_-(-\tilde{L})}};
	\arrow["{\nu^{\tilde{I}}_{H, K}}", from=1-1, to=1-3]
	\arrow["{\alpha^\bullet}"', from=1-1, to=2-1]
	\arrow["{\overline{(-\alpha)^\bullet}}", from=1-3, to=2-3]
	\arrow["{\nu^{\tilde{L}}_{H, K}}"', from=2-1, to=2-3]
\end{tikzcd}\]
\item for all $x\in \A_\R(\tilde{L})$,
\[\begin{tikzcd}
	{\overline{H^\varphi}_+(\tilde{L})\boxtimes \overline{K^\mu}} && {\overline{K^\mu\boxtimes H^\varphi_-(-\tilde{L})}} \\
	{\overline{H^\varphi}_+(\tilde{L})\boxtimes \overline{K^\mu}} && {\overline{K^\mu\boxtimes H^\varphi_-(-\tilde{L})}}.
	\arrow["{\nu^{\tilde{L}}_{H, K}}", from=1-1, to=1-3]
	\arrow["{\overline{\pi^H}_{\tilde{L}}(x)\boxtimes \id}"', from=1-1, to=2-1]
	\arrow["{\overline{\id\boxtimes \pi_{-\tilde{L}}^H(j(x)^*)}}", from=1-3, to=2-3]
	\arrow["{\nu^{\tilde{L}}_{H, K}}"', from=2-1, to=2-3]
\end{tikzcd}\]

    \end{enumerate}
\end{lemma}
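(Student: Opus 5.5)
For (1), I would first reduce to small paths. Path continuations are built by concatenating continuations along short paths, each of which is a composite of canonical inclusion/restriction unitaries. The reflection $t\mapsto -t$ turns a partition of $\alpha$ into a partition of $-\alpha$. So it suffices to treat the case where $\alpha$ is covered by a single interval $\tilde{O}\in\Jcal_\R$, and then the special case $\tilde{I}\subset\tilde{L}$ (together with its inverse). In that case $\alpha^\bullet$ is induced by the inclusion $\overline{H^\varphi}_+(\tilde{I})\otimes\overline{K^\mu}\hookrightarrow\overline{H^\varphi}_+(\tilde{L})\otimes\overline{K^\mu}$. Since $-\tilde{I}\subset-\tilde{L}$, the map $\overline{(-\alpha)^\bullet}$ is the conjugate of the map induced by $K^\mu\otimes H^\varphi_-(-\tilde{I})\hookrightarrow K^\mu\otimes H^\varphi_-(-\tilde{L})$.

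The one thing to verify is that the identification $H^\varphi_-(-\tilde{I})\cong\overline{H^\varphi}_+(\tilde{I})$, $\xi\mapsto\overline{\xi}$, is compatible with the inclusions of bounded vectors. This holds because in \eqref{eq: Zbar} the operator $Z^+(\overline{\xi},\tilde{I})=\overline{Z^-(\xi,-\tilde{I})}\circ i$ does not change when $Z^-(\xi,-\tilde{I})$ is viewed as $Z^-(\xi,-\tilde{L})$. Both $Z^-$ operators are the same map $H_0\to H^\varphi$, intertwining a smaller algebra. Given this, both legs of the square send $\overline{\xi}\otimes\overline{\eta}$ to $\overline{\eta\otimes\xi}$ on a dense subspace, so the square commutes. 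The general short path then follows by passing through $\tilde{O}$ and using the inverse of an inclusion.

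For (2), I would apply (1) to shrink first, which makes the problem local. Take $\tilde{L}_1\subset\tilde{L}$ small and work with $x\in\A_\R(\tilde{L}_1)$ for all such $\tilde{L}_1$. By additivity these $x$ generate $\A_\R(\tilde{L})$, so this is enough. On vectors $\overline{\xi}\otimes\overline{\eta}$ with $\overline{\xi}\in\overline{H^\varphi}_+(\tilde{L})$, the left-hand side gives $\overline{\pi^H}_{\tilde{L}}(x)\overline{\xi}\otimes\overline{\eta}$. By \eqref{eq: defInvolutionRepGA} this equals $\overline{\pi^H_{-\tilde{L}}(j(x)^*)\xi}\otimes\overline{\eta}$.

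Two things then need checking:
\begin{enumerate}
\item $\pi^H_{-\tilde{L}}(j(x)^*)\xi$ still lies in $H^\varphi_-(-\tilde{L})$. This holds since $\pi^H_{-\tilde{L}}(y)$ commutes with the action of $\A_\R((-\tilde{L})^{c-})$, so it preserves $(-\tilde{L},-)$-bounded vectors, with $Z^-(\pi^H_{-\tilde{L}}(y)\xi,-\tilde{L})=\pi^H_{-\tilde{L}}(y)Z^-(\xi,-\tilde{L})$.
\item Under the identification of bounded vectors, this vector corresponds to $\overline{\pi^H}_{\tilde{L}}(x)\overline{\xi}$. This is an identity of operators $Z^+$ obtained by conjugating through $i$ and $\mathrm{J}$.
\end{enumerate}
Once these hold, applying $\nu^{\tilde{L}}_{H,K}$ gives $\overline{\eta\otimes\pi^H_{-\tilde{L}}(j(x)^*)\xi}$, which is exactly the right-hand leg applied to $\overline{\eta\otimes\xi}$.

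I expect the main obstacle to be the sign and orientation bookkeeping. The first issue is confirming that the reflected path $-\alpha$ runs from $-\tilde{I}$ to $-\tilde{L}$ with the correct $\pm$ types. The second is that the twist exponents $\epsilon$ match under $\tilde{I}\mapsto-\tilde{I}$, since $\epsilon(-\tilde{I})$ is not simply $-\epsilon(\tilde{I})$. The identity $-(-\tilde{I})^{c+}=\tilde{I}^{c-}$ computed above, together with the fact that $j$ commutes with automorphisms, should absorb this. To be safe, I would carry out the check with the real-line labelled actions $\pi_{\tilde{I}}$ throughout rather than with $\pi_I$, so that no $\epsilon$ appears explicitly.
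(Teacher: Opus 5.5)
Your proposal is correct and follows essentially the paper's route. For (1), path continuations are composites of canonical inclusion/restriction unitaries, and both legs send $\overline{\xi}\otimes\overline{\eta}$ to $\overline{\eta\otimes\xi}$. For (2), you use the same elementary-tensor computation based on $\overline{\pi^H}_{\tilde{L}}(x)\overline{\xi}=\overline{\pi^H_{-\tilde{L}}(j(x)^*)\xi}$.

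Two of your extra steps are unnecessary. Shrinking to $\tilde{L}_1\subset\tilde{L}$ and invoking additivity just reproduces the same statement for $\tilde{L}_1$, and the direct computation already works for every $x\in\A_\R(\tilde{L})$. Your check (b) is automatic, because the identification $H^\varphi_-(-\tilde{L})\cong\overline{H^\varphi}_+(\tilde{L})$ is literally the map $\xi\mapsto\overline{\xi}$ on vectors.
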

\begin{proof}
    The commutativity of the first diagram is clear from the definition of path continuations. To argue the commutativity of the second diagram, let $\overline{\xi}\otimes\overline{\eta}\in {\overline{H^\varphi}_+(\tilde{L})\otimes \overline{K^\mu}}$. Then, we have
    \begin{align*}
        \nu_{H, K}^{\tilde{L}}\circ (\overline{\pi^H}_{\tilde{L}}(x)\boxtimes \id)(\overline{\xi}\otimes\overline{\eta}) & = \nu_{H, K}^{\tilde{L}}\big(\overline{\pi^H}_{\tilde{L}}(x)(\xi)\otimes \overline{\eta}\big)\\ &= \nu_{H, K}^{\tilde{L}}\big(\overline{\pi^H_{-\tilde{L}}(j(x)^*)(\xi)}\otimes\overline{\eta}\big)\\ & = \overline{\eta\otimes \pi^H_{-\tilde{L}}(j(x)^*)(\xi)}\\ & = \overline{\id\boxtimes \pi^H_{-\tilde{L}}(j(x)^*)}\circ \nu_{H, K}^{\tilde{L}}(\overline{\xi}\otimes\overline{\eta}),
    \end{align*}
    as needed.
\end{proof}

We next argue the compatibility of the involution with the action of $\Aut(\A)$. 

\begin{lemma}
    Fix $\varphi,\mu\in \Aut(\A)$ automorphisms and let $(H^\varphi,\pi^H)\in \Rep^\varphi(\A)$ be a twisted representation. Then, we have an equality of $(\mu\circ\varphi^{-1}\circ \mu^{-1})$-twisted $\A$-representations
    \[
T_\mu\big(\overline{(H^\varphi,\pi^H)}\big) = \overline{T_\mu((H^\varphi,\pi^H))}.
    \]
\end{lemma}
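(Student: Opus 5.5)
Both sides of the claimed equality live on the same underlying Hilbert space $\overline{H^\varphi}$. Indeed, $T_\mu$ leaves the underlying Hilbert space unchanged, and conjugation only replaces a space by its complex conjugate. So the plan is simply to compare the two families of actions of the algebras $\A_\R(\tilde{I})$ on $\overline{H^\varphi}$ directly from the definitions, interval by interval. Before doing so, I will check that the twists match. The conjugate $\overline{H^\varphi}$ is $\varphi^{-1}$-twisted by Lemma \ref{lemm: InvolutionCompatibleGrading}, so $T_\mu(\overline{H^\varphi})$ is $\mu\varphi^{-1}\mu^{-1}$-twisted. On the other side, $T_\mu(H^\varphi)$ is $\mu\varphi\mu^{-1}$-twisted, so its conjugate is $(\mu\varphi\mu^{-1})^{-1}=\mu\varphi^{-1}\mu^{-1}$-twisted. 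The gradings therefore agree.

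I will work in the $\A_\R$-picture. First I need that $T_\mu$ on the $\A_\R$-level is again precomposition by $\mu^{-1}$, that is, $\pi^{\mu\ast H}_{\tilde{I}}(x)=\pi^H_{\tilde{I}}(\mu^{-1}x)$. This requires a short check that $\epsilon$-powers intertwine correctly:
\[
\pi^{\mu\ast H}_{\tilde{I}}(x)=\pi^{\mu\ast H}_I\big((\mu\varphi\mu^{-1})^{\epsilon(\tilde{I})}x\big)=\pi^H_I\big(\mu^{-1}\mu\varphi^{\epsilon(\tilde{I})}\mu^{-1}x\big)=\pi^H_{\tilde{I}}(\mu^{-1}x).
\]
Then for $\tilde{I}\in\Jcal_\R$, $x\in\A_\R(\tilde{I})$ and $\xi\in H^\varphi$, the left-hand side acts by
\[
\overline{\pi^H}_{\tilde{I}}(\mu^{-1}x)(\overline{\xi})=\overline{\pi^H_{-\tilde{I}}\big(j(\mu^{-1}x)^*\big)(\xi)}.
\]
The right-hand side acts by
\[
\overline{\pi^{\mu\ast H}_{-\tilde{I}}\big(j(x)^*\big)(\xi)}=\overline{\pi^H_{-\tilde{I}}\big(\mu^{-1}(j(x)^*)\big)(\xi)}.
\]

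The key input is that $j$ commutes with every automorphism, which was established above via $V_\mu\mathrm{J}=\mathrm{J}V_\mu$. Since $\mu^{-1}$ is also a $*$-automorphism, this gives $\mu^{-1}(j(x)^*)=(\mu^{-1}j(x))^*=j(\mu^{-1}x)^*$. This identifies the two actions, and by compatibility with inclusions it identifies the two twisted representations.

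The only mildly delicate point is the bookkeeping with $\epsilon$ in the $\A_\R$-description of $T_\mu$, together with the fact that $j$ maps $\A(I)$ to $\A(\overline{I})$ so that $\mu^{-1}$ is applied on the correct interval. I expect no real obstacle here, since $\mu$ is a net automorphism compatible with all intervals.
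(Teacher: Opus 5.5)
Your proposal is correct and follows essentially the same route as the paper: you compare the two actions of $\A_\R(\tilde{I})$ on $\overline{H^\varphi}$ directly from the definitions and use that $j$ commutes with $\mu^{-1}$. You also check the grading and verify that $T_\mu$ is still precomposition by $\mu^{-1}$ in the $\A_\R$-picture; the paper uses both facts tacitly, and your verifications are correct.
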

\begin{proof}
    The underlying Hilbert space for both representations is $\overline{H^\varphi}.$ Let $\tilde{I}\in \Jcal_\R$ and $x\in \A_\R(\tilde{I})$. Then, $x$ acts on $ T_\mu\big(\overline{(H^\varphi,\pi^H)}\big)$ by
    \[
    \overline{\pi^H}_{\tilde{I}}(\mu^{-1}x) = \pi^H_{-\tilde{I}}(j(\mu^{-1}x)^*) = \pi^H_{-\tilde{I}}(\mu^{-1}\circ j(x)^*),
    \]
    and it acts on $\overline{T_\mu((H^\varphi,\pi^H))}$ by
    \[
    \overline{\pi^{\mu\ast H}}_{\tilde{I}}(x) = \pi^{\mu\ast H}_{-\tilde{I}}(j(x)^*) = \pi_{-\tilde{I}}^H(\mu^{-1}\circ j(x)^*).
    \]
    Hence, the claim follows.
\end{proof}

The following computations will be needed to prove the different required compatibilities between the structure morphisms we have introduced for the involution on $\Rep^{\Aut(\A)}(\A)$ and the $\Aut(\A)$-crossed braiding.

\begin{proposition}\label{prop: NecessaryEqualitiesForInvolutiveFunctor}
    Let $\varphi,\mu,\nu\in \Aut(\A)$ be automorphisms of $\A$. Let $H^\varphi\in \Rep^\varphi(\A)$ and $K^\mu\in \Rep^\mu(\A)$ be twisted representations of $\A$. Then, we have the following equalities of morphisms:
\begin{enumerate}
    \item $T_\nu(\phi_{H}) = \phi_{T_\nu(H^\varphi)}$ as morphisms from $T_\nu(H^\varphi)$ to $T_\nu(\overline{H^\varphi})= \overline{T_\nu(H^\varphi)}$,
    \item $\mathrm{J}\circ V_\nu = V_\nu\circ T_\nu(\mathrm{J})$ as morphisms from $T_\nu(H_0)$ to $H_0$,
    \item the two legs of the following diagram
\[\begin{tikzcd}
	{\overline{T_\nu(H^\varphi)}_+(\tilde{I})\boxtimes \overline{T_\nu(K^\mu)}} &&& {\overline{T_\nu(K^\mu)\boxtimes T_\nu(H^\varphi)_-(-\tilde{I})}} \\
	{T_\nu(\overline{H^\varphi})_+(\tilde{I})\boxtimes T_\nu(\overline{K^\mu})} &&& {\overline{T_\nu(K^\mu\boxtimes H^\varphi_-(-\tilde{I}))}} \\
	{T_\nu(\overline{H^\varphi}_+(\tilde{I})\boxtimes \overline{K^\mu})} &&& {T_\nu(\overline{K^\mu\boxtimes H^\varphi_-(-\tilde{I})})}
	\arrow["{\nu^{\tilde{I}}_{T_\nu(H^\varphi), T_\nu(K^\mu)}}", from=1-1, to=1-4]
	\arrow["{=}"', from=1-1, to=2-1]
	\arrow["\cong", from=1-4, to=2-4]
	\arrow["\cong"', from=2-1, to=3-1]
	\arrow["{=}", from=2-4, to=3-4]
	\arrow["{T_\nu(\nu_{H, K}^{\tilde{I}})}"', from=3-1, to=3-4]
\end{tikzcd}\]
where the two vertical non-equality arrows are the morphisms in Proposition \ref{prop: ActionData}.
\end{enumerate}
\end{proposition}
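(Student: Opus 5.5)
All three equalities are checked directly on underlying Hilbert spaces, using that $T_\nu$ is the identity on underlying spaces and morphisms. The only inputs are the formulas $Z^\pm(\Gamma_\nu\xi,\tilde{I}) = \Gamma_\nu\circ Z^\pm(\xi,\tilde{I})\circ V_{\nu^{-1}}$, the commutation of $V_\nu$ with $\mathrm{J}$, and the explicit descriptions of the unitaries in Propositions \ref{prop: ActionData} and \ref{prop: 2CellInvolution}.

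For (1), the underlying map of $T_\nu(\phi_H)$ is $\Gamma_\nu\xi\mapsto \overline{\overline{\xi}}$. The underlying map of $\phi_{T_\nu(H^\varphi)}$ is the double-conjugation map on the same space $H^\varphi$, and the target $\overline{\overline{T_\nu(H^\varphi)}}$ has underlying space $\overline{\overline{H^\varphi}}$. By the preceding lemma, $T_\nu(\overline{H})=\overline{T_\nu(H)}$ holds on the nose, so both maps are $\xi\mapsto\overline{\overline{\xi}}$ and (1) is immediate. For (2), $T_\nu(\mathrm{J})$ has the same underlying anti-unitary as $\mathrm{J}$. The identity $\mathrm{J}V_\nu = V_\nu\mathrm{J}$ was established just before Lemma \ref{lemm: InvolutionCompatibleGrading}, since $V_\nu$ preserves $\A(S^1_+)$ and $\Omega$. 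Hence (2) reduces to that commutation. One should also check that both sides are morphisms $T_\nu(H_0)\to\overline{H_0}$ in the sense of Lemma \ref{lemm: InvolutiveDataUnit}, which follows from $j\circ\nu=\nu\circ j$.

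For (3), I will evaluate both legs on the dense subspace of vectors $\overline{\Gamma_\nu\xi}\otimes\overline{\Gamma_\nu\eta}$ with $\xi\in H^\varphi_-(-\tilde{I})$ and $\eta\in K^\mu$. First I must confirm that the identification $\overline{T_\nu(H^\varphi)}_+(\tilde{I})= T_\nu(\overline{H^\varphi})_+(\tilde{I})$ sends $\overline{\Gamma_\nu\xi}$ to $\Gamma_\nu\overline{\xi}$ compatibly with the $Z$-operators. On one side, $Z^+(\overline{\Gamma_\nu\xi},\tilde{I}) = \overline{\Gamma_\nu Z^-(\xi,-\tilde{I})V_{\nu^{-1}}}\circ i$ by \eqref{eq: Zbar}. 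On the other, $Z^+(\Gamma_\nu\overline{\xi},\tilde{I}) = \Gamma_\nu\circ\overline{Z^-(\xi,-\tilde{I})}\circ i\circ V_{\nu^{-1}}$. These agree because $i$ is built from $\mathrm{J}$ and $\mathrm{J}$ commutes with $V_{\nu^{-1}}$, which is essentially (2). With this in hand, the top-right leg sends the vector to $\overline{\Gamma_\nu\eta\otimes\Gamma_\nu\xi}\mapsto\overline{\eta\otimes\xi}$, using Proposition \ref{prop: ActionData} inside the conjugate. The left-bottom leg sends it to $\Gamma_\nu\overline{\xi}\otimes\Gamma_\nu\overline{\eta}\mapsto\overline{\xi}\otimes\overline{\eta}\mapsto\overline{\eta\otimes\xi}$. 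The two legs agree on a dense set, and since both are unitary they agree everywhere.

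The main obstacle is the bookkeeping in (3): matching the bounded-vector identifications, i.e. showing that $\overline{\Gamma_\nu\xi}$ and $\Gamma_\nu\overline{\xi}$ give the same element once the $Z$-operators are compared. The crux is the commutation of $\mathrm{J}$ (and hence $i$) with $V_{\nu^{\pm1}}$. Everything else is tracking where identity maps go.
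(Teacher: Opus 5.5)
Your proposal is correct and matches the paper's proof: (1) and (2) reduce to $T_\nu$ acting as the identity on underlying maps together with $\mathrm{J}V_\nu = V_\nu\mathrm{J}$, and (3) is checked on a dense set of elementary tensors, where both legs give $\overline{\eta\otimes\xi}$. Your extra comparison of the $Z^+$-operators in (3) is harmless but not needed, because $T_\nu(\overline{H^\varphi})=\overline{T_\nu(H^\varphi)}$ holds on the nose and $Z^+$ is uniquely determined by its vector, so the identification of bounded vectors is automatically the identity.
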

\begin{proof}
    The first equality is obvious given that $T_\nu$ acts trivially on morphisms. The second follows from the compatibility of $\mathrm{J}$ with conformal net automorphisms and the fact that $T_\nu$ is trivial on morphisms. For the last statement, let $\overline{\xi}\otimes\overline{\eta}\in {\overline{T_\nu(H^\varphi)}_+(\tilde{I})\otimes \overline{T_\nu(K^\mu)}}$. Then, both legs applied to $\overline{\xi}\otimes\overline{\eta}$ produce $\overline{\eta\otimes\xi}$, and the claim follows.
\end{proof}

We finally discuss the compatibility of the involution with the conformal structure of twisted representations of $\A$. This is necessary to argue the compatibility of the crossed balance and the involutive structures, in the sense of Definition \ref{def: Involutive-X-Balance}. Given $(\tilde{f}, f,V)\in \Diff_{\A}^+(S^1)$, we write $\overline{(\tilde{f}, f,V)}\in \Diff_{\A}^+(S^1)$ for the element
\[
\overline{(\tilde{f}, f,V)}:=\big( x\mapsto -\tilde{f}(-x), z\mapsto \overline{f(\overline{z})}, \mathrm{J}\circ V\circ \mathrm{J}\big).
\]
It is clear that $z\mapsto \overline{f(\overline{z})}$ is orientation preserving and that $x\mapsto -\tilde{f}(-x)$ covers it. Recall that given a unitary $V\in U(H_0)$, we write $[V]\in \mathcal{P}U(H_0)$ for its class in $\mathcal{P}U(H_0)$. Then, $[\mathrm{J}\circ V\circ \mathrm{J}] = \mathrm{J}\circ [V]\circ \mathrm{J} = \mathrm{J}\circ U(f)\circ \mathrm{J} = U(z\mapsto \overline{f(\overline{z})})$ by the Bisognano-Wichmann Theorem \cite{bgl93} and the uniqueness of the projective $\Diff^+(S^1)$-action extending the action of $\Mob$ on $H_0$ \cite[Thm. 6.1.9]{weiner2007conformalcovariancerelatedproperties}.  Hence $\overline{(\tilde{f}, f,V)}$ is a well-defined element of $\Diff_{\A}^+(S^1)$. 
\begin{proposition}\label{prop: ConformalActionOnConjugate}
    Fix an automorphism $\varphi\in \Aut(\A)$ and let $H^\varphi\in \Rep^\varphi(\A)$ be a $\varphi$-twisted representation of $\A$. Let us denote by $U^H$ the unitary action of $\Diff^+_{\A}(S^1)$ on $H^\varphi$. Then, the action $U^{\overline{H}}$ of $\Diff^+_{\A}(S^1)$ on $\overline{H^\varphi}$ is given by
    \[
    U^{\overline{H}}(\tilde{f}, f, V) = \overline{U^H(\overline{(\tilde{f}, {f}, V)})}
    \]
    for all $(\tilde{f}, f, V)\in \Diff_{\A}^+(S^1)$.
\end{proposition}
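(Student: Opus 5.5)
The plan is to use the uniqueness part of Theorem \ref{Thm: TwistedRepIsConformal}. Define $W(\tilde{f},f,V):=\overline{U^H(\overline{(\tilde{f},f,V)})}$ on $\overline{H^\varphi}$. It then suffices to prove three things: (i) $W$ is a unitary action of $\Diff^+_\A(S^1)$ with the central $U(1)$ acting in the standard way; (ii) $W$ is continuous; (iii) for every $I\in\Jcal$ and $(\tilde{f},f,V)\in\Diff^+_\A(I)$ we have $W(\tilde{f},f,V)=\overline{\pi^H}_I(U(\tilde{f},f,V))$.

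For (i) and (ii), first check that the assignment $g\mapsto \overline{g}$ is a continuous group automorphism of $\Diff^+_\A(S^1)$. Conjugation by $x\mapsto -x$ on $\Diff(\R)$, by complex conjugation on $\Diff^+(S^1)$, and by $\mathrm{J}$ on $U(H_0)$ are all continuous and multiplicative, and the projective compatibility was verified just before the statement. The only subtle point is the centre. For $\lambda\in U(1)$, antilinearity of $\mathrm{J}$ gives $\mathrm{J}\lambda\mathrm{J}=\bar\lambda$, so the centre is sent to $\bar\lambda$. Since complex conjugation of operators on $\overline{H^\varphi}$ turns the scalar $\bar\lambda$ into $\lambda$, $W$ makes $U(1)$ act standardly. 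Unitarity of $W$ and the homomorphism property are then formal.

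For (iii), first note that $\overline{(\cdot)}$ maps $\Diff^+_\A(I)$ into $\Diff^+_\A(\overline{I})$. It preserves supports up to conjugation, and it preserves the identity component of the lift because it is a continuous automorphism fixing the identity. Fix $\tilde{I}$ with $q(\tilde{I})=I$, so that $-\tilde{I}$ lifts $\overline{I}$. Writing $g=(\tilde{f},f,V)$, we get $U^H(\overline{g})=\pi^H_{-\tilde{I}}(U(\overline{g}))$, where $U(\overline{g})=\mathrm{J}V\mathrm{J}\in\A(\overline{I})$. Here Remark \ref{rk: AutomorphismsPreserveVirasoro} lets us ignore the choice of lift, since $\varphi$ fixes Virasoro elements. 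On the other hand, \eqref{eq: defInvolutionRepGA} gives
\[\overline{\pi^H}_{\tilde{I}}(V)=\overline{\pi^H_{-\tilde{I}}(j(V)^*)}.\]
By definition of $j$, the element $j(V)$ satisfies $\pi_0(j(V))=\mathrm{J}V^*\mathrm{J}$, so $j(V)^*=\mathrm{J}V\mathrm{J}=U(\overline{g})$. This gives (iii).

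I expect the main obstacle to be the bookkeeping in the lift: showing that $\overline{g}$ lies in the identity-component preimage $\Diff^+_\A(\overline{I})$, and that the interval $-\tilde{I}$ used in \eqref{eq: defInvolutionRepGA} matches the lift with respect to which Theorem \ref{Thm: TwistedRepIsConformal} is stated. Both points are handled by the connectedness argument together with the Virasoro-invariance of $\varphi$.
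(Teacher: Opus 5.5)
Your proposal is correct and follows the same route as the paper: on each local subgroup $\Diff^+_\A(I)$ you check that $\overline{\pi^H}_I(U(g))=\overline{\pi^H_{\overline{I}}(j(U(g))^*)}$ with $j(U(g))^*=\mathrm{J}\,U(g)\,\mathrm{J}=U(\overline{g})$, and then conclude by the uniqueness in Theorem \ref{Thm: TwistedRepIsConformal}. Your version is more careful than the paper's on points it leaves implicit: that $g\mapsto\overline{g}$ is a continuous automorphism under which $W$ acts standardly on the centre, that $\overline{g}$ lands in $\Diff^+_\A(\overline{I})$, and that the choice of lift is harmless because $\varphi$ fixes the Virasoro subnet.
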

\begin{proof}
    Let $I\in\Jcal$ and assume that $(\tilde{f}, f,V)\in \Diff_{\A}^+(I)$. Then, by definition
    \[
    U^{\overline{H}}(\tilde{f}, f,V) = \overline{\pi^H}_I(U(\tilde{f}, f, V))=  \overline{\pi_{\overline{I}}^H(j(U(\tilde{f}, f,V))^*)}.
    \]
    But $\pi_{0,\overline{I}}(j(U(\tilde{f}, f,V))^*) = \mathrm{J}\circ U(\tilde{f}, f,V)\circ \mathrm{J} = U(\overline{(\tilde{f}, f, V)})$. By uniqueness of the action in Theorem \ref{Thm: TwistedRepIsConformal} and the fact that $U^{\overline{H}}$ defines an action of $\Diff_\A^+(S^1)$ on $\overline{H^\varphi}$, the claim follows.
\end{proof}

We can now endow $\Rep^{\Aut(\A)}(\A)$ with the structure of an $\Aut(\A)$-crossed balanced involutive $\mathrm{W}^*$-tensor category. Let $\overline{\,\cdot\,}: \Rep^{\Aut(\A)}(\A)\to \Rep^{\Aut(\A)}(\A)$ be the involution in Definition \ref{def: involutionRepA}. Then, we have a unitary isomorphism
\[
\nu_{H, K}: \overline{H^\varphi}\boxtimes \overline{K^\mu} = \overline{H^\varphi}_+(\widetilde{S^1_-})\boxtimes \overline{K^\mu}\xrightarrow{\nu_{H, K}^{\widetilde{S^1_-}}}\overline{K^\mu\boxtimes H_-^\varphi(-\widetilde{S_-^1})} = \overline{K^\mu\boxtimes H_-^\varphi(\widetilde{S_+^1})} = \overline{K^\mu\boxtimes H^\varphi}.
\]

\begin{lemma}\label{lemm : IndeedGetInvolutive}
The functor
\[
\overline{\,\cdot\, }: \Rep^{\Aut(\A)}(\A)\to \Rep^{\Aut(\A)}(\A), 
\]
in Definition \ref{def: involutionRepA}, with the isomorphisms $i: H_0\cong\overline{H_0}$ and $\phi_{H}: H^\varphi\cong\overline{\overline{H^\varphi}},$ and~the family of isomorphisms $\nu$, endow $\Rep^{\Aut(A)}(\A)$ with the structure of an involutive $\mathrm{W}^*$-tensor~category. 
\end{lemma}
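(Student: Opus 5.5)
To prove Lemma \ref{lemm : IndeedGetInvolutive}, I will verify the axioms of an involutive $\mathrm{W}^*$-tensor category one at a time. The plan is to check everything on the dense subspaces of vectors of the form $L(\xi,\tilde{I})\eta$ (or simple tensors $\overline{\xi}\otimes\overline{\eta}$), where each structure map acts by an explicit formula.

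\textbf{Step 1 (routine properties).} First, $\overline{\,\cdot\,}$ is an anti-linear $*$-functor. It is normal on endomorphism algebras, since $F\mapsto\overline{F}$ is the standard anti-isomorphism $B(H)\to B(\overline{H})$ restricted to $\End(H^\varphi)$. Next, $r:=i$ is unitary because $\mathrm{J}$ is anti-unitary, and it intertwines actions by Lemma \ref{lemm: InvolutiveDataUnit}. The maps $\phi_H$ are unitary, natural, and intertwining by Lemma \ref{lemm: InvolutiveDataSquare}. The identity $\overline{\phi_H}=\phi_{\overline{H}}$ is immediate, since both send $\overline{\xi}\mapsto\overline{\overline{\overline{\xi}}}$. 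The identity $\phi_{H_0}=\overline{i}\circ i$ reduces to $\mathrm{J}^2=\id$. Naturality of $\nu_{H,K}$ in $H$ and $K$ is clear from its formula $\overline{\xi}\otimes\overline{\eta}\mapsto\overline{\eta\otimes\xi}$, and Proposition \ref{prop: 2CellInvolution} shows that it is a unitary intertwiner.

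\textbf{Step 2 (compatibility of $\nu$ with the associator).} I need the hexagon relating $\nu_{H\boxtimes K,R}$, $\nu_{H,K}$, $\nu_{K\boxtimes H,R}$ (up to ordering) and the image of the associator under $\overline{\,\cdot\,}$. I will pass to the triple fusion $H_+(\tilde{I})\boxtimes K\boxtimes R_-(\tilde{J})$ of Section \ref{Sec: Associativity}.
\begin{itemize}
\item Extend Proposition \ref{prop: 2CellInvolution} to a triple version $\overline{H}_+(\tilde{I})\boxtimes\overline{K}\boxtimes\overline{R}_-(\tilde{J})\to\overline{R_+(-\tilde{J})\boxtimes K\boxtimes H_-(-\tilde{I})}$, sending $\overline{\xi}\otimes\overline{\eta}\otimes\overline{\chi}\mapsto\overline{\chi\otimes\eta\otimes\xi}$. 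The isometry follows from the same inner-product computation, using the commutation of $Z^{+*}Z^+\in\A(I)$ and $Z^{-*}Z^-\in\A(J)$.
\item Check that each side of the hexagon, composed with the identifications of Proposition \ref{prop: MapAssociativity}, equals this triple map on simple tensors. Each elementary map acts by reordering or regrouping tensor factors, so this is mostly bookkeeping.
\end{itemize}

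\textbf{Step 3 (the condition $\phi_{X\boxtimes Y}=\overline{\nu}\circ\nu\circ(\phi\boxtimes\phi)$).} On a simple tensor $\xi\otimes\eta$, the right-hand side produces $\overline{\overline{\xi\otimes\eta}}$, after identifying $H^\varphi_+(\widetilde{S^1_-})$ with $\overline{\overline{H}}_+(\widetilde{S^1_-})$ via the canonical isomorphisms $H_\pm(\tilde{I})\cong\overline{H}_\mp(-\tilde{I})$.

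\textbf{Main obstacle: interval conventions.} The tensor product is defined over the fixed pair $(\widetilde{S^1_-},\widetilde{S^1_+})$, while $\nu$ is built from $\nu^{\widetilde{S^1_-}}$, using $-\widetilde{S^1_-}=\widetilde{S^1_+}$. Iterating $\nu$ or conjugating produces fusions over reflected intervals. These must be brought back to the standard ones, and the path continuations $\alpha^\bullet$ become $\overline{(-\alpha)^\bullet}$ by Lemma \ref{Lemm: TechnicalLemmaInvolutionTensorator}(1). The task is to show that all these path continuations cancel, and to track the one-sided and two-sided fusion identifications consistently with Proposition \ref{prop: BreakGammaInComponents}. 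I would first prove a general lemma: $\nu^{\tilde{I}}$ commutes with path continuations and with the canonical one-sided/two-sided identifications. With that lemma in hand, Steps 2 and 3 reduce to computations on simple tensors.
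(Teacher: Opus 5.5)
Your proposal is correct and follows essentially the same route as the paper. The paper also calls the associator coherence and $\overline{\phi_H}=\phi_{\overline{H}}$ straightforward, derives $\overline{i}\circ i=\phi_{H_0}$ from $\mathrm{J}^2=\id_{H_0}$, and checks $\phi_{H\boxtimes K}=\overline{\nu_{K,H}}\circ\nu_{\overline{H},\overline{K}}\circ(\phi_H\boxtimes\phi_K)$ on simple tensors $\xi\otimes\eta$ exactly as in your Step 3. Your triple-fusion argument in Step 2, and your lemma reconciling $\nu^{\tilde{I}}$ with the one-sided/two-sided identifications (equivalently, $\nu$ with ${}^{\mathrm{op}}\nu$), make explicit bookkeeping that the paper uses without comment.
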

\begin{proof}
    The compatibility between the associator $\alpha$ and the unitaries $\nu$ is straightforward to check, as is the fact that $\overline{\phi_{H}} = \phi_{\overline{H}}$. Since $\mathrm{J^2} = \id_{H_0}$, it also holds that $\overline{i}\circ i = \phi_{H_0}$. Finally, we need to show the commutativity of 
\[\begin{tikzcd}
	{H^\varphi\boxtimes K^\mu} && {\overline{\overline{H^\varphi\boxtimes K^\mu}}} \\
	{\overline{\overline{H^\varphi}}\boxtimes \overline{\overline{K^\mu}}} && {\overline{\overline{K^\mu}\boxtimes \overline{H^\varphi}}}.
	\arrow["{\phi_{H\boxtimes K}}", from=1-1, to=1-3]
	\arrow["{\phi_{H}\boxtimes \phi_{K}}"', from=1-1, to=2-1]
	\arrow["{\nu_{\overline{H}, \overline{K}}}"', from=2-1, to=2-3]
	\arrow["{\overline{\nu_{K, H}}}"', from=2-3, to=1-3]
\end{tikzcd}\]
Let $\xi\in H^\varphi_+(\widetilde{S^1_-})$ and $\eta\in K^\mu$. Then, the long leg of the diagram above applied to $\xi\otimes\eta$ reads
\[
\xi\otimes\eta\mapsto \overline{\overline{\xi}}\otimes\overline{\overline{\eta}}\mapsto \overline{\overline{\eta}\otimes\overline{\xi}}\mapsto \overline{\overline{\xi\otimes\eta}},
\]
which is also the image of $\xi\otimes\eta$ under the top arrow, as needed.
\end{proof}

We next argue that the $\Aut(\A)$-crossed balanced structure of $\Rep^{\Aut(\A)}(\A)$ in Theorem \ref{Thm: RepAutAIsCrossedBalanced} is compatible with the involutive $\mathrm{W}^*$-structure in the sense that they provide an $\Aut(\A)$-crossed balanced involutive $\mathrm{W}^*$-tensor structure.

\begin{theorem}\label{thm: Involutive-AutX-X-Balanced}
    Let $\A$ be a conformal net. The $\Aut(\A)$-crossed balanced $\mathrm{W}^*$-tensor category $\Rep^{\Aut(\A)}(\A)$ is naturally an $\Aut(\A)$-crossed balanced involutive $\mathrm{W}^*$-tensor category.
\end{theorem}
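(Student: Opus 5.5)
The proof assembles the data already constructed and checks the axioms of Definitions \ref{def: G-X-braidedInvolutive} and \ref{def: Involutive-X-Balance} in turn. Lemma \ref{lemm : IndeedGetInvolutive} makes $\Rep^{\Aut(\A)}(\A)$ an involutive $\mathrm{W}^*$-tensor category. Theorem \ref{Thm: RepAutAIsCrossedBalanced} supplies the $\Aut(\A)$-crossed balanced structure. For the data $\chi_\nu(H)\colon \overline{T_\nu(H^\varphi)}\to T_\nu(\overline{H^\varphi})$ we take the identity of $\overline{H^\varphi}$, which is legitimate by the lemma asserting $T_\nu(\overline{H^\varphi}) = \overline{T_\nu(H^\varphi)}$. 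Condition (1), that $\overline{\,\cdot\,}$ maps $\Rep^\varphi(\A)$ to $\Rep^{\varphi^{-1}}(\A)$, is Lemma \ref{lemm: InvolutionCompatibleGrading}. Condition (4) is trivial because the action is strict and all $\chi$'s are identities. Condition (2), that $(T_\nu,\chi_\nu)$ is an involutive $\mathrm{W}^*$-tensor functor, reduces to the three equalities of Proposition \ref{prop: NecessaryEqualitiesForInvolutiveFunctor}:
\begin{itemize}
\item part (1) gives compatibility with $\phi$;
\item part (2) gives compatibility with $i$, using the unit isomorphism $V_{\nu^{-1}}$ composed with $\Gamma_\nu$;
\item part (3), specialised to $\tilde{I} = \widetilde{S^1_-}$, gives compatibility with $\nu_{H,K}$.
\end{itemize}

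Condition (3), the compatibility \eqref{eq: CompatibilityBraidingInvolution} of the crossed braiding with the involution, is checked on the dense vectors $\overline{\xi}\otimes\overline{\eta}$ with $\xi\in H^\varphi_-(\widetilde{S^1_+})$, i.e.\ $\overline{\xi}\in \overline{H^\varphi}_+(\widetilde{S^1_-})$, and $\eta\in K^\mu$. On the left, $\nu_{X,Y}$ is simply the swap $\overline{\xi}\otimes\overline{\eta}\mapsto\overline{\eta\otimes\xi}$ up to identifications. On the right, the braiding $\B$ is a path continuation $\varrho^\bullet$ followed by the swap unitary of Proposition \ref{prop: SwapWithAction}. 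Lemma \ref{Lemm: TechnicalLemmaInvolutionTensorator}(1) converts $\varrho^\bullet$ on the conjugated side into $\overline{(-\varrho)^\bullet}$ on the original side. Since $-\varrho$ runs from $\widetilde{S^1_+}$ to $\widetilde{S^1_-}$, it is the inverse of the path defining $\B_{H,K}$, which is exactly what the diagram demands. The remaining task is to match the swap maps, i.e.\ $\Gamma$'s and $V$'s, which is done by tracking vectors. I expect this bookkeeping, especially handling the right-fusion identification $^{\mathrm{op}}\nu$ and the $\Gamma_{\varphi^{-1}}$ twist, to be where care is needed; I would first verify it for untwisted $K$ and then insert the twists.

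The new substantive condition is the ribbon condition \eqref{eq: RibbonCondition}. With $\chi=\id$ and the action strict, it reads $\overline{\theta_H^*} = \theta_{\overline{H}}$ as maps $\overline{H^\varphi}\to\overline{H^\varphi}$. The plan is as follows.
\begin{itemize}
\item Proposition \ref{prop: ConformalActionOnConjugate} gives $\theta_{\overline{H}} = U^{\overline{H}}(\widetilde{\exp}(-2\pi i L_0)) = \overline{U^H(\overline{\widetilde{\exp}(-2\pi iL_0)})}$.
\item Conjugation $(\tilde f,f,V)\mapsto \overline{(\tilde f,f,V)}$ sends the rotation $x\mapsto x+t$ to $x\mapsto x-t$, i.e.\ it maps $\widetilde{\exp}(-2\pi i L_0)$ to its inverse.
\item On the unitary component, $\mathrm{J}$ commutes with the rotation group by Bisognano–Wichmann together with uniqueness of the Möbius lift (\cite{MR58601}), so the lift of $\widetilde{\Mob}$ has the correct phase.
\end{itemize}
Hence $\theta_{\overline{H}} = \overline{\theta_H^{-1}} = \overline{\theta_H^*}$. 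The point I would verify carefully is that, on the $\widetilde{\Mob}$ part, the conjugated element equals the inverse as an element of $\Diff^+_\A(S^1)$ with no extra scalar. I would settle this by noting that both sides are one-parameter subgroups of the canonical $\widetilde{\Mob}$-lift agreeing projectively, so they coincide. Naturality and unitarity of all the data are inherited from the constituent results, which completes the proof.
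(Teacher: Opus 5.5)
Your proposal is correct and follows the paper's proof essentially step by step. It uses $\chi_\nu=\id$; Lemmas \ref{lemm : IndeedGetInvolutive} and \ref{lemm: InvolutionCompatibleGrading} and Proposition \ref{prop: NecessaryEqualitiesForInvolutiveFunctor} for the involutive structure and axioms (1), (2), (4); vector tracking for axiom (3); and Proposition \ref{prop: ConformalActionOnConjugate} plus strictness ($\theta_{T_\varphi(\overline{H^\varphi})}=T_\varphi(\theta_{\overline{H}})=\theta_{\overline{H}}$) for the ribbon condition. Your use of Lemma \ref{Lemm: TechnicalLemmaInvolutionTensorator}(1), with $-\varrho$ the reversal of $\varrho$, is a welcome refinement, since the paper's vector tracking treats $\B$ as a bare swap and ignores $\varrho^\bullet$. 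On the phase issue, justify it by uniqueness of the unitary lift of the whole group $\widetilde{\Mob}$, not of a single one-parameter group, which is only fixed up to a character $e^{ict}$. With that, no scalar appears: the element of $\Diff^+_\A(S^1)$ over $\widetilde{\exp}(-2\pi iL_0)$ with unitary component $\id_{H_0}$ is conjugated to the one over $\widetilde{\exp}(2\pi iL_0)$ with unitary component $\mathrm{J}\,\id_{H_0}\,\mathrm{J}=\id_{H_0}$.
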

\begin{proof}
    We use the involutive structure on $\Rep^{\Aut(\A)}$ described in Lemma \ref{lemm : IndeedGetInvolutive}. By Lemma \ref{lemm: InvolutionCompatibleGrading}, the involution functor is compatible with the grading by $\Aut(\A)$, and by Proposition \ref{prop: NecessaryEqualitiesForInvolutiveFunctor}, the functor $T_\nu$ is involutive for the structure data
    \[
    T_\nu(\overline{\,\cdot\,}) = \overline{T_\nu(\cdot)}
    \]
    for all $\nu\in \Aut(\A)$. Since all the structure data appearing in the fourth axiom in Definition \ref{def: G-X-braidedInvolutive} are identities in our case, that equality also follows. Le us show the third axiom in Definition \ref{def: G-X-braidedInvolutive}. Fix $\varphi,\mu\in \Aut(\A)$ automorphisms and twisted representations $H^\varphi\in\Rep^\varphi(\A)$ and $K^\mu\in \Rep^\mu(\A)$. In Diagram \eqref{eq: CompatibilityBraidingInvolution}, take $g = \varphi$, $X = H^\varphi$ and $Y = K^\mu$. Then, to a vector $\overline{\xi\otimes\eta}\in \overline{H^\varphi\boxtimes K^\mu}$, the top-right leg of the diagram reads
    \[
    \overline{\xi\otimes \eta}\mapsto \overline{\Gamma_g\eta\otimes\xi}\mapsto \overline{\xi}\otimes \overline{\Gamma_g\eta}\mapsto \overline{\xi}\otimes \Gamma_{g}\overline{\eta}\mapsto \Gamma_{g^{-1}}\Gamma_{g}\overline{\eta}\otimes\overline{\xi}.
    \]
    The left-bottom leg applied to the same vector reads
    \[
    \overline{\xi\otimes\eta}\mapsto \overline{\eta}\otimes \overline{\xi} = \Gamma_e\overline{\eta}\otimes \overline{\xi}\mapsto \Gamma_{g^{-1}}\Gamma_g\overline{\eta}\otimes\overline{\xi},
    \]
as needed.
    
    It is only left to show the ribbon condition \eqref{eq: RibbonCondition}. Fix $\varphi\in\Aut(\A)$ and let $H^\varphi\in \Rep^\varphi(\A)$ be a twisted representation. Since the vertical arrows in \eqref{eq: RibbonCondition} are identities in our setting, it is enough to show that $\theta_{T_\varphi(\overline{H^\varphi})} = \overline{\theta^*_H}$ as unitaries $\overline{H^\varphi}\to \overline{H^\varphi}$. By the fact that $\theta$ is an $\Aut(\A)$-crossed balance, $\theta_{T_\varphi(\overline{H^\varphi})} = T_\varphi(\theta_{\overline{H}})$, see Definition \ref{def: CrossedCategoricalTwist} $\mathrm{(i)}$. Since $T_\varphi$ acts as the identity on morphisms, $\theta_{T_\varphi(\overline{H^\varphi})} = \theta_{\overline{H}}$, and we only have to show that $\overline{\theta_{H}^*} = \theta_{\overline{H}}$. Now, by definition, $\theta_{\overline{H}}=e^{-2\pi i L_0}:\overline{H^\varphi}\to \overline{H^\varphi}$, which, by Proposition \ref{prop: ConformalActionOnConjugate} is exactly $\overline{e^{2\pi i L_0}}=\overline{\theta_H^*}$, as needed. 
\end{proof}

\begin{corollary}
    Let $\A$ be a conformal net being acted on faithfully by a discrete group $G$. Then, the $G$-crossed balanced $\mathrm{W}^*$-tensor category $\Rep^{G}(\A)$ is naturally a $G$-crossed balanced involutive $\mathrm{W}^*$-tensor category.  
\end{corollary}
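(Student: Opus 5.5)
The plan is to pull back the structure of Theorem \ref{thm: Involutive-AutX-X-Balanced} along the injective homomorphism $\Phi: G\to \Aut(\A)$, exactly as Theorem \ref{thm: BigCorollary} was obtained from Theorem \ref{Thm: RepAutAIsCrossedBalanced}. First, $\Rep^G(\A)=\bigoplus_{g\in G}\Rep^{\Phi(g)}(\A)$ is a full $\mathrm{W}^*$-subcategory of $\Rep^{\Aut(\A)}(\A)$. It is closed under the Connes fusion, since $\Rep^{\Phi(g)}(\A)\boxtimes \Rep^{\Phi(h)}(\A)\subset \Rep^{\Phi(gh)}(\A)$ by Theorem \ref{thm: RepresentationOnFusion}. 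It contains the unit $H_0\in\Rep^{\Phi(e)}(\A)$. It is closed under the functors $T_{\Phi(g)}$, because $T_{\Phi(g)}(\Rep^{\Phi(h)}(\A))\subset \Rep^{\Phi(ghg^{-1})}(\A)$. Restricting the $\Aut(\A)$-action along $\Phi$ therefore gives a $G$-action, and the $G$-crossed braiding $\B$ and the crossed balance $\theta$ restrict verbatim. This recovers Theorem \ref{thm: BigCorollary}.

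Next, I would check that the involutive data also restrict. By Lemma \ref{lemm: InvolutionCompatibleGrading}, the functor $\overline{\,\cdot\,}$ sends $\Rep^{\Phi(g)}(\A)$ to $\Rep^{\Phi(g)^{-1}}(\A)=\Rep^{\Phi(g^{-1})}(\A)$, so it preserves $\Rep^G(\A)$ and satisfies axiom (i) of Definition \ref{def: G-X-braidedInvolutive} with grading $g\mapsto g^{-1}$. The structure maps are as follows:
\begin{itemize}
\item $i: H_0\to\overline{H_0}$ from Lemma \ref{lemm: InvolutiveDataUnit};
\item $\phi_H$ from Lemma \ref{lemm: InvolutiveDataSquare};
\item $\nu_{H,K}$ from Proposition \ref{prop: 2CellInvolution};
\item $\chi_g = \id$, using $T_{\Phi(g)}(\overline{H})=\overline{T_{\Phi(g)}(H)}$.
\end{itemize}
All of these are morphisms between objects of $\Rep^G(\A)$. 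Because $\Rep^G(\A)$ is a full subcategory, every commutative diagram required in Definitions \ref{def: involutiveW*Cat}, \ref{def: G-X-braidedInvolutive} and \ref{def: Involutive-X-Balance} consists of morphisms already shown to commute in $\Rep^{\Aut(\A)}(\A)$.

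Concretely, the involutive tensor coherences hold by Lemma \ref{lemm : IndeedGetInvolutive}. The involutivity of each $T_{\Phi(g)}$ holds by Proposition \ref{prop: NecessaryEqualitiesForInvolutiveFunctor}. The braiding/involution compatibility \eqref{eq: CompatibilityBraidingInvolution} and axiom (iv) both follow from the proof of Theorem \ref{thm: Involutive-AutX-X-Balanced}. So does the ribbon condition \eqref{eq: RibbonCondition}, which reduces to $\theta_{\overline{H}}=\overline{\theta_H^*}$ via Proposition \ref{prop: ConformalActionOnConjugate}. In each case the diagram is indexed by $\Phi(g),\Phi(h)\in \Aut(\A)$, and the structure isomorphisms $n_{g,h}$ of the pulled-back action are identities, as they were upstairs.

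The only point needing care is the last one. The compatibility morphisms of the $G$-action must be the images under $\Phi$ of those for $\Aut(\A)$; this holds because $\Phi$ is a group homomorphism and $T_\mu T_\nu = T_{\mu\nu}$ strictly. I do not expect a genuine obstacle: the corollary is a formal restriction argument, and faithfulness of $\Phi$ is used only to identify the $G$-grading with the $\Aut(\A)$-grading on $\Rep^G(\A)$.
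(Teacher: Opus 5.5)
Your proposal is correct and follows the paper's approach. The paper states this corollary without proof, as the restriction of Theorem \ref{thm: Involutive-AutX-X-Balanced} along $\Phi$, in the same way that Theorem \ref{thm: BigCorollary} is obtained from Theorem \ref{Thm: RepAutAIsCrossedBalanced}; your checks (closure under fusion, under $T_{\Phi(g)}$ and under conjugation, with every required diagram being an instance of one already verified for $\Aut(\A)$) are what that restriction amounts to.
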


\begin{corollary}
    Let $\A$ be a conformal net. Then, the balanced $\mathrm{W}^*$-tensor category $\Rep(\A)$ is naturally a balanced involutive $\mathrm{W}^*$-tensor category. In particular, given a representation $H\in \Rep(\A)$, it holds that $\theta_{\overline{H}} = \overline{\theta_{H}^*}$, where $\theta$ denotes the balance. 
\end{corollary}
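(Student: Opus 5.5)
The plan is to obtain this statement as the special case $G=\{e\}$ of the preceding Corollary, and then check the displayed identity $\theta_{\overline{H}}=\overline{\theta_H^*}$ directly from Proposition \ref{prop: ConformalActionOnConjugate}. For the trivial group, $\Rep^{\{e\}}(\A)=\Rep(\A)$ is the identity component of $\Rep^{\Aut(\A)}(\A)$. This component is closed under Connes fusion, under the involution $\overline{\,\cdot\,}$ (by Lemma \ref{lemm: InvolutionCompatibleGrading}, which sends $\Rep^\varphi(\A)$ to $\Rep^{\varphi^{-1}}(\A)$, so it preserves the $e$-component), and under the unit isomorphism $i$ of Lemma \ref{lemm: InvolutiveDataUnit}.

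First, I restrict all the structure data of Theorem \ref{thm: Involutive-AutX-X-Balanced} to this component: the associator, the unitors, $\nu_{H,K}$, $\phi_H$ and $i$. These remain unitary, and they satisfy the coherences of Definition \ref{def: involutiveW*Cat} because those coherences already hold in the larger category (Lemma \ref{lemm : IndeedGetInvolutive}). On $\Rep(\A)$ the action $T_e$ is the identity functor, and all maps $\Gamma_e$ are identities. Consequently the crossed braiding $\B$ becomes an honest braiding, diagram \eqref{eq: Xtwist} reduces to the usual balance axiom $\theta_{X\otimes Y}=\B_{Y,X}\B_{X,Y}(\theta_X\otimes\theta_Y)$, and diagram \eqref{eq: CompatibilityBraidingInvolution} reduces to the standard compatibility of an involution with a braiding. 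Together these give a balanced involutive $\mathrm{W}^*$-tensor structure.

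Second, I specialize the ribbon condition \eqref{eq: RibbonCondition} to $g=e$. Its vertical arrows $\chi_e$ and $n_{e,e}$ are identities, so the condition says exactly $\theta_{\overline{H}}=\overline{\theta_H^*}$. To see this directly, I apply Proposition \ref{prop: ConformalActionOnConjugate} to the element $\widetilde{\exp}(-2\pi i L_0)$, lifted to $\Diff^+_\A(S^1)$ via the canonical lift of $\widetilde{\Mob}$. The bar operation sends $\tilde f(x)=x-1$ to $x\mapsto -(-x-1)=x+1$, which is the lift of $\widetilde{\exp}(2\pi i L_0)$.

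The main point requiring care is the unitary component. The bar operation also sends $V$ to $\mathrm{J}V\mathrm{J}$, and I must check that the distinguished $\widetilde{\Mob}$ lift is preserved, so that no stray phase appears. I would argue this using the fact that the unique lift of the projective $\widetilde{\Mob}$-representation from \cite{MR58601} is intertwined by $\mathrm{J}$, since conjugation by $\mathrm{J}$ preserves $\Omega$. This gives $U^{\overline H}(e^{-2\pi iL_0})=\overline{U^H(e^{2\pi i L_0})}=\overline{\theta_H^{*}}$, as required.
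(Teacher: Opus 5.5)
Your proposal is correct and follows the paper's route: the corollary is the $G=\{e\}$ case of Theorem~\ref{thm: Involutive-AutX-X-Balanced}, and that proof obtains $\theta_{\overline{H}}=\overline{\theta_H^*}$ from Proposition~\ref{prop: ConformalActionOnConjugate} exactly as you do, using that the vertical arrows in \eqref{eq: RibbonCondition} are identities. The phase issue you flag, which the paper does not discuss, is automatic here: since $e^{2\pi i L_0}=U(R_{2\pi})=1$ on $H_0$, the canonical lift of $\widetilde{\exp}(-2\pi i L_0)$ to $\Diff^+_\A(S^1)$ is $(x\mapsto x-1,\id,1)$, and its conjugate is $(x\mapsto x+1,\id,\mathrm{J}1\mathrm{J})=(x\mapsto x+1,\id,1)$.
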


\bibliographystyle{halpha-abbrv}
\bibliography{Gcrossed}
\end{document}